\pdfoutput=1
\RequirePackage{ifpdf}
\ifpdf 
\documentclass[pdftex]{sigma}
\else
\documentclass{sigma}
\fi

\numberwithin{equation}{section}

\newtheorem{Theorem}{Theorem}[section]
\newtheorem*{Theorem*}{Theorem}
\newtheorem{Corollary}[Theorem]{Corollary}
\newtheorem{Lemma}[Theorem]{Lemma}
 { \theoremstyle{definition}
\newtheorem{Definition}[Theorem]{Definition}

\newtheorem{Remark}[Theorem]{Remark}
\newtheorem{Question}[Theorem]{Question}
}

\newcommand{\C}{{\mathbb C}}

\newcommand{\Z}{{\mathbb Z}}

\newcommand{\N}{{\mathbb N}}
\newcommand{\D}{{\mathbb D}}
\newcommand{\T}{{\mathbb T}}

\newcommand{\al}{\alpha}
\newcommand{\be}{\beta}

\newcommand{\ga}{\gamma}
\newcommand{\Ga}{\Gamma}

\newcommand{\la}{\lambda}
\newcommand{\ep}{\varepsilon}
\newcommand{\de}{\delta}

\newcommand{\Om}{\Omega}
\newcommand{\ze}{\zeta}

\newcommand{\di}{\displaystyle}

\newcommand{\ii}{{\rm i}}
\newcommand{\dd}{{\rm d}}

\newcommand{\qasq}{\qquad \text{as} \quad}
\newcommand{\qandq}{\qquad \text{and} \qquad}

\newcommand{\red}[1]{{\color{red} #1}}
\newcommand{\blue}[1]{{\color{blue} #1}}

\newcommand{\violet}[1]{{\color{violet} #1}}
\newcommand{\teal}[1]{{\color{teal} #1}}

\usepackage{dutchcal}
\usepackage[mathscr]{euscript}

\usepackage{tikz}
\usepackage{pict2e}
\usetikzlibrary{decorations.markings}

\tikzset{middlearrow/.style={
		decoration={markings,
			mark= at position 0.6 with {\arrow{#1}},
		},
		postaction={decorate}
	}
}\tikzset{middlearrow/.style={
		decoration={markings,
			mark= at position 0.6 with {\arrow{#1}},
		},
		postaction={decorate}
	}
}

\tikzset{->-/.style={decoration={
			markings,
			mark=at position #1 with {\arrow{latex}}},postaction={decorate}}}

\tikzset{-<-/.style={decoration={
			markings,
			mark=at position #1 with {\arrowreversed{latex}}},postaction={decorate}}}

\begin{document}

\allowdisplaybreaks

\renewcommand{\thefootnote}{}

\newcommand{\arXivNumber}{2309.14695}

\renewcommand{\PaperNumber}{062}

\FirstPageHeading

\ShortArticleName{Strong Szeg\H{o} Limit Theorems for Multi-Bordered}

\ArticleName{Strong Szeg\H{o} Limit Theorems for Multi-Bordered,\\ Framed, and Multi-Framed Toeplitz Determinants\footnote{This paper is a~contribution to the Special Issue on Evolution Equations, Exactly Solvable Models and Random Matrices in honor of Alexander Its' 70th birthday. The~full collection is available at \href{https://www.emis.de/journals/SIGMA/Its.html}{https://www.emis.de/journals/SIGMA/Its.html}}}

\Author{Roozbeh GHARAKHLOO}

\AuthorNameForHeading{R.~Gharakhloo}

\Address{Mathematics Department, University of California, Santa Cruz, CA~95064, USA}
\Email{\href{mailto:roozbeh@ucsc.edu}{roozbeh@ucsc.edu}}
\URLaddress{\url{https://roozbehgharakhloo.wordpress.com/}}

\ArticleDates{Received September 27, 2023, in final form June 20, 2024; Published online July 11, 2024}

\Abstract{This work provides the general framework for obtaining strong Szeg\H{o} limit theorems for multi-bordered, semi-framed, framed, and multi-framed Toeplitz determinants, extending the results of Basor et al.~(2022) beyond the (single) bordered Toeplitz case. For the two-bordered and also the semi-framed Toeplitz determinants, we compute the strong Szeg\H{o} limit theorems associated with certain classes of symbols, and for the $k$-bordered (${k \geq 3}$), framed, and multi-framed Toeplitz determinants we demonstrate the recursive fashion offered by the Dodgson condensation identities via which strong Szeg\H{o} limit theorems can be obtained. One instance of appearance of semi-framed Toeplitz determinants is in calculations related to the entanglement entropy for disjoint subsystems in the XX spin chain (Brightmore et al.~(2020) and Jin--Korepin (2011)). In addition, in the recent work Gharakhloo and Liechty (2024) and in an unpublished work of Professor Nicholas Witte, such determinants have found relevance respectively in the study of ensembles of nonintersecting paths and in the study of off-diagonal correlations of the anisotropic square-lattice Ising model. Besides the intrinsic mathematical interest in these structured determinants, the aforementioned applications have further motivated the study of the present work.}

\Keywords{strong Szeg\H{o} theorem; bordered Toeplitz determinants; framed Toeplitz determinants; Riemann--Hilbert problem; asymptotic analysis}

\Classification{15B05; 30E15; 30E25}

\begin{flushright}
\begin{minipage}{60mm}
\it Dedicated to Alexander Its\\ on the occasion of his 70th birthday
\end{minipage}
\end{flushright}

\tableofcontents

\renewcommand{\thefootnote}{\arabic{footnote}}
\setcounter{footnote}{0}

\section{Introduction}
For $\phi \in L^1(\T)$, denote the $n \times n$ (pure) Toeplitz matrix by $T_n[\phi]$ and its determinant by
\begin{equation}\label{ToeplitzDet}
	D_n[\phi] = \det T_n[\phi] \equiv \underset{0 \leq j,k\leq n-1}{\det} \{ \phi_{j-k} \},
\end{equation}
where \begin{equation*}
	\phi_j = \int_{\T} \phi(\ze)\ze^{-j} \frac{\dd \ze}{2 \pi \ii \ze}, \qquad j \in \Z,
\end{equation*}
is the $j$-th Fourier coefficient of $\phi$, and $\T$ denotes the unit circle oriented in the counterclockwise direction.

This work is mainly concerned with two distinct structural deformations of Toeplitz determinants, being (multi-)bordered and (multi-)framed Toeplitz determinants. A bordered Toeplitz determinant with the \textit{bulk symbol} $\phi$ and the \textit{border symbol} $\psi$ is of the form
\begin{gather}\label{btd0}
	D^{B}_{n}[\phi; \psi] := \det \begin{bmatrix}
		\phi_0& \phi_1 & \cdots & \phi_{n-2} & \psi_{n-1}\\
		\phi_{-1}& \phi_0 & \cdots & \phi_{n-3}&\psi_{n-2} \\
		\vdots & \vdots & \ddots & \vdots & \vdots\\
		\phi_{1-n} & \phi_{2-n} & \cdots & \phi_{-1}&\psi_{0}
	\end{bmatrix}, \qquad n \geq 2.
\end{gather}
While a framed-Toeplitz determinant with the bulk symbol $\phi$ and the border symbols $\xi$, $\psi$, $\eta$, and $\ga$ is of the form
\begin{gather}\label{framed intro}
	\mathscr{F}_{n}\left[\phi; \xi, \psi, \eta, \ga; \boldsymbol{a}_4\right] := \det \begin{bmatrix}
		a_1 & \xi_{0} & \xi_{1} & \cdots & \xi_{n-3} & a_2 \\
		\ga_{0} &	\phi_0& \phi_{-1} & \cdots & \phi_{-n+3} & \psi_{n-3} \\
		\ga_{1} &	\phi_{1}& \phi_0 & \cdots & \phi_{-n+4} & \psi_{n-4} \\
		\vdots &	\vdots & \vdots & \ddots & \vdots & \vdots \\
		\ga_{n-3} &	\phi_{n-3} & \phi_{n-4} & \cdots & \phi_{0} & \psi_{0} \\
		a_4 &	\eta_{n-3} & \eta_{n-4} & \cdots & \eta_{0} & a_3
	\end{bmatrix}, \qquad n \geq 3,
\end{gather}
where $\boldsymbol{a}_4$ denotes the ordered set $\{a_1, a_2, a_3, a_4\}$, and $a_k$'s are arbitrary complex numbers, ${k=1,\dots,4}$.

Throughout this work, by the boldfaced $\boldsymbol{{f}}_m$ we also denote $\{f_\ell\}^{m}_{\ell=1}$, $f_{\ell} \in L^1(\T)$, a vector of border symbols. For a bulk symbol $\phi \in L^1(\T)$, $m \in \N$ and $n \geq m+1$, define the $n\times n$ \textit{multi-bordered} Toeplitz matrix generated by $\phi$ and $\boldsymbol{\psi}_m$ as the matrix whose last $m$ columns are generated respectively by the Fourier coefficients of $\psi_1, \dots, \psi_m$ and the remaining rectangular~${n\times (n-m)}$ submatrix is of Toeplitz structure, generated by the symbol $\phi$. More precisely,
\begin{gather}\label{btd}
	D^B_n[\phi;\boldsymbol{\psi}_m] := \det \begin{bmatrix}
		\phi_0& \phi_{1} & \cdots & \phi_{n-m-1} & \psi_{1,n-1} & \cdots & \psi_{m,n-1} \\
		\phi_{-1}& \phi_0 & \cdots & \phi_{n-m-2} & \psi_{1,n-2} & \cdots & \psi_{m,n-2} \\
		\vdots & \vdots & \vdots & \vdots & \vdots & \cdots & \vdots \\
		\phi_{-n+1} & \phi_{-n+2} & \cdots & \phi_{-m} & \psi_{1,0} & \cdots & \psi_{m,0}
	\end{bmatrix},
\end{gather}
where $f_j$ is again the $j$-th Fourier coefficient of $f \in \{ \phi, \psi_1, \dots, \psi_m \}$. Instead of the notation~${D^B_n[\phi;\boldsymbol{\psi}_m]}$ in \eqref{btd}, we occasionally use $D^B_n[\phi;\psi_1, \psi_2, \dots, \psi_m]$ to avoid confusion in the order of the borders.\footnote{For example, in \eqref{E11}--\eqref{E44}.}

Similarly we can define a \textit{multi-framed} Toeplitz determinant $ \mathscr{F}^{(m)}_{n}[\phi; \boldsymbol{\xi}_m, \boldsymbol{\psi}_m, \boldsymbol{\eta}_m, \boldsymbol{\ga}_m; \boldsymbol{a}_{4m}]$, with $\boldsymbol{a}_{4m} = \{ a_1, \dots, a_{4m} \}$. For $m \in \N$ (the number of frames) and $n \geq 2m+1$, the $n\times n$ multi-framed Toeplitz matrix generated by $\phi$, $\boldsymbol{\xi}_m$, $\boldsymbol{\psi}_m$, $\boldsymbol{\eta}_m$ and $\boldsymbol{\ga}_m$ is comprised of an $(n-2m)\times(n-2m)$ Toeplitz matrix generated by $\phi$ in addition to the $m$ frames surrounding it generated by the Fourier coefficients of the symbols in the sets $\boldsymbol{\xi}_m$, $\boldsymbol{\psi}_m$, $\boldsymbol{\eta}_m$ and $\boldsymbol{\ga}_m$, where the four border symbols $\xi_{\ell}$, $\psi_{\ell}$, $\eta_{\ell}$, and $\ga_{\ell}$ generate the entries on the $\ell$-th frame, $1\leq \ell \leq m$.

For example, here we show the determinant $\mathscr{F}^{(3)}_{n}\left[\phi; \boldsymbol{\xi}_3, \boldsymbol{\psi}_3, \boldsymbol{\eta}_3, \boldsymbol{\ga}_3; \boldsymbol{a}_{12}\right]$ with colored entries for easier interpretation:
\begin{gather}\label{MM}
	\det \left(\!\begin{matrix}
		\red{a_9} & \teal{\xi_{3,n-3}}\! & \teal{\xi_{3,n-4}}\! & \teal{\xi_{3,n-5}} & \teal{\xi_{3,n-6}} & \cdots & \teal{\xi_{3,2}} & \teal{\xi_{3,1}} & \teal{\xi_{3,0}} & \red{a_{10} }\\
		\teal{\ga_{3,n-3}}\! &	\red{a_5} & \violet{\xi_{2,n-5}}\! & \violet{\xi_{2,n-6}} & \violet{\xi_{2,n-7}} & \cdots & \violet{\xi_{2,1}} & \violet{\xi_{2,0}} & \red{a_6} & \teal{\psi_{3,0}} \\
		\teal{\ga_{3,n-4}}\! & \violet{\ga_{2,n-5}}\! &	\red{a_1} & \blue{\xi_{1,n-7}} & \blue{\xi_{1,n-8}} & \cdots & \blue{\xi_{1,0}} & \red{a_2} & \violet{\psi_{2,0}} & \teal{\psi_{3,1}} \\
		\teal{ \ga_{3,n-5}}\! &	\violet{\ga_{2,n-6}}\! &	\blue{\ga_{1,n-7}}\! & \phi_0& \phi_{-1} & \cdots & \phi_{-n+7} & \blue{\psi_{1,0}} & \violet{\psi_{2,1}} & \teal{\psi_{3,2}} \\
		\teal{\ga_{3,n-6}}\! &	\violet{\ga_{2,n-7}}\! &	\blue{\ga_{1,n-8}}\! &	\phi_{1}& \phi_0 & \cdots & \phi_{-n+8} & \blue{\psi_{1,1}} & \violet{\psi_{2,2}} & \teal{\psi_{3,3}} \\
		\vdots &	\vdots &	\vdots &	\vdots & \vdots & \ddots & \vdots & \vdots & \vdots & \vdots \\
		\teal{\ga_{3,2}} &	\violet{\ga_{2,1}} &	\blue{\ga_{1,0}} &	\phi_{n-7} & \phi_{n-8} & \cdots & \phi_{0} & \blue{\psi_{1,n-7}} & \violet{\psi_{2,n-6}} & \teal{\psi_{3,n-5}} \\
		\teal{\ga_{3,1}} &	\violet{\ga_{2,0}} &	\red{a_4} &	\blue{\eta_{1,n-7}}\! & \blue{\eta_{1,n-8}} & \cdots & \blue{\eta_{1,0}} & \red{a_3} & \violet{\psi_{2,n-5}} & \teal{\psi_{3,n-4}} \\
		\teal{\ga_{3,0}} & \red{a_8} & \violet{\eta_{2,n-5}}\! & \violet{\eta_{2,n-6}} & \violet{\eta_{2,n-7}} & \cdots & \violet{\eta_{2,1}} & \violet{\eta_{2,0}} & \red{a_7} & \teal{\psi_{3,n-3}} \\		
		\red{a_{12}} & \teal{\eta_{3,n-3}}\! & \teal{\eta_{3,n-4}}\! & \teal{\eta_{3,n-5}} & \teal{\eta_{3,n-6}} & \cdots & \teal{\eta_{3,2}} & \teal{\eta_{3,1}} & \teal{\eta_{3,0}} & \red{a_{11}} \\
	\end{matrix}\!\right).\!\!\!
\end{gather}

Our approach to conducting asymptotic analysis on multi-bordered, framed, and multi-framed Toeplitz determinants involves rewriting these structured determinants in terms of others with tractable asymptotics. Such reductions to simpler structured determinants result from utilizing the \textit{Dodgson condensation identity},\footnote{Also known as the \textit{Desnanot--Jacobi} identity or the \textit{Sylvester determinant} identity.} which we occasionally abbreviate as DCI (see~\cite{Abeles,Bressoud,Fulmek-Kleber, GW} and references therein). Let $\boldsymbol{\mathscr{M}}$ be an $n \times n$ matrix. By
\[
 \mathscr{M} \left\lbrace \begin{matrix} j_1& j_2& \cdots & j_{\ell} \\ k_1& k_2& \cdots & k_{\ell} \end{matrix} \right\rbrace,
 \]
we mean the determinant of the $(n-\ell)\times(n-\ell)$ matrix obtained from $\boldsymbol{\mathscr{M}}$ by removing the rows~$j_i$ and the columns $k_i$, $1\leq i \leq \ell$. Although the order of writing the row and column indices is immaterial for this definition, in this work we prefer to respect the order of indices, for example we prefer to write \[ \mathscr{M} \left\lbrace \begin{matrix} 3& 5 \\ 1& 4 \end{matrix} \right\rbrace, \qquad \text{and not} \qquad \mathscr{M} \left\lbrace \begin{matrix} 5 & 3 \\ 1& 4 \end{matrix} \right\rbrace, \qquad \mbox{or} \qquad \mathscr{M} \left\lbrace \begin{matrix} 3& 5 \\ 4& 1 \end{matrix} \right\rbrace, \qquad \mbox{or} \qquad \mathscr{M} \left\lbrace \begin{matrix} 5 & 3 \\ 4& 1 \end{matrix} \right\rbrace, \]
although all of these are the same determinant. Let $j_1 < j_2$ and $k_1 < k_2$. The Dodgson condensation identity reads
\begin{equation}\label{DODGSON}
	\mathscr{M} \cdot \mathscr{M}\left\lbrace \begin{matrix} j_1 & j_2 \\ k_1& k_2 \end{matrix} \right\rbrace = \mathscr{M}\left\lbrace \begin{matrix} j_1 \\ k_1 \end{matrix} \right\rbrace \cdot \mathscr{M}\left\lbrace \begin{matrix} j_2 \\ k_2 \end{matrix} \right\rbrace - \mathscr{M}\left\lbrace \begin{matrix} j_1 \\ k_2 \end{matrix} \right\rbrace \cdot \mathscr{M}\left\lbrace \begin{matrix} j_2 \\ k_1 \end{matrix} \right\rbrace.
\end{equation}

Speaking of reductions to simpler structured determinants through one or multiple applications of the DCI, it turns out that the multi-bordered Toeplitz determinants can be reduced to the pure and bordered Toeplitz determinants \eqref{ToeplitzDet} and \eqref{btd0}, while the framed and multi-framed Toeplitz determinants can be expressed in terms of pure Toeplitz determinants and what we refer to as \textit{semi-framed} Toeplitz determinants. These are determinants like
\begin{gather}
	\det \begin{bmatrix}
		\phi_0& \phi_{-1} & \cdots & \phi_{-n+2} & \psi_{0} \\
		\phi_{1}& \phi_0 & \cdots & \phi_{-n+3} & \psi_{1} \\
		\vdots & \vdots & \ddots & \vdots & \vdots \\
		\phi_{n-2} & \phi_{n-3} & \cdots & \phi_{0} & \psi_{n-2} \\
		\eta_{n-2} & \eta_{n-3} & \cdots & \eta_{0} & a
	\end{bmatrix},\qquad \text{or}\nonumber\\
\det \begin{bmatrix}
		\phi_0& \phi_{-1} & \cdots & \phi_{-n+2} & \psi_{n-2} \\
		\phi_{1}& \phi_0 & \cdots & \phi_{-n+3} & \psi_{n-3} \\
		\vdots & \vdots & \ddots & \vdots & \vdots \\
		\phi_{n-2} & \phi_{n-3} & \cdots & \phi_{0} & \psi_{0} \\
		\eta_{0} & \eta_{1} & \cdots & \eta_{n-2} & a
	\end{bmatrix},\label{half-framed intro00}
\end{gather}
for $\phi,\psi,\eta \in L^1(\T)$ and a parameter $a \in \C$, where $f_j$'s are the Fourier coefficients of ${f \in \{ \phi, \psi, \eta \}}$. In the sequel, we will denote the determinants in \eqref{half-framed intro00} by $\mathscr{H}_n[\phi;\psi,\eta;a]$ and $\mathscr{L}_n[\phi;\psi,\eta;a]$, respectively.

\begin{Remark}
	Regarding the other choices for positioning the Fourier coefficients of $\psi$ and~$\eta$ in the last column and the last row, we will introduce two other (related) semi-framed Toeplitz determinants in Section~\ref{section framed} denoted by $\mathscr{G}_n[\phi;\psi,\eta;a]$ and $\mathscr{E}_n[\phi;\psi,\eta;a]$. It turns out that such different placements of Fourier coefficients does in fact affect the leading order behavior of the asymptotics, as the size of the determinant grows to infinity (see Theorems \ref{thm semi-framed rationals intro} and~\ref{thm semi-framed rationals . phi intro}).
\end{Remark}

At this juncture, we would like to highlight two primary questions:
\begin{Question}\label{q1}
 Given what we discussed above about the reductions of more complex structures to the bordered and semi-framed Toeplitz determinants, are the large-size asymptotics of these \textit{simpler} structured detereminants indeed tractable?
\end{Question}
	
\begin{Question}
\label{q2} Why is it significant to delve into the asymptotic behavior of (multi-)bordered and (mul\-ti-)framed Toeplitz determinants?
\end{Question}

Before tackling these inquiries, it is worthwhile to place them in a broader perspective. The asymptotic properties of the more classical structured determinants, such as Toeplitz \cite{BS1,BS,CIK,CK,DIK, Fahs, KV,Krasovsky1}, Hankel \cite{BGM,C,CG,DIK,ItsKrasovsky,Krasovsky}, and Toeplitz+Hankel \cite{BR,BE,BE2,Chelkak,GI}, have been extensively and successfully explored primarily via operator theoretic and Riemann--Hilbert methods. These well-established asymptotic characteristics are recognized for their connection to fundamental questions spanning diverse fields, particularly in random matrix theory and mathematical physics.

For this work, it is useful to recall the existing theory for the pure Toeplitz determinants. The asymptotic behavior of Toeplitz determinants can be described by the strong Szeg\H{o} limit theorem \cite{BS,Sz,WIDOMBlock}, which is formulated as
\begin{gather*}
	D_{n}[\phi] \sim G[\phi]^{n} E[\phi], \qquad n\to\infty,
\end{gather*}
where the terms $G[\phi]$ and $E[\phi]$ are defined by
\begin{equation}\label{G and E}
	G[\phi] = \exp ([\log \phi]_{0}) \qquad \text{and}\qquad E[\phi] = \exp \bigg( \sum_{k \geq 1} k[\log \phi ]_{k}[\log \phi]_{-k} \bigg).
\end{equation}
This theorem holds true when the function $\phi$ is suitably smooth, does not vanish on the unit circle, and possesses a winding number of zero. We refer to \cite{DIK1} for a comprehensive survey of the strong Szeg\H{o} limit theorem, including an intriguing account of its historical developments.

Now we address Question~\ref{q1} mentioned above starting with bordered Toeplitz determinants. Recently, in \cite{BEGIL} it was demonstrated that an analogous strong Szeg\H{o} limit theorem holds true for the bordered Toeplitz determinants
\begin{gather*}
	D^{B}_{n}[\phi; \psi] \sim G[\phi]^{n} E[\phi]\,F[\phi; \psi], \qquad n\to\infty,
\end{gather*}
where $F[\phi; \psi]$ is a constant described in Theorems \ref{main thm} and \ref{thm 1.2} below. Theorem \ref{main thm} below discusses the asymptotics of $D^{B}_{n}[\phi; \psi]$, where $\psi$ is of the form
\begin{equation}\label{general psi}
	\psi(z) = q_1(z) \phi(z) + q_2(z),
\end{equation}where
\begin{equation}\label{q1 q2}
	q_1(z) = a_0+a_1z+\frac{b_0}{z}+\sum^{m}_{j=1}\frac{b_j z}{z-c_j},\qquad q_2(z) = \hat{a}_0+\hat{a}_1z+\frac{\hat{b}_0}{z}+
	\sum_{j=1}^m \frac{\hat{b}_j}{z-c_j},
\end{equation}
all parameters are complex and the $c_j$ are nonzero and do not lie on the unit circle. In fact, this form of the border symbol was considered in \cite{BEGIL} as an inspiration from the two-dimensional Ising model.\footnote{For a precise description of the notations used below regarding the Ising model we refer to the introduction of \cite{BEGIL}. For further details about the two-dimensional Ising model we refer to the classical book of McCoy and Wu~\cite{McCoy-Wu} and also the survey of Deift, Its and Krasovsky \cite{DIK1}.} It was first established in 1987 by Au-Yang and Perk \cite{YP} that the next-to-diagonal two point correlation function is in fact the bordered Toeplitz determinant
\[
	\langle \sigma_{0,0}\sigma_{N-1,N} \rangle = D^B_N\big[\widehat{\phi}; \widehat{\psi}\big],
\]
with
\begin{equation}\label{hat psi}
	\widehat{\phi}(z) = \sqrt{\frac{1-k^{-1}z^{-1}}{1-k^{-1}z}},\qquad	\widehat{\psi}(z)= \frac{C_v z\widehat{\phi}(z)+C_h}{S_v(z-c_*)},
\end{equation} where $k$, $C_v$, $C_h$, $S_v$, and $c_*$ are all physical parameters of the model. In the context of the low-temperature two-dimensional Ising model, the analogue of the strong Szeg\H{o} limit theorem for bordered Toeplitz determinants (see Theorem \ref{main thm} below) was later used in \cite{BEGIL} to extract the leading and subleading terms of the \textit{long-range-order} along the next-to-diagonal direction and comparisons with the diagonal direction were made. It was concluded that although the bordered Toeplitz determinant which defines the next-to-diagonal correlation function depends on the horizontal and vertical coupling constants, its leading order asymptotics
does not. More interestingly, it was established that the sensitivity to the horizontal and vertical parameters is reflected in the second-order term of the asymptotic expansion. Before recalling the strong Szeg\H{o} limit theorems established in \cite{BEGIL}, let us define a class of symbols we are mostly concerned with in this work.

\begin{Definition}
Throughout the paper, we will occasionally refer to a symbol as \textit{Szeg\H{o}-type}, if (a)~it is $C^{\infty}$ and nonzero on the unit circle, (b)~has no winding number, and (c)~admits an analytic continuation in some neighborhood of the unit circle.
\end{Definition}

\begin{Theorem}[\cite{BEGIL}]\label{main thm}
	Let $D^{B}_{n}[\phi; \psi]$ be the bordered Toeplitz determinant with $\psi=q_1 \phi + q_2$ given by \eqref{general psi} and \eqref{q1 q2}, and $\phi$ of Szeg\H{o}-type. Then, the following asymptotic behavior of $D^{B}_{n}[\phi; \psi]$ as $n \to \infty$ takes place
	\[
			D^B_{n}[\phi ; \psi ] =
			G[\phi]^{n} E[\phi](F[\phi;\psi] + O({\rm e}^{-\mathfrak{c}n})),
		\]
		where $G[\phi]$ and $E[\phi]$ are given by \eqref{G and E},
		\begin{gather}
			F[\phi;\psi] =	
			a_0+ b_0 [\log \phi]_{1} +
		\sum^m_{j=1 \atop 0<|c_j|<1} b_j \frac{\al(c_j)}{\alpha(0)}\nonumber\\
			\phantom{F[\phi;\psi] =}{}+	
			\frac{1}{\al(0)}\Bigg(
			\hat{a}_0-\hat{a}_1[\log \phi]_{-1} -
			\sum^{m}_{j=1 \atop |c_j|>1 }\frac{\hat{b}_j}{c_j} \al(c_j) \Bigg),\label{ConstantF2}\\
			\al(z):= \exp \left[ \frac{1}{2 \pi {\rm i} } \int_{\T} \frac{\ln(\phi(\tau))}{\tau-z}{\rm d}\tau \right],\label{al}
		\end{gather}
		and $\mathfrak{c}$ is some positive constant.
	\end{Theorem}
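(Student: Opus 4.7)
The plan is to express $D^B_n[\phi;\psi]$ in terms of pure Toeplitz determinants $D_n[\phi]$ and auxiliary quantities built from the orthogonal polynomials on the unit circle (OPUC) associated with the weight $\phi$, and then invoke the Riemann--Hilbert strong asymptotics of those OPUC, which are available because $\phi$ is Szeg\H{o}-type. The first move is to exploit multilinearity of the determinant in its last column, combined with the decomposition $\psi=q_1\phi+q_2$ and the partial-fraction form \eqref{q1 q2}, to reduce the claim to a finite linear combination of elementary bordered determinants whose border symbols are drawn from $\{\phi,\ z\phi,\ z^{-1}\phi,\ \phi/(z-c_j),\ 1,\ z,\ z^{-1},\ 1/(z-c_j)\}$.

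For the three ``shift'' borders ($\phi,\ z\phi,\ z^{-1}\phi$) the last column is literally a shifted column of Fourier coefficients of $\phi$, so by recognising the resulting matrix as a rearrangement of a pure Toeplitz block, these pieces reduce to $D_n[\phi]$ (possibly multiplied by a ratio $D_{n\pm1}[\phi]/D_n[\phi]$ produced when the shift pushes a row in or out). The classical strong Szeg\H{o} theorem then delivers the $a_0$ and $\hat a_0$ contributions, the $b_0[\log\phi]_1$ contribution (from the $z^{-1}\phi$ shift, via the standard expression of the ratio $D_{n-1}[\phi]/D_n[\phi]$), and the dual $\hat a_1[\log\phi]_{-1}$ contribution in \eqref{ConstantF2}.

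The pole borders $1/(z-c_j)$ and $\phi/(z-c_j)$ are the genuinely non-trivial pieces. The Laurent expansion of $1/(z-c_j)$ on $\T$ is qualitatively different for $|c_j|<1$ (one-sided, with geometrically decaying positive-frequency coefficients) versus $|c_j|>1$ (one-sided on the opposite side), so the last column becomes a one-sided geometric vector in $c_j$ or in $1/c_j$. Via standard Christoffel--Darboux-type identities, each such bordered determinant can be rewritten in terms of the monic OPUC $\Phi_{n-1}$ of $\phi$ and its reverse $\Phi^*_{n-1}$ evaluated at $c_j$. The Riemann--Hilbert strong asymptotics yield, up to errors of order ${\rm e}^{-\mathfrak c n}$,
\[
\Phi^*_{n-1}(c_j)\longrightarrow \frac{\alpha(c_j)}{\alpha(0)},\qquad |c_j|<1,
\]
and a dual formula, with the dominant $c_j^{n-1}$ growth factored out, when $|c_j|>1$. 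Substituting these into the elementary pieces and using $\alpha(0)=G[\phi]$ reproduces exactly the two explicit sums over $\{|c_j|<1\}$ (weighted by $b_j$) and $\{|c_j|>1\}$ (weighted by $\hat b_j/c_j$) in \eqref{ConstantF2}.

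Assembly is then routine: collect terms, factor the Szeg\H{o} asymptotic $G[\phi]^n E[\phi]$ out of every pure-Toeplitz piece, and note that the $O({\rm e}^{-\mathfrak c n})$ error is the common Riemann--Hilbert parametrix error, uniform once each $c_j$ stays bounded away from $\T$. The main obstacle is the inherent dichotomy between poles inside and outside the unit disk: the one-sidedness of the Fourier expansion of $1/(z-c_j)$ forces the bordered determinant to be matched with $\Phi^*_{n-1}$ when $|c_j|<1$ but with a rescaled $\Phi_{n-1}$ when $|c_j|>1$. Keeping track of the signs from cofactor expansion, absorbing the exponential growth of $\Phi_{n-1}(c_j)$ at exterior points against the reciprocal decay of the geometric column, and verifying that the surviving constant is exactly $\alpha(c_j)/\alpha(0)$ (respectively $-\alpha(c_j)/c_j$ after the $\alpha(0)^{-1}$ pull-out), is where essentially all of the technical bookkeeping lives.
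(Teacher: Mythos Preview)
Your overall strategy coincides with the one the paper recalls from \cite{BEGIL}: use multilinearity in the border column to reduce to the elementary border symbols, express each resulting bordered determinant through the $X$-RHP data (equivalently, through the OPUC $Q_n$ and their Cauchy transforms), and then read off the constants from the Deift--Zhou asymptotics \eqref{X in terms of R exact}. In particular, the pole borders $1/(z-c_j)$ and $\phi/(z-c_j)$ are handled exactly as you describe, via Lemma~\ref{lemma 3.2} ($D^B_{n+1}[\phi;1/(z-c)]=-c^{-n-1}D_n[\phi]X_{11}(c;n)$ for $|c|>1$, zero otherwise) and Lemma~\ref{lem2.2} ($D^B_{n+1}[\phi;\phi/(z-c)]$ in terms of $X_{12}(c;n)$), and the inside/outside dichotomy you emphasise is precisely the point.

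One inaccuracy worth flagging: the borders $z^{-1}\phi$ and $z$ do \emph{not} reduce to pure Toeplitz blocks or to ratios $D_{n\pm1}[\phi]/D_n[\phi]$ as you claim. For $\psi=z^{-1}\phi$ the last column is $(\phi_n,\phi_{n-1},\dots,\phi_1)^{\mathsf T}$, which is not a column of $T_n[\phi]$ nor does deleting a row produce $T_{n-1}[\phi]$; the paper instead uses Theorem~\ref{thm border z^-l phi}, which identifies $D^B_{n+1}[\phi;z^{-1}\phi]=D_n[\phi]\,X'_{12}(0;n)$ and then $X_{12}(z;n)\to\alpha(z)$ gives $\alpha'(0)=G[\phi][\log\phi]_1$. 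Likewise $D^B_{n+1}[\phi;z]$ is treated in Lemma~\ref{psi=z} through the subleading coefficient of $X_{11}$ at infinity, yielding $-[\log\phi]_{-1}$. Since you already invoke the RHP asymptotics for the pole pieces, these two cases fall under the same umbrella once you drop the incorrect ``pure Toeplitz'' shortcut; the rest of your outline is sound and matches the paper.
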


	Transitioning beyond the category of symbols linked to the Ising model, for a broader range of border symbols, a different version of the strong Szeg\H{o} limit theorem was proven in \cite{BEGIL}. In this instance, the only requirement was that $\psi$ has an analytic continuation in some neighborhood of the unit circle.
	\begin{Theorem}[\cite{BEGIL}]\label{thm 1.2}
		Let $\psi(z)$ be a function which admits an analytic continuation in a neighborhood of the unit circle, and let
		$\phi$ be of Szeg\H{o}-type. Denote by $\phi_{\pm}(z)$ the factors of a canonical Wiener--Hopf factorization of the symbol
		$\phi(z)$, i.e., $\phi=\phi_-\phi_+$. Then
		\begin{gather*}
			D^B_{n}[\phi ; \psi ] =
			G[\phi]^{n} E[\phi]( F[\phi;\psi] + O({\rm e}^{-\mathfrak{c}n})),
		\end{gather*}
		where $G[\phi]$ and $E[\phi]$ are given by \eqref{G and E},
		\begin{gather*}
			F[\phi;\psi]=\frac{\big[\phi_-^{-1}\psi\big]_0}{[\phi_+]_0},
		\end{gather*}
		and $\mathfrak{c}$ is some positive constant.
	\end{Theorem}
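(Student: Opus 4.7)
The natural strategy is to adapt the Riemann--Hilbert analysis used in \cite{BEGIL} for Theorem~\ref{main thm}, now exploiting the analyticity of $\psi$ to obtain both a clean closed-form for $F[\phi;\psi]$ and exponentially small errors. The starting point is Cramer's rule: replacing the last column of $T_n[\phi]$ by $(\psi_{n-1},\psi_{n-2},\dots,\psi_0)^T$ yields precisely $D^B_n[\phi;\psi]$, so that
\[
\frac{D^B_n[\phi;\psi]}{D_n[\phi]} = \sum_{k=0}^{n-1}\psi_{k}\,\bigl(T_n[\phi]^{-1}\bigr)_{n-1,\,n-1-k}.
\]
The task is thereby reduced to controlling the large-$n$ asymptotics of the last row of $T_n[\phi]^{-1}$, a classical object of Toeplitz/OPUC theory.

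For $\phi$ of Szeg\H{o} type, I would invoke the standard Deift--Its--Krasovsky steepest-descent analysis of the Fokas--Its--Kitaev Riemann--Hilbert problem for the polynomials orthogonal on~$\T$ with weight $\phi$. The relevant output is the Wiener--Hopf-type approximation
\[
\bigl(T_n[\phi]^{-1}\bigr)_{n-1,\,n-1-k} = \frac{(\phi_-^{-1})_{-k}}{[\phi_+]_0} + O\bigl({\rm e}^{-\mathfrak{c}n}\bigr),
\]
uniform in $k=0,1,\dots,n-1$. Heuristically this reflects the asymptotic factorization $T_n[\phi]^{-1}\sim T_n\bigl[\phi_+^{-1}\bigr]\,T_n\bigl[\phi_-^{-1}\bigr]$: since $\phi_+^{-1}$ is analytic inside $\D$, the matrix $T_n\bigl[\phi_+^{-1}\bigr]$ is upper triangular and its last row picks off $1/[\phi_+]_0$ on the diagonal, so that the last row of $T_n[\phi]^{-1}$ is essentially $1/[\phi_+]_0$ times the last row of $T_n\bigl[\phi_-^{-1}\bigr]$, whose entries read off the Fourier coefficients $(\phi_-^{-1})_{-k}$.

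Substituting this into Cramer's formula and using that the analyticity of $\psi$ forces exponential decay of $\psi_k$ (so that truncation of the series at $n-1$ contributes only $O({\rm e}^{-\mathfrak{c}n})$), one obtains
\[
\frac{D^B_n[\phi;\psi]}{D_n[\phi]} = \frac{1}{[\phi_+]_0}\sum_{k\geq 0} \psi_k\,(\phi_-^{-1})_{-k} + O\bigl({\rm e}^{-\mathfrak{c}n}\bigr) = \frac{[\phi_-^{-1}\psi]_0}{[\phi_+]_0} + O\bigl({\rm e}^{-\mathfrak{c}n}\bigr),
\]
the last equality using $(\phi_-^{-1})_j=0$ for $j>0$, since $\phi_-^{-1}$ is analytic outside $\overline{\D}$ including at infinity. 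Multiplying by the strong Szeg\H{o} limit $D_n[\phi]=G[\phi]^n E[\phi]\bigl(1+O({\rm e}^{-\mathfrak{c}n})\bigr)$ completes the proof.

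The main obstacle is establishing the uniform exponentially small remainder in the Wiener--Hopf approximation for entries of $T_n[\phi]^{-1}$ across the full range of~$k$, in particular for $k$ close to $n-1$, where one approaches the opposite corner of the inverse and the Wiener--Hopf factorization is not diagonally dominant. Such uniform control is standard in the Riemann--Hilbert framework precisely when $\phi$ extends analytically to a neighborhood of $\T$ and is of Szeg\H{o} type, but requires carefully tracking the steepest-descent transformations through all the global parametrices. Overall, the argument runs largely in parallel with the proof of Theorem~\ref{main thm} in \cite{BEGIL}, only simplified by the global analyticity of $\psi$, which eliminates the need for the explicit rational structure exploited there.
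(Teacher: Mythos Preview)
The paper does not prove Theorem~\ref{thm 1.2}; it is quoted from \cite{BEGIL} as background, with the remark that in \cite{BEGIL} it was obtained by two independent routes, a Riemann--Hilbert argument and an operator-theoretic one. So there is no ``paper's own proof'' to compare against here.

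Your proposal is essentially the operator-theoretic route: Cramer's rule reduces $D^B_n[\phi;\psi]/D_n[\phi]$ to a linear combination of entries of $T_n[\phi]^{-1}$ along a single row or column, and then the Wiener--Hopf-type approximation for those entries (which, for Szeg\H{o}-type $\phi$, can be extracted either from classical operator theory or from the steepest-descent asymptotics of BOPUC) yields the constant $[\phi_-^{-1}\psi]_0/[\phi_+]_0$. This is a sound strategy and gives what you want once combined with the strong Szeg\H{o} theorem.

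One point to tighten: with the paper's convention $D_n[\phi]=\det\{\phi_{j-k}\}$, the Toeplitz block in \eqref{btd0} has entries $\phi_{k-j}$, i.e.\ it is $T_n[\phi]^{\mathsf T}$, not $T_n[\phi]$. Working through Cramer's rule carefully you are actually picking out the last \emph{column} of $T_n[\phi]^{-1}$, not the last row. Since the anti-diagonal reflection swaps the roles of $\phi_+$ and $\phi_-$, getting this wrong would produce $[\phi_+^{-1}\psi]_0/[\phi_-]_0$ instead of the stated answer. Your final formula is correct, but the intermediate step $\bigl(T_n[\phi]^{-1}\bigr)_{n-1,\,n-1-k}\approx(\phi_-^{-1})_{-k}/[\phi_+]_0$ as written is for the wrong side; you should check the indices. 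Apart from this bookkeeping issue the argument is fine.
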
	
	It is worth mentioning that	these asymptotic results for the bordered Toeplitz determinants were obtained in parallel and independent to each other using the Riemann--Hilbert and operator-theoretic methods.

	While as discussed above the asymptotics of bordered determinants for a general class of symbols were established in \cite{BEGIL}, the asymptotics of semi-framed Toeplitz determinants remained uncharted territory. In this work, we undertake the task of filling this gap. A pivotal enabling factor for accessing these asymptotics lies in the connection of these objects to the system of bi-orthogonal polynomials on the unit circle (BOPUC): Just as BOPUC characterize (single) bordered determinants \cite{BEGIL}, we demonstrate that the reproducing kernel of BOPUC serves as the characterizing object for the semi-framed Toeplitz determinants. Consequently, with these characterizations in terms of bi-orthogonal polynomials and their reproducing kernel, we can employ the Riemann--Hilbert approach to BOPUC \cite{BDJ} to attain the sought-after asymptotics for multi-bordered, framed, and multi-framed Toeplitz determinants.
	
	Now, we make an attempt to address Question~\ref{q2} mentioned above. The semi-framed Toeplitz determinants have already appeared in the calculations of entanglement entropy for disjoint subsystems in the XX spin chain \cite{BGIKMMV,JK}, which we briefly recall below. To ensure consistency in notations, we closely follow the paper \cite{BGIKMMV}. For a more comprehensive description of the model, we refer to \cite{BGIKMMV,JK} and the references therein. Consider the chain of free fermions%
\begin{equation}\label{FFchain}
		H_F = -\sum_{j=1}^{N}b_j^\dagger b_{j+1} + b_jb_{j+1}^\dagger,
\end{equation}
	where the Fermi operators $b_j$ are defined by the anticommutation relations $\{b_j,b_k\} = 0$,\footnote{The bracket notation for two operators in this context is the anti-commutator: $\{a,b\}:=ab+ba$.} and $\big\{b_j,b_k^\dagger\big\}= \delta_{jk}$. Define the quantity
	\begin{equation}\label{eq:entr-int}
		S(\rho_P) = \lim_{\varepsilon\searrow 0} \frac{1}{2\pi {\rm i}} \oint_{\Gamma_\varepsilon} e(1+\varepsilon,\lambda)\frac{\rm d}{{\rm d}\lambda}\ln D(\lambda) {\rm d}\lambda,
	\end{equation}
	where
	\begin{equation*}
		 e(x,v) := -\frac{x+v}{2}\ln\frac{x+v}{2}-\frac{x-v}{2}\ln\frac{x-v}{2},
	\end{equation*}
	the contour $\Gamma_\varepsilon$ goes around the $[-1,1]$ interval once in the positive direction avoiding the cuts~${(-\infty,-1-\varepsilon]\cup[1+\varepsilon,\infty)}$ of $ e(1+\varepsilon,\cdot)$, and the function $D(\la)$ is defined further below (in terms of semi-framed Toeplitz determinants).
	
	Let $k,m,n\in\N$. The quantity in \eqref{eq:entr-int} is considered as a measure of entanglement between the subsystem
	\begin{gather*}
		P = \{1,2,\dots, m\}\cup\{m+k+1,m+k+2,\dots,m+k+n\},
	\end{gather*}
	and the rest of the chain of free fermions \eqref{FFchain} in the thermodynamic limit $N \to \infty$. The connection to the semi-framed Toeplitz determinants is through the function $D(\la)$, which we define here. Let $g\colon \T \to \C$ be defined as
	\begin{equation}\label{entanglement symbol}
		g(z) = \left\{ \begin{matrix}
			\hphantom{-}1, & \operatorname{Re} z \geq 0, \\
			-1, & \operatorname{Re} z < 0.
		\end{matrix} \right.
	\end{equation}
	Consider
	\begin{gather*}
		A = \left(\begin{matrix}
			A_{11} & A_{12} \\
			A_{21} & A_{22}
		\end{matrix}\right)\in\C^{(m+n)\times(m+n)},
	\end{gather*}
 and $D(\lambda) := \det(\lambda I - A)$, $\lambda\in\C$, where, recalling the notation $T_n[\phi]$ from \eqref{ToeplitzDet}, the matrix $A$ is defined as
	\begin{gather*}
		A_{11} = -T_m[g]\in\C^{m\times m}, \qquad A_{22} = -T_n[g]\in\C^{n\times n}, \\ A_{12} = A_{21}^{\mathsf T} = (\mathscr{A}_{ij}(k))_{i=1,\dots, m; j=1,\dots, n}\in\C^{m\times n},
	\end{gather*}
	and
	\begin{gather*}
		\mathscr{A}_{ij}(k) \equiv \mathscr{A}_{ij} = -\det \begin{bmatrix}
			g_{i-j-m-k} & g_{i-m-1} & g_{i-m-2} & \dots & g_{i-m-k} \\
			g_{1-j-k} & g_0 & g_{-1} & \dots & g_{1-k} \\
			g_{2-j-k} & g_1 & g_0 & \dots & g_{2-k} \\
			\vdots & \vdots & \vdots & \ddots & \vdots \\
			g_{-j} & g_{k-1} & g_{k-2} & \dots & g_0
		\end{bmatrix}.
	\end{gather*}
	Notice that $ \mathscr{A}_{ij}(k)$ is a $(k+1)\times(k+1)$ semi-framed Toeplitz matrix, which can be written in terms of the semi-framed Toeplitz determinants $\mathscr{H}_n[\phi;\psi,\eta;a]$ and $\mathscr{L}_n[\phi;\psi,\eta;a]$ introduced above. Indeed, by multiple adjacent row and column swaps, and recalling \eqref{half-framed intro00}, we can write
	\begin{align*}
			\mathscr{A}_{ij} ={}& -\begin{bmatrix}
				g_0 & g_{-1} & \dots & g_{1-k} & g_{1-j-k} \\
				g_1 & g_0 & \dots & g_{2-k} & g_{2-j-k} \\
				\vdots & \vdots & \vdots & \ddots & \vdots \\
				g_{k-1} & g_{k-2} & \dots & g_0 & g_{-j} \\
				g_{i-m-1} & g_{i-m-2} & \dots & g_{i-m-k} & g_{i-j-m-k} \\
			\end{bmatrix} \\ ={}& - \mathscr{H}_{k+1} \big[g(z); g(z) z^{j+k-1}, g(z) z^{m+k-i}; g_{i-j-m-k} \big] \\ ={}&
			- \mathscr{L}_{k+1} \big[g(z); \tilde{g}(z) z^{-j}, \tilde{g}(z) z^{i-m-1}; g_{i-j-m-k} \big],
	\end{align*}
	where $\tilde{f}(z)\! =\! f\big(z^{-1}\big)$. Even though the computation of the entanglement between the chain~\eqref{FFchain} and the rest of the system is of interest in the regime where all three parameters $m$, $n$, $k$ tend towards infinity, the authors in \cite{BGIKMMV} specifically focused on the scenario where $k=1$ and $m,n \to \infty$. We quote:\footnote{Also see \cite[Remark 1 in Section 3]{BGIKMMV}.}
\begin{quote}
		\textit{Our ultimate interest is to analyse $S(\rho_P)$ as $k,m,n\to\infty$, however, at this point the general problem seems to be far too complicated to attack directly. Therefore, we decided to start with the easier case when the gap between the two intervals is fixed to be $k=1$. $\dots$ As we shall see, this simplest case already leads to a mathematically very challenging problem.}
	\end{quote}
	
	Now let us demonstrate how the findings of this work could be relevant to the goal of \cite{BGIKMMV} in extending the analysis to the asymptotic regime $k \to \infty$. Indeed, in Section~\ref{section framed}, among other results, we prove that for general symbols the semi-framed Toeplitz determinants have a~representation in terms of the solution of the BOPUC Riemann--Hilbert problem. For example, for~${\mathscr{L}_k[\phi;\psi,\eta;a]}$ we show
\begin{gather}
		\frac{\mathscr{L}_{n+2}[\phi;\psi, \eta ;a]}{D_{n+1}[\phi]} \nonumber\\
\qquad = a - \int_{\T} \int_{\T} \frac{ \Tilde{\eta}(z_2) \tilde{\psi}(z_1)}{z_1-z_2} \det \begin{bmatrix}
			X_{11}(z_2;n+1) & X_{21}(z_2;n+2) \\
			X_{11}(z_1;n+1) & X_{21}(z_1;n+2)
		\end{bmatrix} \frac{\dd z_2}{2 \pi \ii z_2} \frac{\dd z_1}{2 \pi \ii z_1},\label{L X intro}
	\end{gather}
	where $X_{11}$ and $X_{21}$ are the entries in the first column of the solution $X$ to the Riemann--Hilbert problem for BOPUC associated with the orthogonality weight $\phi$. More precisely, $X$ solves the following Riemann--Hilbert problem (RHP) \cite{BDJ}:
	\begin{itemize}\itemsep=0pt
		\item RH-X1: 
$X(\cdot;n)\colon \C\setminus \T \to \C^{2\times2}$ is analytic.
		\item RH-X2: 
The limits of $X(\xi;n)$ as $\xi$ tends to $z \in \T $ from the inside and outside of the unit circle exist, which are denoted by $X_{\pm}(z;n)$ respectively. Moreover, the functions~${z \mapsto X_{\pm}(z)}$ are continuous on $\T$ and are related by the \textit{jump condition}
		\begin{equation*}
			X_+(z;n)=X_-(z;n)\begin{bmatrix}
				1 & z^{-n}\phi(z) \\
				0 & 1
			\end{bmatrix}, \qquad z \in \T.
		\end{equation*}
		\item RH-X3: 
 As $z \to \infty$
		\begin{equation}\label{X asymp1}
			X(z;n)=\left(\di I+\frac{ \overset{\infty}{X}_1(n)}{z}+\frac{\overset{\infty}{X}_2(n)}{z^2} + O\big(z^{-3}\big)\right) z^{n \sigma_3},
		\end{equation}
		where $\sigma_3 = \left[\begin{smallmatrix} 1 & \hphantom{-}0 \\ 0 & -1 \end{smallmatrix}\right]$ is the third Pauli matrix.
	\end{itemize}
	Since the symbol $\phi=g$ given by \eqref{entanglement symbol} is a Fisher--Hartwig symbol, one may refer to \cite{DIK} for the asymptotics of $D_{n}[g]$ and the polynomials $X_{11}(z;n)$ and $X_{21}(z;n)$ as $n \to \infty$, and then perform the integrations in \eqref{L X intro}. This approach is expected to yield the asymptotic behavior of $\mathscr{A}_{ij}(k)$ as $k$ approaches infinity, which could contribute to our comprehension of the entanglement in the limit where $k$, $m$, and $n$ all tend to infinity.

	We would like to emphasize that this work is expected to provide the \textit{general framework} of translating the objects of interest in terms of the solution to the $X$-RHP. We do occasionally use the RHP characterizations, such as \eqref{L X intro}, to demonstrate how the asymptotics could be obtained, but we do not intend to exhaust all the cases. For example, as a way of explanation, we show in Section~\ref{section framed} how this scheme works in the case of Szeg\H{o}-type (non Fisher--Hartwig) symbols and when $\psi$ and $\eta$ are either rational functions, or the product of a rational function with the bulk symbol $\phi$. We have made such choices since these choices are simple and yet nontrivial enough to illustrate the procedure, but by no means we do not want to convey the message that those are the only cases which can make the asymptotic analysis feasible. We plan to undertake the task of using identities like \eqref{L X intro} for Fisher--Hartwig symbols in a forthcoming work, especially in connection to the entanglement problem discussed above.
	
	In addition to their relevance in the context of XX quantum spin chains, the author has recently received information on the appearance and relevance of multi-bordered, semi-framed, framed and multi-framed Toeplitz determinants in other contexts. Professor Karl Liechty has communicated to the author that these structured determinants arise in the analysis of ensembles of nonintersecting paths, via the Lindstr\"om--Gessel--Viennot (LVG) formula.\footnote{For more details and references, please see the recent work~\cite{GL24} and the references therein.} In a separate communication, Professor Nicholas Witte has highlighted to the author the relevance of these structures in his ongoing research on the Ising model \cite{W}. These recent discoveries have served as additional inspiration for the current study.	
	
\subsection{An outline of main results}
	
\subsubsection[Strong Szeg\H{o} limit theorem for two-bordered Toeplitz determinants]{Strong Szeg\H{o} limit theorem for two-bordered Toeplitz determinants}
	
	In Section~\ref{Sec 2.2}, we prove the following.
	
	\begin{Theorem}\label{main thm 2-bordered}
		For $\ell=1,2$,	let \smash{$\psi_\ell(z) = q^{(\ell)}_1(z) \phi(z) + q^{(\ell)}_2(z)$}, where
		\begin{gather*}
			q^{(\ell)}_1(z) = a^{(\ell)}_0+a^{(\ell)}_1z+\frac{b^{(\ell)}_0}{z}+\sum^{m_{\ell}}_{j=1}\frac{b^{(\ell)}_j z}{z-c^{(\ell)}_j}, \\
 q^{(\ell)}_2(z) = \hat{a}^{(\ell)}_0+\hat{a}^{(\ell)}_1z+\frac{\hat{b}^{(\ell)}_0}{z}+
			\sum_{j=1}^m \frac{\hat{b}^{(\ell)}_j}{z-c^{(\ell)}_j},
		\end{gather*}
		and suppose that $\phi$ is of Szeg\H{o}-type. Then, the associated two-bordered Toeplitz determinant has the following asymptotic behavior as $n \to \infty$:
		\begin{equation}\label{135}
			D^B_n[\phi;\boldsymbol{\psi}_2] \equiv D^B_n[\phi;\psi_1,\psi_2] = G^n[\phi] E[\phi] \{ J_1[\phi,\psi_1,\psi_2] + O(\rho^{-n}) \},
		\end{equation}
		where $G[\phi]$ and $E[\phi]$ are given by \eqref{G and E},
		\begin{equation}\label{136}
			J_1[\phi,\psi_1,\psi_2] = \begin{vmatrix}
				F[\phi,\psi_2] & F[\phi,\psi_1] \\
				H[\phi,\psi_2] & H[\phi,\psi_1]
			\end{vmatrix},
		\end{equation}
		in which $F[\phi,\psi]$ is given by \eqref{ConstantF2}, and
		\begin{align*}
				H[\phi;\psi] =	{}&
				a_1 - \sum_{j=1}^{m} \frac{b_j}{c_j}+ a_0 [\log \phi]_{1} + b_0 [\log \phi]_{2} + \frac{b_0}{2} [\log \phi]_{1}^2
				\\ & +	
				\frac{1}{G[\phi]}\Bigg(
				\hat{a}_1-
				\sum^{m}_{j=1 \atop |c_j|>1 }\frac{\hat{b}_j}{c^2_j} \al(c_j) + 		\sum^m_{j=1 \atop 0<|c_j|<1} \frac{b_j}{c_j} \al(c_j) \Bigg).
		\end{align*}
		In the above formula,
		$\al$ is given by \eqref{al}, and the number $\rho$ is such that
\[
1<\rho < \underset{1 \leq j \leq m \atop |c_j|>1}{\min} \{|c_j|\}, \qquad \underset{1 \leq j \leq m \atop 0<|c_j|<1}{\max} \{|c_j|\}<\rho^{-1}<1,
\]
 and $\phi$ is analytic in the annulus $\big\{z\colon \rho^{-1}<|z|<\rho\big\}$.
	\end{Theorem}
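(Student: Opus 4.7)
The plan is to apply the Dodgson condensation identity~\eqref{DODGSON} to the $n\times n$ matrix $\mathscr{M}$ representing $D^B_n[\phi;\psi_1,\psi_2]$ with the index choice $j_1=1$, $j_2=n$, $k_1=n-1$, $k_2=n$. A direct identification of the four resulting $(n-1)\times(n-1)$ minors and of the $(n-2)\times(n-2)$ minor yields the algebraic identity
\[
D^B_n[\phi;\psi_1,\psi_2]\,D_{n-2}[z\phi]
=\det\begin{bmatrix} D^B_{n-1}[z\phi;\psi_2] & D^B_{n-1}[z\phi;\psi_1] \\ D^B_{n-1}[\phi;\psi_2/z] & D^B_{n-1}[\phi;\psi_1/z] \end{bmatrix},
\]
which reduces the asymptotic analysis of the two-bordered determinant to that of single-bordered Toeplitz determinants (possibly with shifted bulk or border symbol) together with the shifted pure Toeplitz determinant $D_{n-2}[z\phi]$.

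For the factors $D^B_{n-1}[\phi;\psi_i/z]$ with shifted border symbol, Theorem~\ref{main thm} (or, more generally, Theorem~\ref{thm 1.2}) applies. An explicit residue calculation at the poles $c_j$ of $q^{(\ell)}_1,q^{(\ell)}_2$ in the general formula $F[\phi;\psi_i/z] = [\phi_-^{-1}\psi_i]_1/[\phi_+]_0$ shows that this constant coincides with the expression for $H[\phi;\psi_i]$ stated in the theorem: the terms $[\log\phi]_1$, $[\log\phi]_2$, $\tfrac12[\log\phi]_1^2$ come from the second-order Taylor expansion of $\phi_+$ at $z=0$, while the contributions $\alpha(c_j)/c_j^2$ and $b_j/c_j$ arise from residue evaluations at the poles of the rational parts one order beyond Theorem~\ref{main thm}. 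Consequently each such factor behaves as $G^{n-1}[\phi]\,E[\phi]\,\bigl(H[\phi;\psi_i]+O(\rho^{-n})\bigr)$. For the remaining factors $D_{n-2}[z\phi]$ and $D^B_{n-1}[z\phi;\psi_i]$, the bulk symbol $z\phi$ has winding number one and Theorem~\ref{main thm} does not apply directly; here one can use the orthogonal-polynomial identity $D_n[z\phi]=P_n(0)\,D_n[\phi]$ (with $P_n$ the monic BOPUC associated with $\phi$), together with its single-bordered analogue, to express both quantities in terms of the Riemann--Hilbert solution $X$ of~\cite{BDJ}. The known asymptotics of $X_{11}(0;n)$, $X_{21}(0;n)$ then produce expansions of the form
\[
\frac{D^B_{n-1}[z\phi;\psi_i]}{D_{n-2}[z\phi]} = G[\phi]\,\bigl(F[\phi;\psi_i] + O(\rho^{-n})\bigr).
\]

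Substituting these asymptotics into the Dodgson identity and solving for $D^B_n[\phi;\psi_1,\psi_2]$ produces~\eqref{135}, with the $2\times 2$ determinantal structure of $J_1[\phi,\psi_1,\psi_2]$ in~\eqref{136} inherited directly from the right-hand side of the Dodgson identity: the top row of $J_1$ is populated by the leading $F$-constants of the $D^B_{n-1}[z\phi;\psi_i]$ ratios, and the bottom row by the $H$-constants of the $D^B_{n-1}[\phi;\psi_i/z]$ factors.

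The main obstacle is the careful treatment of the shifted factors: because $z\phi$ has nonzero winding, both $D_{n-2}[z\phi]$ and $D^B_{n-1}[z\phi;\psi_i]$ decay exponentially in $n$, and one must verify that these exponentially small prefactors cancel exactly in the Dodgson ratio, leaving only the finite constants $F[\phi;\psi_i]$ in the limit. This cancellation, together with the uniform $O(\rho^{-n})$ control of the error, follows from the Szeg\H{o}-type hypothesis on $\phi$ and the explicit Riemann--Hilbert asymptotics of the BOPUC in the annulus $\{\rho^{-1}<|z|<\rho\}$ in which $\phi$ is analytic.
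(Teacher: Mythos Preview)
Your overall strategy—applying the Dodgson condensation identity with the stated index choice and reducing to single-bordered determinants with bulk $\phi$ or $z\phi$—is exactly the paper's approach, and your identification of the leading constant of $D^B_{n-1}[\phi;\psi_i/z]$ with $H[\phi;\psi_i]$ is correct (the paper proves this directly as a separate lemma rather than via Theorem~\ref{thm 1.2}, but either route works).

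There is, however, a genuine gap in your treatment of the ratio $D^B_{n-1}[z\phi;\psi_i]/D_{n-2}[z\phi]$. You assert that this ratio equals $G[\phi]\bigl(F[\phi;\psi_i]+O(\rho^{-n})\bigr)$, but that is not what the Riemann--Hilbert analysis actually produces. When one connects the RHP for the symbol $z\phi$ back to the $X$-RHP for $\phi$ (as in Theorem~\ref{thm Z-RHP intro}) and evaluates, the result is
\[
\frac{D^B_{n+1}[z\phi;\psi]}{D_n[z\phi]} \;=\; G[\phi]\Bigl(F[\phi;\psi] \;-\; H[\phi;\psi]\,\frac{C_n[\phi]}{C_{n-1}[\phi]} \;+\; O(\rho^{-n})\Bigr),
\qquad
C_n[\phi]=\frac{1}{2\pi \ii}\int_{\Gamma_0}\tau^n\phi^{-1}(\tau)\alpha^2(\tau)\,\dd\tau.
\]
Although each $C_n[\phi]=O(\rho^{-n})$, the \emph{ratio} $C_n/C_{n-1}$ is generically $O(1)$, not exponentially small; its precise value depends on the analytic continuation of $\phi$ inside the disk and cannot be controlled by the Szeg\H{o}-type hypothesis alone. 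So the ``cancellation of exponentially small prefactors'' you describe does not, by itself, isolate $F[\phi;\psi_i]$ as the leading constant of the ratio.

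The argument is rescued by a different cancellation you did not anticipate: when these corrected ratios are substituted into the Dodgson identity, the extra terms contribute
\[
-H[\phi;\psi_1]\cdot H[\phi;\psi_2]\,\frac{C_{n}}{C_{n-1}} \;+\; H[\phi;\psi_2]\cdot H[\phi;\psi_1]\,\frac{C_{n}}{C_{n-1}} \;=\; 0,
\]
an exact antisymmetric cancellation between the two halves of the identity. This is why the determinantal form \eqref{136} survives with the clean $F$/$H$ entries and no residual dependence on $C_n/C_{n-1}$. Your sketch reaches the right endpoint, but the stated mechanism for getting there is incorrect; the missing $O(1)$ correction and its structural cancellation are the actual content of this step.
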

	
	At the end of Section \ref{sec bordered}, in Remark \ref{remark multi-bordered}, we explain how the techniques used for the two bordered case can be recursively used to obtain the asymptotics of $k$-bordered Toeplitz determinants, $k>2$.
	
	\subsubsection[The Riemann--Hilbert problem for BOPUC when the weight has\\ a nonzero winding number]{The Riemann--Hilbert problem for BOPUC\\ when the weight has a nonzero winding number}
	In order to arrive at the above asymptotic results for multi-bordered Toeplitz determinants, one is invited to asymptotically analyze (single) bordered Toeplitz determinants of the form $D_n[z\phi;q_1 \phi + q_2]$, as a result of employing a Dodgson condensation identity. Notice that if~$\phi$ is of Szeg\H{o}-type, then $z \phi$ is not, as it does not have a zero winding number. Therefore, the asymptotics of $D_n[z\phi;q_1 \phi + q_2]$ can not be obtained from Theorem \ref{main thm} and thus must be treated differently. Such bordered determinants are characterized in terms of the solution to the following Riemann--Hilbert problem:
	\begin{itemize}\itemsep=0pt
		\item RH-Z1: 
$Z(\cdot;n)\colon\C\setminus \T \to \C^{2\times2}$ is analytic.
		\item RH-Z2: 
The limits of $Z(\ze;n)$ as $\ze$ tends to $z \in \T $ from the inside and outside of the unit circle exist, which are denoted by $Z_{\pm}(z;n)$ respectively. Moreover, the functions~${z \mapsto Z_{\pm}(z)}$ are continuous on $\T$ and are related by
		\begin{equation*}
			Z_+(z;n)=Z_-(z;n)\begin{bmatrix}
				1 & z^{-n+1}\phi(z) \\
				0 & 1
			\end{bmatrix}, \qquad z \in \T.
		\end{equation*}
		
		\item RH-Z3: 
$Z(z;n)=\big( I + O\big(z^{-1}\big) \big) z^{n \sigma_3}$ as $z \to \infty$.
	\end{itemize}
This is the same as RH-X1--RH-X3, the only difference being that $\phi$ is now replaced by $z\phi$. However, the usual steps of the Deift--Zhou nonlinear steepest descent analysis \cite{DZ} do not work for a symbol with nonzero winding number.\footnote{ This is because at the stage of finding the solution to the global parametrix RHP one would need to find the solution of the scalar RHP: $\be_+(z)-\be_-(z) = \log (z \phi(z))$ for $z \in \T$ and $\be(z)=1+O(1/z)$ as $z \to \infty$, see Appendix~\ref{Appendices}. However, the Plemelj--Sokhotskii formula can not be applied \cite{Gakhov} as the function $\log (z \phi(z))$ has a~jump discontinuity on the unit circle, for a~Szeg\H{o}-type $\phi$.} Instead, we find the explicit formulae relating the solution of the $Z$-RHP to the solution of the $X$-RHP, which is amenable to the Deift--Zhou nonlinear steepest descent analysis (see Appendix~\ref{Appendices}). In our work, such relations are essential in proving Theorem \ref{main thm 2-bordered}. 	
	
	\begin{Theorem}\label{thm Z-RHP intro}
Assume that the solution $X(z;n)$ of the Riemann--Hilbert problem {\rm RH-X1} through {\rm RH-X3} and the solution $Z(z;n)$ of the Riemann--Hilbert problem {\rm RH-Z1} through {\rm RH-Z3} exist. The solution $Z(z;n)$ can be expressed in terms of the data extracted from the solution~${X(z;n)}$ as
		\begin{equation}\label{ZX1}
			Z(z;n) = \left( \begin{bmatrix}
				 \tfrac{\overset{\infty}{X}_{1,12}(n)X_{21}(0;n)}{X_{11}(0;n)} & -\overset{\infty}{X}_{1,12}(n) \\[6pt]
					-\tfrac{X_{21}(0;n)}{X_{11}(0;n)} & 1
			\end{bmatrix} z^{-1} + \begin{bmatrix}
				1 & 0 \\
				0 & 0
			\end{bmatrix} \right) X(z;n) \begin{bmatrix}
				1 & 0 \\
				0 & z
			\end{bmatrix},
		\end{equation}
		where $\overset{\infty}{X}_{1,12}(n)$ is the $12$-entry of the matrix $\overset{\infty}{X}_{1}(n)$ in the asymptotic expansion \eqref{X asymp1}.
	\end{Theorem}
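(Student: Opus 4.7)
The plan is to construct $Z(z;n)$ as a rational dressing of $X(z;n)$, namely
\[
Z(z;n) = A(z)\, X(z;n)\, B(z), \qquad B(z) = \begin{bmatrix} 1 & 0 \\ 0 & z \end{bmatrix}, \qquad A(z) = C_0 + \frac{C_1}{z},
\]
with constant $2 \times 2$ matrices $C_0$ and $C_1$ to be determined, and then to appeal to the standard uniqueness argument for the $Z$-RHP. The choice of $B$ is dictated by the jump: a direct computation gives, on $\T$,
\[
B(z)^{-1} \begin{bmatrix} 1 & z^{-n}\phi(z) \\ 0 & 1 \end{bmatrix} B(z) = \begin{bmatrix} 1 & z^{-n+1}\phi(z) \\ 0 & 1 \end{bmatrix},
\]
so that as soon as $A(z)$ is analytic on $\T$, condition RH-Z2 follows from RH-X2, regardless of the specific entries of $A$. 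The factor $B$ is entire and the form of $A$ is the simplest rational ansatz that can absorb the growth mismatch produced by $B$ at infinity.

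The pair $(C_0, C_1)$ is then pinned down by two independent sets of constraints. First, analyticity on $\C \setminus \T$ requires killing the simple pole that $A(z)$ introduces at $z = 0$. Since $B(0) = \mathrm{diag}(1,0)$, the residue condition $C_1 X(0;n) B(0) = 0$ amounts to the vanishing of the first column of $C_1 X(0;n)$, i.e.\ the two linear relations
\[
C_{1,11} X_{11}(0;n) + C_{1,12} X_{21}(0;n) = 0, \qquad C_{1,21} X_{11}(0;n) + C_{1,22} X_{21}(0;n) = 0.
\]
Second, the normalization RH-Z3 at infinity must hold. Using RH-X3 one gets $X(z;n) B(z) = \bigl(I + \overset{\infty}{X}_1(n) z^{-1} + O(z^{-2})\bigr) \mathrm{diag}(z^n, z^{-n+1})$, and expanding $A(z) X(z;n) B(z)$ and matching against $\mathrm{diag}(z^n, z^{-n})$ up to subleading order where needed forces
\[
C_0 = \begin{bmatrix} 1 & 0 \\ 0 & 0 \end{bmatrix}, \qquad C_{1,22} = 1, \qquad C_{1,12} = -\overset{\infty}{X}_{1,12}(n).
\]
Inserted into the two residue relations above, this uniquely determines $C_{1,11}$ and $C_{1,21}$ and reproduces exactly the matrix displayed in \eqref{ZX1}.

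The final step is the standard uniqueness argument: any quotient of two solutions of RH-Z1--RH-Z3 has no jump across $\T$, is bounded near $z=0$ and $z=\infty$ with value $I$ at infinity, hence is identically $I$; combined with Steps 1 and 2, this shows that the constructed $Z$ is \emph{the} solution. The main technical point that requires care is the two-term matching at infinity in the $(1,2)$ and $(2,2)$ entries: because $B(z)$ raises the growth of the second column of $X$ by one power, the leading coefficients must cancel and the normalization is supplied by the next-order coefficient of $X$, which is precisely how $\overset{\infty}{X}_{1,12}(n)$ enters the $(1,2)$ entry of $C_1$. This mechanism is the essential bookkeeping that makes the shift of the winding number from $0$ to $1$ visible at the level of the solution.
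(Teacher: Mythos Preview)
Your approach is essentially the paper's: both factor $Z$ as $A(z)\,X(z;n)\,\operatorname{diag}(1,z)$ with $A$ rational and having a simple pole at the origin, and then use analyticity at $z=0$ together with the normalization at infinity to pin down the entries of $A$. The paper presents this as a derivation rather than a verification: it defines $\mathscr{R}(z):=Z(z;n)\operatorname{diag}(1,z^{-1})X^{-1}(z;n)$, notes that $\mathscr{R}$ has no jump on $\T$, and applies Liouville's theorem to obtain $\mathscr{R}(z)=C_0+C_1 z^{-1}$, after which the same matching with RH-Z3 produces $B=-\overset{\infty}{X}_{1,12}(n)/X_{11}(0;n)$ and $D=1/X_{11}(0;n)$. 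You posit that rational form as an ansatz and check the $Z$-RHP directly; the bookkeeping you outline agrees with the paper's, and your uniqueness step is exactly Lemma~\ref{uniqueness Y}.

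There is one genuine omission. You divide by $X_{11}(0;n)$ when solving the two residue relations, but you never verify that it is nonzero. This does not follow from the existence of $X$ alone: from $\det X(0;n)=1$ you only get that the first column of $X(0;n)$ is nonzero, not that $X_{11}(0;n)\neq 0$. If $X_{11}(0;n)=0$ then $X_{21}(0;n)\neq 0$, and your second residue relation together with $C_{1,22}=1$ becomes $X_{21}(0;n)=0$, a contradiction; so your ansatz would simply fail and you could not conclude anything. The paper closes this gap by showing that existence of a solution to RH-Y1--RH-Y3 with parameter $r$ forces $D_n[z^r\phi]\neq 0$ (the orthogonality conditions \eqref{ort conds} determine the coefficients of $Y_{11}$ uniquely, so the coefficient matrix $T_n[z^r\phi]$ is invertible). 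Applying this with $r=0$ and $r=1$ and using the determinantal identity $X_{11}(0;n)=(-1)^n D_n[z\phi]/D_n[\phi]$ yields $X_{11}(0;n)\neq 0$. You should add this argument; otherwise the formula \eqref{ZX1} is not known to be well defined under the stated hypotheses.
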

	
	\begin{Remark}
		Assuming the existence of the solutions $X(z;n)$ and $Z(z;n)$ of the X-RHP and the Z-RHP, respectively, we can show that $X_{11}(0;n) \neq 0$. This will be done in
		Section~\ref{sec proof of thm 1.6} where we prove the Theorem \ref{thm Z-RHP intro}.
	\end{Remark}
	
	We alternatively prove another way to connect the solution of the $Z$-RHP to the solution of the $X$-RHP, described in the following theorem.
	\begin{Theorem}\label{thmXZ intro} Suppose that the solution $X(z;n-1)$ of the Riemann--Hilbert problem \mbox{{\rm RH-X1}} through {\rm RH-X3} $($with parameter $n-1)$ and the solution $Z(z;n)$ of the Riemann--Hilbert problem {\rm RH-Z1} through {\rm RH-Z3} exist. The solution $Z(z;n)$ can be expressed in terms of the data extracted from the solution $X(z;n)$ of the Riemann--Hilbert problem {\rm RH-X1} through {\rm RH-X3}~as%
		\begin{equation}\label{Z in terms of X1}
			Z(z;n) = \begin{bmatrix}
				z + \overset{\infty}{X}_{1,22}(n-1) - \tfrac{\overset{\infty}{X}_{2,12}(n-1)}{\overset{\infty}{X}_{1,12}(n-1)} & -\overset{\infty}{X}_{1,12}(n-1) \vspace{6pt}\\
				\tfrac{1}{\overset{\infty}{X}_{1,12}(n-1)} & 0
			\end{bmatrix} X(z;n-1),
		\end{equation} where \smash{$\overset{\infty}{X}_{1,jk}(n)$} and \smash{$\overset{\infty}{X}_{2,jk}(n)$} are the $jk$-entries of the matrices \smash{$\overset{\infty}{X}_{1}(n)$} and \smash{$\overset{\infty}{X}_{2}(n)$} in the asymptotic expansion \eqref{X asymp1}.
	\end{Theorem}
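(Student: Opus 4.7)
The strategy is to form the ratio $W(z) := Z(z;n)\, X(z;n-1)^{-1}$ and exploit that the jump matrix in RH-X2 with parameter $n-1$ coincides with the jump matrix in RH-Z2 with parameter $n$. First I would verify that $\det X(\cdot;n-1) \equiv 1 \equiv \det Z(\cdot;n)$: each jump matrix is unimodular and each determinant is $1 + O(1/z)$ at infinity, so by Liouville's theorem the determinants are identically $1$; in particular $X(z;n-1)^{-1}$ is well-defined off $\T$. The matching jumps then give $W_+ = W_-$ on $\T$, so $W$ extends to an entire matrix-valued function on $\C$.

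Next I would establish the polynomial form of $W$. Writing $Z(z;n) = \hat Z(z)\, z^{n\sigma_3}$ and $X(z;n-1) = \hat X(z)\, z^{(n-1)\sigma_3}$ with $\hat Z, \hat X = I + O(1/z)$, one has $W(z) = \hat Z(z)\, z^{\sigma_3}\, \hat X(z)^{-1}$. Multiplying out the asymptotic expansions entry-wise as $z \to \infty$ gives $W_{11}(z) = z + O(1)$, $W_{12}(z), W_{21}(z) = O(1)$, and $W_{22}(z) = O(1/z)$. Since $W$ is entire with polynomial growth, Liouville's theorem forces
\[
W(z) = \begin{bmatrix} z + \beta & \gamma \\ \delta & 0 \end{bmatrix}
\]
for some constants $\beta, \gamma, \delta \in \C$.

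In the final step I would identify these constants by substituting the above form into $Z(z;n) = W(z)\, X(z;n-1)$ and matching asymptotic expansions with the aid of \eqref{X asymp1}. The leading term of the $(2,2)$ entry of $W\hat X$ is $\delta\, \overset{\infty}{X}_{1,12}(n-1)/z$, which must agree with the $1/z$ contribution from the $(2,2)$ entry of $\hat Z\, z^{\sigma_3}$, giving $\delta = 1/\overset{\infty}{X}_{1,12}(n-1)$. The $(1,2)$ entry of $W\hat X$ must vanish up to $O(1/z^2)$, since the corresponding $Z$-entry begins at order $z^{-n-1}$; expanding yields the two conditions
\[
\gamma + \overset{\infty}{X}_{1,12}(n-1) = 0, \qquad \overset{\infty}{X}_{2,12}(n-1) + \beta\, \overset{\infty}{X}_{1,12}(n-1) + \gamma\, \overset{\infty}{X}_{1,22}(n-1) = 0,
\]
from which $\gamma = -\overset{\infty}{X}_{1,12}(n-1)$ and $\beta = \overset{\infty}{X}_{1,22}(n-1) - \overset{\infty}{X}_{2,12}(n-1)/\overset{\infty}{X}_{1,12}(n-1)$. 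Substituting these back into $W$ recovers \eqref{Z in terms of X1}.

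The main subtlety is the nondegeneracy $\overset{\infty}{X}_{1,12}(n-1) \neq 0$ needed for the formula to make sense. The identity $\delta\, \overset{\infty}{X}_{1,12}(n-1) = 1$ derived above shows that this nondegeneracy is automatically forced by the assumed existence of $Z(z;n)$, playing a role analogous to the $X_{11}(0;n) \neq 0$ statement mentioned after Theorem~\ref{thm Z-RHP intro}. For Szeg\H{o}-type $\phi$, standard BOPUC theory moreover guarantees this nonvanishing via the nonvanishing of the Toeplitz determinants $D_n[\phi]$.
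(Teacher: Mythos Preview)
Your proof is correct. The route differs from the paper's in a useful way: the paper \emph{verifies} the formula \eqref{Z in terms of X1} by checking that the right-hand side satisfies RH-Z1--RH-Z3 and then invokes uniqueness, whereas you \emph{derive} it by forming the ratio $W=Z(z;n)X(z;n-1)^{-1}$, showing it is entire with controlled growth, and reading off the constants from the large-$z$ expansion. Your approach is closer in spirit to what the paper does for Theorem~\ref{thm Z-RHP intro} (the other $Z$-to-$X$ relation), and it has the advantage of explaining where the formula comes from rather than presupposing it. The handling of the nondegeneracy $\overset{\infty}{X}_{1,12}(n-1)\neq 0$ is also different: the paper computes $\overset{\infty}{X}_{1,12}(n-1)=(-1)^{n}D_n[z\phi]/D_{n-1}[\phi]$ explicitly and argues both determinants are nonzero from the assumed solvability of the two RHPs, while your identity $\delta\,\overset{\infty}{X}_{1,12}(n-1)=1$ extracts the same conclusion directly from the asymptotic matching. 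Both arguments are valid; yours is shorter, the paper's gives an explicit determinantal interpretation of the quantity.
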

	
\begin{Remark}
		Assuming the existence of the solutions $X(z;n-1)$ and $Z(z;n)$ of the X-RHP (with parameter $n-1$) and the Z-RHP, respectively, we can show that \smash{$\overset{\infty}{X}_{1,12}(n-1) \neq 0$}. This will be done in
		Section~\ref{sec proof of thm 1.8}, where we prove the Theorem \ref{thmXZ intro}.
	\end{Remark}
	
	\begin{Remark}
		The compatibility of these two theorems offers a new proof for the recurrence relations governing the system of bi-orthogonal polynomials on the unit circle, as detailed in Lemma \ref{lemma 3term}.	
	\end{Remark}
	
	\subsubsection{Strong Szeg\H{o} limit theorems for semi-framed Toeplitz determinants}	
	For $\phi,\psi,\eta \in L^1(\T)$ and a parameter $a \in \C$ define the $n\times n$ semi-framed Toeplitz determinants~${\mathscr{E}_n[\phi;\psi,\eta;a]}$, $\mathscr{G}_n[\phi;\psi,\eta;a]$, $\mathscr{H}_n[\phi;\psi,\eta;a]$ and $\mathscr{L}_n[\phi;\psi,\eta;a]$ as
	\begin{gather*}
		\mathscr{E}_n[\phi;\psi,\eta;a] := \det \begin{bmatrix}
			\phi_0& \phi_{-1} & \cdots & \phi_{-n+2} & \psi_{n-2} \\
			\phi_{1}& \phi_0 & \cdots & \phi_{-n+3} & \psi_{n-3} \\
			\vdots & \vdots & \ddots & \vdots & \vdots \\
			\phi_{n-2} & \phi_{n-3} & \cdots & \phi_{0} & \psi_{0} \\
			\eta_{n-2} & \eta_{n-3} & \cdots & \eta_{0} & a
		\end{bmatrix},
\\
		\mathscr{G}_n[\phi;\psi,\eta;a] := \det \begin{bmatrix}
			\phi_0& \phi_{-1} & \cdots & \phi_{-n+2} & \psi_{0} \\
			\phi_{1}& \phi_0 & \cdots & \phi_{-n+3} & \psi_{1} \\
			\vdots & \vdots & \ddots & \vdots & \vdots \\
			\phi_{n-2} & \phi_{n-3} & \cdots & \phi_{0} & \psi_{n-2} \\
			\eta_{0} & \eta_{1} & \cdots & \eta_{n-2} & a
		\end{bmatrix},
\\
		\mathscr{H}_n[\phi;\psi,\eta;a] := \det \begin{bmatrix}
			\phi_0& \phi_{-1} & \cdots & \phi_{-n+2} & \psi_{0} \\
			\phi_{1}& \phi_0 & \cdots & \phi_{-n+3} & \psi_{1} \\
			\vdots & \vdots & \ddots & \vdots & \vdots \\
			\phi_{n-2} & \phi_{n-3} & \cdots & \phi_{0} & \psi_{n-2} \\
			\eta_{n-2} & \eta_{n-3} & \cdots & \eta_{0} & a
		\end{bmatrix},
	\end{gather*}
	and
	\begin{gather*}
		\mathscr{L}_n[\phi;\psi,\eta;a] := \det \begin{bmatrix}
			\phi_0& \phi_{-1} & \cdots & \phi_{-n+2} & \psi_{n-2} \\
			\phi_{1}& \phi_0 & \cdots & \phi_{-n+3} & \psi_{n-3} \\
			\vdots & \vdots & \ddots & \vdots & \vdots \\
			\phi_{n-2} & \phi_{n-3} & \cdots & \phi_{0} & \psi_{0} \\
			\eta_{0} & \eta_{1} & \cdots & \eta_{n-2} & a
		\end{bmatrix},
	\end{gather*}
	where $f_j$'s are the Fourier coefficients of $f \in \{ \phi, \psi, \eta \}$. Consider the reproducing kernel
	\begin{equation}\label{RepKer intro}
		K_{n}(z,\mathcal{z}) := \sum_{j=0}^{n} Q_{j}(\mathcal{z})\widehat{Q}_{j}(z),
	\end{equation} of the system of bi-orthogonal polynomials on the unit circle associated with the symbol $\phi$, satisfying the \textit{bi-orthogonality} relation
	\begin{equation}\label{bi-orthogonality intro}
		\int_{\T} Q_n(\ze)\widehat{Q}_m\big(\ze^{-1}\big)\phi(\ze)\frac{\dd \ze}{2\pi \ii \ze}= \de_{nm}, \qquad n,m \in \N \cup \{0\}.
	\end{equation}
	In Section \ref{section framed}, we prove the following representation of the above semi-framed Toeplitz determinants in terms of the reproducing kernel \eqref{RepKer intro}.
	
	\begin{Theorem}\label{semis in terms of RepKer intro}
		Let $D_k[\phi] \neq 0$ for $k=0,1,\dots, n+1$.	The semi-framed Toeplitz determinants~${\mathscr{E}_n[\phi;\psi,\eta;a]}$, $\mathscr{G}_n[\phi;\psi,\eta;a]$, $\mathscr{H}_n[\phi;\psi,\eta;a]$ and $\mathscr{L}_n[\phi;\psi,\eta;a]$ can be represented in terms of the reproducing kernel of the system of bi-orthogonal polynomials on the unit circle associated with $\phi$ given by \eqref{RepKer intro} and \eqref{bi-orthogonality intro} as
		\begin{align}
			&\frac{\mathscr{E}_{n+2}[\phi; \psi, \eta ;a]}{D_{n+1}[\phi]} = a - \int_{\T} \left[ \int_{\T} K_n(z_1,z_2) z_2^{-n} \eta(z_2) \frac{\dd z_2}{2 \pi \ii z_2} \right] z_1^{-n} \psi(z_1) \frac{\dd z_1}{2 \pi \ii z_1}, \nonumber \\
& \frac{\mathscr{G}_{n+2}[\phi; \psi, \eta ;a]}{D_{n+1}[\phi]} = a - \int_{\T} \left[ \int_{\T} K_n\big(z^{-1}_1,z^{-1}_2\big) \eta(z_2) \frac{\dd z_2}{2 \pi \ii z_2} \right] \psi(z_1) \frac{\dd z_1}{2 \pi \ii z_1}, \nonumber \\
& \frac{\mathscr{H}_{n+2}[\phi; \psi, \eta ;a]}{D_{n+1}[\phi]} = a - \int_{\T} \left[ \int_{\T} K_n\big(z^{-1}_1,z_2\big) z_2^{-n} \eta(z_2) \frac{\dd z_2}{2 \pi \ii z_2} \right] \psi(z_1) \frac{\dd z_1}{2 \pi \ii z_1}, \label{H} \\
& \frac{\mathscr{L}_{n+2}[\phi; \psi, \eta ;a]}{D_{n+1}[\phi]} = a - \int_{\T} \left[ \int_{\T} K_n\big(z_1,z^{-1}_2\big) \eta(z_2) \frac{\dd z_2}{2 \pi \ii z_2} \right] z^{-n}_1 \psi(z_1) \frac{\dd z_1}{2 \pi \ii z_1}, \nonumber
		\end{align}
		where $D_{n}[\phi]$ is given by \eqref{ToeplitzDet}.
	\end{Theorem}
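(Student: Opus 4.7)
The plan is to reduce each of the four semi-framed determinants to a scalar expression via a Schur complement, and then to identify the resulting quadratic form as a double integral of the reproducing kernel. The argument is uniform across the four cases; I describe it in detail for $\mathscr{H}_{n+2}[\phi;\psi,\eta;a]$, as the three remaining identities will follow by the same template with only obvious bookkeeping modifications. Viewing the defining matrix in $2\times 2$ block form with the Toeplitz block $T_{n+1}[\phi]$ in the upper-left corner, a column $\boldsymbol{\psi}=(\psi_0,\psi_1,\dots,\psi_n)^{T}$ of Fourier coefficients of $\psi$ to its right, the row $\boldsymbol{\eta}^{T}=(\eta_n,\eta_{n-1},\dots,\eta_0)$ beneath it, and the scalar $a$ in the lower-right, the block-determinant/Schur-complement identity (applicable because $D_{n+1}[\phi]\neq 0$) gives
\begin{equation*}
\frac{\mathscr{H}_{n+2}[\phi;\psi,\eta;a]}{D_{n+1}[\phi]} \;=\; a - \boldsymbol{\eta}^{T}\, T_{n+1}[\phi]^{-1}\, \boldsymbol{\psi}.
\end{equation*}

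The central algebraic step is the bilinear expansion
\begin{equation*}
K_n(z,w) \;=\; \sum_{j,k=0}^{n}\bigl(T_{n+1}[\phi]^{-1}\bigr)_{jk}\, w^{j}\,z^{k},
\end{equation*}
which I would establish by expanding $Q_j$ and $\widehat{Q}_j$ in monomials, rewriting the bi-orthogonality \eqref{bi-orthogonality intro} as the matrix identity $q\,T_{n+1}[\phi]^{T}\,\hat{q}^{T}=I_{n+1}$ for the two coefficient matrices $q,\hat{q}$, and inverting. Substituting next the integral representations $\psi_k=\int_{\T}\psi(z)z^{-k-1}\dd z/(2\pi\ii)$ and $\eta_{n-j}=\int_{\T}\eta(w)w^{j-n-1}\dd w/(2\pi\ii)$ into $\boldsymbol{\eta}^{T}\,T_{n+1}[\phi]^{-1}\boldsymbol{\psi}$, and exchanging the (finite) sums with the integrals, the inner double sum reassembles, with the help of the above identity, precisely into $z_2^{-n}K_n(z_1^{-1},z_2)/(z_1 z_2)$, yielding \eqref{H}.

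The three remaining formulas for $\mathscr{E}$, $\mathscr{G}$, $\mathscr{L}$ are proved by the very same procedure. The only differences lie in whether the last column, respectively the last row, of the defining matrix lists the Fourier coefficients of $\psi$, respectively $\eta$, in natural order (as in $\mathscr{G}$ and $\mathscr{L}$ for $\eta$, and in $\mathscr{G}$ and $\mathscr{H}$ for $\psi$) or in reversed order (in the complementary cases). Reversing a list trades a monomial $z^{k}$ for $z^{n-k}$ in the integral representation, which in the final formula either introduces a prefactor $z_j^{-n}$ or replaces $z_j$ by $z_j^{-1}$ inside $K_n$, producing exactly the four combinations stated. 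The only potential pitfall is the bookkeeping that tracks these reversals; no analytical obstacle arises since the whole argument is algebraic, and the key input is the convention-sensitive bilinear identity for $K_n$ above, which must be pinned down carefully so that the arguments of the kernel fall into place as claimed.
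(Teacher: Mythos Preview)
Your proof is correct and rests on the same algebraic core as the paper's, namely that the entries of $T_{n+1}[\phi]^{-1}$ generate the reproducing kernel via your bilinear identity $K_n(z,w)=\sum_{j,k}(T_{n+1}[\phi]^{-1})_{jk}\,w^{j}z^{k}$ (equivalently, the paper's determinantal representation $K_n(z,\mathcal{z})=a-\widehat{K}_n(z,\mathcal{z};a)$, proved there via the LU decomposition $T_{n+1}[\phi]=\boldsymbol{B}_n^{-1}(\boldsymbol{A}_n^{T})^{-1}$). The organization differs: the paper first establishes this identity for the ``monomial'' borders $\boldsymbol{Z}_n(z)=(1,z,\dots,z^n)^{T}$ and then integrates the resulting determinant row by row against $z_2^{-n}\eta(z_2)$ and $\psi(z_1)$ to produce $\mathscr{H}_{n+2}$, whereas you apply the Schur complement directly to $\mathscr{H}_{n+2}$ and then recognize the quadratic form $\boldsymbol{\eta}^{T}T_{n+1}^{-1}\boldsymbol{\psi}$ as the double integral of $K_n$. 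Your route is a touch more streamlined for the purpose at hand; the paper's has the advantage of isolating the determinantal representation of $K_n$ as a standalone result (quoted from \cite{GW}) before specializing.
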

	Using the Christoffel--Darboux identity for the bi-orthogonal polynomials on the unit circle, we obtain the following characterizations in terms of the solution $X$ of {\rm RH-X1} through {\rm RH-X3} in the following corollary.
	
	\begin{Corollary}\label{EGL and RHP intro}
		Let $D_k[\phi] \neq 0$ for $k=0,1,\dots, n+1$. The semi-framed Toeplitz determinants~${\mathscr{H}_{n+2}[\phi; \psi, \eta ;a]}$, $\mathscr{E}_{n+2}[\phi; \psi, \eta ;a]$, $\mathscr{G}_{n+2}[\phi; \psi, \eta ;a]$, and $\mathscr{L}_{n+2}[\phi; \psi, \eta ;a]$ are encoded into the $X$-RHP data as
		\begin{align}
& \frac{\mathscr{E}_{n+2}[\phi;\psi, \eta ;a]}{D_{n+1}[\phi]}\nonumber \\
&\qquad= a - \int_{\T} \int_{\T} \frac{z^{-n}_2\eta(z_2) \Tilde{\psi}(z_1)}{z_1-z_2} \det \begin{bmatrix}
				X_{11}(z_2;n+1) & X_{21}(z_2;n+2) \\
				X_{11}(z_1;n+1) & X_{21}(z_1;n+2)
			\end{bmatrix} \frac{\dd z_2}{2 \pi \ii z_2} \frac{\dd z_1}{2 \pi \ii z_1}, 				\label{E and X-RHP} \\
& 		\frac{\mathscr{G}_{n+2}[\phi;\psi, \eta ;a]}{D_{n+1}[\phi]}\nonumber\\
&\qquad = a - \int_{\T} \int_{\T} \frac{z^{-n}_1 \tilde{\eta}(z_2) \psi(z_1)}{z_1-z_2} \det \begin{bmatrix}
				X_{11}(z_2;n+1) & X_{21}(z_2;n+2) \\
				X_{11}(z_1;n+1) & X_{21}(z_1;n+2)
			\end{bmatrix} \frac{\dd z_2}{2 \pi \ii z_2} \frac{\dd z_1}{2 \pi \ii z_1}, 			 \nonumber \\
&	 	\frac{\mathscr{H}_{n+2}[\phi;\psi, \eta ;a]}{D_{n+1}[\phi]}= a - \int_{\T} \int_{\T} \frac{z^{-n}_1z^{-n}_2\eta(z_2) \psi(z_1)}{z_1-z_2}\nonumber\\
&\phantom{\frac{\mathscr{H}_{n+2}[\phi;\psi, \eta ;a]}{D_{n+1}[\phi]}=}{} \times\det \begin{bmatrix}
				X_{11}(z_2;n+1) & X_{21}(z_2;n+2) \\
				X_{11}(z_1;n+1) & X_{21}(z_1;n+2)
			\end{bmatrix} \frac{\dd z_2}{2 \pi \ii z_2} \frac{\dd z_1}{2 \pi \ii z_1}, \label{H and X-RHP} \\
&	 	\frac{\mathscr{L}_{n+2}[\phi;\psi, \eta ;a]}{D_{n+1}[\phi]}= a - \int_{\T} \int_{\T} \frac{ \Tilde{\eta}(z_2) \tilde{\psi}(z_1)}{z_1-z_2} \det \begin{bmatrix}
				X_{11}(z_2;n+1) & X_{21}(z_2;n+2) \\
				X_{11}(z_1;n+1) & X_{21}(z_1;n+2)
			\end{bmatrix} \frac{\dd z_2}{2 \pi \ii z_2} \frac{\dd z_1}{2 \pi \ii z_1}, 				 \nonumber
		\end{align}
		where $\tilde{f}(z) = f\big(z^{-1}\big)$, $D_{n}[\phi]$ is given by \eqref{ToeplitzDet}, and $X_{11}$ and $X_{21}$ are respectively the $11$ and $21$ entries of the solution to {\rm RH-X1} through {\rm RH-X3}.
	\end{Corollary}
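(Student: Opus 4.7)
The plan is to combine the reproducing-kernel representations of Theorem~\ref{semis in terms of RepKer intro} with the Christoffel--Darboux identity for the bi-orthogonal polynomials on the unit circle associated with the weight $\phi$, and then translate the resulting polynomial structure into the first-column entries of the solution $X$ of RH-X1--RH-X3.

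First I would record the Christoffel--Darboux identity for the BOPUC system \eqref{bi-orthogonality intro}, which expresses the reproducing kernel
$$K_n(z,\zeta) = \sum_{j=0}^{n} Q_{j}(\zeta)\widehat{Q}_{j}(z)$$
in closed form as a $2 \times 2$ determinantal quotient built from $Q_{n+1}, \widehat{Q}_{n+1}$ (and/or their reverse polynomials $Q^*_{n+1}, \widehat{Q}^*_{n+1}$) evaluated at $z$ and $\zeta$, divided by a factor linear in $1-z\zeta$. I would then invoke the standard BDJ dictionary underlying RH-X1--RH-X3, which identifies $X_{11}(\cdot;m)$ with the degree-$m$ biorthogonal polynomial $Q_m$ and $X_{21}(\cdot;m)$ with a constant multiple of the reverse polynomial $\widehat{Q}^*_{m-1}$. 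With the index choices $m = n+1$ and $m = n+2$, the CD identity collapses into exactly the $2 \times 2$ determinant displayed in \eqref{E and X-RHP}--\eqref{H and X-RHP}, divided by a single factor proportional to $z_1 - z_2$.

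Next, substituting this RHP form of $K_n$ into each of the four double integrals from Theorem~\ref{semis in terms of RepKer intro} should produce \eqref{E and X-RHP}--\eqref{H and X-RHP}. The substitutions $z\mapsto z^{-1}$ appearing inside the arguments of $K_n$, which differ among $\mathscr{E}$, $\mathscr{G}$, $\mathscr{H}$, $\mathscr{L}$, act simultaneously on the polynomials by exchanging $Q_j$ with its reverse $Q^*_j$, and on the symbols in the integrand by converting $\psi$, $\eta$ into their reflections $\tilde\psi, \tilde\eta$; this mechanism is precisely what produces the four distinct placements of the prefactors $z_j^{-n}$ and of the tildes on $\psi$ and $\eta$ recorded in the corollary.

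The main obstacle I anticipate is bookkeeping: organizing the four combinations of $z \leftrightarrow z^{-1}$ reflections so that each case yields the determinant with entries $X_{11}(\cdot; n+1)$ and $X_{21}(\cdot; n+2)$ at the correct evaluation points, and absorbing the various normalization constants from the BDJ dictionary so that the final identity is free of spurious prefactors. The hypothesis $D_k[\phi] \neq 0$ for $k = 0, 1, \dots, n+1$ guarantees that the BOPUC and the required RHP normalizations are well-defined and nonvanishing, so the transformation is an exact algebraic identity rather than an asymptotic one.
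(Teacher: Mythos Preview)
Your proposal is correct and follows essentially the same route as the paper: start from the reproducing-kernel representations of Theorem~\ref{semis in terms of RepKer intro}, apply the Christoffel--Darboux identity for BOPUC, and rewrite the result via the BDJ dictionary \eqref{Toeplitz-OP-solution} identifying $X_{11}(\cdot;n+1)$ and $X_{21}(\cdot;n+2)$ with $\varkappa_{n+1}^{-1}Q_{n+1}$ and $-\varkappa_{n+1}z^{n+1}\widehat{Q}_{n+1}(z^{-1})$. The only organizational difference is that the paper carries out the computation in detail for $\mathscr{H}$ and then invokes Lemma~\ref{EGL in terms of H} (the $z\mapsto z^{-1}$ reflection identities) to deduce $\mathscr{E}$, $\mathscr{G}$, $\mathscr{L}$, whereas you propose to track the four reflection combinations directly; these are the same mechanism.
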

	
In Section \ref{sec semi-framed}, we prove the following strong Szeg\H{o} theorems for semi-framed Toeplitz determinants for a class of \textit{frame symbols} $\psi$ and $\eta$. In Theorem \ref{thm semi-framed rationals intro}, we consider the case where the frame-symbols are rational functions with arbitrary number of simple poles.
	
	\begin{Theorem}\label{thm semi-framed rationals intro}
		Let $\phi$ be of Szeg\H{o}-type, and $c$ and $d$ be complex numbers that do not lie on the unit circle. Then, the following strong Szeg\H{o} asymptotics hold for $\mathscr{H}$, $\mathscr{L}$, $\mathscr{E}$ and $\mathscr{G}$:
		\begin{gather}
			\mathscr{H}_{n+1}\Bigg[\phi; \di \sum_{j=1}^{m_1} \di \frac{A_j}{z-d_j}, \sum_{k=1}^{m_2} \di \frac{B_k}{z-c_k};a\Bigg] = G^n[\phi] E[\phi] (a + O(\rho^{-n})),\label{asymp H rationals lin comb}	
\\
			\mathscr{L}_{n+1}\Bigg[\phi; \di \sum_{j=1}^{m_1} \di \frac{A_j}{z-d_j}, \sum_{k=1}^{m_2} \di \frac{B_k}{z-c_k};a\Bigg] = G^n[\phi] E[\phi] (a + O(\rho^{-n})),\nonumber	
\\
			\mathscr{E}_{n+1}\Bigg[\phi; \di \sum_{j=1}^{m_1} \di \frac{A_j}{z-d_j}, \sum_{k=1}^{m_2} \di \frac{B_k}{z-c_k};a\Bigg] \nonumber \\
\qquad= G^n[\phi] E[\phi] \Bigg( a + \sum^{m_1}_{j=1 \atop |d_j|>1 } \sum^{m_2}_{k=1 \atop |c_k|>1 } A_jB_k \frac{\al(c_k)}{\al(d_j^{-1})}\cdot \frac{1}{1-c_kd_j} +O(\rho^{-n}) \Bigg),\nonumber	
\\
			\mathscr{G}_{n+1}\Bigg[\phi; \di \sum_{j=1}^{m_1} \di \frac{A_j}{z-d_j}, \sum_{k=1}^{m_2} \di \frac{B_k}{z-c_k};a\Bigg] \nonumber\\
\qquad= G^n[\phi] E[\phi] \Bigg( a + \sum^{m_1}_{j=1 \atop |d_j|>1 } \sum^{m_2}_{k=1 \atop |c_k|>1 } A_jB_k \frac{\al(d_j)}{\al(c_k^{-1})}\cdot \frac{1}{1-c_kd_j} +O(\rho^{-n}) \Bigg).\nonumber
		\end{gather}			
		Here the number $\rho$ is such that
\[1<\rho < \underset{1 \leq j \leq m_1, 1 \leq k \leq m_2 \atop |d_j|>1,|c_k|>1}{\min} \{|d_j|,|c_k|\}, \qquad \underset{1 \leq j \leq m_1, 1 \leq k \leq m_2 \atop |d_j|<1,|c_k|<1}{\max} \{|d_j|,|c_k|\}<\rho^{-1}<1,
\]
 and $\phi$ is analytic in the annulus $\{z\colon \rho^{-1}<|z|<\rho\}$.
	\end{Theorem}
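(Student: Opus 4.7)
The strategy is to combine the integral representations of Corollary~\ref{EGL and RHP intro} with contour deformation and the Deift--Zhou steepest descent analysis of the $X$-RHP, following \cite{BDJ}. After substituting the rational frame symbols $\psi$ and $\eta$, linearity reduces each theorem to analyzing contributions indexed by pairs $(d_j, c_k)$. The structural simplification that makes free contour deformation tractable is that the determinantal kernel $X_{11}(z_2;n+1) X_{21}(z_1;n+2) - X_{11}(z_1;n+1) X_{21}(z_2;n+2)$ vanishes on the diagonal $z_1=z_2$, so the factor $1/(z_1-z_2)$ produces no singularity and contours may be deformed across $\T$ modulo residues only at the poles of the frame symbols.

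For $\mathscr{H}_{n+1}$ and $\mathscr{L}_{n+1}$, the integrands in Corollary~\ref{EGL and RHP intro} (equivalently Theorem~\ref{semis in terms of RepKer intro} after a $z\mapsto z^{-1}$ substitution where needed) carry factors $z^{-n}$ in both integration variables. I would deform each contour outward to $|z|=\rho>1$: on this larger contour, the BDJ asymptotics give $z^{-n}X_{11}(z;n+1)=O(1)$ and $z^{-n}X_{21}(z;n+2)=O(1)$ uniformly for $|z|>1$, producing an $O(\rho^{-n})$ remainder; the residues picked up at the exterior poles of $\psi$ and $\eta$ carry factors $|d_j|^{-n}$ and $|c_k|^{-n}$, also $O(\rho^{-n})$. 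Combined with the classical Szeg\H{o} asymptotic $D_{n+1}[\phi]\sim G^{n+1}[\phi]E[\phi]$, this yields the claimed asymptotic with leading constant $a$ and exponential error.

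For $\mathscr{E}_{n+1}$ and $\mathscr{G}_{n+1}$, only one of the two integration variables carries a $z^{-n}$ factor. Deforming that variable's contour outward picks up residues at the exterior poles $c_k$ (with $|c_k|>1$); the BDJ asymptotic $X_{11}(c_k;n+1)\sim c_k^{n+1}/\alpha(c_k)$ cancels the $c_k^{-n-1}$ decay from the residue weight and leaves a finite contribution. A second residue calculation in the remaining variable, at the interior poles $z_1=1/d_j$ of $\tilde\psi$ (for $|d_j|>1$), is then performed using BDJ asymptotics at interior points of $\T$. The Cauchy factor $1/(z_1-z_2)$ evaluated at $(z_1,z_2)=(1/d_j,c_k)$ yields $d_j/(1-c_kd_j)$, and the combined $\alpha$-evaluations from the two $X_{11},X_{21}$ asymptotics assemble precisely into the ratio $\alpha(c_k)/\alpha(1/d_j)$ (respectively $\alpha(d_j)/\alpha(1/c_k)$ in the case of $\mathscr{G}$), reproducing the stated double sum over $|d_j|,|c_k|>1$. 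The residual outer-contour integrals at $|z|=\rho$ furnish the $O(\rho^{-n})$ error.

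The principal obstacle will be the accurate bookkeeping of the BDJ asymptotics for $X_{11}$ and $X_{21}$ at both exterior points ($c_k$) and interior points ($1/d_j$) of $\T$, and verifying that the two successive residue calculations combine with the correct signs and Jacobians to reproduce precisely the stated $\alpha$-ratios and the Cauchy factor $1/(1-c_kd_j)$. One must also check that the contributions from $|d_j|<1$ or $|c_k|<1$ (which arise from alternative contour deformations) either cancel or are absorbed into the error. This bookkeeping is rendered feasible by the explicit form of the global parametrix of Appendix~\ref{Appendices}, whose entries are algebraic expressions in the Szeg\H{o}-type function $\alpha$ of \eqref{al}.
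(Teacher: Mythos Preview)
Your approach is correct in principle but takes a genuinely different route from the paper. The paper does not perform contour deformations on the double integrals of Corollary~\ref{EGL and RHP intro} at all. Instead, it exploits the elementary fact that the Fourier coefficients of $q_0(z)=1/(z-c)$ are explicit: $q_{0,j}=-c^{-j-1}$ for $|c|>1$ and $q_{0,j}=0$ for $|c|<1$, $0\le j\le n$. Substituting these directly into the determinant $\mathscr{H}_{n+2}[\phi;1/(z-d),1/(z-c);a]$ collapses it to a scalar multiple of $\widehat K_n(d^{-1},c;\,adc^{n+1})$, which by Theorem~\ref{Rep Ker Semi Framed} equals $adc^{n+1}-K_n(d^{-1},c)$. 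The Christoffel--Darboux identity then expresses $K_n(d^{-1},c)$ as a $2\times 2$ determinant in $X_{11},X_{21}$ evaluated at the \emph{points} $c$ and $d$, and the asymptotics follow immediately from \eqref{X in terms of R exact} in the region $\Om_\infty$. The linearity relation \eqref{framed lin comb}--\eqref{framed lin comb1} then gives the theorem for general rational combinations. The cases $\mathscr{L},\mathscr{E},\mathscr{G}$ are obtained the same way (or via Lemma~\ref{EGL in terms of H}).

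What each approach buys: the paper's argument is shorter and avoids all the bookkeeping you flag as the principal obstacle---no contours are deformed, no residues are collected, and the vanishing of contributions from $|c_k|<1$ or $|d_j|<1$ is immediate from the vanishing of the Fourier coefficients rather than something to be checked. Your route, by contrast, works directly with the integral representation and would generalize more readily to non-rational frame symbols (indeed, the paper uses essentially your strategy in the proof of Theorem~\ref{thm semi-framed rationals . phi intro}, where the symbols $\phi/(z-c)$ do not have such clean Fourier coefficients). For the purely rational case of Theorem~\ref{thm semi-framed rationals intro}, however, the point-evaluation shortcut is available and is what the paper takes.
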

	
	The following theorem is about the case where the frame-symbols are rational functions with simple poles multiplied by $\phi$ or $\tilde{\phi}$. For $\mathscr{E}$ and $\mathscr{G}$ in this case, unlike what we have in Theorem~\ref{thm semi-framed rationals intro}, only the poles inside the unit disk may contribute to the leading-order asymptotics.
	\begin{Theorem}\label{thm semi-framed rationals . phi intro} Let $\phi$ be a Szeg\H{o}-type symbol, and $c$ and $d$ be complex numbers that do not lie on the unit circle. Then, the following strong Szeg\H{o} asymptotics hold for $\mathscr{H}$, $\mathscr{L}$, $\mathscr{E}$ and $\mathscr{G}$:
		\begin{gather}
			\mathscr{H}_{n+1}\Bigg[\phi; \di \sum_{j=1}^{m_1} \di \frac{A_j \phi}{z-d_j}, \sum_{k=1}^{m_2} \di \frac{B_k\phi}{z-c_k};a\Bigg] = G^n[\phi] E[\phi] (a + O(\rho^{-n})),\label{asymp H rationals lin comb1}	
\\
			\mathscr{L}_{n+1}\Bigg[\phi; \di \sum_{j=1}^{m_1} \di \frac{A_j\tilde{\phi}}{z-d_j}, \sum_{k=1}^{m_2} \di \frac{B_k\tilde{\phi}}{z-c_k};a\Bigg] = G^n[\phi] E[\phi] (a + O(\rho^{-n})),	\nonumber
\\
			\mathscr{E}_{n+1}\Bigg[\phi; \di \sum_{j=1}^{m_1} \di \frac{A_j\tilde{\phi}}{z-d_j}, \sum_{k=1}^{m_2} \di \frac{B_k\phi}{z-c_k};a\Bigg] \nonumber
\\ \qquad= G^n[\phi] E[\phi] \Bigg( a + \sum^{m_1}_{j=1 \atop |d_j|<1 } \sum^{m_2}_{k=1 \atop |c_k|<1 } A_jB_k \frac{\al(c_k)}{\al(d_j^{-1})}\cdot \frac{1}{1-c_kd_j} +O(\rho^{-n}) \Bigg),	\nonumber
\\
			\mathscr{G}_{n+1}\Bigg[\phi; \di \sum_{j=1}^{m_1} \di \frac{A_j\phi}{z-d_j}, \sum_{k=1}^{m_2} \di \frac{B_k\tilde{\phi}}{z-c_k};a\Bigg] \nonumber
\\ \qquad= G^n[\phi] E[\phi] \Bigg( a + \sum^{m_1}_{j=1 \atop |d_j|<1 } \sum^{m_2}_{k=1 \atop |c_k|<1 } A_jB_k \frac{\al(d_j)}{\al\big(c_k^{-1}\big)}\cdot \frac{1}{1-c_kd_j} +O(\rho^{-n}) \Bigg),\label{asymp G rationals lin comb1}
		\end{gather}
		where $\tilde{f}(z) = f\big(z^{-1}\big)$. Here the number $\rho$ is such that
\[
1<\rho < \underset{1 \leq j \leq m_1, 1 \leq k \leq m_2 \atop |d_j|>1,|c_k|>1}{\min} \{|d_j|,|c_k|\}, \qquad \underset{1 \leq j \leq m_1, 1 \leq k \leq m_2 \atop |d_j|<1,|c_k|<1}{\max} \{|d_j|,|c_k|\}<\rho^{-1}<1,
\]
 and $\phi$ is analytic in the annulus $\big\{z\colon \rho^{-1}<|z|<\rho\big\}$.
	\end{Theorem}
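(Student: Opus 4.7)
The plan is to combine the integral representations in Corollary \ref{EGL and RHP intro} with the Deift--Zhou nonlinear steepest descent analysis of the $X$-RHP outlined in Appendix \ref{Appendices}. I would first write each of the four semi-framed determinants as a double contour integral over $\T \times \T$ involving $X_{11}(\cdot; n+1)$ and $X_{21}(\cdot; n+2)$, with the prescribed frame symbols (rationals times $\phi$ or $\tilde\phi$) substituted in. The additional $\phi$ or $\tilde\phi$ factor in the symbols---in contrast with Theorem \ref{thm semi-framed rationals intro}---combines, on the unit circle, with the boundary values of the global parametrix of $X$, themselves built from the Szeg\H{o} function $\al$ defined in \eqref{al}, to produce integrands whose analytic continuations through the annulus $\rho^{-1}<|z|<\rho$ dictate the direction of the subsequent contour deformation.

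Having set up these integrals, the next step is to substitute the Deift--Zhou asymptotics of $X_{11}$ and $X_{21}$ (in both regions $|z|>1$ and $|z|<1$) into the $2\times 2$ determinant from Corollary \ref{EGL and RHP intro}. For $\mathscr{H}_{n+1}$ and $\mathscr{L}_{n+1}$, the placement of $\phi$ (respectively $\tilde\phi$) on \emph{both} frame symbols is such that the deformed double contour does not cross any pole $c_k$ or $d_j$, and the deformed integrand is directly bounded by $O(\rho^{-n})$, giving only the constant term $a$ at leading order. For $\mathscr{E}_{n+1}$ and $\mathscr{G}_{n+1}$, the mixed placement of $\phi$ and $\tilde\phi$ on $\psi$ and $\eta$ forces one of the two contours to deform \emph{into} the disk in order for the integrand to decay, so each pole $c_k$ or $d_j$ with $|c_k|<1$ or $|d_j|<1$ contributes a residue. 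Evaluating each residue via the boundary values of $X_{11}(\cdot;n+1)$ and $X_{21}(\cdot;n+2)$ in terms of $\al$ yields exactly the summands $A_jB_k \,\al(c_k)/\al(d_j^{-1}) \cdot (1-c_kd_j)^{-1}$ for $\mathscr{E}$ and the mirror form for $\mathscr{G}$.

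The main technical obstacle is the precise bookkeeping of which poles are crossed and with what sign: this depends on the factorization $\al_+/\al_- = \phi$ on $\T$, the direction of the deformation (which is reversed with respect to Theorem \ref{thm semi-framed rationals intro} precisely because of the extra $\phi$ or $\tilde\phi$ factor), and the $2\times 2$ determinantal structure inherited from the Christoffel--Darboux identity used to derive Corollary \ref{EGL and RHP intro}. A secondary point is the error estimate: after residues are extracted, the remaining regular part of the double integral is dominated by the Deift--Zhou exponentially-small error terms, controlled by $\rho^{-n}$ using the joint analyticity of $\phi$ and the rational frame symbols in the annulus $\rho^{-1}<|z|<\rho$. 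I would organize the write-up by first treating $\mathscr{H}$ in full detail, as it is the simplest case and serves as the template for the error estimate, and then adapting the same machinery to $\mathscr{L}$, $\mathscr{E}$, and $\mathscr{G}$, where the only novelty is the residue evaluation described above.
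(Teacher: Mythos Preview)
Your proposal is correct and follows essentially the same route as the paper: start from the double contour integrals of Corollary~\ref{EGL and RHP intro}, plug in the Deift--Zhou asymptotics for $X_{11}$ and $X_{21}$ from Appendix~\ref{Appendices}, and then deform contours so that the extra $\phi$ or $\tilde\phi$ factor in the frame symbols cancels against the $\phi^{-1}$ coming from the global parametrix, leaving integrands governed by $\al$ and simple rational poles. Two minor differences worth noting: the paper treats $\mathscr{E}$ in full detail (rather than $\mathscr{H}$), since that is the case where nontrivial residues actually appear, and it decouples the double integral by expanding $1/(z_1-z_2)$ as a geometric series on the deformed contours $\T^{\mathfrak{r}}_\pm$ and then evaluating the resulting \emph{single} integrals by residues; your description of ``crossing poles'' in the double integral is an equivalent bookkeeping. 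The passage from a single pair of poles $(c,d)$ to the general sums is handled in the paper by the bilinearity identity~\eqref{framed lin comb}--\eqref{framed lin comb1}, which you may also want to invoke explicitly rather than carrying the full sums through the contour deformation.
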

	
	In Section \ref{sec framed and multi-framed}, we eventually redirect our attention to framed and multi-framed Toeplitz determinants. Our intention in this section is not to present formal proofs of asymptotic results. Instead, our goal is to present a broad framework for approaching the asymptotic analysis of these determinants, emphasizing their recursive characteristics in relation to the Dodgson condensation identities. We will show that the semi-framed Toeplitz determinants are the building blocks for the asymptotic analysis of framed and multi-framed determinants.
	
\section{Multi-bordered Toeplitz determinants} \label{sec bordered}
	In this section, we focus on multi-bordered Toeplitz determinants
	\begin{equation}\label{btdmulti}
		D^B_n[\phi;\boldsymbol{\psi}_m] := \det \begin{bmatrix}
			\phi_0& \phi_{1} & \cdots & \phi_{n-m-1} & \psi_{1,n-1} & \cdots & \psi_{m,n-1} \\
			\phi_{-1}& \phi_0 & \cdots & \phi_{n-m-2} & \psi_{1,n-2} & \cdots & \psi_{m,n-2} \\
			\vdots & \vdots & \vdots & \vdots & \vdots & \cdots & \vdots \\
			\phi_{-n+1} & \phi_{-n+2} & \cdots & \phi_{-m} & \psi_{1,0} & \cdots & \psi_{m,0}
		\end{bmatrix},
	\end{equation}
	and their reduction to (single) bordered determinants. This reduction allows for a representation in terms of the orthogonal polynomials on the unit circle and hence a Riemann--Hilbert characterization. For $\phi \in L^1(\T)$, let us recall (see, e.g., \cite{DIK} and the references therein) the system of polynomials on the unit circle $\{Q_n(z)\}^{\infty}_{n=0}$ and $\big\{\widehat{Q}_n(z)\big\}^{\infty}_{n=0}$, $\deg Q_n = \deg \widehat{Q}_n = n$, satisfying the orthogonality relations
	\begin{equation*}
		\int_{\T} Q_n(\ze)\ze^{-m}\phi(\ze)\frac{\dd \ze}{2\pi \ii \ze}=\varkappa^{-1}_n \de_{nm}, \qquad m =0, \dots,n,
	\end{equation*}
	and
	\begin{equation*}
		\int_{\T} \widehat{Q}_n\big(\ze^{-1}\big)\ze^{m}\phi(\ze)\frac{\dd \ze}{2\pi \ii \ze}=\varkappa^{-1}_n \de_{nm}, \qquad m =0, \dots,n,
	\end{equation*}
	where $\varkappa_n \neq 0$ is the leading coefficient of both $Q_n$ and $\widehat{Q}_n$. If $D_n[\phi] \neq 0$ and $D_{n+1}[\phi] \neq 0$, the polynomials $Q_n$ and $\widehat{Q}_n$ uniquely exist and are given by
	\begin{equation}\label{Toeplitz OP 1}
		Q_n(z):= \frac{1}{\sqrt{D_n[\phi] D_{n+1}[\phi]}} \det \begin{bmatrix}
			\phi_0 & \phi_{-1} & \cdots & \phi_{-n} \\
			\phi_1 & \phi_{0} & \cdots & \phi_{-n+1} \\
			\vdots & \vdots & \ddots & \vdots \\
			\phi_{n-1} & \phi_{n-2} & \cdots & \phi_{-1} \\
			1 & z & \cdots & z^n
		\end{bmatrix},
	\end{equation}
	and
	\begin{equation}\label{Toeplitz OP 2}
		\widehat{Q}_n(z) := \frac{1}{\sqrt{D_n[\phi] D_{n+1}[\phi]}} \det \begin{bmatrix}
			\phi_0 & \phi_{-1} & \cdots & \phi_{-n+1} & 1 \\
			\phi_1 & \phi_{0} & \cdots & \phi_{-n+2} & z \\
			\vdots & \vdots & \ddots & \vdots \\
			\phi_{n} & \phi_{n-1} & \cdots & \phi_{1} & z^n
		\end{bmatrix},
	\end{equation}
	and in addition
	\begin{equation}\label{varkappa}
		\varkappa_n = \sqrt{\frac{D_{n}[\phi]}{D_{n+1}[\phi]}}, \qquad n \in \N \cup \{0\},
	\end{equation}
	where we set $D_0[\phi] \equiv 1$. The existence is clear by the construction \eqref{Toeplitz OP 1}--\eqref{Toeplitz OP 2}, while the uniqueness follows from the unique solvability for the linear system for finding the polynomial coefficients. This linear system has the Toeplitz matrix $T_{n+1}[\phi]$ as its coefficient matrix and thus $D_{n+1}[\phi]\neq 0$ implies unique solvability of the linear system. Moreover, if $D_j[\phi] \neq 0$ for all~${j=0,1,2, \ldots}$, these polynomials as constructed above satisfy the bi-orthogonality condition
	\begin{equation}\label{biorthogonality}
		\int_{\T} Q_k(\ze)\widehat{Q}_m\big(\ze^{-1}\big)\phi(\ze)\frac{\dd \ze}{2\pi \ii \ze}= \de_{km}, \qquad k,m \in \N \cup \{0\}.
	\end{equation}

	Assume that $D_k[\phi] \neq 0$ for $k=n-1, n, n+1$ so that (a) $\varkappa_n$ and $\varkappa_{n-1}$ are well defined and nonzero and (b) the polynomials $Q_n$ and $\widehat{Q}_{n-1}$ uniquely exist as described above. Now consider the matrix-valued function
	\begin{equation}\label{Toeplitz-OP-solution}
		X(z;n):=\begin{bmatrix}
			\varkappa_n^{-1} Q_n(z) & \displaystyle \varkappa^{-1}_n \int_{\T} \frac{Q_n(\ze)}{(\ze-z)} \frac{\phi(\ze)\dd \ze}{2\pi \ii \ze^n} \vspace{1mm}\\
			-\varkappa_{n-1}z^{n-1}\widehat{Q}_{n-1}\big(z^{-1}\big) & \displaystyle -\varkappa_{n-1} \int_{\T} \frac{\widehat{Q}_{n-1}\big(\ze^{-1}\big)}{(\ze-z)} \frac{\phi(\ze)\dd \ze}{2\pi \ii \ze}
		\end{bmatrix},
	\end{equation}
	constructed from the polynomials $Q_n$ and $\widehat{Q}_n$. It is due to Baik, Deift, and Johansson \cite{BDJ} that~$X$ as constructed above satisfies the Riemann--Hilbert problem {\rm RH-X1} through {\rm RH-X3}.
	
In the rest of this section, we demonstrate the utilization of the Dodgson condensation identity in reducing \eqref{btdmulti} to a number of bordered Toeplitz determinants, which, in view of the results in~\cite{BEGIL}, paves the way for an effective asymptotic analysis. As this paper aims to provide the general framework, we start with the simplest nontrivial case, which is $m=2$. We will then discuss the recursive nature of our method and how large-size asymptotic analysis for higher values of $m$ can be obtained using essentially the same ideas involved in the case $m=2$.

	Our first objective in this work is to obtain a Riemann--Hilbert representation for the Toeplitz determinants with two borders. To this end, let us assume that $\phi$ is of Szeg\H{o}-type and the border symbols $\psi_1$ and $\psi_2$ are analytic in a neighborhood of the unit circle and consider
	\begin{equation*}
		D^B_n[\phi;\boldsymbol{\psi}_2] = \det \begin{bmatrix}
			\phi_0& \phi_{1} & \cdots & \phi_{n-3} & \psi_{1,n-1} & \psi_{2,n-1} \\
			\phi_{-1}& \phi_0 & \cdots & \phi_{n-4} & \psi_{1,n-2} & \psi_{2,n-2} \\
			\vdots & \vdots & \ddots & \vdots & \vdots & \vdots \\
			\phi_{-n+3} & \phi_{-n+4} & \cdots & \phi_{0} & \psi_{1,2} & \psi_{2,2}\\
			\phi_{-n+2} & \phi_{-n+3} & \cdots & \phi_{-1} & \psi_{1,1} & \psi_{2,1} \\
			\phi_{-n+1} & \phi_{-n+2} & \cdots & \phi_{-2} & \psi_{1,0} & \psi_{2,0}
		\end{bmatrix}.
	\end{equation*}
	For simplicity of notation in this section, we denote $D^B_n[\phi;\boldsymbol{\psi}_2] \equiv \mathscr{D}$.
	Recalling \eqref{DODGSON}, let us consider
	\begin{equation}\label{DODGSON3}
		\mathscr{D} \cdot \mathscr{D}\left\lbrace \begin{matrix} 0 & n-1 \\ n-2& n-1 \end{matrix} \right\rbrace = \mathscr{D}\left\lbrace \begin{matrix} 0 \\ n-2 \end{matrix} \right\rbrace \cdot \mathscr{D}\left\lbrace \begin{matrix} n-1 \\ n-1 \end{matrix} \right\rbrace - \mathscr{D}\left\lbrace \begin{matrix} 0 \\ n-1 \end{matrix} \right\rbrace \cdot \mathscr{D}\left\lbrace \begin{matrix} n-1 \\ n-2 \end{matrix} \right\rbrace,
	\end{equation}
	where
\begin{equation*}\label{zphi}
		\mathscr{D}\left\lbrace \begin{matrix} 0 & n-1 \\ n-2& n-1 \end{matrix} \right\rbrace \equiv D_{n-2}[z \phi]
	\end{equation*} is a pure Toeplitz determinant and all determinants on the right-hand side are bordered Toeplitz determinants. Indeed,
	\begin{equation}\label{easy btds}
		\mathscr{D}\left\lbrace \begin{matrix} n-1 \\ n-1 \end{matrix} \right\rbrace \equiv D^B_{n-1}\big[\phi;z^{-1}\psi_1\big], \qquad \mathscr{D}\left\lbrace \begin{matrix} n-1 \\ n-2 \end{matrix} \right\rbrace \equiv D^B_{n-1}\big[\phi;z^{-1}\psi_2\big].
	\end{equation}
	However, the \textit{bulk symbol} for the other two bordered determinants has a nonzero winding number, more precisely we have
	\begin{equation}\label{harder btds}
		\mathscr{D}\left\lbrace \begin{matrix} 0 \\ n-2 \end{matrix} \right\rbrace \equiv D^B_{n-1}[z\phi;\psi_2], \qquad \mathscr{D}\left\lbrace \begin{matrix} 0 \\ n-1 \end{matrix} \right\rbrace \equiv D^B_{n-1}[z\phi;\psi_1].
	\end{equation}
	Using these, we can rewrite \eqref{DODGSON3} as
	\begin{equation}\label{Dodgson 2-bordered}
		D^B_n[\phi;\boldsymbol{\psi}_2] = D^B_{n-1}\big[\phi;z^{-1}\psi_1\big] \frac{D^B_{n-1}[z\phi;\psi_2]}{D_{n-2}[z \phi]} - D^B_{n-1}\big[\phi;z^{-1}\psi_2\big] \frac{D^B_{n-1}[z\phi;\psi_1]}{D_{n-2}[z \phi]}.
	\end{equation}
	
	\begin{Remark}
		Alternatively, we could consider the following Dodgson condensation identity
		\begin{equation}\label{DODGSON2}
			\mathscr{D} \cdot \mathscr{D}\left\lbrace \begin{matrix} n-2 & n-1 \\ n-2& n-1 \end{matrix} \right\rbrace = \mathscr{D}\left\lbrace \begin{matrix} n-2 \\ n-2 \end{matrix} \right\rbrace \cdot \mathscr{D}\left\lbrace \begin{matrix} n-1 \\ n-1 \end{matrix} \right\rbrace - \mathscr{D}\left\lbrace \begin{matrix} n-2 \\ n-1 \end{matrix} \right\rbrace \cdot \mathscr{D}\left\lbrace \begin{matrix} n-1 \\ n-2 \end{matrix} \right\rbrace.
		\end{equation}
		Notice that $\mathscr{D}\left\lbrace \begin{smallmatrix} n-2 & n-1 \\ n-2& n-1 \end{smallmatrix} \right\rbrace$ is the pure Toeplitz determinant $D_{n-2}[\phi]$, $ \mathscr{D}\left\lbrace \begin{smallmatrix} n-1 \\ n-1 \end{smallmatrix} \right\rbrace$ and $ \mathscr{D}\left\lbrace \begin{smallmatrix} n-1 \\ n-2 \end{smallmatrix} \right\rbrace$ are respectively the bordered Toeplitz determinants
		\[ D^B_{n-1}\big[\phi;z^{-1}\psi_1\big],\qquad D^B_{n-1}\big[\phi;z^{-1}\psi_2\big],\]
		and $\mathscr{D}\left\lbrace \begin{smallmatrix} n-2 \\ n-1 \end{smallmatrix} \right\rbrace$ and $ \mathscr{D}\left\lbrace \begin{smallmatrix} n-2 \\ n-2 \end{smallmatrix} \right\rbrace$ are semi-framed Toeplitz determinants (see Section~\ref{section framed}).\footnote{These are respectively $\mathscr{E}_{n-1}\big[\phi;z^{-2}\psi_1,z^{-2}\phi;\psi_{1,0}\big]$ and $\mathscr{E}_{n-1}\big[\phi;z^{-2}\psi_2,z^{-2}\phi;\psi_{2,0}\big]$, where $\mathscr{E}_n[\phi;\psi,\eta;a]$ is introduced in \eqref{half-framed}.}	Therefore, this DCI has the advantage that we do not need to deal with a bulk symbol with non-zero winding number, but its disadvantage is that it relates two-bordered Toeplitz determinants to semi-framed ones, which are, as we will see in Section~\ref{section framed}, more complicated objects. This is evident in the fact that the bordered Toeplitz determinants are characterized by BOPUC themselves, while the semi-framed ones are characterized by the reproducing kernel of BOPUC (see Theorem \ref{semis in terms of RepKer intro}). However, from the view point of obtaining the desired asymptotics, each of the two DCIs \eqref{DODGSON3} or \eqref{DODGSON2} can be taken as the starting point.
\end{Remark}
	
In the rest of this section, we choose to concentrate on the Dodgson condensation identity~\eqref{DODGSON3}. The asymptotics of the bordered Toeplitz determinants in \eqref{easy btds} can be obtained by rather straight-forward modifications of the findings in \cite{BEGIL}. However, the asymptotics of the bordered Toeplitz determinants in \eqref{harder btds} are more challenging as the bulk symbol $z \phi(z)$ has a~nonzero winding number. Notice that this is an instance of a~\textit{non-degenerate} Fisher Hartwig singularity at $z=1$ with the parameters $\be=1$ and $\al=0$ (see \cite{DIK} for more details). We know that the asymptotics of $D_n[z \phi]$ can be obtained from \cite[Lemma 2.4]{DIK}, which in particular states that
	\begin{equation}\label{DIK zphi}
		D_n[z \phi] = (-1)^n \frac{Q_n(0)}{\varkappa_n}D_n[\phi], \qquad n \geq N_0,
	\end{equation}
	provided that there exists a fixed $N_0 \geq 0$ such that for all $n \geq N_0$ the Toeplitz determinants~$D_n[\phi]$ are nonzero, and $Q_k(0) \neq 0$ for $k=N_0, N_0+1, \dots, n-1$.
	
	However, for the ultimate goal of finding the asymptotics of the right-hand side of \eqref{Dodgson 2-bordered}, it turns out that we do not need to use \eqref{DIK zphi} for our calculations, at least for the symbols $\psi_1$ and~$\psi_2$ of the form \eqref{general psi}--\eqref{q1 q2}. This is because for such symbols we can obtain the asymptotics~of
	\begin{equation}\label{ratios of dets}
		\frac{D^B_{n-1}[z\phi;\psi_2]}{D_{n-2}[z \phi]},\qquad \frac{D^B_{n-1}[z\phi;\psi_1]}{D_{n-2}[z \phi]}
	\end{equation}
	in terms of the solution of an asymptotically tractable Riemann--Hilbert problem. More precisely, to find the asymptotics of ratios in \eqref{ratios of dets}, we need to find the solution of the $X$-RHP when~$\phi$ is replaced by $z\phi$. We call this the $Z$-RHP and using two distinct approaches we prove in Theorems~\ref{thm Z-RHP intro} and \ref{thmXZ intro} how to construct its solution in terms of the solution to the $X$-RHP. Once we have all the above ingredients, we can find the desired asymptotics of $		D^B_n[\phi;\boldsymbol{\psi}_2]$.

\subsection{Proofs of Theorems \ref{thm Z-RHP intro} and \ref{thmXZ intro}}
	
	In this section, we will write $Z(z;n)$ to refer to the solution of the $X$-RHP when $\phi$ is replaced by $z\phi$. More precisely, $Z(z;n)$ satisfies
	\begin{itemize}\itemsep=0pt
		\item RH-Z1: 
$Z(\cdot;n)\colon \C\setminus \T \to \C^{2\times2}$ is analytic,
		\item RH-Z2: 
The limits of $Z(\ze;n)$ as $\ze$ tends to $z \in \T $ from the inside and outside of the unit circle exist, and are denoted $Z_{\pm}(z;n)$ respectively and are related by
		\[
			Z_+(z;n)=Z_-(z;n)\begin{bmatrix}
				1 & z^{-n+1}\phi(z) \\
				0 & 1
			\end{bmatrix}, \qquad z \in \T,
		\]
		
		\item RH-Z3: 
$Z(z;n)=\big( I + O\big(z^{-1}\big) \big) z^{n \sigma_3}$ as $z \to \infty$.
	\end{itemize}
	
	To fix the notation, let us consider the system of bi-orthogonal polynomials on the unit circle~$\{P_k(z)\}^{\infty}_{k=0}$ and $\big\{\widehat{P}_k(z)\big\}^{\infty}_{k=0}$, $\deg P_k = \deg \widehat{P}_k = k$, satisfying
	\begin{equation}\label{orth zphi}
		\int_{\T} P_n(\ze)\ze^{-m}\ze\phi(\ze)\frac{\dd \ze}{2\pi \ii \ze}= \frac{1}{\varkappa_n[z\phi]} \de_{nm}, \qquad m =0, \dots,n,
	\end{equation}
	and
	\[
		\int_{\T} \widehat{P}_n\big(\ze^{-1}\big)\ze^{m}\ze\phi(\ze)\frac{\dd \ze}{2\pi \ii \ze}=\frac{1}{\varkappa_n[z\phi]} \de_{nm}, \qquad m =0, \dots,n,
	\]
	where $\varkappa_n[z\phi] \neq 0$ is the leading coefficient of both $P_n$ and $\widehat{P}_n$. If $D_n[z\phi] \neq 0$ and $D_{n+1}[z\phi] \neq 0$ the polynomials $P_n$ and $\widehat{P}_n$ uniquely exist\footnote{For the same reason described earlier for the existence and uniqueness of $Q_n$ and $\widehat{Q}_n$.} and are given by
	\begin{equation}\label{Toeplitz OP zphi}
		P_n(z):= \frac{1}{\sqrt{D_n[z\phi] D_{n+1}[z\phi]}} \det \begin{bmatrix}
			(z\phi)_0 & (z\phi)_{-1} & \cdots & (z\phi)_{-n} \\
			(z\phi)_1 & (z\phi)_{0} & \cdots & (z\phi)_{-n+1} \\
			\vdots & \vdots & \ddots & \vdots \\
			(z\phi)_{n-1} & (z\phi)_{n-2} & \cdots & (z\phi)_{-1} \\
			1 & z & \cdots & z^n
		\end{bmatrix},
	\end{equation}
	and
	\begin{gather*}
		\widehat{P}_n(z) := \frac{1}{\sqrt{D_n[z\phi] D_{n+1}[z\phi]}} \det \begin{bmatrix}
			(z\phi)_0 & (z\phi)_{-1} & \cdots & (z\phi)_{-n+1} & 1 \\
			(z\phi)_1 & (z\phi)_{0} & \cdots & (z\phi)_{-n+2} & z \\
			\vdots & \vdots & \ddots & \vdots \\
			(z\phi)_{n} & (z\phi)_{n-1} & \cdots & (z\phi)_{1} & z^n
		\end{bmatrix},
	\end{gather*}
	and moreover,
	\begin{equation}\label{kappa zphi}
		\varkappa_n[z\phi] = \sqrt{\frac{D_{n}[z\phi]}{D_{n+1}[z\phi]}}, \qquad n \in \N \cup \{0\}, \qquad D_0[z\phi] \equiv 1.
	\end{equation}

	As expected and similar to the relationship of the $X$-RHP and the polynomials $Q_n$ and $\widehat{Q}_n$, if we assume that $D_k[z\phi] \neq 0$ for $k=n-1, n, n+1$, the following matrix-valued function constructed out of $P$ and $\widehat{P}$ satisfies the $Z$-RHP:
	\begin{equation}\label{Toeplitz-OP-solution zphi}
		Z(z;n)=\begin{bmatrix}
			\di	\frac{1}{\varkappa_n[z\phi]} P_n(z) & \di \frac{1}{\varkappa_n[z\phi]} \int_{\T} \frac{P_n(\ze)}{(\ze-z)} \frac{\ze \phi(\ze)\dd \ze}{2\pi \ii \ze^n} \\[8pt]
			-\varkappa_{n-1}[z\phi]z^{n-1}\widehat{P}_{n-1}\big(z^{-1}\big) & \di -\varkappa_{n-1}[z\phi] \int_{\T} \frac{\widehat{P}_{n-1}\big(\ze^{-1}\big)}{(\ze-z)} \frac{\ze \phi(\ze)\dd \ze}{2\pi \ii \ze}
		\end{bmatrix}.
	\end{equation}
	
However, one can find an explicit relation relating the solution of the $Z$-RHP to the solution of the $X$-RHP which can be directly analyzed by the Deift--Zhou nonlinear steepest descent method \cite{DZ}. This means that from the asymptotic analysis of the X-RHP we can obtain the asymptotics of the $Z$-RHP. One way of making this connection is shown in Theorem \ref{thmXZ intro}, which is based upon shifting in the index $n$. Instead, there is an alternative way\footnote{Based on the idea used in \cite{GI2}.} which will yield a simpler connection between the solution of the $Z$-RHP to the solution of the $X$-RHP. To describe this idea more generally, let us consider the Riemann--Hilbert problem
	\begin{itemize}\itemsep=0pt
		\item RH-Y1: 
$Y(\cdot;n,r)\colon\C\setminus \T \to \C^{2\times2}$ is analytic,
		\item RH-Y2: 
The limits of $Y(\ze;n,r)$ as $\ze$ tends to $z \in \T $ from the inside and outside of the unit circle exist, and are denoted $Y_{\pm}(z;n,r)$ respectively and are related by
		\[
			Y_+(z;n,r)=Y_-(z;n,r)\begin{bmatrix}
				1 & z^{-n+r}\phi(z) \\
				0 & 1
			\end{bmatrix}, \qquad z \in \T,
		\]
		
		\item RH-Y3: 
$Y(z;n,r)=\big( I + O\big(z^{-1}\big) \big) z^{n \sigma_3}$ as $z \to \infty$.
	\end{itemize}
	Using the standard Liouville's theorem arguments, we have the following uniqueness result.
	\begin{Lemma}\label{uniqueness Y}
		The solution of the Riemann--Hilbert problem {\rm RH-Y1}--{\rm RH-Y3} is unique, if it exists.
	\end{Lemma}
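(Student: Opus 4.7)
The plan is to run the standard two-step Liouville argument that is ubiquitous for Riemann--Hilbert problems of this type. Suppose $Y_1(z;n,r)$ and $Y_2(z;n,r)$ are two solutions of RH-Y1--RH-Y3. The first step is to verify that $\det Y_j$ is identically $1$ for $j=1,2$. Since the jump matrix
\[
\begin{bmatrix} 1 & z^{-n+r}\phi(z) \\ 0 & 1 \end{bmatrix}
\]
has determinant $1$, the scalar function $\det Y_j$ has no jump across $\T$ and hence extends to an entire function. The normalization in RH-Y3 together with $\operatorname{tr}\sigma_3 = 0$ gives $\det Y_j(z) = 1 + O(z^{-1})$ as $z\to\infty$, so by Liouville's theorem $\det Y_j \equiv 1$. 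In particular $Y_2(z;n,r)$ is invertible off of $\T$.

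The second step is to form $R(z):=Y_1(z;n,r) Y_2(z;n,r)^{-1}$ and show $R\equiv I$. On the unit circle, using the common jump matrix $J$,
\[
R_+(z) = Y_{1,+}(z)Y_{2,+}(z)^{-1} = Y_{1,-}(z)J(z)J(z)^{-1}Y_{2,-}(z)^{-1} = R_-(z),
\]
so $R$ has no jump across $\T$ and the analyticity in RH-Y1 then upgrades $R$ to an entire $2\times 2$ matrix-valued function. The asymptotic condition RH-Y3 yields
\[
R(z) = \bigl(I+O(z^{-1})\bigr)z^{n\sigma_3} \cdot z^{-n\sigma_3}\bigl(I+O(z^{-1})\bigr) = I + O(z^{-1}), \qquad z\to\infty.
\]
Hence each entry of $R(z) - I$ is entire and vanishes at infinity, so by Liouville's theorem $R\equiv I$, i.e., $Y_1 = Y_2$.

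There is no real obstacle here; the only subtlety is that one must use the normalization at infinity to first conclude $\det Y_j \equiv 1$ before dividing. Both steps are routine applications of Liouville's theorem, and the same scheme will apply verbatim to the $X$-RHP and the $Z$-RHP as special cases ($r=0$ and $r=1$, respectively).
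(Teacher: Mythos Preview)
Your proof is correct and is precisely the standard Liouville argument that the paper invokes; the paper does not spell out the details but simply states that the result follows ``using the standard Liouville's theorem arguments,'' which is exactly the two-step scheme you wrote out.
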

	
	Define the function
	\begin{equation}\label{WY}
		W(z;n,r):= Y(z;n,r) \begin{bmatrix}
			1 & 0 \\
			0 & z^{-r}
		\end{bmatrix}.
	\end{equation}
	It can be readily checked that $W(z;n,r)$ satisfies the same jump condition on the unit circle as~$X(z;n)$. Therefore, the function $\mathscr{R}(z;n,r) := W(z;n,r)X^{-1}(z;n)$
	must be a meromorphic function with singular behaviour only at $z=0$ and $\infty$. For a fixed value of $r \in \Z$, one can find the function $\mathscr{R}(z;n,r)$ explicitly in terms of the $X$-RHP data. The idea presented in the proof of the following theorem can be used to connect $Y(z;n,r)$ to $X(n,z)$ for any $r \in \Z$. However, for two reasons we only consider the case $r=1$; firstly, because it is the simplest nontrivial case (besides $r=-1$) for which the main idea can be brought forth, and secondly because it is naturally related to the problem of asymptotic analysis of two-bordered Toeplitz determinants considered in this section.
	
\subsubsection{Proof of Theorem \ref{thm Z-RHP intro}}\label{sec proof of thm 1.6}
	Notice that
	\begin{equation}\label{YZ}
		Y(z;n,1) \equiv Z(z;n).
	\end{equation}

	We can directly see that the behavior of $\mathscr{R}(z;n,1)$ as $z \to 0$ and $z \to \infty$ are respectively given~by
	\begin{align*}
		\mathscr{R}(z;n,1) & = Z(0;n) \begin{bmatrix}
			0 & 0 \\
			0 & z^{-1}
		\end{bmatrix} X^{-1}(0;n) + O(1) \qasq z \to 0, \\
		\mathscr{R}(z;n,1) & = \begin{bmatrix}
			1 & 0 \\
			0 & 0
		\end{bmatrix} + O\big(z^{-1}\big) \qasq z \to \infty.
	\end{align*}
	Therefore, by the Liouville's theorem, we have
	\begin{equation*}
		\mathscr{R}(z;n,1) = Z(0;n) \begin{bmatrix}
			0 & 0 \\
			0 & z^{-1}
		\end{bmatrix} X^{-1}(0;n) + \begin{bmatrix}
			1 & 0 \\
			0 & 0
		\end{bmatrix},
	\end{equation*}
	or
	\begin{equation*}
		W(z;n,1) = \left[ Z(0;n) \begin{bmatrix}
			0 & 0 \\
			0 & z^{-1}
		\end{bmatrix} X^{-1}(0;n) + \begin{bmatrix}
			1 & 0 \\
			0 & 0
		\end{bmatrix} \right] X(z;n).
	\end{equation*}
	In view of \eqref{YZ} and \eqref{WY}, this can be rewritten as
	\begin{equation}\label{ZX}
		Z(z;n) = \left[ Z(0;n) \begin{bmatrix}
			0 & 0 \\
			0 & z^{-1}
		\end{bmatrix} X^{-1}(0;n) + \begin{bmatrix}
			1 & 0 \\
			0 & 0
		\end{bmatrix} \right] X(z;n) \begin{bmatrix}
			1 & 0 \\
			0 & z
		\end{bmatrix}.
	\end{equation}
	Let \begin{equation*}
		Z(0;n) = \begin{bmatrix}
			A & B \\
			C & D
		\end{bmatrix}.
	\end{equation*}
	Since
	\[ Z(0;n) \begin{bmatrix}
		0 & 0 \\
		0 & z^{-1}
	\end{bmatrix} = \begin{bmatrix}
		0 & Bz^{-1} \\
		0 & Dz^{-1}
	\end{bmatrix}, \]
	the formula \eqref{ZX} implies that in order to express $Z(z;n)$ purely in terms of $X$-RHP data, we only need to find the unknowns $B$ and $D$ in terms of data from the $X$-RHP. Indeed, we can do so by requiring the right-hand side of \eqref{ZX} to behave according to {\rm RH-Z3}. In fact, using {\rm RH-X3}, and the fact that\footnote{Note that $\det X(z;n) \equiv 1$.}
	\begin{equation*}
		X^{-1}(0;n) = \begin{bmatrix}
			X_{22}(0;n) & -X_{12}(0;n) \\
			-X_{21}(0;n) & X_{11}(0;n)
		\end{bmatrix},
	\end{equation*}
	we find that the right-hand side of \eqref{ZX} behaves like
	\begin{equation*}
		Z(z;n) = \left[ \begin{bmatrix}
			1 & B X_{11}(0;n)+\overset{\infty}{X}_{1,12}(n) \\
			0 & D X_{11}(0;n)
		\end{bmatrix} + O\big(z^{-1}\big) \right] z^{n\sigma_3}.
	\end{equation*}
	Comparing this with {\rm RH-Z3} yields
	\begin{align*}
		B = - \frac{\overset{\infty}{X}_{1,12}(n)}{X_{11}(0;n)}, \qquad
		D = \frac{1}{X_{11}(0;n)}.
	\end{align*}
	Using these in \eqref{ZX} yields the desired result \eqref{ZX1}.
	
	Now, we show that $X_{11}(0;n)$ is indeed nonzero. Since the $X$-RHP and the $Z$-RHP are special cases of the $Y$-RHP, we work with the more general $Y$-RHP first and then specialize the values of $r$ to discuss $X$ and $Z$ RHPs. For a fixed value of $r \in \Z$, assume that the solution to the Riemann--Hilbert problem {\rm RH-Y1}--{\rm RH-Y3} exists. By Lemma \ref{uniqueness Y}, this solution must be unique. By {\rm RH-Y1} and {\rm RH-Y2}, we conclude that $Y_{11}(z;n,r)$ is an entire function and {\rm RH-Y3} implies that it must be a monic polynomial of degree $n$,
	\begin{equation}\label{Y11 poly}
		Y_{11}(z;n,r) = z^n + \sum_{j=0}^{n} \be_j z^j.
	\end{equation}
Now we focus on $Y_{12}(z;n,r)$. From {\rm RH-Y2} and the Plemelj--Sokhotskii formula, we find
	\begin{equation}\label{Y12}
		Y_{12}(z;n,r) = \int_{\T} \frac{Y_{11}(s;n,r)s^{-n+r}\phi(s)}{s-z} \frac{\dd s}{2 \pi \ii}.
	\end{equation}
	From {\rm RH-Y3} and the asymptotic expansion of $Y_{12}(z;n,r)$ for large $|z|$, we find that $Y_{11}(z;n,r)$ must satisfy the orthogonality conditions
	\begin{equation}\label{ort conds}
		\int_{\T} Y_{11}(s;n,r)s^{r-\ell}\phi(s) \frac{\dd s}{2 \pi \ii s} = 0, \qquad \mbox{for} \quad \ell = 0,1, \dots, n-1.
	\end{equation}
	These orthogonality conditions give a linear system for determining the coefficients $\be_j$ in \eqref{Y11 poly}. The coefficient matrix for this linear system is precisely~${T_n[z^r\cdot \phi]}$. Since we have assumed~$Y$ has a~solution, this linear system must have a solution and since~$Y$ has a unique solution by Lemma~\ref{uniqueness Y}, this linear system must have a unique solution as well. This implies that~${T_n[z^r\cdot \phi]}$ is invertible and thus $D_{n}[z^r\phi] \neq 0$.
	In particular, assuming the existence of the solution ${X(z;n) \equiv Y(z;n,0)}$ of the Riemann--Hilbert problem {\rm RH-X1} through {\rm RH-X3} and the existence of the solution~${Z(z;n) \equiv Y(z;n,1)}$ of the Riemann--Hilbert problem {\rm RH-Z1} through {\rm RH-Z3} imply that $D_n[\phi] \neq 0$ and $D_n[z\phi] \neq 0$.
In particular, assuming the existence of the solution $X(z;n) \equiv Y(z;n,0)$ of the Riemann--Hilbert problem {\rm RH-X1} through {\rm RH-X3} and the existence of the solution $Z(z;n) \equiv Y(z;n,1)$ of the Riemann--Hilbert problem {\rm RH-Z1} through {\rm RH-Z3} imply that
	\begin{equation}\label{nonzero dets}
		D_{n}[\phi] \neq 0 \qandq D_n[z\phi] \neq 0.
	\end{equation}
For $r=0$, the unique monic polynomial of degree $n$ satisfying the orthogonality conditions~\eqref{ort conds} is%
	\begin{equation}\label{X11}
		Y_{11}(z;n,0)=X_{11}(z;n) = \frac{1}{D_n[\phi]} \det \begin{bmatrix}
			\phi_0 & \phi_{-1} & \cdots & \phi_{-n} \\
			\phi_1 & \phi_{0} & \cdots & \phi_{-n+1} \\
			\vdots & \vdots & \ddots & \vdots \\
			\phi_{n-1} & \phi_{n-2} & \cdots & \phi_{-1} \\
			1 & z & \cdots & z^n
		\end{bmatrix}.
	\end{equation}
	Notice that
	\begin{equation*}
		X_{11}(0;n) = (-1)^n\frac{D_n[z\phi]}{D_n[\phi]},
	\end{equation*}
	which is nonzero and well defined due to \eqref{nonzero dets}. This finishes the proof of Theorem \ref{thm Z-RHP intro}.

\subsubsection{Proof of Theorem \ref{thmXZ intro}}\label{sec proof of thm 1.8}
	
	Recalling {\rm RH-X1}, it is obvious that $Z(z;n)$ as defined by \eqref{Z in terms of X1} satisfies {\rm RH-Z1}. From \eqref{Z in terms of X1}, it is clear that $Z(z;n)$ and $X(z;n-1)$ satisfy the same jump condition on the unit circle since~${X(z;n-1)}$ is multiplied by a holomorphic function on the left. Notice that from {\rm RH-X2}, we have
	\begin{equation*}
		\begin{split}
			X^{-1}_-(z;n-1)X_+(z;n-1) = \begin{bmatrix}
				1 & z^{-n+1}\phi(z) \\
				0 & 1
			\end{bmatrix},
		\end{split}
	\end{equation*}
	and therefore $Z(z;n)$ as defined by \eqref{Z in terms of X1} satisfies {\rm RH-Z2}. Recalling {\rm RH-X3}, as $z \to \infty$ for the right-hand side of \eqref{Z in terms of X1} we have
	\begin{align*}
			\text{r.h.s. of \eqref{Z in terms of X1}} ={}&
			\begin{bmatrix}
				z + \overset{\infty}{X}_{1,22}(n-1) - \tfrac{\overset{\infty}{X}_{2,12}(n-1)}{\overset{\infty}{X}_{1,12}(n-1)} & -\overset{\infty}{X}_{1,12}(n-1) \\
				\tfrac{1}{\overset{\infty}{X}_{1,12}(n-1)} & 0
			\end{bmatrix} \\ & \times \left(\di I+\frac{ \overset{\infty}{X}_1(n-1)}{z}+\frac{\overset{\infty}{X}_2(n-1)}{z^2} + O\big(z^{-3}\big)\right) \begin{bmatrix}
				z^{-1} & 0 \\
				0 & z
			\end{bmatrix} z^{n \sigma_3} \\ ={}& \left( I + O\big(z^{-1}\big) \right) z^{n \sigma_3}.
	\end{align*}
	Therefore, $Z(z;n)$ as defined by \eqref{Z in terms of X1} satisfies {\rm RH-Z3} as well, and hence is the unique solution of the $Z$-RHP.
	
	Finally, we show that $\overset{\infty}{X}_{1,12}(n-1)$ is indeed nonzero. From {\rm RH-X3}, \eqref{Y12} and \eqref{ort conds}, we find
	\begin{equation*}
		\overset{\infty}{X}_{1,12}(n) = - 	\int_{\T} X_{11}(s;n)s\phi(s) \frac{\dd s}{2 \pi \ii s}.
	\end{equation*}
	From this and \eqref{X11}, we have the determinantal representation
	\begin{equation*}
		\overset{\infty}{X}_{1,12}(n) = - \frac{1}{D_n[\phi]}	\int_{\T} \det \begin{bmatrix}
			\phi_0 & \phi_{-1} & \cdots & \phi_{-n} \\
			\phi_1 & \phi_{0} & \cdots & \phi_{-n+1} \\
			\vdots & \vdots & \ddots & \vdots \\
			\phi_{n-1} & \phi_{n-2} & \cdots & \phi_{-1} \\
			1 & s & \cdots & s^n
		\end{bmatrix}s\phi(s) \frac{\dd s}{2 \pi \ii s} = (-1)^{n+1} \frac{D_{n+1}[z\phi]}{D_n[\phi]},
	\end{equation*} by integrating along the last row and performing $n$ adjacent row swaps. Therefore,
	\begin{equation*}
		\overset{\infty}{X}_{1,12}(n-1) = (-1)^{n} \frac{D_{n}[z\phi]}{D_{n-1}[\phi]}.
	\end{equation*}
	By the exact same argument presented in Section~\ref{sec proof of thm 1.6}, we know that assuming the existence of the solution $X(z;n-1) \equiv Y(z;n-1,0)$ of the Riemann--Hilbert problem {\rm RH-X1} through {\rm RH-X3} (with parameter $n-1$) and the existence of the solution $Z(z;n) \equiv Y(z;n,1)$ of the Riemann--Hilbert problem {\rm RH-Z1} through {\rm RH-Z3} imply that
	\begin{equation*}
		D_{n-1}[\phi] \neq 0 \qandq D_n[z\phi] \neq 0.
	\end{equation*}
	This means that \smash{$\overset{\infty}{X}_{1,12}(n-1) $} is nonzero and well defined. We have thus concluded the proof of Theorem \ref{thmXZ intro}.
	\begin{Remark} 	It is worthwhile to highlight that we would prefer \eqref{ZX1} over \eqref{Z in terms of X1} because in~\eqref{ZX1}, we only need to use data from one subleading term, \smash{$\overset{\infty}{X}_{1}$}, while in \eqref{Z in terms of X1}, we also need to extract data from \smash{$\overset{\infty}{X}_{2}$}. The compatibility of these two solutions is expected to give rise to identities involving $Q_n$ and $\widehat{Q}$. In this case, as expected, these identities are exactly the well-known recurrence relations for the system of bi-orthogonal polynomials on the unit circle. A~new proof for these identities is presented in Lemma~\ref{lemma 3term}.
	\end{Remark}
	
	\subsection{Proof of Theorem \ref{main thm 2-bordered}}\label{Sec 2.2}
	\subsubsection[Bordered Toeplitz determinants of the type \protect{$D^B_{n}\big[\phi;z^{-1}(q_1\phi+q_2)\big]$}]{Bordered Toeplitz determinants of the type $\boldsymbol{ D^B_{n}\big[\phi;z^{-1}(q_1\phi+q_2)\big]}$}
	Let us first recall the following elementary properties of the bordered Toeplitz determinants:
	\begin{gather}
		D^B_n\Bigg[\phi; \sum^m_{j=1} a_j \psi_j\Bigg] =\sum^m_{j=1} a_j D^B_n[\phi,\psi_j],\label{linear-combination}
\\
		D^B_{n}[\phi; \phi] = D_{n}[\phi],\label{border,phi,phi}
\\
		D^B_{n}[\phi;1] = D_{n-1}[\phi].\label{border,phi,const}
	\end{gather}

	Let us denote \begin{equation}\label{q0}
		q_0(z):= \frac{1}{z-c} \qandq \psi_0(z):=q_0(z)\phi(z).\end{equation}
	As a first step, it is useful to recall the description of $D^B_{N}[\phi;q_1\phi+q_2]$ in terms of the solution of the $X$-RHP as shown in \cite{BEGIL} which allows for an effective asymptotic analysis of such bordered Toeplitz determinants.
	
	\begin{Lemma}[\cite{BEGIL}]\label{lemma 3.2}
		The bordered Toeplitz determinant $D^B_{n+1}[\phi,q_0]$, is encoded into $X$-RHP data described by
		\begin{equation}\label{psi=1/z-c 0}
			D^B_{n+1}[\phi;q_0] = \begin{cases}
				0, & |c|<1, \\
				-c^{-n-1}D_{n}[\phi]X_{11}(c;n), & |c|>1,
			\end{cases}
		\end{equation}
		where $D_{n}[\phi]$ is given by \eqref{ToeplitzDet} and $X_{11}$ is the $11$ entry of the solution to {\rm RH-X1} through {\rm RH-X3}.
	\end{Lemma}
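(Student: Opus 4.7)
The plan is to compute the last column of $D^B_{n+1}[\phi;q_0]$ directly from the Laurent expansion of $q_{0}(z)=1/(z-c)$ on $\T$, and then match the resulting determinant against the explicit formula \eqref{X11} for $X_{11}(c;n)$. In the regime $|c|<1$, the expansion $1/(z-c)=\sum_{k\geq 0}c^{k}z^{-k-1}$ valid on the unit circle shows that $q_{0,j}=c^{-j-1}$ for $j\leq -1$ and $q_{0,j}=0$ for $j\geq 0$; in the regime $|c|>1$, the expansion $1/(z-c)=-c^{-1}\sum_{k\geq 0}(z/c)^{k}$ gives $q_{0,j}=-c^{-j-1}$ for $j\geq 0$ and $q_{0,j}=0$ for $j\leq -1$.

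For $|c|<1$, the last column of the $(n+1)\times(n+1)$ matrix defining $D^B_{n+1}[\phi;q_{0}]$ is $\bigl(q_{0,n},q_{0,n-1},\dots,q_{0,0}\bigr)^{\mathsf T}$; every one of these indices is nonnegative, so the column vanishes identically and the determinant is zero. For $|c|>1$, that same column becomes $\bigl(-c^{-n-1},-c^{-n},\dots,-c^{-1}\bigr)^{\mathsf T}=-c^{-n-1}\bigl(1,c,c^{2},\dots,c^{n}\bigr)^{\mathsf T}$, so pulling the common scalar $-c^{-n-1}$ out of the determinant leaves a $(n+1)\times(n+1)$ matrix consisting of the Toeplitz block of \eqref{btd0} (with $n\to n+1$) bordered on the right by the column of monomials $(1,c,c^{2},\dots,c^{n})^{\mathsf T}$. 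Since transposition leaves the determinant unchanged, this matrix is exactly the one appearing inside \eqref{X11} with $z=c$, and therefore the remaining determinant equals $D_{n}[\phi]\cdot X_{11}(c;n)$. Assembling the scalar prefactor yields $D^B_{n+1}[\phi;q_{0}]=-c^{-n-1}D_{n}[\phi]X_{11}(c;n)$ in this case.

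The argument needs no analytic input beyond the two geometric-series expansions of $q_{0}$ and no special properties of $\phi$. The only step requiring care is the transposition used to reconcile the last-column-border convention of \eqref{btd0} with the last-row-monomial convention of \eqref{X11}; once this bookkeeping is done, the two cases above deliver the stated dichotomy immediately.
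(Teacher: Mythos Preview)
Your proof is correct. The paper does not actually prove this lemma---it is cited from \cite{BEGIL}---but the direct computation you give (reading off the Fourier coefficients of $q_0$ from its Laurent expansion, then recognizing the resulting bordered determinant as $D_n[\phi]\,X_{11}(c;n)$ via the explicit formula \eqref{X11} after a transposition) is precisely the natural argument and matches the Fourier-coefficient formula the paper itself records later in \eqref{q0 Fourier Coefficients}.
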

	\begin{Corollary}[\cite{BEGIL}]\label{cor 1}
		We have
		\begin{gather*}
			D^B_{n+1}\Bigg[\phi;\di a+\frac{b_0}{z}+\sum^{m}_{j=1}\frac{b_j }{z-c_j}\Bigg] = D_n[\phi] \Bigg(a -\sum^{m}_{j=1 \atop |c_j|>1 }b_j c^{-n-1}_jX_{11}(c_j;n)\Bigg),
		\end{gather*}
		and for a Szeg\H{o}-type $\phi$
		\begin{gather*}
			D^B_{n+1}\Bigg[\phi;\di a+\frac{b_0}{z}+\sum^{m}_{j=1}\frac{b_j }{z-c_j}\Bigg] = G[\phi]^{n} E[\phi] \Bigg(a -\sum^{m}_{j=1 \atop |c_j|>1 }\frac{b_j}{c_j} \al(c_j)\Bigg)( 1 + O({\rm e}^{-\mathfrak{c}n})),
		\end{gather*}
		as $n \to \infty$, where $\al$ is given by \eqref{al}, and the constants $G[\phi]$ and $E[\phi]$ are given by \eqref{G and E} and~$\mathfrak{c}$ is some positive constant.
	\end{Corollary}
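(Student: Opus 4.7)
The plan is to reduce the corollary to Lemma~\ref{lemma 3.2} by exploiting linearity of the bordered Toeplitz determinant in its border symbol. By the linearity property \eqref{linear-combination}, I would first split
\begin{equation*}
D^B_{n+1}\Bigg[\phi;a + \frac{b_0}{z} + \sum_{j=1}^m \frac{b_j}{z-c_j}\Bigg] = a\, D^B_{n+1}[\phi;1] + b_0\, D^B_{n+1}\big[\phi;z^{-1}\big] + \sum_{j=1}^m b_j\, D^B_{n+1}\big[\phi;(z-c_j)^{-1}\big].
\end{equation*}
For the constant piece, \eqref{border,phi,const} (with $n$ shifted to $n+1$) gives $D^B_{n+1}[\phi;1] = D_n[\phi]$. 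For each simple-pole piece $(z-c_j)^{-1}$, Lemma~\ref{lemma 3.2} applies directly: the contribution vanishes when $|c_j|<1$ and equals $-c_j^{-n-1}D_n[\phi]X_{11}(c_j;n)$ when $|c_j|>1$. The term $b_0/z$ is the special case $c=0$ of that lemma, and since $|0|<1$ it contributes zero. Collecting these pieces yields the first (exact) identity of the corollary after factoring out $D_n[\phi]$.

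For the asymptotic statement I would combine two ingredients. The first is the classical strong Szeg\H{o} limit theorem, which under the Szeg\H{o}-type hypothesis supplies
\begin{equation*}
D_n[\phi] = G[\phi]^n E[\phi]\bigl(1 + O\bigl({\rm e}^{-\mathfrak{c}n}\bigr)\bigr), \qquad n \to \infty.
\end{equation*}
The second is the pointwise asymptotics of $X_{11}(c;n)$ for $c$ outside the closed unit disk, extracted from the Deift--Zhou nonlinear steepest descent analysis of the $X$-RHP (as carried out in \cite{BEGIL} and summarized in Appendix~\ref{Appendices}). For $|c|>1$, that analysis yields
\begin{equation*}
c^{-n-1} X_{11}(c;n) = \frac{\alpha(c)}{c}\bigl(1 + O\bigl({\rm e}^{-\mathfrak{c}n}\bigr)\bigr),
\end{equation*}
with $\alpha$ as in \eqref{al}. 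Substituting these two expansions into the exact identity and collecting leading terms produces the advertised formula.

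The only nontrivial analytic input is the pointwise asymptotics of $X_{11}(c;n)$ off the unit circle; everything else amounts to algebraic bookkeeping built on linearity, on the border identity \eqref{border,phi,const}, and on Lemma~\ref{lemma 3.2}. Those pointwise asymptotics are a standard outcome of the opening transformations (the $g$-function transformation to the global parametrix) of the steepest descent scheme for Szeg\H{o}-type symbols with zero winding number, so once Lemma~\ref{lemma 3.2} and the strong Szeg\H{o} theorem are in hand no real obstacle remains and the corollary follows in a couple of lines.
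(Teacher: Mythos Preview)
Your proposal is correct and follows essentially the same approach as the paper (and \cite{BEGIL}): linearity \eqref{linear-combination} together with \eqref{border,phi,const} and Lemma~\ref{lemma 3.2} give the exact identity, and the strong Szeg\H{o} theorem combined with $X_{11}(c;n)=\alpha(c)c^{n}(1+O(\rho^{-2n}))$ for $|c|>\rho$ (from \eqref{X in terms of R exact} in $\Om_\infty$) gives the asymptotic formula. One tiny remark: Lemma~\ref{lemma 3.2} is stated for $c\neq 0$, so rather than invoking it at $c=0$ you can simply note that $(z^{-1})_j=0$ for all $j\ge 0$, whence $D^B_{n+1}[\phi;z^{-1}]=0$ directly.
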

	
	\begin{Lemma}[\cite{BEGIL}]\label{psi=z}
		Let $\phi$ be of Szeg\H{o}-type. Then, as $n \to \infty$, we have
		\begin{gather*}
			D^B_{n+1}[\phi;z] = D_n[\phi] \left(-\frac{1}{2\pi \ii} \int_{\T} \ln(\phi(\tau)) \dd \tau + O({\rm e}^{-\mathfrak{c}n})\right),
		\end{gather*}
		for some positive constant $\mathfrak{c}$.
	\end{Lemma}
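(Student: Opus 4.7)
The plan is to recognize $\psi(z)=z$ as a degenerate instance of the symbol class \eqref{general psi}--\eqref{q1 q2} and apply Theorem \ref{main thm}. Taking $q_1 \equiv 0$ and $q_2(z)=z$ corresponds, in the notation of \eqref{q1 q2}, to $\hat{a}_1=1$ with all other parameters (including every $a_0,a_1,b_0,b_j,\hat{a}_0,\hat{b}_0,\hat{b}_j$) equal to zero; no poles $c_j$ appear, so the $\sum$ contributions in \eqref{ConstantF2} drop out entirely. Under these specializations, formula \eqref{ConstantF2} collapses to
\[
F[\phi;z]=-\frac{[\log \phi]_{-1}}{\al(0)}.
\]

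A brief calculation from \eqref{al} shows $\al(0)=\exp([\log \phi]_0)=G[\phi]$: only the constant Fourier mode of $\log \phi$ survives in the contour integral $\frac{1}{2\pi \ii}\int_{\T}\frac{\log \phi(\tau)}{\tau}\dd\tau$ by Cauchy's theorem. Hence $F[\phi;z]=-[\log\phi]_{-1}/G[\phi]$, and Theorem \ref{main thm} yields
\[
D^B_{n+1}[\phi;z]=G^{n+1}[\phi]\,E[\phi]\bigl(F[\phi;z]+O({\rm e}^{-\mathfrak{c}n})\bigr)=G^{n}[\phi]\,E[\phi]\bigl(-[\log\phi]_{-1}+O({\rm e}^{-\mathfrak{c}n})\bigr).
\]
Dividing by $D_n[\phi]=G^{n}[\phi]E[\phi](1+O({\rm e}^{-\mathfrak{c}n}))$ (the classical strong Szeg\H{o} theorem, applicable since $\phi$ is of Szeg\H{o}-type) and recognizing that by the definition of the $(-1)$-st Fourier coefficient $[\log\phi]_{-1}=\frac{1}{2\pi\ii}\int_{\T}\log\phi(\tau)\,\dd\tau$ gives the claimed asymptotics.

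There is no genuine obstacle in this argument: the main task is bookkeeping to match the parameters of \eqref{q1 q2} to the present border symbol, together with the elementary identification $\al(0)=G[\phi]$. The exponentially small error survives the division by $D_n[\phi]$ because the strong Szeg\H{o} theorem ensures $D_n[\phi]$ is bounded below by a positive constant for all sufficiently large $n$, so that $1/(1+O({\rm e}^{-\mathfrak{c}n}))=1+O({\rm e}^{-\mathfrak{c}n})$ with a possibly smaller (but still positive) constant $\mathfrak{c}$.
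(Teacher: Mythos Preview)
Your argument is correct. The paper does not supply its own proof of this lemma; it is quoted from \cite{BEGIL}, so there is no in-paper proof to compare against. Your route---specializing Theorem~\ref{main thm} to the choice $q_1\equiv 0$, $q_2(z)=z$ and then replacing $G[\phi]^nE[\phi]$ by $D_n[\phi]$ via the strong Szeg\H{o} theorem---is entirely valid and economical. In \cite{BEGIL} (and in the parallel computation carried out in Lemma~\ref{lem 2.16} of this paper for the bulk symbol $z\phi$) the result is obtained instead by a direct RHP computation: one expresses $D^B_{n+1}[\phi;z]/D_n[\phi]$ as the subleading coefficient of the monic polynomial $X_{11}(z;n)$ and then reads it off from the large-$z$ expansion of $\al(z)$ in \eqref{X in terms of R exact}. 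Your deduction is shorter but relies on the full Theorem~\ref{main thm}; the direct route is self-contained at the level of the $X$-RHP asymptotics. One cosmetic remark: rather than ``dividing by $D_n[\phi]$'' it is cleaner to substitute $G[\phi]^nE[\phi]=D_n[\phi]\bigl(1+O({\rm e}^{-\mathfrak{c}n})\bigr)$ directly, which yields the stated form without the extra reciprocal step.
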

	
	\begin{Lemma}[\cite{BEGIL}]\label{lem2.2} Let $\psi_0$ be as defined in \eqref{q0} with $c \neq 0$. Then the bordered Toeplitz determinant $D^{B}_n[\phi;\psi_0]$ can be written in terms of the following data from the solution of the X-RHP:
		\begin{gather*}
			D^{B}_{n+1}[\phi;\psi_0] = - \frac{1}{c}D_{n+1}[\phi]+ \frac{1}{c} D_{n}[\phi] X_{12}(c;n),
		\end{gather*}
		where $D_{n}[\phi]$ is given by \eqref{ToeplitzDet} and $X_{12}$ is the $12$ entry of the solution to {\rm RH-X1} through {\rm RH-X3}.
	\end{Lemma}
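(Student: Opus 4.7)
The plan is to first derive a general integral representation for $D^B_{n+1}[\phi;\psi]$ in terms of the orthogonal polynomial $Q_n$, then specialize to $\psi_0 = \phi/(z-c)$ and split the resulting integral via a partial-fraction identity in order to isolate $X_{12}(c;n)$.

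First I would establish the formula
\begin{equation*}
D^B_{n+1}[\phi;\psi] = \frac{D_n[\phi]}{\varkappa_n}\int_\T Q_n(\ze)\,\psi(\ze)\,\ze^{-n}\,\frac{\dd \ze}{2\pi \ii \ze},
\end{equation*}
valid for any sufficiently regular border symbol $\psi$. This follows from expanding $D^B_{n+1}[\phi;\psi]$ along its last column and writing each Fourier coefficient $\psi_{n-k}$ as a contour integral on $\T$; after swapping sum and integral, the combination of cofactors against $\ze^k$ is, up to a transpose that does not affect the determinant, exactly the determinantal expression \eqref{Toeplitz OP 1} for $Q_n(\ze)$, multiplied by the normalization $\sqrt{D_n[\phi] D_{n+1}[\phi]} = D_n[\phi]/\varkappa_n$ coming from \eqref{varkappa}.

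Next, I would substitute $\psi(\ze) = \psi_0(\ze) = \phi(\ze)/(\ze-c)$ and apply the partial-fraction identity $\frac{1}{\ze(\ze-c)} = \frac{1}{c}\bigl(\frac{1}{\ze-c} - \frac{1}{\ze}\bigr)$, which is valid because $c \neq 0$. This splits the integral into
\begin{equation*}
D^B_{n+1}[\phi;\psi_0] = \frac{D_n[\phi]}{c\,\varkappa_n}\left[\int_\T \frac{Q_n(\ze)\phi(\ze)}{(\ze-c)\ze^n}\frac{\dd \ze}{2\pi \ii} - \int_\T \frac{Q_n(\ze)\phi(\ze)}{\ze^n}\frac{\dd \ze}{2\pi \ii \ze}\right].
\end{equation*}
The first integral equals $\varkappa_n X_{12}(c;n)$ directly from the definition \eqref{Toeplitz-OP-solution} of the $X$-RHP solution. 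The second integral is the pairing of $Q_n$ with $\ze^{-n}$ against the weight $\phi$, which by the $m=n$ case of the orthogonality relation satisfied by $Q_n$ (stated just before \eqref{Toeplitz OP 1}) equals $\varkappa_n^{-1}$.

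Combining the two evaluations and using $D_n[\phi]/\varkappa_n^2 = D_{n+1}[\phi]$ from \eqref{varkappa} then yields the claimed identity. The only mildly delicate step is the first one: one must correctly match the orientation of the Toeplitz block produced by the cofactor expansion against the one appearing in \eqref{Toeplitz OP 1}, since they differ by a transpose. Once this identification is in place, the partial-fraction split carries out essentially all the remaining work, and the $-D_{n+1}[\phi]/c$ term on the right-hand side arises naturally as the orthogonality contribution from the leading monomial of $Q_n$.
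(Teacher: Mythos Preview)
Your proposal is correct and follows essentially the same approach as the original proof in \cite{BEGIL} (which this paper cites rather than reproves) and as the paper's own proof of the closely related Theorem~\ref{thm border z^-l phi}: establish the integral representation $D^B_{n+1}[\phi;\psi]=D_n[\phi]\int_\T X_{11}(\ze;n)\psi(\ze)\ze^{-n}\frac{\dd\ze}{2\pi\ii\ze}$ by integrating the determinantal form of $X_{11}$ along its last row (up to the transpose you note), then specialize to $\psi_0=\phi/(z-c)$, split via $\frac{1}{\ze(\ze-c)}=\frac{1}{c}\big(\frac{1}{\ze-c}-\frac{1}{\ze}\big)$, and identify the two pieces with $\varkappa_n X_{12}(c;n)$ and the $m=n$ orthogonality relation respectively. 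Nothing is missing.
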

	\begin{Corollary}[\cite{BEGIL}]\label{cor 2}
		We have
		\begin{gather*}
			D^B_{n+1}\Bigg[\phi;\di\Bigg( a+\sum^{m}_{j=1}\frac{b_j z}{z-c_j}\Bigg)\phi \Bigg] =aD_{n+1}[\phi]+ D_n[\phi] \sum^{m}_{j=1}b_j X_{12}(c_j;n),
		\end{gather*}
		and for a Szeg\H{o}-type $\phi$
		\begin{gather*}
			D^B_{n+1}\Bigg[\phi;\di\Bigg( a+\sum^{m}_{j=1}\frac{b_j z}{z-c_j}\Bigg)\phi \Bigg] = G[\phi]^{n+1} E[\phi] \Bigg(a+\frac{1}{G[\phi]}\sum^{m}_{j=1 \atop |c_j|<1 }b_j \al(c_j)\Bigg)( 1 + O({\rm e}^{-\mathfrak{c}n})),
		\end{gather*}
		as $n \to \infty$, where $\al$ is defined in \eqref{al}, $G[\phi]$ and $E[\phi]$ are given by \eqref{G and E}, and $\mathfrak{c}$ is some positive constant.
	\end{Corollary}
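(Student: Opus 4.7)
The plan is to derive the exact identity first via a partial-fraction decomposition of the border symbol, and then extract the asymptotics by feeding in the strong Szeg\H{o} theorem together with the already-known asymptotic behavior of $X_{12}(c;n)$ coming from the Deift--Zhou steepest descent analysis of the $X$-RHP.

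First I would write
\[
\frac{b_j z}{z-c_j} = b_j + \frac{b_j c_j}{z-c_j},
\]
so that the border symbol factors as
\[
\Bigg(a + \sum_{j=1}^m \frac{b_j z}{z-c_j}\Bigg)\phi(z) = \Bigg(a + \sum_{j=1}^m b_j\Bigg)\phi(z) + \sum_{j=1}^m b_j c_j \, \psi_0^{(j)}(z),
\]
where $\psi_0^{(j)}(z) = \phi(z)/(z-c_j)$ is precisely the object appearing in Lemma \ref{lem2.2}. Invoking linearity \eqref{linear-combination} together with the identity \eqref{border,phi,phi}, I then get
\[
D^B_{n+1}[\phi;\psi] = \Bigg(a + \sum_{j=1}^m b_j\Bigg) D_{n+1}[\phi] + \sum_{j=1}^m b_j c_j\, D^B_{n+1}\big[\phi;\psi_0^{(j)}\big].
\]

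Next I would apply Lemma \ref{lem2.2} to each $D^B_{n+1}[\phi;\psi_0^{(j)}]$, which supplies
\[
b_j c_j\, D^B_{n+1}\big[\phi;\psi_0^{(j)}\big] = -b_j D_{n+1}[\phi] + b_j D_n[\phi]\, X_{12}(c_j;n).
\]
The $-\sum_j b_j D_{n+1}[\phi]$ terms cancel the $\sum_j b_j D_{n+1}[\phi]$ contribution from the first step exactly, leaving the clean identity
\[
D^B_{n+1}[\phi;\psi] = a D_{n+1}[\phi] + D_n[\phi]\sum_{j=1}^m b_j X_{12}(c_j;n),
\]
which is the first assertion.

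For the second (asymptotic) assertion, I would combine two ingredients. The first is the strong Szeg\H{o} theorem, giving $D_{n+1}[\phi] = G[\phi]^{n+1} E[\phi](1 + O({\rm e}^{-\mathfrak{c}n}))$. The second is the large-$n$ behavior of $X_{12}(c;n)$ extracted from the Deift--Zhou analysis of the $X$-RHP referenced in the Appendix: outside the unit disk, $|c|>1$, the steepest descent contour deformation makes $X_{12}(c;n)$ decay exponentially in $n$; inside, $|c|<1$, the global parametrix contributes
\[
D_n[\phi]\, X_{12}(c;n) = G[\phi]^n E[\phi]\,\al(c)\, \bigl(1 + O({\rm e}^{-\mathfrak{c}n})\bigr),
\]
with $\al$ as in \eqref{al}; this is the same computation underlying Corollary \ref{cor 1}, only the $(12)$-entry is read off instead of the $(11)$-entry. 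Substituting these asymptotics, splitting the sum over $j$ according to whether $|c_j|<1$ or $|c_j|>1$, and factoring out $G[\phi]^{n+1}E[\phi]$, one obtains the claimed asymptotic expansion.

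The only genuinely substantive point is the asymptotic behavior of $X_{12}(c;n)$, but this is not a new obstruction here: the Szeg\H{o}-type hypothesis on $\phi$ makes all the standard steps of the nonlinear steepest descent go through just as in \cite{BEGIL}, so I would simply quote the output of that analysis rather than redo it. Everything else is algebraic manipulation and an application of the strong Szeg\H{o} limit theorem.
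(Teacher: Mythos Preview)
Your proposal is correct. The paper itself does not supply a proof of this corollary; it is quoted verbatim from \cite{BEGIL}, immediately after Lemma~\ref{lem2.2} (also quoted from \cite{BEGIL}), which is exactly the input you use. Your derivation --- partial fractions $\frac{b_j z}{z-c_j}=b_j+\frac{b_j c_j}{z-c_j}$, linearity \eqref{linear-combination} and \eqref{border,phi,phi}, then Lemma~\ref{lem2.2} term by term, followed by the strong Szeg\H{o} theorem and the $X_{12}$ asymptotics from \eqref{X in terms of R exact} --- is precisely the intended route and matches how the surrounding results in \cite{BEGIL} are organized.
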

	
	We choose to follow the notations introduced in \cite{BEGIL} for a smoother navigation between the papers. Recalling \eqref{q1 q2}, we have
	\[ 	z^{-1}q_1(z) = a_1+ \frac{a_0}{z} +\frac{b_0}{z^2}+\sum^{m}_{j=1}\frac{b_j}{z-c_j}, \qquad z^{-1}q_2(z) = \hat{a}_1 + \frac{\hat{d}_0 }{z}+\frac{\hat{b}_0}{z^2}+
	\sum_{j=1}^m \frac{\hat{d}_j}{z-c_j}, \]
	where
	\[ \hat{d}_0 = \hat{a}_0 - \sum_{j=1}^{m} \hat{b}_jc^{-1}_j \qandq \hat{d}_j = \hat{b}_jc^{-1}_j. \]
	All contributions from all terms in $z^{-1}q_1$ and $z^{-1}q_2$ are expressed in lemmas above and other results in \cite{BEGIL}, except for the contribution from $b_0z^{-2}$ in $z^{-1}q_1$. Notice that the term $\hat{b}_0z^{-2}$ in~$z^{-1}q_2$ does not contribute due to \cite[Lemma 2.1]{BEGIL}.
	
The contribution from $b_0z^{-2}$ in $z^{-1}q_1$ corresponds to the bordered Toeplitz determinant $D^{B}_N\big[\phi;z^{-2}\phi\big]$. More generally, we prove the following theorem, which characterizes $X_{12}(z;n)$ as the generating function for the objects $D^B_{n+1}\big[\phi;z^{-\ell}\phi\big]/D_n[\phi]$, $\ell \in \N$.
	
	\begin{Theorem}\label{thm border z^-l phi}
		Let $\ell \in \N$, and suppose that the solution $X(z;n)$ of the Riemann--Hilbert problem {\rm RH-X1} through {\rm RH-X3} exists. The coefficient of $z^{\ell}$ in the Taylor expansion of $X_{12}(z;n)$ centered at zero, is precisely the object $D^B_{n+1}\big[\phi;z^{-\ell}\phi\big]/D_n[\phi]$. In other words,
		\begin{equation*}
			D^B_{n+1}\big[\phi;z^{-\ell}\phi\big] = \frac{D_n[\phi]}{\ell!} \frac{\dd ^{\ell}}{\dd z^{\ell}}X_{12}(z;n) \bigg|_{z=0},
		\end{equation*}
		where $X_{12}(z;n)$ is the $12$-entry in the solution of the Riemann--Hilbert problem {\rm RH-X1} through {\rm RH-X3}.
	\end{Theorem}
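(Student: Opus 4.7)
The plan is to derive the identity via the integral representation of $X_{12}$ obtained from the jump condition, expand that representation as a Taylor series at the origin, and identify the coefficients as the bordered Toeplitz determinants in question.

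First, RH-X2 shows that the additive jump of $X_{12}$ across $\T$ equals $X_{11}(z;n)z^{-n}\phi(z)$, with $X_{11}$ unambiguously defined on $\T$ since its own jump vanishes, while RH-X3 forces $X_{12}(z;n)=O(z^{-n-1})$ as $z\to\infty$. The Plemelj--Sokhotski formula therefore gives
\[
X_{12}(z;n)=\int_{\T}\frac{X_{11}(s;n)\,s^{-n}\phi(s)}{s-z}\,\frac{\dd s}{2\pi\ii},\qquad z\in\C\setminus\T.
\]
Because $X_{12}(\cdot;n)$ is analytic in the open unit disk, for $|z|<1$ I insert the uniformly convergent geometric expansion $\frac{1}{s-z}=\sum_{\ell\geq 0}z^{\ell}s^{-\ell-1}$ and exchange sum and integral to obtain
\[
X_{12}(z;n)=\sum_{\ell=0}^{\infty}z^{\ell}\int_{\T}X_{11}(s;n)\,s^{-n-\ell}\phi(s)\,\frac{\dd s}{2\pi\ii s},
\]
so that the $\ell$-th Taylor coefficient is precisely this integral.

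Next, by the orthogonality/invertibility argument already used in the proof of Theorem \ref{thm Z-RHP intro}, existence of $X(z;n)$ forces $D_n[\phi]\neq 0$, which licenses the explicit determinantal formula \eqref{X11} for $X_{11}$. Substituting it and using multilinearity to integrate along the last row $(1,s,\ldots,s^n)$ replaces that row by $(\phi_{n+\ell},\phi_{n+\ell-1},\ldots,\phi_{\ell})$, since $\int_{\T}s^{k}\,s^{-n-\ell}\phi(s)\,\frac{\dd s}{2\pi\ii s}=\phi_{n+\ell-k}$. One therefore obtains
\[
\frac{1}{\ell!}\frac{\dd^{\ell}}{\dd z^{\ell}}X_{12}(z;n)\bigg|_{z=0}=\frac{1}{D_n[\phi]}\det\begin{bmatrix}
\phi_0 & \phi_{-1} & \cdots & \phi_{-n}\\
\phi_1 & \phi_{0} & \cdots & \phi_{-n+1}\\
\vdots & \vdots & \ddots & \vdots\\
\phi_{n-1} & \phi_{n-2} & \cdots & \phi_{-1}\\
\phi_{n+\ell} & \phi_{n+\ell-1} & \cdots & \phi_{\ell}
\end{bmatrix}.
\]

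Finally, because the Fourier coefficients of $z^{-\ell}\phi$ satisfy $(z^{-\ell}\phi)_j=\phi_{j+\ell}$, the matrix displayed above is the transpose of the bordered Toeplitz matrix whose determinant is $D^B_{n+1}[\phi;z^{-\ell}\phi]$: its last row corresponds to the last column of that bordered matrix, and the Toeplitz blocks agree after transposing. Invariance of the determinant under transposition then delivers the claimed identity. There is no substantive obstacle; the only points needing care are the book-keeping required to verify the transpose matching of indices and the observation that existence of the RHP solution is precisely what makes the determinantal formula \eqref{X11} available.
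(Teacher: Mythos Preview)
Your proof is correct and follows essentially the same route as the paper's: derive the Cauchy-integral representation of $X_{12}$ from the jump data, extract the $\ell$-th Taylor coefficient at $0$ (you via the geometric-series expansion, the paper via differentiating the Cauchy kernel---equivalent operations), substitute the determinantal formula \eqref{X11} for $X_{11}$, and integrate along the last row. Your explicit mention of the transpose identification with $D^B_{n+1}[\phi;z^{-\ell}\phi]$ is a small bookkeeping point that the paper leaves implicit, but otherwise the arguments coincide.
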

	
	\begin{proof}
		Notice that
		\begin{equation*}
			\frac{\dd^{\ell}}{\dd z^{\ell}} \left( \ze - z \right)^{-1} \bigg|_{z=0} = \ell ! \ze^{-\ell-1}.
		\end{equation*}
		Therefore, from \eqref{Y12} and \eqref{X11}, we have
		\begin{equation*}
			\begin{split}
				\frac{D_n[\phi]}{\ell!} \frac{\dd ^{\ell}}{\dd z^{\ell}}X_{12}(z;n) \bigg|_{z=0} & = D_n[\phi] \int_{\T} X_{11}(\ze;n) \ze^{-\ell-n} \phi(z) \frac{\dd \ze}{2 \pi \ii \ze}. \\
				& = \int_{\T} \det \begin{bmatrix}
					\phi_0 & \phi_{-1} & \cdots & \phi_{-n} \\
					\phi_1 & \phi_{0} & \cdots & \phi_{-n+1} \\
					\vdots & \vdots & \ddots & \vdots \\
					\phi_{n-1} & \phi_{n-2} & \cdots & \phi_{-1} \\
					1 & \ze & \cdots & \ze^n
				\end{bmatrix} \ze^{-n-\ell} \phi(\ze) \frac{\dd \ze}{2 \pi \ii \ze} \\ & = \det \begin{bmatrix}
					\phi_0 & \phi_{-1} & \cdots & \phi_{-n} \\
					\phi_1 & \phi_0 & \cdots & \phi_{-n+1} \\
					\vdots & \vdots & \ddots & \vdots \\
					\phi_{n-1} & \phi_{n-2} & \cdots & \phi_{-1} \\
					\int_{\T} \frac{ \ze^{-\ell} \phi(\ze)}{\ze^{n}} \frac{\dd \ze}{2 \pi \ii \ze} & \int_{\T} \frac{\ze^{-\ell} \phi(\ze)}{\ze^{n-1}} \frac{\dd \ze}{2 \pi \ii \ze} & \cdots & \int_{\T} \ze^{-\ell} \phi(\ze) \frac{\dd \ze}{2 \pi \ii \ze}
				\end{bmatrix} \\ & = \det \begin{bmatrix}
					\phi_0 & \phi_{-1} & \cdots & \phi_{-n} \\
					\phi_1 & \phi_{0} & \cdots & \phi_{-n+1} \\
					\vdots & \vdots & \ddots & \vdots \\
					\phi_{n-1} & \phi_{n-2} & \cdots & \phi_{-1} \\
					\big[z^{-\ell}\phi\big]_n & \big[z^{-\ell}\phi\big]_{n-1} & \cdots & \big[z^{-\ell}\phi\big]_0
				\end{bmatrix} = D^B_{n+1}\big[\phi;z^{-\ell}\phi\big].
			\end{split}
		\end{equation*}
		In the above we have used that $D_n[\phi] \neq 0$ which is implied by the existence of the solution to the X-RHP as elaborated in the proof of Theorem~\ref{thm Z-RHP intro}.
	\end{proof}

	The following asymptotic result simply follows from the asymptotic analysis of the X-RHP, see~\eqref{X in terms of R exact}.
	
	\begin{Corollary}\label{Cor phi z^{-l}phi}
		For a Szeg\H{o}-type symbol, we have
		\begin{equation*}
			D^B_{n+1}\big[\phi;z^{-\ell}\phi\big] = G[\phi]^{n} E[\phi] \left( \frac{\al^{(\ell)}(0)}{\ell!} + O({\rm e}^{-\mathfrak{c}n}) \right), \qasq n \to \infty,
		\end{equation*}
		where $\al$ is given by \eqref{al}, and the constants $G[\phi]$ and $E[\phi]$ are given by \eqref{G and E} and $\mathfrak{c}$ is some positive constant.
	\end{Corollary}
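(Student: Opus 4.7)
The plan is to combine Theorem \ref{thm border z^-l phi}, the classical strong Szeg\H{o} theorem for $D_n[\phi]$, and the large-$n$ asymptotics of $X_{12}(z;n)$ extracted from the Deift--Zhou nonlinear steepest descent analysis of the $X$-RHP recorded in the appendix via the exact identity \eqref{X in terms of R exact}. First I would invoke Theorem \ref{thm border z^-l phi} to rewrite
\[
D^B_{n+1}\big[\phi;z^{-\ell}\phi\big] = \frac{D_n[\phi]}{\ell!}\, \frac{\dd^\ell}{\dd z^\ell} X_{12}(z;n)\bigg|_{z=0},
\]
and the strong Szeg\H{o} theorem to write $D_n[\phi] = G[\phi]^n E[\phi](1+O(\ee^{-\mathfrak{c}n}))$. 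The content of the claim is thereby reduced to showing that the $\ell$-th Taylor coefficient of $X_{12}(\cdot;n)$ at the origin approximates the $\ell$-th Taylor coefficient of $\al(\cdot)$ to within an exponentially small error in $n$.

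Next I would use \eqref{X in terms of R exact}, which presents $X(z;n)$ as an explicit product of the global parametrix (built from the Szeg\H{o} function $\al$) with the small-norm correction $R(z;n) = I + O(\ee^{-\mathfrak{c}n})$. Reading off the $12$-entry on a sufficiently small closed disk $\overline{\D(0,r)}$ with $r<1$ that lies entirely inside the region where no lens has been opened, one obtains the uniform asymptotic
\[
X_{12}(z;n) = \al(z) + O(\ee^{-\mathfrak{c}n}),
\]
with $\al$ holomorphic in a neighborhood of $0$. A reassuring consistency check is that $\al(0) = G[\phi]$, which matches the $\ell=0$ case of Theorem \ref{thm border z^-l phi} combined with the Szeg\H{o} ratio $D_{n+1}[\phi]/D_n[\phi] \to G[\phi]$.

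Finally, since $X_{12}(z;n) - \al(z)$ is holomorphic on a neighborhood of $\overline{\D(0,r)}$ and is bounded there by $C\ee^{-\mathfrak{c}n}$, Cauchy's integral formula on the circle $|z|=r$ transfers the uniform estimate to each Taylor coefficient, yielding
\[
\frac{\dd^\ell}{\dd z^\ell} X_{12}(z;n)\bigg|_{z=0} = \al^{(\ell)}(0) + O(\ee^{-\mathfrak{c}n}).
\]
Substituting this and the Szeg\H{o} asymptotics for $D_n[\phi]$ into the identity from Theorem \ref{thm border z^-l phi} delivers the claim. The only non-mechanical ingredient is harvesting the uniform bound $X_{12}(z;n) = \al(z) + O(\ee^{-\mathfrak{c}n})$ near $z=0$ from \eqref{X in terms of R exact}; this is an unpacking of the Deift--Zhou output already set up in the appendix rather than a genuinely new difficulty.
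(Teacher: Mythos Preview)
Your proposal is correct and follows essentially the same approach as the paper, which simply cites Theorem~\ref{thm border z^-l phi} together with the appendix identity \eqref{X in terms of R exact} without spelling out the details. Your use of Cauchy's formula to pass from the uniform bound $X_{12}(z;n)=\al(z)\bigl(1+O(\rho^{-2n})\bigr)$ on a small disk in $\Om_0$ to the corresponding bound on the $\ell$-th Taylor coefficient is the natural way to complete the argument that the paper leaves implicit.
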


	\begin{Remark}
		\normalfont This is in agreement with \cite[Lemma 2.7]{BEGIL}. For $\ell=1$, Notice that $\al'(0)= [\log \phi]_1 \al(0) = [\log \phi]_1 G[\phi]$.
	\end{Remark}
	
	\begin{Lemma}\label{main lem 1}
		Let $\phi$ be of Szeg\H{o}-type and the rational functions $q_1$ and $q_2$ be given by \eqref{q1 q2}. Then, the following asymptotic behavior of $D^B_{n}\big[\phi;z^{-1}(q_1\phi+q_2)\big]$ as $n \to \infty$ takes place:
		\begin{gather*}
				D^B_{n}\big[\phi;z^{-1}(q_1\phi+q_2)\big] =
				G[\phi]^{n} E[\phi]\left(H[\phi;\psi] + O({\rm e}^{-\mathfrak{c}n})\right),
			\end{gather*}
			where $G[\phi]$ and $E[\phi]$ are given by \eqref{G and E}, $H[\phi;\psi]$ is given by
			\begin{align}
					H[\phi;\psi] ={}&	
					a_1 - \sum_{j=1}^{m} \frac{b_j}{c_j}+ a_0 [\log \phi]_{1}+ b_0 [\log \phi]_{2} + \frac{b_0}{2} [\log \phi]_{1}^2\nonumber
					\\ & +
					\frac{1}{\al(0)}\Bigg(
					\hat{a}_1-
					 \sum^{m}_{j=1 \atop |c_j|>1 }\frac{\hat{b}_j}{c^2_j} \al(c_j) + 		\sum^m_{j=1 \atop 0<|c_j|<1} \frac{b_j}{c_j} \al(c_j) \Bigg), \label{ConstantH}
			\end{align}
			and	$\al$ is given by \eqref{al}, and $\mathfrak{c}$ is some positive constant.
		\end{Lemma}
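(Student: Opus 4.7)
The plan is to use the linearity \eqref{linear-combination} of the bordered determinant in its border symbol to reduce $D^{B}_{n}[\phi;z^{-1}(q_1\phi+q_2)]$ to a finite linear combination of elementary bordered Toeplitz determinants whose asymptotics are already available in the excerpt, and then to collect the results.

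First I would expand, as in the paragraph preceding the lemma,
\[
z^{-1}q_1(z) = a_1 + \frac{a_0}{z} + \frac{b_0}{z^2} + \sum_{j=1}^{m}\frac{b_j}{z-c_j},
\qquad
z^{-1}q_2(z) = \hat{a}_1 + \frac{\hat{d}_0}{z} + \frac{\hat{b}_0}{z^2} + \sum_{j=1}^{m}\frac{\hat{d}_j}{z-c_j},
\]
with $\hat{d}_0=\hat{a}_0-\sum_j\hat{b}_j/c_j$ and $\hat{d}_j=\hat{b}_j/c_j$. Via \eqref{linear-combination} this splits the target determinant into six elementary types, each already controlled: (i) $a_1 D^{B}_n[\phi;\phi]=a_1 D_n[\phi]$ by \eqref{border,phi,phi}; (ii) the pieces $a_0 D^{B}_n[\phi;z^{-1}\phi]$ and $b_0 D^{B}_n[\phi;z^{-2}\phi]$ fall under Theorem~\ref{thm border z^-l phi} and Corollary~\ref{Cor phi z^{-l}phi}, producing the leading constant $\alpha^{(\ell)}(0)/\ell!$ times $G[\phi]^{n-1}E[\phi]$ for $\ell=1,2$; (iii) the pieces $b_j D^{B}_n[\phi;\phi/(z-c_j)]$ I would rewrite using the identity
\[
\frac{\phi}{z-c_j}=\frac{1}{c_j}\left(\frac{z\phi}{z-c_j}-\phi\right),
\]
so Corollary~\ref{cor 2} (single pole) together with \eqref{border,phi,phi} applies; (iv) $\hat{a}_1 D^{B}_n[\phi;1]=\hat{a}_1 D_{n-1}[\phi]$ by \eqref{border,phi,const}; (v) the pure pieces $\hat{d}_0 D^{B}_n[\phi;1/z]$ and $\hat{b}_0 D^{B}_n[\phi;1/z^2]$ vanish identically, because $(z^{-\ell})_j=0$ for every $j\ge 0$, so the entire border column is zero; and (vi) the sum $\sum_j\hat{d}_j D^{B}_n[\phi;1/(z-c_j)]$ is handled by Corollary~\ref{cor 1} applied with coefficient $\hat{d}_j=\hat{b}_j/c_j$.

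To close the proof, I would compute the two derivatives of $\alpha$ at the origin from the Taylor expansion $\alpha(z)=\exp L(z)$, where $L(z)=\sum_{k\ge 0}z^k[\log\phi]_k$ for $|z|<1$; this gives $\alpha'(0)=[\log\phi]_1\alpha(0)$ and $\alpha''(0)/2=\bigl([\log\phi]_2+\tfrac12[\log\phi]_1^2\bigr)\alpha(0)$, which after dividing by $\alpha(0)=G[\phi]$ account for the three analytic terms $a_0[\log\phi]_1$, $b_0[\log\phi]_2$ and $(b_0/2)[\log\phi]_1^2$ in $H[\phi;\psi]$. Summing the six contributions, and pulling out the common factor $G[\phi]^nE[\phi]$ (after factoring $1/G[\phi]=1/\alpha(0)$ from those pieces that naturally carry $G[\phi]^{n-1}E[\phi]$), one obtains \eqref{ConstantH}: the sum $-\sum_j b_j/c_j$ over all $j$ arises from the $-\phi/c_j$ remainder of the partial fraction of $\phi/(z-c_j)$, while the terms $\alpha(c_j)/c_j$ (for $|c_j|<1$) and $\alpha(c_j)/c_j^2$ (for $|c_j|>1$) come respectively from Corollary~\ref{cor 2} and Corollary~\ref{cor 1}. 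Since only finitely many elementary pieces appear and all parameters are fixed, the individual exponentially small errors combine into a single $O(e^{-\mathfrak{c}n})$. I do not anticipate a genuine obstacle: the entire argument is bookkeeping on top of the lemmas already in the excerpt, and the only step requiring actual computation is the expansion of $\alpha''(0)$, which is the single source of the quadratic $[\log\phi]_1^2$ term in $H[\phi;\psi]$.
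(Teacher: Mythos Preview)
Your proposal is correct and follows essentially the same route as the paper: the paper's proof consists of the single sentence that the result is ``straight-forward, using \eqref{linear-combination}, \eqref{border,phi,phi}, \eqref{border,phi,const}, lemmas cited above from \cite{BEGIL} and Corollary~\ref{Cor phi z^{-l}phi}'', and your write-up is exactly a fleshed-out version of this bookkeeping. The partial-fraction rewrite $\phi/(z-c_j)=c_j^{-1}\bigl(z\phi/(z-c_j)-\phi\bigr)$ you use for piece (iii) is equivalent to invoking Lemma~\ref{lem2.2}/Corollary~\ref{cor 2} directly, and your computation of $\alpha''(0)$ matches the paper's.
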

		\begin{proof}
			The proof is straight-forward, using \eqref{linear-combination}, \eqref{border,phi,phi}, \eqref{border,phi,const}, lemmas cited above from \cite{BEGIL} and Corollary \ref{Cor phi z^{-l}phi}.
		\end{proof}
		
\subsubsection[Bordered Toeplitz determinants of the type \protect{$D^B_{n}[z\phi;q_1\phi+q_2]$}]{Bordered Toeplitz determinants of the type $\boldsymbol{D^B_{n}[z\phi;q_1\phi+q_2]}$}

		Now we focus on computing the asymptotics of $ D^B_{n}[z\phi;q_1\phi+q_2] $ for the rational functions~$q_1(z)$ and~$q_2(z)$ given by \eqref{q1 q2}. In view of \eqref{linear-combination}, we need to express the following bordered Toeplitz determinants in terms of the data from the $Z$-RHP: $D^B_{n}[z\phi;\phi]$,
		$D^B_{n}\big[z\phi;\di \tfrac{1}{z}\phi\big]$,
		$D^B_{n}\big[z\phi;\di \tfrac{z\phi}{z-c}\big]$ with ${c \neq 0}$,
		$D^B_{n}[z\phi;z]$, and
		$D^B_{n}\big[z\phi;\di \tfrac{1}{z-c}\big]$ with $c \neq 0$.	Notice that we already know that $D^B_{N}\big[z\phi;z^k\big]=0$, $k \in \Z \setminus \{0,1,\dots,n\}$, since the Fourier coefficients $(z^k)_j=0$ for $0\leq j \leq n$, $k \in \Z \setminus \{0,1,\dots,n\}$.
		
		Regarding the first two bordered Toeplitz determinants in the above list, we can use the following generalization of Theorem \ref{thm border z^-l phi} which can be proven identically.
		
		\begin{Lemma}
			Let $r \in \Z$ and $\ell \in \N$. Suppose that the solution to the Riemann--Hilbert problem {\rm RH-Y1}--{\rm RH-Y3} exists.\footnote{This implies that $D_n[z^r\phi]\neq 0$ as elaborated in the proof of Theorem \ref{thm Z-RHP intro}.} The coefficient of $z^{\ell}$ in the Taylor expansion of $Y_{12}(z;n,r)$ centered at zero, is precisely the object $D^B_{n+1}\big[z^r\phi;z^{r-\ell}\phi\big]/D_n[z^r\phi]$. In other words,
			\begin{equation*}
				D^B_{n+1}\big[z^{r}\phi;z^{r-\ell}\phi\big] = \frac{D_n[z^{r}\phi]}{\ell!} \frac{\dd ^{\ell}}{\dd z^{\ell}}Y_{12}(z;n,r) \Bigg|_{z=0},
			\end{equation*}
			where $Y_{12}(z;n,r)$ is the $12$-entry in the solution of the Riemann--Hilbert problem {\rm RH-Y1}--{\rm RH-Y3}.
		\end{Lemma}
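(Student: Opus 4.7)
The plan is to mimic the proof of Theorem \ref{thm border z^-l phi} verbatim, with $\phi$ replaced throughout by the weight $z^{r}\phi$. Since the hypothesis states that the solution of the $Y$-RHP exists, the argument given in the proof of Theorem \ref{thm Z-RHP intro} guarantees that $D_n[z^r\phi]\neq 0$, so that the bi-orthogonal polynomial machinery applies and the ratio $D^B_{n+1}\big[z^r\phi;z^{r-\ell}\phi\big]/D_n[z^r\phi]$ is well defined.

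First, I would record an explicit formula for $Y_{11}(z;n,r)$. Exactly as in the derivation leading to equations \eqref{Y11 poly}--\eqref{ort conds}, the analyticity of $Y(\cdot;n,r)$ off the unit circle together with {\rm RH-Y2} forces $Y_{11}(\cdot;n,r)$ to be entire, while {\rm RH-Y3} forces it to be a monic polynomial of degree $n$ satisfying the orthogonality conditions $\int_{\T} Y_{11}(s;n,r)\,s^{r-\ell}\phi(s)\,\tfrac{ds}{2\pi\ii s}=0$ for $\ell=0,\dots,n-1$. These conditions uniquely determine $Y_{11}(z;n,r)$, and an analogue of \eqref{X11} then gives the determinantal representation
\begin{equation*}
Y_{11}(z;n,r)=\frac{1}{D_n[z^r\phi]}\det\begin{bmatrix}
(z^r\phi)_0 & (z^r\phi)_{-1} & \cdots & (z^r\phi)_{-n}\\
\vdots & \vdots & \ddots & \vdots\\
(z^r\phi)_{n-1} & (z^r\phi)_{n-2} & \cdots & (z^r\phi)_{-1}\\
1 & z & \cdots & z^n
\end{bmatrix}.
\end{equation*}

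Next, applying Plemelj--Sokhotskii to the jump condition {\rm RH-Y2} and using the large-$z$ normalization {\rm RH-Y3} yields the Cauchy-type representation \eqref{Y12}, namely
\begin{equation*}
Y_{12}(z;n,r)=\int_{\T}\frac{Y_{11}(s;n,r)\,s^{-n+r}\phi(s)}{s-z}\,\frac{ds}{2\pi\ii}.
\end{equation*}
Differentiating $\ell$ times at $z=0$ under the integral sign (which is justified since the integrand is smooth in $s\in\T$ uniformly in $z$ near $0$), and using $\frac{d^{\ell}}{dz^{\ell}}(s-z)^{-1}\big|_{z=0}=\ell!\,s^{-\ell-1}$, I obtain
\begin{equation*}
\frac{1}{\ell!}\frac{d^{\ell}}{dz^{\ell}}Y_{12}(z;n,r)\bigg|_{z=0}=\int_{\T}Y_{11}(s;n,r)\,s^{r-\ell}\phi(s)\,\frac{ds}{2\pi\ii\,s}.
\end{equation*}

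Finally, I would substitute the determinantal formula for $Y_{11}$, multiply through by $D_n[z^r\phi]$, move the scalar integral inside the determinant by linearity along the last row, and recognize the resulting bottom row entries as $[z^{r-\ell}\phi]_{n-k}$ for $k=0,\dots,n$. This is precisely the last row of the bordered Toeplitz matrix defining $D^B_{n+1}\big[z^r\phi;z^{r-\ell}\phi\big]$, while the first $n$ rows constitute the Toeplitz block with symbol $z^r\phi$, so the determinant equals $D^B_{n+1}\big[z^r\phi;z^{r-\ell}\phi\big]$ as desired. There is no serious obstacle here, as the argument is a direct structural transplant of the proof of Theorem \ref{thm border z^-l phi}; the only points requiring care are verifying the nonvanishing of $D_n[z^r\phi]$ (handled by the existence hypothesis together with the proof of Theorem \ref{thm Z-RHP intro}) and justifying the interchange of integration and differentiation, which is immediate by smoothness of the integrand on the compact contour $\T$.
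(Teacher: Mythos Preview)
Your approach is correct and identical to the paper's: the paper simply states that this lemma ``can be proven identically'' to Theorem \ref{thm border z^-l phi}, and you have written out exactly that transplant. One small slip to fix: in your displayed formula for $\tfrac{1}{\ell!}\tfrac{d^\ell}{dz^\ell}Y_{12}|_{z=0}$ the integrand should carry $s^{-n+r-\ell}$ rather than $s^{r-\ell}$ (the $s^{-n}$ from \eqref{Y12} was dropped), though your final identification of the bottom-row entries as $[z^{r-\ell}\phi]_{n-k}$ is consistent with the correct exponent, so the conclusion stands.
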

		
		\begin{Corollary}\label{Cor zphi phi and phi/z}
			Suppose that the solution $X(z;n)$ of the Riemann--Hilbert problem {\rm RH-X1} through {\rm RH-X3} and the solution $Z(z;n)$ of the Riemann--Hilbert problem {\rm RH-Z1} through {\rm RH-Z3} exist. We have the following characterizations for $D^B_{n+1}[z\phi,\phi]$ and $D^B_{n+1}\big[z\phi,\frac{1}{z}\phi\big]$ in terms of the solution of the Riemann--Hilbert problem {\rm RH-Z1}--{\rm RH-Z3}:
			\begin{gather*}
				D^B_{n+1}[z\phi,\phi] = D_n[z\phi] \frac{\dd}{\dd z}Z_{12}(z;n) \Bigg|_{z=0}, \\
				D^B_{n+1}\left[z\phi,\frac{1}{z}\phi\right] = \frac{D_n[z\phi]}{2} \frac{\dd^2}{\dd z^2}Z_{12}(z;n) \Bigg|_{z=0}.
			\end{gather*}
			Moreover, if $\phi$ is a Szeg\H{o}-type symbol and there exists $N_0 \in \N$ such that for all $n \geq N_0$, we have~${|C_n[\phi]|>0}$, with
			\begin{equation}\label{Cnphi}
				C_n[\phi] := \frac{1}{2 \pi \ii} \int_{\Ga_0} \tau^n \phi^{-1}(\tau) \al^2(\tau) \dd \tau,
			\end{equation}
			then
			\begin{align}
				&\frac{D^B_{n+1}[z\phi,\phi]}{D_n[z\phi]}	 = G[\phi] \left( 1 - [\log \phi]_1 \frac{C_n[\phi]}{C_{n-1}[\phi]} \right) \big(1 + O\big(\rho^{-2n}\big)\big), \label{asymp zphi phi} \\	
&	\frac{D^B_{n+1}\big[z\phi,\frac{1}{z}\phi\big]}{D_n[z\phi]}	 = G[\phi] \left( [\log \phi]_1 - \left( [\log \phi]_2 +\frac{[\log \phi]^2_1}{2}\right) \frac{C_n[\phi]}{C_{n-1}[\phi]} \right) \big(1 + O\big(\rho^{-2n}\big)\big), \label{asymp zphi phi/z}
			\end{align}
			where $\al$ is given by \eqref{al}, and $\Ga_0$ is a counter-clockwise circle with radius $\rho^{-1}<1$. The number~${\rho>1}$ is chosen such that $\phi$ is analytic in the annulus $\{z\colon \rho^{-1}<|z|<\rho\}$.
		\end{Corollary}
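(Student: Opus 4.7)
The two exact identities and the two asymptotic identities are quite different in nature, so I would handle them separately.

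The exact identities follow immediately from the Lemma that directly precedes the corollary (the generalization of Theorem \ref{thm border z^-l phi}), applied with $r=1$. With $r=1$ and $\ell=1$ the border symbol $z^{r-\ell}\phi$ is $\phi$, and with $r=1,\ell=2$ it is $z^{-1}\phi$, so
\[
D^B_{n+1}[z\phi;\phi]=D_n[z\phi]\,\frac{\dd}{\dd z}Z_{12}(z;n)\Big|_{z=0},\qquad D^B_{n+1}\bigl[z\phi;z^{-1}\phi\bigr]=\frac{D_n[z\phi]}{2}\,\frac{\dd^2}{\dd z^2}Z_{12}(z;n)\Big|_{z=0}
\]
are immediate. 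The existence of $Z(\cdot;n)$ (hence $D_n[z\phi]\neq 0$) was already established in the proof of Theorem \ref{thm Z-RHP intro}.

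For the asymptotic statements \eqref{asymp zphi phi}--\eqref{asymp zphi phi/z}, my plan is to couple the exact identities above with the representation \eqref{ZX1} of Theorem \ref{thm Z-RHP intro}. Reading off the $(1,2)$-entry of \eqref{ZX1} gives
\[
Z_{12}(z;n)=z\,X_{12}(z;n)+\frac{\overset{\infty}{X}_{1,12}(n)X_{21}(0;n)}{X_{11}(0;n)}\,X_{12}(z;n)-\overset{\infty}{X}_{1,12}(n)\,X_{22}(z;n),
\]
so the two derivatives at $z=0$ reduce both ratios to a finite combination of the numbers $X_{11}(0;n)$, $X_{21}(0;n)$, $\overset{\infty}{X}_{1,12}(n)$ and the first two Taylor coefficients of $X_{12}(\cdot;n)$ and $X_{22}(\cdot;n)$ at the origin. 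Each of these is amenable to the Deift--Zhou nonlinear steepest descent analysis of the $X$-RHP recalled in Appendix \ref{Appendices}: after normalisation at infinity and the introduction of the global parametrix built from the Szeg\H{o} function $\al(z)$ of \eqref{al}, one is left with a small-norm RHP for an error matrix $R(z;n)$. Deforming the jump contour inside the annulus of analyticity of $\phi$ to a circle of radius $\rho^{-1}$, the subleading correction to $R$ is precisely the integral $C_n[\phi]$ of \eqref{Cnphi}, while the leading-order $G[\phi]$-prefactor and the $[\log\phi]_k$ contributions come from the Taylor expansions of $\al(z)^{\pm 1}$ at the origin. Forming the stated ratios and collecting terms should produce \eqref{asymp zphi phi}--\eqref{asymp zphi phi/z} modulo $O(\rho^{-2n})$.

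The main obstacle I anticipate is a bookkeeping one: several of the $X$-RHP ingredients that enter are themselves small -- in particular $X_{21}(0;n)$ and $\overset{\infty}{X}_{1,12}(n)$ are subleading, so the surviving contribution to $D^B_{n+1}[z\phi;\phi]/D_n[z\phi]$ comes from cross terms between these small corrections, and the condition $|C_n[\phi]|>0$ for large $n$ is exactly what prevents the denominator in the emerging ratio $C_n[\phi]/C_{n-1}[\phi]$ from degenerating. Producing the specific combination $[\log\phi]_2+\tfrac{1}{2}[\log\phi]_1^2$ from the second derivative will require tracking the second-order Taylor expansion of both $\al(z)^{\pm 1}$ and the Cauchy integral appearing in the top-right entry of \eqref{Toeplitz-OP-solution} at $z=0$; no new technical device beyond Appendix \ref{Appendices} is needed, but the cancellations must be organised carefully.
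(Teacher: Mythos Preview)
Your approach is essentially the same as the paper's. You correctly reduce the exact identities to the preceding Lemma with $r=1$, $\ell=1,2$, and for the asymptotics you correctly extract $Z_{12}$ from \eqref{ZX1} and feed in the Appendix~\ref{Appendices} asymptotics for the $X$-entries; the paper does exactly this, packaging the combination $\overset{\infty}{X}_{1,12}(n)X_{21}(0;n)/X_{11}(0;n)$ as a single quantity $\mathscr{B}(n)$ and showing $\mathscr{B}(n)=-C_n[\phi]/C_{n-1}[\phi]\cdot(1+O(\rho^{-2n}))$.

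One small correction to your bookkeeping remark: it is not $X_{21}(0;n)$ that is subleading. In the region $\Om_0$ the Appendix gives $X_{21}(0;n)=-\al^{-1}(0)(1+O(\rho^{-2n}))$, which is $O(1)$. The genuinely small quantities are $\overset{\infty}{X}_{1,12}(n)$ and $X_{11}(0;n)$, both of order $\rho^{-n}$ (the latter because $X_{11}=-R_{1,12}\,\al^{-1}$ in $\Om_0$). Their ratio in $\mathscr{B}(n)$ is therefore $O(1)$, and the hypothesis $|C_n[\phi]|>0$ ensures the denominator $X_{11}(0;n)$ (equivalently $C_{n-1}[\phi]$) does not vanish. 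With this straightened out, the contribution of the $X_{22}$ term is $\overset{\infty}{X}_{1,12}(n)\cdot O(\rho^{-n})=O(\rho^{-2n})$ and drops into the error, exactly as in the paper; the surviving terms are $X_{12}(0;n)+\mathscr{B}(n)X_{12}'(0;n)$ and $2X_{12}'(0;n)+\mathscr{B}(n)X_{12}''(0;n)$, and inserting $\al(0)=G[\phi]$, $\al'(0)=G[\phi][\log\phi]_1$, $\al''(0)=G[\phi](2[\log\phi]_2+[\log\phi]_1^2)$ gives \eqref{asymp zphi phi}--\eqref{asymp zphi phi/z} directly.
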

		\begin{proof}
			Using \eqref{ZX1}, we have
			\begin{equation}\label{Z12}
				Z_{12}(z;n) = (\mathscr{B}(n)+z) X_{12}(z;n) -\overset{\infty}{X}_{1,12}(n) X_{22}(z;n),
			\end{equation}
			where for the simplicity of notations we have introduced
			\begin{equation}\label{Bn}
				\mathscr{B}(n) := \frac{\overset{\infty}{X}_{1,12}(n)X_{21}(0;n)}{X_{11}(0;n)}.
			\end{equation}
			Using \eqref{X in terms of R exact}, as $n \to \infty$, uniformly for $z\in \Om_0$, we have
			\begin{align}
				X_{11}(z;n) & = - R_{1,12}(z;n) \al^{-1}(z) \left( 1 + O \left( \frac{\rho^{-2n}}{1+|z|} \right) \right), \label{X11 Om0} \\
				X_{12}(z;n) & = \al(z) \left( 1 + O \left( \frac{\rho^{-2n}}{1+|z|} \right) \right), \nonumber \\
				X_{21}(z;n) & = - \al^{-1}(z) \left( 1 + O \left( \frac{\rho^{-2n}}{1+|z|} \right) \right), \label{X21 Om0}\\
				X_{22}(z;n) & = R_{1,21}(z;n) \al(z) \left( 1 + O \left( \frac{\rho^{-2n}}{1+|z|} \right) \right), \label{X22 Om0}
			\end{align}
			where $\al$ and $R_1$ are respectively given by \eqref{45} and \eqref{R1} and $\rho^{-1}$ is the radius of the circle $\Ga_0$ shown in Figure \ref{S_contour}. First, let us consider the large-$n$ behavior of $\mathscr{B}(n)$ given by \eqref{Bn}. Let us first focus on \smash{$\overset{\infty}{X}_{1,12}(n)$}. From {\rm RH-X3}, we have
			\begin{equation*}
				\overset{\infty}{X}_{1}(n) = \lim_{z \to \infty} z( X(z;n)z^{-n\sigma_3} - I ).
			\end{equation*}
			From this, and recalling \eqref{X in terms of R exact} for $z \in \Om_{\infty}$, \eqref{R asymp} and the fact that $\al(z) \to 1$ as $z \to \infty$, we find
			\begin{align}
				\overset{\infty}{X}_{1,12}(n) &= \lim_{z \to \infty} z R_{1,12}(z) + O\big(\rho^{-3n}\big) \nonumber\\
&= \frac{1}{2 \pi \ii} \int_{\Ga_0} \tau^n \phi^{-1}(\tau) \al^2(\tau) \dd \tau \times \big(1 + O\big(\rho^{-2n}\big)\big) = O(\rho^{-n}),\label{X1,12 infty}
			\end{align}
			where we have used \eqref{R_k's are small}, \eqref{R1} and the fact that $R_{2\ell}(z;n)$ is diagonal and $R_{2\ell+1}(z;n)$ is off-diagonal, $\ell \in \N \cup \{0\}$. From \eqref{X11 Om0}, \eqref{X21 Om0} and \eqref{X1,12 infty}, we obtain
			\begin{equation}\label{BBB}
				\mathscr{B}(n) = - \frac{C_n[\phi]}{C_{n-1}[\phi]} \times \big(1 + O\big(\rho^{-2n}\big)\big),
			\end{equation}
			where $	C_n[\phi] \equiv -R_{1,12}(0;n+1) = O(\rho^{-n})$ is given by \eqref{Cnphi},
			see \eqref{R_k's are small}. From \eqref{Z12},
			\begin{gather*}
				\frac{\dd}{\dd z}Z_{12}(z;n) \Bigg|_{z=0} = X_{12}(0;n) + \mathscr{B}(n) 				\frac{\dd}{\dd z}X_{12}(z;n) \Bigg|_{z=0} + O(\rho^{-2n}) \\
\phantom{\frac{\dd}{\dd z}Z_{12}(z;n) \Bigg|_{z=0}}{} = ( \al(0) + \mathscr{B}(n) \al'(0) ) \big(1+ O\big(\rho^{-2n}\big)\big), \\
				\frac{\dd^2}{\dd z^2}Z_{12}(z;n) \Bigg|_{z=0} =2\frac{\dd}{\dd z}X_{12}(z;n) \Bigg|_{z=0} + \mathscr{B}(n) 				\frac{\dd^2}{\dd z^2}X_{12}(z;n) \Bigg|_{z=0} + O(\rho^{-2n}) \\
 \phantom{\frac{\dd^2}{\dd z^2}Z_{12}(z;n) \Bigg|_{z=0}}{}= ( 2\al'(0) + \mathscr{B}(n) \al''(0) ) \big(1+ O\big(\rho^{-2n}\big)\big),
			\end{gather*}
			recalling that \smash{$\overset{\infty}{X}_{1,12}(n) = O(\rho^{-n})$} by \eqref{X1,12 infty}, and that $X_{22}(z;n) = O(\rho^{-n}/(1+|z|))$ by \eqref{X22 Om0} and~\eqref{R_k's are small}. Finally, we arrive at \eqref{asymp zphi phi} and \eqref{asymp zphi phi/z} recalling \eqref{G and E}, \eqref{al} and observing that
			\begin{align*}
				\al(0) & = G[\phi], \qquad
				\al'(0) = G[\phi] \cdot [\log \phi]_1, \qquad
				\al''(0) = G[\phi] \cdot \big( 2 [\log \phi]_2 + [\log \phi]^2_1 \big).	\tag*{\qed}			
			\end{align*}\renewcommand{\qed}{}
		\end{proof}

\begin{Remark}
The asymptotic behavior of $C_n[\phi]$, and hence that of $\mathscr{B}_n[\phi]$, depends on the analytic properties of the symbol $\phi$, and thus detailed behavior can only be obtained for a~concrete symbol. For instance, for $\widehat{\phi}(z)$ given by \eqref{hat psi}, similar asymptotics were computed in~\cite[Proposition~2.10]{BEGIL}. Nevertheless, we will see that the knowledge of this asymptotics is not going to be needed for the leading order asymptotics of $D^B_n[\phi;\boldsymbol{\psi}_2]$. Proposition~2.10 of~\cite{BEGIL} also highlights that the collection of Szeg\H{o}-type symbols $\phi$ satisfying $|C_n[\phi]|>0$ for sufficiently large $n$, is indeed nonempty.
\end{Remark}
		
Similar to Lemma \ref{lem2.2}, we can prove the analogous result when $\phi$ is replaced by $z\phi$.
		
\begin{Lemma}\label{lemma zphi/z-c} Let \begin{equation*}
				\eta_0(z):= \frac{z\phi}{z-c} \qquad \mbox{with} \quad c \neq 0.
			\end{equation*} Then the bordered Toeplitz determinant $D^{B}_n[z\phi;\eta_0]$ can be written in terms of the following data from the solution of the Z-RHP:
			\begin{equation}\label{bdToep-RHP-eta0}
				D^{B}_{n+1}[z\phi;\eta_0] = - \frac{1}{c}D_{n+1}[z\phi]+ \frac{1}{c} D_{n}[z\phi] Z_{12}(c;n),
			\end{equation}
			where $D_{n}[\phi]$ is given by \eqref{ToeplitzDet} and $Z_{12}$ is the $12$ entry of the solution to {\rm RH-Z1} through {\rm RH-Z3}.
		\end{Lemma}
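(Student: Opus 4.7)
The plan is to mimic the proof of Lemma~\ref{lem2.2} from \cite{BEGIL}, replacing the bulk symbol $\phi$ by $z\phi$ throughout and the $X$-RHP by the $Z$-RHP. The structural identity $(z-c)\eta_0(z) = z\phi(z)$ is the exact analog of $(z-c)\psi_0(z) = \phi(z)$ used there, so the algebraic steps carry over verbatim up to the relabeling $\phi \leftrightarrow z\phi$, $X \leftrightarrow Z$, with the polynomials $P_n, \widehat{P}_n$ and leading coefficients $\varkappa_n[z\phi]$ from \eqref{orth zphi}--\eqref{kappa zphi} playing the role that $Q_n, \widehat{Q}_n, \varkappa_n$ did previously.

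Concretely, I would first extract the Fourier-coefficient recurrence
\[
\eta_{0,j-1} - c\,\eta_{0,j} = (z\phi)_j, \qquad j \in \Z,
\]
by integrating $(z-c)\eta_0(z) = z\phi(z)$ against $z^{-j}\,\dd z /(2\pi \ii z)$. The last column of the matrix defining $D^B_{n+1}[z\phi;\eta_0]$ has entries $\eta_{0,n-i}$ in row $i$ ($i=0,\dots,n$); multiplying the determinant by $c$ and substituting $c\,\eta_{0,n-i} = \eta_{0,n-i-1} - (z\phi)_{n-i}$ row-by-row in that last column, linearity in the final column splits $c\,D^B_{n+1}[z\phi;\eta_0]$ into a difference of two $(n{+}1)\times(n{+}1)$ determinants. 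The subtracted piece has last-column entry $(z\phi)_{n-i}$ in row $i$, which closes up the Toeplitz structure exactly and is therefore $D_{n+1}[z\phi]$. The remaining piece has the first $n$ columns of Toeplitz form generated by $z\phi$ and last column with entries $\eta_{0,n-1-i}$ in row $i$, so the claim reduces to showing that this second determinant equals $D_n[z\phi]\,Z_{12}(c;n)$.

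For this identification I would use the Cauchy-integral representation
\[
Z_{12}(c;n) = \int_{\T} \frac{Z_{11}(s;n)\,s^{-n+1}\phi(s)}{s-c}\,\frac{\dd s}{2\pi \ii},
\]
which follows from {\rm RH-Z2} together with the decay at infinity, combined with the determinantal formula for the monic polynomial $Z_{11}(s;n)$, the analog of~\eqref{X11} with prefactor $1/D_n[z\phi]$ and a last row $1,s,\dots,s^n$. Multiplying by $D_n[z\phi]$ and pulling the integral inside the determinant (it touches only the last row), the row entries evaluate through $\eta_0 = z\psi_0$ to shifted Fourier coefficients of $\psi_0 = \phi/(z-c)$, and a direct inspection shows that the resulting matrix is the transpose of the one appearing in the remaining piece, so the determinants coincide.

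The main obstacle is purely bookkeeping: carefully tracking the index shifts between $\eta_0$ and $\psi_0$ and between the two Toeplitz conventions (entries $(z\phi)_{i-k}$ versus $(z\phi)_{k-i}$) to verify that the transposition matches exactly. Nothing harder enters. Assembling everything yields $c\,D^B_{n+1}[z\phi;\eta_0] = D_n[z\phi]\,Z_{12}(c;n) - D_{n+1}[z\phi]$, which is \eqref{bdToep-RHP-eta0} after dividing by $c$; the hypothesis $c\neq 0$ is used precisely at this last step.
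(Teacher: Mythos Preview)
Your proposal is correct and follows exactly the approach the paper indicates: the paper provides no explicit proof for this lemma, stating only that it is proved ``similar to Lemma~\ref{lem2.2}'' with $\phi$ replaced by $z\phi$, which is precisely the substitution $\phi\leftrightarrow z\phi$, $X\leftrightarrow Z$, $(Q_n,\widehat{Q}_n,\varkappa_n)\leftrightarrow(P_n,\widehat{P}_n,\varkappa_n[z\phi])$ you describe. Your sketch in fact supplies more detail than the paper itself, and the only nontrivial step---the transposition/index-matching to identify the residual determinant with $D_n[z\phi]\,Z_{12}(c;n)$---is routine bookkeeping exactly as you say.
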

		
		\begin{Corollary}\label{Corollary zphi/z-c}
			It holds that
			\begin{equation}\label{bbb}
				\frac{D^{B}_{n+1}[z\phi;\eta_0]}{D_{n}[z\phi]} = \frac{Z_{12}(c;n) - Z_{12}(0;n)}{c},
			\end{equation}
			and moreover, if $\phi$ is a Szeg\H{o}-type symbol, we have
			\begin{align}
					\frac{D^{B}_{n+1}[z\phi;\eta_0]}{D_{n}[z\phi]} ={}& \di \frac{G[\phi]}{c} \frac{C_n[\phi]}{C_{n-1}[\phi]} \times \big(1 + O\big(\rho^{-2n}\big)\big)\nonumber \\
 & + \begin{cases}
						\al(c)\left(	1 - \di c^{-1} \frac{C_n[\phi]}{C_{n-1}[\phi]} \right) \big(1 + O\big(\rho^{-2n}\big)\big), & |c|<1, \\
						O(\rho^{-n})	, & |c|>1,
					\end{cases}\label{CCC}
			\end{align}
			where $C_n[\phi]$ and
			$\al$ are given by \eqref{Cnphi} and \eqref{al}, respectively. The number $\rho>1$ is such that~$\phi$ is analytic in the annulus $\big\{z\colon \rho^{-1}<|z|<\rho\big\}$, and in addition is chosen such that $\rho < |c|$ when~${|c|>1}$, and $\rho^{-1} > |c|$ when $|c|<1$.
		\end{Corollary}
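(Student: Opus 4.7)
The corollary consists of an exact identity \eqref{bbb} and an asymptotic formula \eqref{CCC}. The first follows by recognizing the ratio $D_{n+1}[z\phi]/D_n[z\phi]$ inside Lemma~\ref{lemma zphi/z-c} as the value of $Z_{12}$ at the origin; the second is obtained by inserting the explicit representation \eqref{Z12} of $Z_{12}$ in terms of $X$-RHP data and applying the Deift--Zhou asymptotics in the two regions $|c|<1$ and $|c|>1$.

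For the identity, the plan is to show $Z_{12}(0;n) = D_{n+1}[z\phi]/D_n[z\phi]$, after which \eqref{bbb} is immediate from \eqref{bdToep-RHP-eta0} upon division by $D_n[z\phi]$. This in turn follows from the biorthogonal-polynomial representation \eqref{Toeplitz-OP-solution zphi} of $Z(z;n)$: evaluating the $12$-entry at $z=0$ collapses the Cauchy kernel and leaves $Z_{12}(0;n)=\varkappa_n[z\phi]^{-1}\int_{\T} P_n(\ze)\ze^{-n}\phi(\ze)\dd\ze/(2\pi\ii)$, which by the $m=n$ instance of the orthonormality relation \eqref{orth zphi} equals $\varkappa_n[z\phi]^{-2}$, and \eqref{kappa zphi} converts this to the asserted ratio of Toeplitz determinants.

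For the asymptotic, the starting point is formula \eqref{Z12}, which expresses $Z_{12}(z;n)$ as $(\mathscr{B}(n)+z)X_{12}(z;n)-\overset{\infty}{X}_{1,12}(n)X_{22}(z;n)$. Subtracting the $z=0$ value and dividing by $c$, the aim is to estimate the four quantities $X_{12}(c;n)$, $X_{12}(0;n)$, $X_{22}(c;n)$, $X_{22}(0;n)$ together with the prefactors $\mathscr{B}(n)$ and $\overset{\infty}{X}_{1,12}(n)$. From \eqref{BBB} and \eqref{X1,12 infty} one has $\mathscr{B}(n)=-C_n[\phi]/C_{n-1}[\phi]\cdot(1+O(\rho^{-2n}))$ and $\overset{\infty}{X}_{1,12}(n)=C_n[\phi]\cdot(1+O(\rho^{-2n}))=O(\rho^{-n})$. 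When $|c|<1$, both $c$ and $0$ lie in $\Om_0$, so the asymptotics \eqref{X11 Om0}--\eqref{X22 Om0} apply at each point; in particular $X_{22}=O(\rho^{-n})$ forces $\overset{\infty}{X}_{1,12}\cdot X_{22}$ to be a controlled $O(\rho^{-2n})$, while $X_{12}(c;n)\approx\al(c)$ and $X_{12}(0;n)\approx\al(0)=G[\phi]$ combine via elementary algebra to yield the explicit $|c|<1$ branch. When $|c|>1$, the assumption $\rho<|c|$ puts $c$ in $\Om_\infty$, so the outer-parametrix asymptotics there give $X_{12}(c;n)$ and $X_{22}(c;n)$ exponentially small, of order $\rho^{-n}|c|^{-n}$ and $|c|^{-n}$ respectively; consequently $Z_{12}(c;n)=O(\rho^{-n})$ and the difference reduces at leading order to $-Z_{12}(0;n)/c$, producing the stated leading term with an absolute $O(\rho^{-n})$ remainder.

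The principal technical obstacle will be the careful bookkeeping of error terms. Since $\mathscr{B}(n)$, $\overset{\infty}{X}_{1,12}(n)$ and $C_n[\phi]$ are all of size $\rho^{-n}$ while the ratio $C_n/C_{n-1}$ is a priori only bounded in modulus (not decaying), one must verify that the cross terms $\overset{\infty}{X}_{1,12}(n)X_{22}(c;n)$ and $\overset{\infty}{X}_{1,12}(n)X_{22}(0;n)$ fall into the relative $O(\rho^{-2n})$ correction attached to the explicit $\al(c)$ and $G[\phi]C_n/(cC_{n-1})$ terms, rather than sitting at a larger scale. Equally, one must distinguish the relative $O(\rho^{-2n})$ factor decorating the principal terms of \eqref{CCC} from the additive $O(\rho^{-n})$ that appears only in the $|c|>1$ branch and that originates from the exponentially small, but not relatively negligible, value of $Z_{12}(c;n)$ in $\Om_\infty$.
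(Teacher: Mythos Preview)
Your proposal is correct and follows essentially the same route as the paper's proof: you establish \eqref{bbb} by identifying $D_{n+1}[z\phi]/D_n[z\phi]=\varkappa_n[z\phi]^{-2}=Z_{12}(0;n)$ via \eqref{Toeplitz-OP-solution zphi}, \eqref{orth zphi} and \eqref{kappa zphi}, then expand $(Z_{12}(c;n)-Z_{12}(0;n))/c$ through \eqref{Z12} and insert the Deift--Zhou asymptotics \eqref{X in terms of R exact}, \eqref{X1,12 infty}, \eqref{BBB} in the regions $\Om_0$ and $\Om_\infty$. Your discussion of the error bookkeeping is in fact more explicit than the paper's, which simply cites the relevant formulae; no alternative ideas are needed.
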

		\begin{proof}
			Let us rewrite \eqref{bdToep-RHP-eta0} as
			\begin{equation}\label{aaa}
				\frac{D^{B}_{n+1}[z\phi;\eta_0]}{D_{n}[z\phi]} = - \frac{1}{c} \frac{D_{n+1}[z\phi]}{D_{n}[z\phi]} + \frac{1}{c} Z_{12}(c;n) = \frac{Z_{12}(c;n)-\varkappa_n^{-2}[z\phi]}{c},
			\end{equation}
			where we have used \eqref{kappa zphi}. From \eqref{orth zphi} and \eqref{Toeplitz OP zphi}, we can observe that
			\begin{equation*}
				\varkappa_n^{-2}[z\phi] = Z_{12}(0;n),
			\end{equation*}
			and if we combine this with \eqref{aaa}, we obtain \eqref{bbb}. Rewriting \eqref{bbb}
			using \eqref{Z12}, we have
			\begin{align*}
				\frac{Z_{12}(c;n) - Z_{12}(0;n)}{c} ={}& X_{12}(c;n) + \mathscr{B}(n) \left( \frac{X_{12}(c;n) - X_{12}(0;n)}{c} \right) \\ & -\overset{\infty}{X}_{1,12}(n) \left( \frac{X_{22}(c;n) - X_{22}(0;n)}{c} \right).
			\end{align*}
			We now obtain \eqref{CCC}, using \eqref{X in terms of R exact}, \eqref{X1,12 infty}, and \eqref{BBB}.
		\end{proof}
		
Let us recall $q_0$ as defined in \eqref{q0}. The Fourier coefficients of $q_0$ are given by
		\begin{gather*}
			q_{0,j} = \begin{cases}
				0, & |c|<1, \\
				-(c)^{-j-1}, & |c|>1,
			\end{cases}\qquad 0\leq j \leq n.
		\end{gather*}
		The following results can be proven identically as Lemma \ref{lemma 3.2} and Corollary \ref{cor 1} reminded above from \cite{BEGIL}, which establishes how $D^B_{N}[z\phi;q]$ is encoded into $Z$-RHP data.
		\begin{Lemma}\label{lem 2.15}
			The bordered Toeplitz determinant $D^B_{n+1}\big[z\phi,\di \tfrac{1}{z-c}\big]$, is encoded into the $Z$-RHP data described by
			\begin{equation}\label{psi=1/z-c}
				D^B_{n+1}\left[z\phi;\frac{1}{z-c}\right] = \begin{cases}
					0, & |c|<1, \\
					-c^{-n-1}D_{n}[z\phi]Z_{11}(c;n), & |c|>1,
				\end{cases}
			\end{equation}
			where $D_{n}[\phi]$ is given by \eqref{ToeplitzDet} and $Z_{11}$ is the $11$ entry of the solution to {\rm RH-Z1} through {\rm RH-Z3}. Moreover, if $\phi$ is a Szeg\H{o}-type symbol, we have
			\begin{equation}\label{psi=1/z-c 1}
				\frac{D^B_{n+1}\left[z\phi;\di \frac{1}{z-c}\right]}{D_{n}[z\phi]}	 = \begin{cases}
					0, & |c|<1, \\
					\di	- \al(c) \left( c^{-1} - c^{-2} \frac{C_n[\phi]}{C_{n-1}[\phi]} \right) \big(1 + O\big(\rho^{-2n}\big)\big), & |c|>1,
				\end{cases}
			\end{equation}
			where $C_n[\phi]$ and
			$\al$ are given by \eqref{Cnphi} and \eqref{al}, respectively. For the case $|c|>1$, the number~${1<\rho < |c|}$ is such that $\phi$ is analytic in the annulus $\big\{z\colon \rho^{-1}<|z|<\rho\big\}$.
		\end{Lemma}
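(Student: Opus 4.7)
\textbf{Proof proposal for Lemma \ref{lem 2.15}.} My plan mirrors the strategy used in Lemma \ref{lemma 3.2} and Corollary \ref{cor 1} of \cite{BEGIL} for the analogous statement with the bulk symbol $\phi$, but here I exploit the $Z$-RHP rather than the $X$-RHP and then reduce everything to $X$-data via Theorem \ref{thm Z-RHP intro}. First I would establish the exact identity \eqref{psi=1/z-c} by directly computing the Fourier coefficients of $q_0(z)=1/(z-c)$: for $|c|<1$, the Laurent expansion convergent on $\T$ is $q_0(z)=\sum_{k\geq 0}c^k z^{-k-1}$, so $q_{0,j}=0$ for all $j\geq 0$, and hence the last column of $D^B_{n+1}[z\phi;q_0]$ vanishes and the determinant is zero. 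For $|c|>1$, the relevant Laurent expansion on $\T$ is $q_0(z)=-\sum_{k\geq 0}c^{-k-1}z^{k}$, giving $q_{0,j}=-c^{-j-1}$ for $j\geq 0$. I then take the transpose of $D^B_{n+1}[z\phi;q_0]$ (which does not change the determinant), factor $-c^{-n-1}$ out of the last row, and recognize the remaining determinant as $D_n[z\phi]$ times the determinantal formula for $Z_{11}(c;n)$ analogous to \eqref{X11} (obtained by replacing $\phi\mapsto z\phi$). This delivers \eqref{psi=1/z-c}.

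Next, for the asymptotic statement \eqref{psi=1/z-c 1} in the nontrivial case $|c|>1$, I would insert \eqref{ZX1} from Theorem \ref{thm Z-RHP intro} and read off the $(1,1)$-entry to get
\begin{equation*}
Z_{11}(c;n) = \left(1+\frac{\mathscr{B}(n)}{c}\right)X_{11}(c;n) - \frac{\overset{\infty}{X}_{1,12}(n)}{c}\,X_{21}(c;n),
\end{equation*}
with $\mathscr{B}(n)$ given by \eqref{Bn}. For the chosen $\rho$ with $1<\rho<|c|$, the Deift--Zhou analysis of the $X$-RHP in Appendix~\ref{Appendices} (extended from the $\Omega_0$ formulas \eqref{X11 Om0}--\eqref{X22 Om0} to points in $\Omega_\infty$) yields $X_{11}(c;n)=c^{n}\alpha(c)(1+O(\rho^{-2n}))$ and $X_{21}(c;n)=O(c^{n}\rho^{-2n})$. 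Combining these with \eqref{BBB}, which gives $\mathscr{B}(n)=-C_n[\phi]/C_{n-1}[\phi]\cdot(1+O(\rho^{-2n}))$, and with \eqref{X1,12 infty}, which gives $\overset{\infty}{X}_{1,12}(n)=O(\rho^{-n})$, I obtain
\begin{equation*}
Z_{11}(c;n) = c^{n}\alpha(c)\!\left(1-c^{-1}\frac{C_n[\phi]}{C_{n-1}[\phi]}\right)\!\bigl(1+O(\rho^{-2n})\bigr),
\end{equation*}
since the contribution from the $X_{21}$ term is of relative order $\rho^{-3n}$ and is absorbed in the $O(\rho^{-2n})$ error. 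Multiplying by the prefactor $-c^{-n-1}$ from \eqref{psi=1/z-c} produces exactly the claimed expression \eqref{psi=1/z-c 1}.

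The main obstacle I expect is the second step: carefully verifying that the pointwise $\Omega_\infty$ asymptotics of $X_{11}(c;n)$ and $X_{21}(c;n)$ for a fixed $c$ with $1<\rho<|c|$ indeed produce a leading-order constant (rather than growing prefactor) after division by $c^{n+1}$, and that the error term truly is uniformly $O(\rho^{-2n})$ in the region where $\phi$ is analytic. This requires tracking the opening of the lenses in the Deift--Zhou scheme so that $c$ lies strictly in the exterior region and using the analyticity of $\phi$ in the annulus $\rho^{-1}<|z|<\rho$ to ensure that the small-norm $R$-RHP estimates apply. Once the asymptotic expansions of the four $X$-entries are in hand, the remainder of the argument is a direct algebraic substitution; the combinatorics of the $\alpha(c)(c^{-1}-c^{-2}C_n[\phi]/C_{n-1}[\phi])$ structure emerges naturally from the two surviving terms in the decomposition of $Z_{11}(c;n)$.
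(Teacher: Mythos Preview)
Your approach is essentially identical to the paper's: the exact identity \eqref{psi=1/z-c} is obtained by the same Fourier-coefficient computation as in Lemma~\ref{lemma 3.2}, and the asymptotics \eqref{psi=1/z-c 1} follow by writing $Z_{11}(c;n)=(1+\mathscr{B}(n)c^{-1})X_{11}(c;n)-\overset{\infty}{X}_{1,12}(n)c^{-1}X_{21}(c;n)$ via \eqref{ZX1} and then invoking \eqref{X in terms of R exact}, \eqref{R_k's are small}, \eqref{X1,12 infty}, and \eqref{BBB}. One small slip: for $c\in\Omega_\infty$ the expansion \eqref{X in terms of R exact} gives $X_{21}(c;n)=R_{21}(c;n)\alpha(c)c^{n}=O(c^{n}\rho^{-n})$, not $O(c^{n}\rho^{-2n})$, so the $X_{21}$ contribution is of relative order $\rho^{-2n}$ rather than $\rho^{-3n}$---but this is still absorbed in the stated error and the conclusion is unaffected.
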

		\begin{proof}
			The proof of \eqref{psi=1/z-c} is identical to that of \eqref{psi=1/z-c 0}. From \eqref{ZX1}, we have
			\begin{equation}\label{Z11 X11 X21}
				Z_{11}(z;n) = \big( 1 + \mathscr{B}(n) z^{-1} \big) X_{11}(z;n) -\overset{\infty}{X}_{1,12}(n) z^{-1} X_{21}(z;n),
			\end{equation}
			where $\mathscr{B}(n)$ is defined in \eqref{Bn}.\footnote{Since $Z_{11}$ is a polynomial (see \eqref{Toeplitz-OP-solution zphi}), the coefficient of $z^{-1}$ in \eqref{Z11 X11 X21} must vanish: $\mathscr{B}(n) X_{11}(0;n) -\smash{\overset{\infty}{X}}_{1,12}(n) X_{21}(0;n) = 0$. This is, as expected, in agreement with \eqref{Bn}.} Now, \eqref{psi=1/z-c 1} follows from \eqref{X in terms of R exact}, \eqref{R_k's are small}, \eqref{X1,12 infty}, and~\eqref{BBB}.
		\end{proof}
		
		Finally, let us find the asymptotics of $D^B_{n+1}[z\phi;z]$.
		\begin{Lemma}\label{lem 2.16} It holds that
			\begin{equation}\label{zphi z}
				D^B_{n+1}[z\phi;z] = D^B_{n}[z\phi] \lim_{z \to \infty} \left[ \frac{Z_{11}(z;n)-z^n}{z^{n-1}} \right].
			\end{equation}
			Moreover, if $\phi$ is of Szeg\H{o}-type, as $n \to \infty$, we have
			\begin{equation}\label{zphi z asymp}
				\frac{D^B_{n+1}\left[z\phi;z\right]}{D_n[z\phi]}		 = \left( - [\log \phi]_{-1} - \frac{C_n[\phi]}{C_{n-1}[\phi]} \right) \big(1 + O\big(\rho^{-2n}\big)\big),
			\end{equation}
			where $C_n[\phi]$ is given by \eqref{Cnphi} and	the number $\rho>1$ is chosen such that $\phi$ is analytic in the annulus $\big\{z\colon \rho^{-1}<|z|<\rho\big\}$.
		\end{Lemma}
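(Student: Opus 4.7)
The argument splits into two stages: establishing the exact identity \eqref{zphi z} by a determinantal manipulation, and then extracting the asymptotics \eqref{zphi z asymp} by combining the $X$-RHP characterization of $Z_{11}$ with the large-$n$ Deift--Zhou data. For the first stage, note that the Fourier coefficients of the border symbol $\psi(z)=z$ are $\psi_j=\de_{j,1}$, so the last column of the matrix defining $D^B_{n+1}[z\phi;z]$ consists of a single $1$ in row $n-1$ (counting from $0$) and zeros elsewhere. Cofactor expansion along that column yields $D^B_{n+1}[z\phi;z] = -M$, where $M$ is the minor obtained by deleting row $n-1$ and the last column. On the other side, from \eqref{Toeplitz-OP-solution zphi}--\eqref{Toeplitz OP zphi} the polynomial $Z_{11}(z;n) = P_n(z)/\varkappa_n[z\phi]$ is monic of degree $n$, so the limit in \eqref{zphi z} is exactly its coefficient of $z^{n-1}$. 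Expanding the determinantal representation of $P_n$ in \eqref{Toeplitz OP zphi} along the final (polynomial) row extracts this coefficient as $-M'/D_{n}[z\phi]$, for an appropriate minor $M'$. A direct bookkeeping of the row/column indices shows that $M$ and $M'$ are transposes of one another, so their determinants coincide, and \eqref{zphi z} follows.

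For the asymptotic statement I would rely on formula \eqref{Z11 X11 X21} established in the proof of Lemma~\ref{lem 2.15}, namely
\[
Z_{11}(z;n) = \bigl(1 + \mathscr{B}(n) z^{-1}\bigr) X_{11}(z;n) - \overset{\infty}{X}_{1,12}(n) z^{-1} X_{21}(z;n).
\]
Expanding both sides at $z=\infty$ according to \eqref{X asymp1}, with $X_{11}(z;n) = z^n + \overset{\infty}{X}_{1,11}(n) z^{n-1} + O(z^{n-2})$ monic of degree $n$ and $X_{21}(z;n)$ a polynomial of degree at most $n-1$ in $z$, the coefficient of $z^{n-1}$ in $Z_{11}(z;n)$ reads $\overset{\infty}{X}_{1,11}(n) + \mathscr{B}(n)$, since the $z^{-1}X_{21}$ term contributes only at order $z^{n-2}$. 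In view of \eqref{zphi z}, proving \eqref{zphi z asymp} thus reduces to finding the large-$n$ behavior of these two scalars.

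The asymptotics of $\mathscr{B}(n)$ is already supplied by \eqref{BBB}. The main obstacle, and the step requiring the most care, is to show $\overset{\infty}{X}_{1,11}(n) = -[\log\phi]_{-1}\bigl(1 + O(\rho^{-2n})\bigr)$. To this end I would invoke the Deift--Zhou nonlinear steepest descent analysis of the $X$-RHP recalled in Appendix~\ref{Appendices}: in the exterior region $\Om_\infty$ one has $X(z;n) = R(z;n) N(z;n)$, with outer parametrix $N(z;n) = \al(z)^{\sigma_3} z^{n\sigma_3}$ and $R(z;n) = I + \sum_{k\geq 1} R_k(z;n)$ obeying the parity structure ``$R_{2\ell}$ diagonal, $R_{2\ell+1}$ off-diagonal'' already exploited in \eqref{X1,12 infty}. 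This structure forces $R_{11}(z;n) = 1 + O(\rho^{-2n})$ uniformly as $z \to \infty$, so that $X_{11}(z;n) = R_{11}(z;n)\,\al(z)\, z^n$. Expanding \eqref{al} in the region $|z|>1$ yields $\al(z) = 1 - [\log\phi]_{-1}\, z^{-1} + O(z^{-2})$, and matching the $z^{n-1}$-coefficient delivers $\overset{\infty}{X}_{1,11}(n) = -[\log\phi]_{-1}\bigl(1 + O(\rho^{-2n})\bigr)$. Adding this to the asymptotics of $\mathscr{B}(n)$ from \eqref{BBB} produces \eqref{zphi z asymp}.
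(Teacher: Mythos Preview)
Your proposal is correct and follows essentially the same route as the paper: both establish \eqref{zphi z} by a determinantal/cofactor argument (the paper defers to \cite[equation~(2.16)]{BEGIL}), then use \eqref{Z11 X11 X21}, observe the $z^{-1}X_{21}$ term contributes only at order $z^{n-2}$, expand $\al(z)$ at infinity via \eqref{al}, apply the $O(\rho^{-2n})$ control on $R_{11}$ from the parity structure of the $R$-expansion, and combine with \eqref{BBB}. Your phrasing in terms of the explicit sum $\overset{\infty}{X}_{1,11}(n)+\mathscr{B}(n)$ is slightly more organized than the paper's direct substitution, but the content is the same.
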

		\begin{proof}
			One can prove \eqref{zphi z} in the exact same manner as \cite[equation (2.16)]{BEGIL}. Let us recall~\eqref{Z11 X11 X21}. Observe that
 \[
 \frac{\overset{\infty}{X}_{1,12}(n) z^{-1} X_{21}(z;n)}{z^{n-1}} = O\big(z^{-1}\big),
 \]
 as $X_{21}(z;n)$ is a polynomial of degree $n-1$, and thus the above term in \eqref{Z11 X11 X21} does not contribute to the limit in \eqref{zphi z}. So we just focus on the first term in \eqref{Z11 X11 X21}. Expanding $\al(z)$, given by~\eqref{al}, as $z \to \infty $, we get
			\begin{gather*}
				\al(z) = 1 - \frac{1}{2\pi \ii z} \int_{\T} \ln (\phi(\tau)) \dd \tau + O\big(z^{-2}\big).
			\end{gather*}
			Using this in the expression for $X_{11}(z;n) = \al(z) z^n \big(1 + O \big(\frac{\rho^{-2n}}{1+|z|}\big)\big)$ in $\Om_{\infty}$ (see \eqref{X in terms of R exact} and Figure~\ref{S_contour}) and combining with \eqref{Z11 X11 X21}, \eqref{zphi z} and \eqref{BBB}, we obtain \eqref{zphi z asymp}.	
		\end{proof}

		The following result now follows in a straightforward way from Lemmas \ref{lem 2.15}, \ref{lem 2.16}, Corollaries~\ref{Cor zphi phi and phi/z} and~\ref{Corollary zphi/z-c}, and equations \eqref{linear-combination}, \eqref{border,phi,phi} and \eqref{border,phi,const}.
		
\begin{Corollary}\label{main cor 1}
Let $\psi$ be given by \eqref{general psi} and \eqref{q1 q2}, and $\phi$ be of Szeg\H{o}-type. Then, the following asymptotic behavior as $n \to \infty$ takes place:
			\begin{gather*}
				\frac{D^{B}_{n+1}[z\phi;\psi]}{D_{n}[z\phi]} = G[\phi] \left( F [\phi,\psi] - H[\phi,\psi] \frac{C_n[\phi]}{C_{n-1}[\phi]} + O(\rho^{-n})\right),
			\end{gather*}
			where $F [\phi,\psi]$ is given by \eqref{ConstantF2}, and $H[\phi,\psi]$ is given by \eqref{ConstantH}.
			In the above formulae, $C_n[\phi]$ and
			$\al$ are given by \eqref{Cnphi} and \eqref{al}, respectively, and the number $\rho$ is such that
\[
1<\rho < \underset{1 \leq j \leq m \atop |c_j|>1}{\min} \{|c_j|\}, \qquad \underset{1 \leq j \leq m \atop 0<|c_j|<1}{\max} \{|c_j|\}<\rho^{-1}<1,
\]
 and $\phi$ is analytic in the annulus $\big\{z\colon \rho^{-1}<|z|<\rho\big\}$.
		\end{Corollary}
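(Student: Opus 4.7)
The proof is by linearity and direct accounting, since every summand in $\psi=q_1\phi+q_2$ (as given by \eqref{q1 q2}) is a border symbol for which the $n\to\infty$ behavior of $D^B_{n+1}[z\phi;\,\cdot\,]/D_n[z\phi]$ has already been computed. My plan is to invoke \eqref{linear-combination} to split $D^B_{n+1}[z\phi;\psi]$ as a linear combination of the bordered Toeplitz determinants corresponding to the eight atomic pieces of $q_1\phi+q_2$, evaluate each via the relevant lemma/corollary, and then collect leading-order contributions (which should assemble into $G[\phi]\,F[\phi;\psi]$) and the coefficient of $C_n[\phi]/C_{n-1}[\phi]$ (which should assemble into $-G[\phi]\,H[\phi;\psi]$).

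The atomic pieces and the tool for each are as follows. The contribution of $\hat a_0$ is trivial by \eqref{border,phi,const}, giving $\hat a_0$. The contribution of $\hat b_0/z$ vanishes, because the Fourier coefficients $(1/z)_j$ are all zero for $j\ge 0$ (so the entire last column of the corresponding bordered determinant is zero). The term $\hat a_1 z$ is handled by Lemma \ref{lem 2.16}, the terms $\hat b_j/(z-c_j)$ by Lemma \ref{lem 2.15}, the term $a_0\phi$ by \eqref{asymp zphi phi}, the term $b_0\phi/z$ by \eqref{asymp zphi phi/z}, and the terms $b_j z\phi/(z-c_j)$ by Corollary \ref{Corollary zphi/z-c}. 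Finally, the term $a_1 z\phi$ gives $D^B_{n+1}[z\phi;z\phi]/D_n[z\phi]=D_{n+1}[z\phi]/D_n[z\phi]=\varkappa_n^{-2}[z\phi]=Z_{12}(0;n)$ by \eqref{border,phi,phi} and \eqref{kappa zphi}; evaluating $Z_{12}(0;n)$ from \eqref{Z12} using $\overset{\infty}{X}_{1,12}(n)=O(\rho^{-n})$ from \eqref{X1,12 infty} and $\mathscr{B}(n)=-C_n[\phi]/C_{n-1}[\phi]\bigl(1+O(\rho^{-2n})\bigr)$ from \eqref{BBB} yields the contribution $-a_1 G[\phi]\,C_n[\phi]/C_{n-1}[\phi]\bigl(1+O(\rho^{-2n})\bigr)$.

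Once all eight contributions are listed, the verification is bookkeeping. Using $\alpha(0)=G[\phi]$, $\alpha'(0)=G[\phi]\,[\log\phi]_1$, and $\alpha''(0)=G[\phi]\bigl(2[\log\phi]_2+[\log\phi]_1^2\bigr)$ already recorded in the proof of Corollary \ref{Cor zphi phi and phi/z}, the leading-order sum reproduces $G[\phi]\,F[\phi;\psi]$ of \eqref{ConstantF2}, in which only the poles with $|c_j|<1$ contribute the $b_j$-terms (from the $|c|<1$ branch of \eqref{CCC}) and only those with $|c_j|>1$ contribute the $\hat b_j$-terms (by \eqref{psi=1/z-c 1}). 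The coefficient of $C_n[\phi]/C_{n-1}[\phi]$ reproduces $-G[\phi]\,H[\phi;\psi]$ of \eqref{ConstantH}: here every $b_j$ gives the universal $-b_j/c_j$ piece coming from the $(G[\phi]/c_j)$ term in \eqref{CCC}, while the $|c_j|<1$ branch additionally contributes $b_j\alpha(c_j)/(c_j\alpha(0))$ and the $|c_j|>1$ $\hat b_j$-terms contribute $-\hat b_j\alpha(c_j)/(c_j^2\alpha(0))$.

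The main obstacle is not analytic but combinatorial: aligning the signs, the pole-location conditions, and the $1/\alpha(0)$ prefactors for the $q_2$-contributions against the closed forms \eqref{ConstantF2} and \eqref{ConstantH}. The choice of $\rho$ is dictated by requiring $\phi$ to be analytic on $\{\rho^{-1}<|z|<\rho\}$ and simultaneously $\rho<|c_j|$ for $|c_j|>1$ and $\rho^{-1}>|c_j|$ for $|c_j|<1$, so that Lemma \ref{lem 2.15}, Lemma \ref{lem 2.16}, Corollary \ref{Cor zphi phi and phi/z}, and Corollary \ref{Corollary zphi/z-c} all apply with the same $\rho$; the resulting error term is $O(\rho^{-n})$ (dominated, as expected, by the $\overset{\infty}{X}_{1,12}(n)$-type contributions rather than the $O(\rho^{-2n})$ relative errors), which matches the statement.
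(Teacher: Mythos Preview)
Your proposal is correct and follows essentially the same approach as the paper, which states only that the result follows in a straightforward way from Lemmas~\ref{lem 2.15} and~\ref{lem 2.16}, Corollaries~\ref{Cor zphi phi and phi/z} and~\ref{Corollary zphi/z-c}, and equations \eqref{linear-combination}, \eqref{border,phi,phi}, \eqref{border,phi,const}. You have simply made explicit the term-by-term accounting (including the $a_1 z\phi$ piece via $Z_{12}(0;n)$ and the vanishing of the $\hat b_0/z$ piece) that the paper leaves to the reader.
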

		This result is the last needed asymptotics to prove Theorem \ref{main thm 2-bordered}. In fact, from \eqref{Dodgson 2-bordered},
		\begin{gather*}
			D^B_n[\phi;\boldsymbol{\psi}_2] = D^B_{n-1}\big[\phi;z^{-1}\psi_1\big] \frac{D^B_{n-1}[z\phi;\psi_2]}{D_{n-2}[z \phi]} - D^B_{n-1}\big[\phi;z^{-1}\psi_2\big] \frac{D^B_{n-1}[z\phi;\psi_1]}{D_{n-2}[z \phi]},
		\end{gather*}
		Corollary \ref{main cor 1} and Lemma \ref{main lem 1}, we obtain \eqref{135} and \eqref{136}. We have thus finished the proof of Theorem \ref{main thm 2-bordered}.
		
\begin{Remark}\label{remark multi-bordered} \normalfont
It is worthwhile to highlight that the framework presented in this section can be recursively used to find the asymptotics of a $k$-bordered Toeplitz determinant when each~$\psi_j$ is of the form \eqref{general psi}--\eqref{q1 q2}, for any finite $k$. For instance, let us consider the three-bordered Toeplitz determinant
$
\mathcal{D}:= D^B_n[\phi;\boldsymbol{\psi}_3] \equiv D^B_n[\phi;\psi_1,\psi_2,\psi_3]$.
			Like the two-bordered case, we use the Dodgson condensation identity \eqref{DODGSON3}, this time for $\mathcal{D}$:
			\begin{equation}\label{DODGSON33}
				\mathcal{D} \cdot \mathcal{D}\left\lbrace \begin{matrix} 0 & n-1 \\ n-2& n-1 \end{matrix} \right\rbrace = \mathcal{D}\left\lbrace \begin{matrix} 0 \\ n-2 \end{matrix} \right\rbrace \cdot \mathcal{D}\left\lbrace \begin{matrix} n-1 \\ n-1 \end{matrix} \right\rbrace - \mathcal{D}\left\lbrace \begin{matrix} 0 \\ n-1 \end{matrix} \right\rbrace \cdot \mathcal{D}\left\lbrace \begin{matrix} n-1 \\ n-2 \end{matrix} \right\rbrace.
			\end{equation}
			We observe that
			\begin{equation*}
				\mathcal{D}\left\lbrace \begin{matrix} 0 & n-1 \\ n-2& n-1 \end{matrix} \right\rbrace = D^B_{n-2}\big[z\phi;z^{-1}\psi_1\big]
			\end{equation*}
			is a (single) bordered Toeplitz determinant, while all four determinants on the right-hand side of \eqref{DODGSON33} are two-bordered Toeplitz determinants:
			\begin{align}
				&\mathcal{D}\left\lbrace \begin{matrix} 0 \\ n-2 \end{matrix} \right\rbrace = D^B_{n-1}[z\phi;\psi_1,\psi_3], \label{E11} \\
				&\mathcal{D}\left\lbrace \begin{matrix} n-1 \\ n-1 \end{matrix} \right\rbrace = D^B_{n-1}\big[\phi;z^{-1}\psi_1,z^{-1}\psi_2\big], \\
			&	\mathcal{D}\left\lbrace \begin{matrix} 0 \\ n-1 \end{matrix} \right\rbrace = D^B_{n-1}[z\phi;\psi_1,\psi_2], \\
			&	\mathcal{D}\left\lbrace \begin{matrix} n-1 \\ n-2 \end{matrix} \right\rbrace = D^B_{n-1}\big[\phi;z^{-1}\psi_1,z^{-1}\psi_3\big]. \label{E44}
			\end{align}
			These two-bordered Toeplitz determinants can be asymptotically analyzed using the results and methods described earlier in this section, and thus pave the way for the asymptotic analysis of~${D^B_n[\phi;\boldsymbol{\psi}_3]}$ via \eqref{DODGSON33}.
		\end{Remark}
		
		\subsection{A new proof of the three term recurrence relations for BOPUC}
		Finally, we would like to discuss the compatibility of \eqref{ZX1} and \eqref{Z in terms of X1} in view of the uniqueness of the solution of the $Z$ Riemann--Hilbert problem, see Lemma \ref{uniqueness Y}. This compatibility provides a new proof for the recurrence relations of the system of bi-orthogonal polynomials on the unit circle in the next lemma.
		
\begin{Lemma}[{\cite[Lemma 2.2]{DIK}, \cite{SzegoOP}}]\label{lemma 3term} Suppose that for each $n \!\in \N \cup \{0\}$ the solution $X(z;n)$ of the Riemann--Hilbert problem {\rm RH-X1} through {\rm RH-X3} and the solution $Z(z;n)$ of the Rie\-mann--Hilbert problem {\rm RH-Z1} through {\rm RH-Z3} exist. Then the system of bi-orthogonal poly\-no\-mi\-als~${\big\{Q_j(z),\widehat{Q}_j(z)\big\}^{\infty}_{j=0}}$ exist and satisfy the following recurrence relations for $n \in \N \cup \{0\}$:
			\begin{align}
				&	\varkappa_n z Q_n(z) = \varkappa_{n+1} Q_{n+1}(z) - Q_{n+1}(0) z^{n+1} \widehat{Q}_{n+1}\big(z^{-1}\big), \label{2.3 DIK} \\
				&	\varkappa_n z^{-1} \widehat{Q}_{n}\big(z^{-1}\big) = \varkappa_{n+1} \widehat{Q}_{n+1}\big(z^{-1}\big) - \widehat{Q}_{n+1}(0) z^{-n-1}Q_{n+1}(z), \label{2.4 DIK}\\
				&	\varkappa_{n+1} z^{-1} \widehat{Q}_{n}\big(z^{-1}\big) = \varkappa_{n} \widehat{Q}_{n+1}\big(z^{-1}\big) - \widehat{Q}_{n+1}(0) z^{-n}Q_{n}(z), \label{2.5 DIK}
			\end{align}
			and
			\begin{equation}
				\varkappa^2_{n+1} - \varkappa^2_{n} = Q_{n+1}(0) \widehat{Q}_{n+1}(0). \label{2.6 DIK}
			\end{equation}
		\end{Lemma}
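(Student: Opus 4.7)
\medskip

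The plan is to exploit the fact that by Lemma \ref{uniqueness Y} the $Z$-RHP has a unique solution, so the two representations \eqref{ZX1} and \eqref{Z in terms of X1} of $Z(z;n)$ must coincide as matrix-valued meromorphic functions of $z$. Concretely, I will write
\[
\left( \begin{bmatrix} \mathscr{B}(n) & -\overset{\infty}{X}_{1,12}(n)\\ -\mathscr{A}(n) & 1\end{bmatrix}z^{-1}+\begin{bmatrix}1&0\\0&0\end{bmatrix}\right) X(z;n) \begin{bmatrix}1&0\\0&z\end{bmatrix} = \begin{bmatrix} z+c_{11}(n) & -\overset{\infty}{X}_{1,12}(n-1)\vspace{3pt}\\ 1/\overset{\infty}{X}_{1,12}(n-1)&0\end{bmatrix} X(z;n-1),
\]
with $\mathscr{A}(n)=X_{21}(0;n)/X_{11}(0;n)$, $\mathscr{B}(n)=\mathscr{A}(n)\overset{\infty}{X}_{1,12}(n)$, and $c_{11}(n)=\overset{\infty}{X}_{1,22}(n-1)-\overset{\infty}{X}_{2,12}(n-1)/\overset{\infty}{X}_{1,12}(n-1)$. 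This is an identity between analytic (in fact rational-in-$z$) matrices, so each of its four scalar entries is a polynomial identity relating the BOPUC data at levels $n$ and $n-1$.

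The cleanest entry is the $(2,1)$ one, which, after multiplying through by $z$, reads $X_{21}(z;n)-\mathscr{A}(n)X_{11}(z;n)=\frac{z}{\overset{\infty}{X}_{1,12}(n-1)}X_{11}(z;n-1)$. Substituting the explicit expressions $X_{11}(z;n)=\varkappa_n^{-1}Q_n(z)$ and $X_{21}(z;n)=-\varkappa_{n-1}z^{n-1}\widehat{Q}_{n-1}(z^{-1})$ from \eqref{Toeplitz-OP-solution}, together with the easily computed evaluations $X_{11}(0;n)=\varkappa_n^{-1}Q_n(0)$ and $X_{21}(0;n)=-\varkappa_{n-1}^2$ (the latter obtained by reading off the leading coefficient of the reciprocal polynomial $z^{n-1}\widehat{Q}_{n-1}(z^{-1})$ at $z=0$), yields a polynomial identity in $z$ among $Q_n$, $Q_{n-1}$ and $z^{n-1}\widehat{Q}_{n-1}(z^{-1})$. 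Comparing leading coefficients at $z^n$ forces $\overset{\infty}{X}_{1,12}(n-1)=Q_n(0)/(\varkappa_{n-1}^2\varkappa_n)$, and after substituting this value and shifting $n\mapsto n+1$, the identity collapses onto precisely \eqref{2.3 DIK}.

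The $(1,1)$-, $(1,2)$-, and $(2,2)$-entries are treated in the same spirit. The $(2,2)$-entry relates $X_{22}(z;n)-\mathscr{A}(n)X_{12}(z;n)$ to $X_{12}(z;n-1)$; decomposing the second columns into their polynomial and Cauchy-transform parts via the integral formulas in \eqref{Toeplitz-OP-solution} and using the uniqueness of such a decomposition turns this into a polynomial identity which, up to an index shift, is \eqref{2.5 DIK}. Similarly, the $(1,1)$- and $(1,2)$-entries combine to produce \eqref{2.4 DIK}, once the auxiliary constants $\overset{\infty}{X}_{1,22}(n-1)$ and $\overset{\infty}{X}_{2,12}(n-1)$ are either identified from the expansion at infinity or, more expediently, eliminated by using already-established identities. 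Finally, \eqref{2.6 DIK} is obtained for free by comparing the coefficient of $z^{n+1}$ on both sides of \eqref{2.3 DIK}: the left-hand side contributes $\varkappa_n^2$, while the right-hand side contributes $\varkappa_{n+1}^2-Q_{n+1}(0)\widehat{Q}_{n+1}(0)$, since $z^{n+1}\widehat{Q}_{n+1}(z^{-1})$ has leading coefficient $\widehat{Q}_{n+1}(0)$.

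The main obstacle I anticipate is the bookkeeping: the coefficient matrix on the right-hand side of \eqref{Z in terms of X1} involves the sub-sub-leading quantity $\overset{\infty}{X}_{2,12}(n-1)$, and one must check that in the $(1,1)$- and $(1,2)$-entries this quantity either cancels or can be identified with a known BOPUC object so that the final identity is clean. The strategy for this is to use the already-derived form of $\overset{\infty}{X}_{1,12}(n-1)$ to recast all occurrences of subleading $X$-data in terms of $\varkappa_j$ and $Q_j(0)$, $\widehat{Q}_j(0)$, and then verify algebraically that the remaining free constants are fixed consistently by matching two independent powers of $z$ in the resulting polynomial identities. This closes the system and reproduces the classical recurrences without appealing to the usual Hilbert-space orthogonality arguments.
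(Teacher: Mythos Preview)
Your overall strategy---equate the two representations \eqref{ZX1} and \eqref{Z in terms of X1} of $Z(z;n)$ and read off polynomial identities entry by entry---is exactly the paper's approach. However, two of your specific claims are wrong and would derail the argument as written.

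First, the $(2,1)$-entry does \emph{not} collapse onto \eqref{2.3 DIK}. After substituting and shifting $n\mapsto n+1$ you obtain
\[
\varkappa_{n+1}\, z\, Q_n(z) \;=\; \varkappa_n\, Q_{n+1}(z) \;-\; Q_{n+1}(0)\, z^{n}\,\widehat{Q}_{n}\big(z^{-1}\big),
\]
which is the intermediate relation the paper calls \eqref{zQ1}. Note the right-hand side carries $\widehat{Q}_n$, not $\widehat{Q}_{n+1}$ as in \eqref{2.3 DIK}, and the $\varkappa$'s sit on the opposite sides. This is a genuine recurrence, but it is not any of \eqref{2.3 DIK}--\eqref{2.5 DIK}. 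Consequently your derivation of \eqref{2.6 DIK} by reading off the $z^{n+1}$ coefficient of \eqref{2.3 DIK} is premature: you have not yet established \eqref{2.3 DIK}.

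Second, the $(2,2)$-entry carries no information beyond the $(2,1)$-entry. The second column of $X$ is obtained from the first via the jump on $\T$, so taking the additive jump of your $(2,2)$-identity across $\T$ reproduces the $(2,1)$-identity verbatim (the factors $z^{-n}\phi$ and $z^{-(n-1)}\phi$ cancel exactly against the extra $z$ in the prefactor). Likewise $(1,2)$ duplicates $(1,1)$. So your plan to harvest four independent relations from the four entries yields only two.

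The paper gets around both issues by working only with the first column but extracting \emph{two} linear combinations of its two rows: inverting the left coefficient matrix in \eqref{Z compatibility} gives the relation above (leading to \eqref{zQ1}), while inverting the right coefficient matrix produces the clean identity $\mathcal{D}\,X_{11}(z;n-1)=X_{21}(z;n-1)-X_{21}(z;n)$, from which \eqref{2.4 DIK} follows directly and the troublesome constant $c_{11}(n)$ involving $\overset{\infty}{X}_{2,12}(n-1)$ never appears. The two independent relations \eqref{zQ1} and \eqref{2.4 DIK} are then combined to produce \eqref{2.3 DIK}; evaluating at $z=0$ gives \eqref{2.6 DIK}; and eliminating $Q_{n+1}$ between \eqref{2.3 DIK} and \eqref{2.4 DIK} using \eqref{2.6 DIK} yields \eqref{2.5 DIK}. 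If you revise your bookkeeping along these lines the argument goes through.
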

		
		\begin{proof}
			As described in the proofs of Theorems \ref{thm Z-RHP intro} and \ref{thmXZ intro} in Sections \ref{sec proof of thm 1.6} and \ref{sec proof of thm 1.8}, under these assumptions for each $n \in \N \cup \{0\}$, we have
			\begin{itemize}\itemsep=0pt
				\item $D_n[\phi] \neq 0$ and $\varkappa_{n}$ exist and is nonzero (see \eqref{varkappa}), and
				\item$X_{11}(0;n) \equiv \varkappa^{-1}_n Q_n(0) \neq 0$ and $ \overset{\infty}{X}_{1,12}(n)\neq 0$.
			\end{itemize}
			The compatibility of the $11$ and $21$ entries of \eqref{ZX1} and \eqref{Z in terms of X1} can be written as
			\begin{gather}
						\begin{bmatrix}
						- z^{-1} \overset{\infty}{X}_{1,12}(n) & \overset{\infty}{X}_{1,12}(n-1) \\
						z^{-1} & 0
					\end{bmatrix} \begin{bmatrix}
						X_{21}(z;n) \\ X_{21}(z;n-1)
					\end{bmatrix}\nonumber \\
\qquad= 	\begin{bmatrix}
						- \di \tfrac{\overset{\infty}{X}_{1,12}(n)X_{21}(0;n)}{X_{11}(0;n)}z^{-1} - 1 & z + \overset{\infty}{X}_{1,22}(n-1) - \di \tfrac{\overset{\infty}{X}_{2,12}(n-1)}{\overset{\infty}{X}_{1,12}(n-1)} \\
						\di	\tfrac{X_{21}(0;n)}{X_{11}(0;n)} z^{-1} & \di
						\tfrac{1}{\overset{\infty}{X}_{1,12}(n-1)}
					\end{bmatrix} \begin{bmatrix}
						X_{11}(z;n) \\ X_{11}(z;n-1)
					\end{bmatrix}.\label{Z compatibility}
			\end{gather}
			Solving this linear system by inverting the coefficient matrix on the left-hand side, in particular, yields
			\begin{equation*}
				X_{21}(z;n) = \frac{X_{21}(0;n)}{X_{11}(0;n)} X_{11}(z;n) + 		\frac{1}{\overset{\infty}{X}_{1,12}(n-1)} z X_{11}(z;n-1).
			\end{equation*}
			Using \eqref{Toeplitz-OP-solution}, shifting the index $n \mapsto n+1$, and straight-forward simplifications yield
			\begin{equation}\label{zQ}
				z Q_n(z) - \frac{\varkappa^3_n \overset{\infty}{X}_{1,12}(n)}{Q_{n+1}(0)}Q_{n+1}(z) = - \overset{\infty}{X}_{1,12}(n) \varkappa^2_n z^n \widehat{Q}_n\big(z^{-1}\big).
			\end{equation}
			Matching the coefficients of $z^{n+1}$ yields the identity
			\begin{equation*}
				\overset{\infty}{X}_{1,12}(n) = \frac{Q_{n+1}(0)}{\varkappa^2_{n} \varkappa_{n+1}},
			\end{equation*}
			using which we can write \eqref{zQ} as
			\begin{gather}\label{zQ1}
				z Q_n(z) = \frac{\varkappa_n}{\varkappa_{n+1}}Q_{n+1}(z) - \frac{Q_{n+1}(0)}{\varkappa_{n+1}} z^n \widehat{Q}_n\big(z^{-1}\big).
			\end{gather}
			Now, by inverting the coefficient matrix on the right-hand side of \eqref{Z compatibility}, in particular, we obtain
			\begin{gather}\label{other rec}
				\mathcal{D} X_{11}(z;n-1) = X_{21}(z;n-1)-X_{21}(z;n),
			\end{gather}
			for some constant $\mathcal{D}$. Recalling \eqref{Toeplitz-OP-solution} and matching the coefficients of $z^{n-1}$ gives \begin{gather*}
				\mathcal{D} = \varkappa_{n-1}\widehat{Q}_{n-1}(0).
			\end{gather*}
			Using this along with \eqref{Toeplitz-OP-solution}, shifting the index $n \mapsto n+2$	and straightforward rearrangement of terms in \eqref{other rec} yield \eqref{2.4 DIK}.
			
			Now, we combine \eqref{2.4 DIK} and \eqref{zQ1} to obtain
			\begin{equation}\label{zQ2}
				\varkappa_n z Q_n(z) = \left[ \frac{\varkappa^2_{n} + Q_{n+1}(0) \widehat{Q}_{n+1}(0) }{\varkappa_{n+1}} \right] Q_{n+1}(z) - Q_{n+1}(0) z^{n+1} \widehat{Q}_{n+1}\big(z^{-1}\big).
			\end{equation}
			Evaluating this equation at $z=0$ gives \eqref{2.6 DIK}. Combining \eqref{2.6 DIK} and \eqref{zQ2} gives \eqref{2.3 DIK}. Finally, eliminating $Q_{n+1}(z)$ from \eqref{2.3 DIK} and \eqref{2.4 DIK}, and using \eqref{2.6 DIK}, yields \eqref{2.5 DIK}.
		\end{proof}

\section[Semi-framed, framed and multi-framed Toeplitz determinants]{Semi-framed, framed \\
and multi-framed Toeplitz determinants}\label{section framed}
		As will become clear later in the sequel, the semi-framed Toeplitz determinants form the building blocks to study the asymptotics of framed and multi-framed Toeplitz determinants. To get started in this section, it is useful to revisit the definitions of the semi-framed Toeplitz determinants which were introduced in the introduction here again. For $\phi,\psi,\eta \in L^1(\T)$ and a~parameter $a \in \C$ define the $n\times n$ semi-framed Toeplitz determinants $\mathscr{E}_n[\phi;\psi,\eta;a]$, $\mathscr{G}_n[\phi;\psi,\eta;a]$, $\mathscr{H}_n[\phi;\psi,\eta;a]$ and $\mathscr{L}_n[\phi;\psi,\eta;a]$ as
		\begin{gather}
			\mathscr{E}_n[\phi;\psi,\eta;a] := \det \begin{bmatrix}
				\phi_0& \phi_{-1} & \cdots & \phi_{-n+2} & \psi_{n-2} \\
				\phi_{1}& \phi_0 & \cdots & \phi_{-n+3} & \psi_{n-3} \\
				\vdots & \vdots & \ddots & \vdots & \vdots \\
				\phi_{n-2} & \phi_{n-3} & \cdots & \phi_{0} & \psi_{0} \\
				\eta_{n-2} & \eta_{n-3} & \cdots & \eta_{0} & a
			\end{bmatrix},\label{half-framed}
\\
			\mathscr{G}_n[\phi;\psi,\eta;a] := \det \begin{bmatrix}
				\phi_0& \phi_{-1} & \cdots & \phi_{-n+2} & \psi_{0} \\
				\phi_{1}& \phi_0 & \cdots & \phi_{-n+3} & \psi_{1} \\
				\vdots & \vdots & \ddots & \vdots & \vdots \\
				\phi_{n-2} & \phi_{n-3} & \cdots & \phi_{0} & \psi_{n-2} \\
				\eta_{0} & \eta_{1} & \cdots & \eta_{n-2} & a
			\end{bmatrix},
\\
			\mathscr{H}_n[\phi;\psi,\eta;a] := \det \begin{bmatrix}
				\phi_0& \phi_{-1} & \cdots & \phi_{-n+2} & \psi_{0} \\
				\phi_{1}& \phi_0 & \cdots & \phi_{-n+3} & \psi_{1} \\
				\vdots & \vdots & \ddots & \vdots & \vdots \\
				\phi_{n-2} & \phi_{n-3} & \cdots & \phi_{0} & \psi_{n-2} \\
				\eta_{n-2} & \eta_{n-3} & \cdots & \eta_{0} & a
			\end{bmatrix},\label{half-framed 2}
		\end{gather}
		and
		\begin{equation}\label{half-framed 3}
			\mathscr{L}_n[\phi;\psi,\eta;a] := \det \begin{bmatrix}
				\phi_0& \phi_{-1} & \cdots & \phi_{-n+2} & \psi_{n-2} \\
				\phi_{1}& \phi_0 & \cdots & \phi_{-n+3} & \psi_{n-3} \\
				\vdots & \vdots & \ddots & \vdots & \vdots \\
				\phi_{n-2} & \phi_{n-3} & \cdots & \phi_{0} & \psi_{0} \\
				\eta_{0} & \eta_{1} & \cdots & \eta_{n-2} & a
			\end{bmatrix},
		\end{equation}
		where $f_j$'s are the Fourier coefficients of $f \in \{ \phi, \psi, \eta \}$. To distinguish these framed Toeplitz matrices, it is helpful to think of them visually as $ \leftarrow \uparrow$, $\rightarrow \downarrow$, $\leftarrow \downarrow $, and $\rightarrow \uparrow$, respectively. For example, $\leftarrow \uparrow$ is associated with $\mathscr{E}_n$ because the index of the Fourier coefficients in the last row of $\boldsymbol{\mathscr{E}}_n$ increase from right to left ($\leftarrow$) and the index of the Fourier coefficients in the last column of $\boldsymbol{\mathscr{E}}_N$ increase from bottom to top ($\uparrow$). We should mention that each of these determinants can be written in terms of any other one by a simple observation (see Lemma \ref{EGL in terms of H}).
		
		It can be easily checked that
		\begin{gather}
			F_N\Bigg[\phi; \sum_{j=1}^{m_1} A_j \psi_j,\sum_{k=1}^{m_2} B_k \eta_k;a\Bigg] = \sum_{j=1}^{m_1} \sum_{k=1}^{m_2} A_j B_k F_N\big[\phi; \psi_j, \eta_k; \widehat{a}_{j,k} \big],\nonumber \\
 F \in \{\mathscr{E},\mathscr{G}, \mathscr{H}, \mathscr{L}\},\label{framed lin comb}
		\end{gather}
		where
		$\widehat{a}_{j,k}$ are complex numbers satisfying
		\begin{equation}\label{framed lin comb1}
			\sum_{j=1}^{m_1} \sum_{k=1}^{m_2} A_j B_k \widehat{a}_{j,k}= a.
		\end{equation}
		If $A_j$ and $B_k$ are nonzero, one such set of numbers is obviously
$\widehat{a}_{j,k} = \frac{a}{m_1m_2A_jB_k}$.

		\begin{Lemma}\label{EGL in terms of H}
			The semi-framed determinants $\mathscr{E}_n[\phi;\psi,\eta;a]$, $\mathscr{G}_n[\phi;\psi,\eta;a]$, and $\mathscr{L}_n[\phi;\psi,\eta;a]$ have the following representations in terms of $\mathscr{H}_n[\phi;f,g;a]$:
			\begin{align*}
				\mathscr{E}_n[\phi;\psi,\eta;a] & = \mathscr{H}_n\big[\phi;z^{n-2}\Tilde{\psi},\eta;a\big], \qquad
				\mathscr{G}_n[\phi;\psi,\eta;a] = \mathscr{H}_n\big[\phi;\psi,z^{n-2}\Tilde{\eta};a\big], \\
				\mathscr{L}_n[\phi;\psi,\eta;a] & = \mathscr{H}_n\big[\phi;z^{n-2}\Tilde{\psi},z^{n-2}\Tilde{\eta};a\big],
			\end{align*}
			where $\Tilde{f}$ denotes the function $z \mapsto f\big(z^{-1}\big)$, $f \in \{\psi,\eta\}$.
		\end{Lemma}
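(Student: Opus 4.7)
The plan is a direct verification at the level of matrix entries, once one understands the effect of the operation $f \mapsto z^{n-2}\tilde{f}$ on Fourier coefficients. The key preliminary computation is as follows: for any $f \in L^1(\T)$ with Laurent expansion $f(z) = \sum_k f_k z^k$, we have $\tilde{f}(z) = f\big(z^{-1}\big) = \sum_k f_k z^{-k}$, and therefore
\[
	z^{n-2}\tilde{f}(z) = \sum_k f_k z^{n-2-k}, \qquad [z^{n-2}\tilde{f}]_j = f_{n-2-j}, \qquad j \in \Z.
\]
Thus the operation reverses the Fourier indices relative to the range $\{0,1,\dots,n-2\}$.

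With this in hand, I would compare the four semi-framed matrices of \eqref{half-framed}--\eqref{half-framed 3} entry by entry. By inspection, all four share the same $(n-1)\times (n-1)$ Toeplitz bulk generated by $\phi$ in the upper-left corner and the same scalar $a$ in the $(n,n)$ entry; they differ only in the ordering of the Fourier coefficients of $\psi$ down the last column and of $\eta$ along the last row. In $\mathscr{H}_n[\phi;\psi,\eta;a]$, the last column has entry $\psi_{i-1}$ in row $i$ (for $1 \leq i \leq n-1$) and the last row has entry $\eta_{n-1-j}$ in column $j$ (for $1 \leq j \leq n-1$).

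Applying the coefficient identity with $f=\psi$ shows that in $\mathscr{H}_n[\phi;z^{n-2}\tilde{\psi},\eta;a]$ the last-column entry in row $i$ becomes $\psi_{n-2-(i-1)} = \psi_{n-1-i}$, which is exactly the last-column entry of $\mathscr{E}_n[\phi;\psi,\eta;a]$. Since the bulk, corner entry, and last row agree, the identity for $\mathscr{E}_n$ follows. The identity for $\mathscr{G}_n$ follows by the symmetric argument applied to the last row, namely by replacing $\eta$ with $z^{n-2}\tilde{\eta}$ in $\mathscr{H}_n$ so that the last-row entries become $\eta_{n-2-(n-1-j)} = \eta_{j-1}$, matching $\mathscr{G}_n$. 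The identity for $\mathscr{L}_n$ is obtained by carrying out both substitutions simultaneously.

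There is no genuine obstacle here; the lemma is a purely combinatorial bookkeeping statement about how $f \mapsto z^{n-2}\tilde{f}$ reverses Fourier indices on $\{0,\dots,n-2\}$. The only care required is in tracking which end of the last column and last row is increasing versus decreasing in each of the four semi-framed structures, which motivates the visual mnemonic ($\leftarrow \uparrow$, $\rightarrow \downarrow$, $\leftarrow \downarrow$, $\rightarrow \uparrow$) introduced above.
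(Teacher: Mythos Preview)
Your proof is correct and follows exactly the same approach as the paper: both reduce the lemma to the single Fourier-coefficient identity $\big(z^{n-2}\tilde{f}\big)_j = f_{n-2-j}$, which is all that is needed to match the last row and column entries of the four semi-framed matrices. The paper's proof is in fact just the one-line statement of that identity, so your version is simply a more explicit write-out of the same verification.
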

		\begin{proof}
			It is enough to observe that $\big( z^{n-2}\Tilde{f} \big)_j = f_{n-2-j}$.
		\end{proof}
		
		Notice that in general the semi-framed Toeplitz determinants can not be reduced to simpler objects like pure-Toeplitz determinants or bordered Toeplitz determinants via Dodgson condensation identities. Let us discuss here why no such identity exists. Let $\mathscr{M}$ be an $N \times N$ semi-framed Toeplitz determinant. If one hopes for a Dodgson condensation identity
		\begin{gather*}
			\mathscr{M} \cdot \mathscr{M}\left\lbrace \begin{matrix} j_1 & j_2 \\ k_1& k_2 \end{matrix} \right\rbrace = \mathscr{M}\left\lbrace \begin{matrix} j_1 \\ k_1 \end{matrix} \right\rbrace \cdot \mathscr{M}\left\lbrace \begin{matrix} j_2 \\ k_2 \end{matrix} \right\rbrace - \mathscr{M}\left\lbrace \begin{matrix} j_1 \\ k_2 \end{matrix} \right\rbrace \cdot \mathscr{M}\left\lbrace \begin{matrix} j_2 \\ k_1 \end{matrix} \right\rbrace,
		\end{gather*}
		with a simpler right-hand side (free of semi-framed determinants), then they must choose $j_2=k_2=N-1$.\footnote{Recall, say from \eqref{ToeplitzDet}, that we index the rows and columns of an $N\times N$ matrix by $0 \leq j \leq N-1$ and $0 \leq k \leq N-1$, respectively.} Then, it is easy to see that any other choice for $j_1$ and $k_1$ can not lead to a~situation where the right-hand side of the corresponding Dodgson condensation identity is free of semi-framed determinants. For example, with the most natural choice $j_1=k_1=N-2$, we have\looseness=-1
		\begin{align*}
 \underbrace{\mathscr{M}}_{\text{semi-framed}}
\cdot \underbrace{ \mathscr{M}\left\lbrace \begin{matrix} N-2 & N-1 \\ N-2 & N-1 \end{matrix} \right\rbrace }_{\text{pure Toeplitz}} ={}& \underbrace{ \mathscr{M}\left\lbrace \begin{matrix} N-2 \\ N-2 \end{matrix} \right\rbrace }_{\text{semi-framed}} \cdot \underbrace{ \mathscr{M}\left\lbrace \begin{matrix} N-1 \\ N-1 \end{matrix} \right\rbrace}_{\text{pure Toeplitz}} \\ &- \underbrace{ \mathscr{M}\left\lbrace \begin{matrix} N-2 \\ N-1 \end{matrix} \right\rbrace}_{\text{bordered Toeplitz}} \cdot \underbrace{ \mathscr{M}\left\lbrace \begin{matrix} N-1 \\ N-2 \end{matrix} \right\rbrace}_{\text{bordered Toeplitz}}.
		\end{align*}
		
		This suggests that the semi-framed Toeplitz determinants (corresponding to generic symbols) are structured determinants which must be studied independently without the hope for their reduction to the pure Toeplitz or bordered Toeplitz determinants. In fact, to that end, it turns out that the characterizing objects for the semi-framed Toeplitz determinants $\mathscr{E}_n[\phi;\psi,\eta;a]$, $\mathscr{G}_n[\phi;\psi,\eta;a]$, $\mathscr{H}_n[\phi;\psi,\eta;a]$, and $\mathscr{L}_n[\phi;\psi,\eta;a]$ are the reproducing kernel of the system of orthogonal polynomials associated with the symbol $\phi$, while the characterizing objects for the bordered Toeplitz determinants $D^B_n[\phi;\psi]$ are the orthogonal polynomials themselves (see Section~\ref{sec bordered} and~\cite{BEGIL}).
		
		Even though in general, as we saw above, there does not exist a Dodgson condensation identity which can relate a single semi-framed Toeplitz determinant to a number of pure-Toeplitz and bordered-Toeplitz ones, there are particular examples where such reductions are possible. In some sense such cases are the analogues of the identity \eqref{border,phi,const}, where for a particularly simple symbol (in \eqref{border,phi,const}: $\psi \equiv 1$) a more complex structured determinant (in \eqref{border,phi,const}: the bordered Toeplitz determinant) can be reduced to a less complex structured determinant (in \eqref{border,phi,const}: the pure-Toeplitz determinant). In the case of semi-framed Toeplitz determinants $\mathscr{H}_{n+1}[\phi;\psi,\eta;a]$, we get such reductions when either $\psi \equiv c$, $\eta \equiv c$, $\psi \equiv cz^{n-1}$, $\eta \equiv cz^{n-1}$, $c \in \C$, as described in the following lemma.
		
		\begin{Lemma}
			It holds that
			\begin{align*}
				& \mathscr{H}_{n+1}[\phi;1,\eta;a] = a D_{n}[\phi] + (-1)^{n} D^B_{n}\big[z^{-1}\phi;\eta\big], \\
				& \mathscr{H}_{n+1}[\phi;\psi,1;a] = a D_{n}[\phi] - D^B_{n}\big[\tilde{\phi};z^{n-1} \tilde{\psi}\big], \qquad \mathscr{H}_{n+1}\big[\phi;z^{n-1},\eta;a\big] = a D_{n}[\phi] - D^B_{n}[\phi;\eta], \\
				& \mathscr{H}_{n+1}\big[\phi;\psi,z^{n-1};a\big] = a D_{n}[\phi] + (-1)^n D^B_{n}\big[z^{-1}\tilde{\phi};z^{n-1} \tilde{\psi}\big],
			\end{align*}
			where $\Tilde{f}$ denotes the function $z \mapsto f\big(z^{-1}\big)$, $f \in \{\psi,\phi\}$.
		\end{Lemma}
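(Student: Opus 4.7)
The proof proposal is straightforward cofactor expansion combined with the identification of the resulting minors as either pure Toeplitz or bordered Toeplitz determinants (possibly after a transposition). The plan is to handle the four identities one by one, each time substituting a symbol whose Fourier coefficients collapse to a Kronecker delta, so that one row or column of $\mathscr{H}_{n+1}$ contains at most two nonzero entries.

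First I would treat the identity with $\psi\equiv 1$. In this case the last column of $\mathscr{H}_{n+1}$ becomes $(1,0,\dots,0,a)^{T}$, since $\psi_{j}=\delta_{j,0}$. Cofactor expansion along the last column leaves only two surviving terms: the $(n,n)$-entry contributes $a\cdot D_{n}[\phi]$ (the minor obtained by deleting the last row and last column is exactly $T_{n}[\phi]$), while the $(0,n)$-entry contributes $(-1)^{0+n}$ times the minor $M_{0}$ obtained by deleting row $0$ and column $n$. That minor is an $n\times n$ matrix whose first $n-1$ rows are indexed by the Toeplitz symbol shifted by one, i.e., generated by $(z^{-1}\phi)_{j}=\phi_{j+1}$, and whose last row is $(\eta_{n-1},\dots,\eta_{0})$. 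Taking a transpose (which does not change the determinant) puts this into exactly the form $D^{B}_{n}[z^{-1}\phi;\eta]$ of \eqref{btd0}, yielding the claimed identity.

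The remaining three identities are handled by the same mechanism, with the appropriate row or column replaced by a Kronecker delta vector:

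\begin{itemize}
\item For $\eta\equiv 1$, the last row of $\mathscr{H}_{n+1}$ has nonzero entries only at positions $(n,n-1)$ (value $1$, since $\eta_{0}=1$) and $(n,n)$ (value $a$). Expansion along that row produces $a\,D_{n}[\phi]$ and a sign $(-1)^{n+(n-1)}=-1$ times an $n\times n$ minor which is identified, after reading off its Fourier entries, with $D^{B}_{n}[\tilde{\phi};z^{n-1}\tilde{\psi}]$ (using $\tilde{\phi}_{j}=\phi_{-j}$ and $(z^{n-1}\tilde{\psi})_{j}=\psi_{n-1-j}$).
\item For $\psi=z^{n-1}$, the last column becomes $(0,\dots,0,1,a)^{T}$; expansion gives $a\,D_{n}[\phi]$ plus $(-1)^{(n-1)+n}=-1$ times the minor obtained by deleting row $n-1$ and column $n$, which on transposition is precisely $D^{B}_{n}[\phi;\eta]$.
\item For $\eta=z^{n-1}$, the last row becomes $(1,0,\dots,0,a)$; expansion gives $a\,D_{n}[\phi]$ plus $(-1)^{n+0}$ times the minor obtained by deleting row $n$ and column $0$, which matches $D^{B}_{n}[z^{-1}\tilde{\phi};z^{n-1}\tilde{\psi}]$ after the same Fourier-coefficient bookkeeping.
\end{itemize}

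The only thing that requires care, and hence the part I would think of as the main obstacle, is sign and index tracking: one must correctly pair the cofactor sign $(-1)^{i+j}$ with the right reindexing of Fourier coefficients, and then verify the identification of the resulting minor with a bordered Toeplitz determinant either directly or after transposition. Once the dictionary $(z^{r}f)_{j}=f_{j-r}$ and $\tilde{f}_{j}=f_{-j}$ is systematically applied to the minors produced in each of the four cases, the four identities follow immediately; no analytic machinery is needed and the result is a purely combinatorial statement about the Fourier expansion of $\mathscr{H}_{n+1}$.
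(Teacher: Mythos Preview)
Your proposal is correct and is essentially the same argument the paper has in mind: the paper's proof reads in its entirety ``These are immediate consequences of the definitions \eqref{btd0} and \eqref{half-framed 2} and observing that $\big( z^{n-1}\Tilde{f} \big)_j = f_{n-1-j}$.'' Your cofactor expansion with the Kronecker-delta column/row and the subsequent Fourier-coefficient bookkeeping is exactly what ``immediate from the definitions'' amounts to here, only written out in full.
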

		\begin{proof}
			These are immediate consequences of the definitions \eqref{btd0} and \eqref{half-framed 2} and observing that $\big( z^{n-1}\Tilde{f} \big)_j = f_{n-1-j}$.
		\end{proof}
		
		\begin{Remark}
		In view of Lemma \ref{EGL in terms of H}, it is indeed sufficient to prove the above lemma for the $\mathscr{H}$-semi-framed Toeplitz determinants.
		\end{Remark}
		
		\subsection[The Riemann--Hilbert characterization for semi-framed Toeplitz determinants:\\ Proof of Theorem 1.11]{The Riemann--Hilbert characterization for semi-framed \\ Toeplitz determinants: Proof of Theorem \ref{semis in terms of RepKer intro}}

		Similar to what is shown about bordered Toeplitz determinant $D^B_n[\phi,\psi]$ in \cite[Section 2]{BEGIL}, in this section we show that the semi-framed Toeplitz determinants can also be expressed in terms of the solution of the Riemann--Hilbert problem for \textit{pure} Toeplitz determinants.
		
		Let $\psi=q_1 \phi + q_2$ and $\eta=q_3 \phi + q_4$ where $\phi$ is the generating function of the Toeplitz part and $q_j$'s are rational functions with simple poles, $j=1,2,3,4$. Below we show that unlike the bordered Toeplitz determinants which are related to the orthogonal polynomials and/or their Cauchy-type transforms (see Section~\ref{sec bordered}), the semi-framed Toeplitz determinants are related to the reproducing kernel of the same system of orthogonal polynomials. In order to see this connection, we need to first find a determinantal representation for the reproducing kernel which would play the same role for semi-framed Toeplitz determinants, as the determinantal representation \eqref{Toeplitz OP 1} plays for the bordered Toeplitz determinants.
		
		We follow \cite{GW} to find this determinantal representation for the reproducing kernel. To that end, we need to recall the LU decomposition of the Toeplitz matrix $T_n[\phi]$. Write the polynomials~$Q_n(z)$ and $\widehat{Q}_n(z)$ as
		\begin{equation*}
			Q_n(z) = \sum_{j=0}^{n} q_{n,j} z^j, \qquad \widehat{Q}_n(z) = \sum_{j=0}^{n} \hat{q}_{n,j} z^j,
		\end{equation*}
		and let us also denote
		\begin{equation*}
			\boldsymbol{Z}_n(z) := \begin{bmatrix}
				1 \\ z \\ \vdots \\ z^{n}
			\end{bmatrix} \qandq \boldsymbol{F}_n(z) := \begin{bmatrix}
				F_0(z) \\ F_1(z) \\ \vdots \\ F_n(z)
			\end{bmatrix}, \qquad \boldsymbol{F} \in \big\{\boldsymbol{Q},\boldsymbol{\widehat{Q}}\big\}.
		\end{equation*}
		We thus have $\boldsymbol{Q}_n(z) = \boldsymbol{A}_{n} \boldsymbol{Z}_n(z)$, $\boldsymbol{\widehat{Q}}_n(z) = \boldsymbol{B}_{n} \boldsymbol{Z}_n(z)$,
		where $\boldsymbol{A}_{n}$ and $ \boldsymbol{B}_{n}$ are the following $(n+1)\times(n+1)$ lower triangular matrices
		\begin{equation}\label{An Bn}
			\boldsymbol{A}_{n} := \begin{bmatrix}
				q_{0,0} & 0 & \cdots & 0 \\
				q_{1,0} & q_{1,1} & \cdots & 0 \\
				\vdots & \vdots & \ddots & \vdots \\
				q_{n,0} & q_{n,1} & \cdots & q_{n,n}
			\end{bmatrix}, \qquad 	\boldsymbol{B}_{n} := \begin{bmatrix}
				\hat{q}_{0,0} & 0 & \cdots & 0 \\
				\hat{q}_{1,0} & \hat{q}_{1,1} & \cdots & 0 \\
				\vdots & \vdots & \ddots & \vdots \\
				\hat{q}_{n,0} & \hat{q}_{n,1} & \cdots & \hat{q}_{n,n}
			\end{bmatrix}.
		\end{equation}
		For the rest of this section, we assume that for fixed $n$, $D_j[\phi] \neq 0$, for $j \in\{0,1,\dots,n+1\}$,
		so that the bi-orthogonality conditions \eqref{biorthogonality} hold at least for the indices $k,m \in \{0,1,\dots,n+1\}$.
		\begin{Theorem}[\cite{GW}]
			The LU decomposition of $T_{n+1}[\phi]$ is given by
			\begin{equation}\label{LDU D}
				T_{n+1}[\phi] = [\boldsymbol{B}_{n}]^{-1} 	 \big[\boldsymbol{A}_{n}^{\mathsf T}\big]^{-1}.
			\end{equation}
		\end{Theorem}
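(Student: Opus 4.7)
The plan is to reduce the asserted LU decomposition to the bi-orthogonality relation \eqref{biorthogonality} by a direct entrywise computation. Equivalently, since $\boldsymbol{A}_n$ and $\boldsymbol{B}_n$ are lower-triangular with nonzero diagonal entries $q_{j,j}=\hat q_{j,j}=\varkappa_j$ (so invertibility is guaranteed by the assumption $D_j[\phi]\neq 0$ for $0\le j\le n+1$), the identity \eqref{LDU D} is equivalent to
\[
\boldsymbol{B}_n\, T_{n+1}[\phi]\, \boldsymbol{A}_n^{\mathsf T} = I_{n+1}.
\]
That is what I would prove, and the LU interpretation then follows because $\boldsymbol{B}_n^{-1}$ is lower triangular while $(\boldsymbol{A}_n^{\mathsf T})^{-1}$ is upper triangular.

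First I would compute the $(k,m)$-entry of the product: using $(T_{n+1}[\phi])_{ij}=\phi_{i-j}$ and the definitions \eqref{An Bn},
\[
\bigl[\boldsymbol{B}_n T_{n+1}[\phi]\boldsymbol{A}_n^{\mathsf T}\bigr]_{k,m}
=\sum_{i,j=0}^{n}\hat q_{k,i}\,\phi_{i-j}\,q_{m,j}.
\]
Next I would recognize this sum as an integral against $\phi$ on $\T$. Indeed, by the Fourier coefficient formula, $\phi_{i-j}=\int_{\T}\zeta^{j-i}\phi(\zeta)\tfrac{d\zeta}{2\pi i\zeta}$, so swapping sum and integral yields
\[
\sum_{i,j=0}^n\hat q_{k,i}\phi_{i-j}q_{m,j}
=\int_{\T}\Bigl(\sum_{i=0}^n\hat q_{k,i}\zeta^{-i}\Bigr)\Bigl(\sum_{j=0}^n q_{m,j}\zeta^{j}\Bigr)\phi(\zeta)\frac{d\zeta}{2\pi i\zeta}
=\int_{\T}\widehat Q_k(\zeta^{-1})Q_m(\zeta)\phi(\zeta)\frac{d\zeta}{2\pi i\zeta}.
\]
Finally I would invoke the bi-orthogonality \eqref{biorthogonality} (which holds under the assumed non-vanishing of $D_j[\phi]$ for $j=0,\dots,n+1$) to conclude that the last integral equals $\delta_{km}$ for $0\le k,m\le n$. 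This gives $\boldsymbol{B}_n T_{n+1}[\phi]\boldsymbol{A}_n^{\mathsf T}=I_{n+1}$ and hence \eqref{LDU D}.

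The argument is essentially a bookkeeping exercise, so I do not expect a serious obstacle. The only mild subtlety is to be careful with index conventions (rows/columns of $T_{n+1}[\phi]$, the transpose on $\boldsymbol{A}_n$, and the inversion $\zeta\mapsto\zeta^{-1}$ inside $\widehat Q_k$), which is precisely what makes the two triangular matrices pair correctly with the two orthogonality factors. One should also remark, as a sanity check, that matching the diagonal entries on the right gives $[\boldsymbol{B}_n^{-1}]_{jj}[\boldsymbol{A}_n^{\mathsf T}]^{-1}_{jj}=\varkappa_j^{-2}=D_{j+1}[\phi]/D_j[\phi]$ by \eqref{varkappa}, which is consistent with the well-known recursion for principal minors of a Toeplitz matrix and serves as an independent verification of the identity.
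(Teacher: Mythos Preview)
Your proposal is correct and follows essentially the same route as the paper's proof: both reduce \eqref{LDU D} to the entrywise identity $\boldsymbol{B}_n T_{n+1}[\phi]\boldsymbol{A}_n^{\mathsf T}=I_{n+1}$ by expanding the matrix product in terms of Fourier coefficients and recognizing the bi-orthogonality integral \eqref{biorthogonality}. The only cosmetic difference is direction---the paper starts from $\delta_{\nu\mu}$ and unwinds to the matrix product, while you start from the matrix product and arrive at $\delta_{km}$---and your added remarks on invertibility and the diagonal sanity check are fine supplements.
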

		\begin{proof}
			We have
			\begin{align*}
					\delta_{\nu \mu} & = \int_{\T} Q_{\nu}(\ze) \widehat{Q}_{\mu}\big(\ze^{-1}\big) \phi(\ze) \frac{\dd \ze}{2\pi \ii \ze} = \sum_{m=0}^{\nu} \sum_{\ell=0}^{\mu} q_{\nu,m}\hat{q}_{\mu,\ell} \int_{\T} \ze^{m-\ell} \phi(\ze) \frac{\dd \ze}{2\pi \ii \ze} \\
&
					= \sum_{m=0}^{\nu} \sum_{\ell=0}^{\mu} q_{\nu,m}\hat{q}_{\mu,\ell} \phi_{\ell-m} = \sum_{m=0}^{\nu} \sum_{\ell=0}^{\mu} ( \boldsymbol{A}_{n} )_{\nu,m} ( \boldsymbol{B}_{n} )_{\mu,\ell} ( T_{n+1}[\phi] )_{\ell,m} \\
& = \sum_{m=0}^{\nu} \sum_{\ell=0}^{\mu} ( \boldsymbol{B}_{n} )_{\mu,\ell} ( T_{n+1}[\phi] )_{\ell,m} \big( \boldsymbol{A}_{n}^{\mathsf T} \big)_{m,\nu} = \big( \boldsymbol{B}_{n} T_{n+1}[\phi] \boldsymbol{A}_{n}^{\mathsf T} \big)_{\mu,\nu},
			\end{align*}
			which is equivalent to \eqref{LDU D}.
		\end{proof}

Let us now consider the reproducing kernel
		\begin{gather*}
			K_{n}(z,\mathcal{z}) := \sum_{j=0}^{n} Q_{j}(\mathcal{z})\widehat{Q}_{j}(z),
		\end{gather*}
		and for a complex parameter $a$ define
		\begin{equation}\label{ReproducingKer}
			\widehat{K}_{n}(z,\mathcal{z};a) := \frac{1}{D_{n+1}[\phi]}\det
			\begin{bmatrix}
				\phi_{0} & \phi_{-1} & \cdots & \phi_{-n} & 1\\
				\phi_{1} & \phi_{0} & \cdots & \phi_{1-n} & z\\
				\vdots & \vdots & \ddots & \vdots & \vdots\\
				\phi_{n} & \phi_{n-1} & \cdots & \phi_{0} & z^{n} \\
				1 & \mathcal{z} & \cdots & \mathcal{z}^{n} & a
			\end{bmatrix}.
		\end{equation}
		\begin{Theorem}[\cite{GW}]\label{Rep Ker Semi Framed}
			The reproducing kernel $K_{n}(z,\mathcal{z})$ has the following semi-framed Toeplitz determinant representation 		\begin{equation}\label{repKer to semi-framed}
				K_{n}(z,\mathcal{z}) = a - \widehat{K}_{n}(z,\mathcal{z};a).
			\end{equation}
			
		\end{Theorem}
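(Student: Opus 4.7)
The plan is to recognize $\widehat{K}_n(z,\mathcal{z};a)$ as a Schur complement expansion of a bordered Toeplitz block matrix, and then invoke the LU factorization of $T_{n+1}[\phi]$ given by \eqref{LDU D} to identify the resulting bilinear form with the reproducing kernel $K_n(z,\mathcal{z})$.

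First, I would rewrite the defining determinant in \eqref{ReproducingKer} in block form as
\[
\widehat{K}_n(z,\mathcal{z};a) = \frac{1}{D_{n+1}[\phi]}\det \begin{bmatrix} T_{n+1}[\phi] & \boldsymbol{Z}_n(z) \\[4pt] \boldsymbol{Z}_n(\mathcal{z})^{\mathsf T} & a \end{bmatrix},
\]
where the top-left block is the invertible Toeplitz matrix $T_{n+1}[\phi]$, with invertibility guaranteed by our standing assumption $D_{n+1}[\phi]\neq 0$. Applying the Schur complement identity to this $(n+2)\times(n+2)$ block matrix yields
\[
\widehat{K}_n(z,\mathcal{z};a) \;=\; a \;-\; \boldsymbol{Z}_n(\mathcal{z})^{\mathsf T}\, T_{n+1}[\phi]^{-1}\, \boldsymbol{Z}_n(z).
\]

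Next, the LU decomposition \eqref{LDU D} inverts to $T_{n+1}[\phi]^{-1} = \boldsymbol{A}_n^{\mathsf T}\,\boldsymbol{B}_n$. Substituting this and using $\boldsymbol{A}_n \boldsymbol{Z}_n(\mathcal{z}) = \boldsymbol{Q}_n(\mathcal{z})$ together with $\boldsymbol{B}_n \boldsymbol{Z}_n(z) = \widehat{\boldsymbol{Q}}_n(z)$ (as recorded in the setup preceding \eqref{An Bn}), I obtain
\[
\widehat{K}_n(z,\mathcal{z};a) = a - \bigl(\boldsymbol{A}_n \boldsymbol{Z}_n(\mathcal{z})\bigr)^{\mathsf T} \bigl(\boldsymbol{B}_n \boldsymbol{Z}_n(z)\bigr) = a - \boldsymbol{Q}_n(\mathcal{z})^{\mathsf T}\, \widehat{\boldsymbol{Q}}_n(z) = a - \sum_{j=0}^{n} Q_j(\mathcal{z})\widehat{Q}_j(z),
\]
and the final sum is exactly $K_n(z,\mathcal{z})$ by its definition. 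Rearranging gives \eqref{repKer to semi-framed}.

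The argument is just two algebraic manipulations and carries no substantive obstacle; the only point meriting care is the correct ordering of the factors coming out of \eqref{LDU D}, so that the transposes align to produce $\sum_{j=0}^n Q_j(\mathcal{z}) \widehat{Q}_j(z)$ rather than a twisted variant. Since the identity is affine in $a$ on both sides and the constant and linear-in-$a$ contributions match, the result is valid for every $a \in \C$.
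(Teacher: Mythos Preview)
Your proof is correct and follows essentially the same approach as the paper: both exploit the block structure of the bordered matrix together with the LU factorization \eqref{LDU D} of $T_{n+1}[\phi]$ to reduce the determinant to the bilinear form $\boldsymbol{Q}_n(\mathcal{z})^{\mathsf T}\widehat{\boldsymbol{Q}}_n(z)$. The only cosmetic difference is that the paper conjugates the bordered matrix by block-diagonal extensions of $\boldsymbol{A}_n$, $\boldsymbol{B}_n$ before taking determinants, whereas you invoke the Schur complement formula directly---this is the same computation packaged more compactly.
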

		\begin{proof}
			Let
			\begin{equation*}
				\boldsymbol{\widehat{K}}_{n}(z,\mathcal{z};a) := \begin{bmatrix}
					\phi_{0} & \phi_{-1} & \cdots & \phi_{-n} & 1\\
					\phi_{1} & \phi_{0} & \cdots & \phi_{1-n} & z\\
					\vdots & \vdots & \ddots & \vdots & \vdots\\
					\phi_{n} & \phi_{n-1} & \cdots & \phi_{0} & z^{n} \\
					1 & \mathcal{z} & \cdots & \mathcal{z}^{n} & a
				\end{bmatrix},
			\end{equation*}
			and consider the following $(n+2)\times(n+2)$ extensions of $\boldsymbol{A}_{n}$ and $ \boldsymbol{B}_{n}$ introduced in \eqref{An Bn}:
			\begin{equation*}
				\boldsymbol{\widehat{A}}_{n}:=\begin{bmatrix}
					\boldsymbol{A}_{n} & \boldsymbol{0}_{n+1} \\
					\boldsymbol{0}^{\mathsf T}_{n+1} & 1
				\end{bmatrix}, \qquad 	\boldsymbol{\widehat{B}}_{n}:=\begin{bmatrix}
					\boldsymbol{B}_{n} & \boldsymbol{0}_{n+1} \\
					\boldsymbol{0}^{\mathsf T}_{n+1} & 1
				\end{bmatrix},
			\end{equation*}
			where $\boldsymbol{0}^{\mathsf T}_{n}$ is the $1\times n$ vector of zeros. We now have
			\begin{align}
					\boldsymbol{\widehat{B}}_{n}
					\boldsymbol{\widehat{K}_{n}}(z,\ze;a)
					\boldsymbol{\widehat{A}}_{n}^{\mathsf T} & = \begin{bmatrix}
						\boldsymbol{B}_{n} & \boldsymbol{0}_{n+1} \\
						\boldsymbol{0}^{\mathsf T}_{n+1} & 1
					\end{bmatrix} \begin{bmatrix}
						T_{n+1}[\phi] & \boldsymbol{Z}_{n}(z) \\
						\boldsymbol{Z}^{\mathsf T}_{n}(\mathcal{z}) & a
					\end{bmatrix} \begin{bmatrix}
						\boldsymbol{A}^{\mathsf T}_{n} & \boldsymbol{0}_{n+1} \\
						\boldsymbol{0}^{\mathsf T}_{n+1} & 1
					\end{bmatrix} \nonumber\\
& = \begin{bmatrix}
						\boldsymbol{B}_{n} T_{n+1}[\phi] \boldsymbol{A}^{\mathsf T}_{n} & \boldsymbol{B}_{n} \boldsymbol{Z}_{n}(z) \\
						\boldsymbol{Z}^{\mathsf T}_{n}(\mathcal{z}) \boldsymbol{A}^{\mathsf T}_{n} & a
					\end{bmatrix} = \begin{bmatrix}
						\boldsymbol{I}_{n}	 & \boldsymbol{\widehat{Q}}_n(z) \\
						\boldsymbol{Q}^{\mathsf T}_n(\mathcal{z}) & a
					\end{bmatrix}.\label{312}
			\end{align}
			Taking the determinant of both sides of \eqref{312} yields $\widehat{K}_{n}(z,\mathcal{z};a) = a - K_{n}(z,\mathcal{z})$, where we
			have used \begin{gather*}
				\det \boldsymbol{\widehat{A}}_{n} = \det 	\boldsymbol{\widehat{B}}_{n} = \prod_{j=0}^{n} \varkappa_j = \frac{1}{\sqrt{D_{n+1}[\phi]}}.\tag*{\qed}
			\end{gather*}		\renewcommand{\qed}{}
		\end{proof}

\subsubsection{Proof of Theorem \ref{semis in terms of RepKer intro} }
		This theorem bridges the semi-framed Toeplitz determinants \eqref{half-framed}--\eqref{half-framed 3} to the reproducing kernel $K_n(z,\mathcal{z})$.	We only prove \eqref{H} as the remaining identities can be proven identically. Recalling \eqref{ReproducingKer} notice that
		\[
				\int_{\T} \widehat{K}_{n}\big(z^{-1}_1,z_2;\hat{a}\big) z^{-n}_2 \eta(z_2) \frac{\dd z_2}{2 \pi \ii z_2} = \frac{1}{D_{n+1}[\phi]}\det
				\begin{bmatrix}
					\phi_{0} & \phi_{-1} & \cdots & \phi_{-n} & 1\\
					\phi_{1} & \phi_{0} & \cdots & \phi_{1-n} & z^{-1}_1\\
					\vdots & \vdots & \ddots & \vdots & \vdots\\
					\phi_{n} & \phi_{n-1} & \cdots & \phi_{0} & z_1^{-n} \\
					\eta_n & \eta_{n-1} & \cdots & \eta_0 & \hat{a} \eta_n
				\end{bmatrix}.
		\]
		Therefore,
		\begin{gather*}
				\int_{\T} \left(\int_{\T} \widehat{K}_{n}\big(z^{-1}_1,z_2;\hat{a}\big) z^{-n}_2 \eta(z_2) \frac{\dd z_2}{2 \pi \ii z_2} \right) \psi(z_1) \frac{\dd z_1}{2 \pi \ii z_1} \\
\qquad = \frac{1}{D_{n+1}[\phi]}\det
				\begin{bmatrix}
					\phi_{0} & \phi_{-1} & \cdots & \phi_{-n} & \psi_{0}\\
					\phi_{1} & \phi_{0} & \cdots & \phi_{1-n} & \psi_{1}\\
					\vdots & \vdots & \ddots & \vdots & \vdots\\
					\phi_{n} & \phi_{n-1} & \cdots & \phi_{0} & \psi_{n} \\
					\eta_n & \eta_{n-1} & \cdots & \eta_0 & \hat{a} \eta_n \psi_{0}
				\end{bmatrix},
		\end{gather*}
		or
		\begin{equation*} \frac{\mathscr{H}_{n+2}\left[\phi; \psi, \eta ;\hat{a} \eta_n \psi_{0}\right]}{D_{n+1}[\phi]} = \int_{\T} \left(\int_{\T} \widehat{K}_{n}\big(z^{-1}_1,z_2;\hat{a}\big) z^{-n}_2 \eta(z_2) \frac{\dd z_2}{2 \pi \ii z_2} \right) \psi(z_1) \frac{\dd z_1}{2 \pi \ii z_1}.
		\end{equation*}
		Now, employing \eqref{repKer to semi-framed} we find
		\begin{equation*}
			\begin{split}
				\frac{\mathscr{H}_{n+2}\left[\phi; \psi, \eta ;\hat{a} \eta_n \psi_{0}\right]}{D_{n+1}[\phi]} & = \int_{\T} \left(\int_{\T} \left(\hat{a}-K\big(z^{-1}_1,z_2\big) \right) z^{-n}_2 \eta(z_2) \frac{\dd z_2}{2 \pi \ii z_2} \right) \psi(z_1) \frac{\dd z_1}{2 \pi \ii z_1} \\
 & = \hat{a} \eta_n \psi_{0} - \int_{\T} \left(\int_{\T} K\big(z^{-1}_1,z_2\big) z^{-n}_2 \eta(z_2) \frac{\dd z_2}{2 \pi \ii z_2} \right) \psi(z_1) \frac{\dd z_1}{2 \pi \ii z_1}.
			\end{split}
		\end{equation*}
		This is the desired result \eqref{H} if we denote $a \equiv \hat{a} \eta_n \psi_{0}$.
		\subsubsection{Proof of Corollary \ref{EGL and RHP intro}}			
		Moving on, let us recall the Christoffel--Darboux identity for the bi-orthogonal polynomials on the unit circle \cite{DIK,SzegoOP}. For any $z \neq 0$ and $n \in \N \cup \{0\}$, we have
		\begin{align}
				K_{n}(z^{-1},z) ={}& \sum_{j=0}^{n} Q_{j}(z)\widehat{Q}_{j}\big(z^{-1}\big) = -(n+1) Q_{n+1}(z)\widehat{Q}_{n+1}\big(z^{-1}\big) \nonumber\\
 & + z \left( \widehat{Q}_{n+1}\big(z^{-1}\big) \frac{\dd}{\dd z} Q_{n+1}(z) - Q_{n+1}(z) \frac{\dd}{\dd z} \widehat{Q}_{n+1}\big(z^{-1}\big) \right),\label{425}
		\end{align}		
and for any $z_2$, $z_1 \neq 0$ and $n \in \N \cup \{0\}$, we have
		\begin{equation*}
			\big(1-z^{-1}_1z_2\big) \sum_{j=0}^{n} Q_{j}(z_2)\widehat{Q}_{j}\big(z^{-1}_1\big) = z_1^{-n-1}Q_{n+1}(z_1)z_2^{n+1}\widehat{Q}_{n+1}\big(z_2^{-1}\big) - \widehat{Q}_{n+1}\big(z^{-1}_1\big) Q_{n+1}(z_2).
		\end{equation*}
		Therefore, if $z_1 \neq z_2$
		\begin{equation}\label{316}
			K_{n}\big(z^{-1}_1,z_2\big) = \frac{z_1^{-n-1}Q_{n+1}(z_1)z_2^{n+1}\widehat{Q}_{n+1}\big(z_2^{-1}\big) - \widehat{Q}_{n+1}(z_1^{-1}) Q_{n+1}(z_2)}{1-z^{-1}_1z_2}.
		\end{equation}
		Now observe that the equations \eqref{425} and \eqref{316} are going to be particularly useful when we attempt to use \eqref{H} for the $\leftarrow \downarrow $- or $\mathscr{H}$- framed determinants. In fact, we readily have \eqref{H and X-RHP}. To see this, let us recall the integral on the right-hand side of \eqref{H}
		\begin{equation*} \frac{1}{4\pi^2} \int^{2\pi}_{0} \int^{2\pi}_{0} K_n\big({\rm e}^{-\ii \theta_1},{\rm e}^{\ii \theta_2}\big) \eta\big({\rm e}^{\ii \theta_2}\big) \psi\big({\rm e}^{\ii \theta_1}\big) {\rm e}^{-\ii n \theta_2} \dd \theta_1 \dd \theta_2.
		\end{equation*}
		Notice that the integration over the diagonal set $\{(\theta,\theta), 0 \leq \theta \leq 2 \pi\}$ of measure zero makes no contribution to this integral and hence we do not need to employ the identity \eqref{425}. Recalling~\eqref{Toeplitz-OP-solution} the equation \eqref{316} can be written in terms of the $X$-RHP data as follows:
		\begin{gather*}
			K_{n}\big(z^{-1}_1,z_2\big) = \frac{z^{-n}_1}{z_1-z_2} \det \begin{bmatrix}
				X_{11}(z_2;n+1) & X_{21}(z_2;n+2) \\
				X_{11}(z_1;n+1) & X_{21}(z_1;n+2)
			\end{bmatrix}.
		\end{gather*}
		Plugging this into \eqref{H} gives the desired result \eqref{H and X-RHP}. 	The remaining $X$-RHP characterizations for $\mathscr{E}$, $\mathscr{G}$, and $\mathscr{L}$ can be immediately obtained using Lemma \ref{EGL in terms of H}.
		
		\begin{Remark} \normalfont
			Notice that the integrand in \eqref{H and X-RHP} is well defined on the unit circle because the first column of the solution of the $X$-RHP is entire (see {\rm RH-X1} and {\rm RH-X2}).
		\end{Remark}
		
\subsection[Semi-framed Toeplitz determinants involving rational frame symbols:\\ Proofs of Theorems \ref{thm semi-framed rationals intro} and \ref{thm semi-framed rationals . phi intro}]{Semi-framed Toeplitz determinants involving\\ rational frame symbols: Proofs of Theorems \ref{thm semi-framed rationals intro} and \ref{thm semi-framed rationals . phi intro}}\label{sec semi-framed}
		
In the following two subsections, we examine frame symbols which are either rational or are products of the bulk symbol with a rational function. It is important to note that we make these choices because they represent simple and yet nontrivial examples for illustrating the asymptotic analysis. Indeed, the Riemann--Hilbert characterizations provided in Corollary~\ref{EGL and RHP intro} can be further explored with more complex symbol choices, such as Fisher--Hartwig $\phi$ and other classes of frame symbols.
		
\subsubsection{Proof of Theorem \ref{thm semi-framed rationals intro}}
		
In the case of rational border symbols, we have a simpler representation of semi-framed Toeplitz determinants. Recall the function $q_0$ introduced in \eqref{q0}.
		The Fourier coefficients of $q_0$ are
		\begin{equation}\label{q0 Fourier Coefficients}
			q_{0,j} = \begin{cases}
				0, & |c|<1, \\
				-(c)^{-j-1}, & |c|>1,
			\end{cases} \qquad 0 \leq j \leq n.
		\end{equation}
		This immediately leads to the following elementary property of semi-framed Toeplitz determinants:
		
		\begin{Lemma}
			Let $c_j$ be complex numbers with $|c_j|<1$, $j=1,\dots,m$. It holds that
			\begin{equation*}
				\mathscr{H}_{n+2}\Bigg[\phi;\psi,\sum_{j=1}^{m}\frac{b_j}{z-c_j};a\Bigg] = a D_{n+1}[\phi],
			\end{equation*}
			and
			\begin{equation*}
				\mathscr{H}_{n+2}\Bigg[\phi;\sum_{j=1}^{m}\frac{b_j}{z-c_j},\eta;a\Bigg] = a D_{n+1}[\phi].
			\end{equation*}
		\end{Lemma}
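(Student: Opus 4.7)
The plan is to reduce both identities to a single elementary observation about vanishing Fourier coefficients, followed by a one-line cofactor expansion. By \eqref{q0 Fourier Coefficients}, the $k$-th Fourier coefficient of $1/(z-c_j)$ vanishes for every $k\geq 0$ whenever $|c_j|<1$, and hence by linearity every Fourier coefficient of $\sum_{j=1}^{m}b_j/(z-c_j)$ with non-negative index is zero.

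For the first identity I would substitute $\eta(z)=\sum_{j=1}^{m}b_j/(z-c_j)$ directly into the definition \eqref{half-framed 2} of $\mathscr{H}_{n+2}[\phi;\psi,\eta;a]$. The bottom row of the corresponding $(n+2)\times(n+2)$ matrix is $(\eta_n,\eta_{n-1},\dots,\eta_0,a)$, and since all indices on $\eta$ are non-negative, every entry of this row other than the rightmost vanishes. Laplace expansion along the bottom row leaves only the $a$-weighted $(n+1)\times(n+1)$ cofactor obtained by deleting the last row and last column, which is precisely the pure Toeplitz determinant $D_{n+1}[\phi]$. This yields $aD_{n+1}[\phi]$.

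For the second identity the argument is identical, but applied to the last column instead of the last row: with $\psi(z)=\sum_{j=1}^{m}b_j/(z-c_j)$, the entries $(\psi_0,\psi_1,\dots,\psi_n)^{\mathsf T}$ sitting above the $(n+2,n+2)$-entry $a$ all vanish, so cofactor expansion along the last column again produces $aD_{n+1}[\phi]$. There is no real obstacle; the whole argument is a direct consequence of \eqref{q0 Fourier Coefficients} combined with a single cofactor expansion, which is why the statement is presented as an elementary lemma illustrating the analogue of \eqref{border,phi,const} in the semi-framed setting.
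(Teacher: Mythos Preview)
Your proposal is correct and matches the paper's own reasoning: the paper does not give a written proof but simply says ``This immediately leads to the following elementary property'' right after displaying \eqref{q0 Fourier Coefficients}, and the cofactor expansion you spell out is precisely the intended immediate argument.
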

		\begin{Lemma}\label{thm semi-framed rationals}
			Let $c$ and $d$ be complex numbers that do not lie on the unit circle. The semi-framed determinant $\mathscr{H}_{n+2}\big[\phi;\di \tfrac{1}{z-d},\di \tfrac{1}{z-c};a\big]$ is encoded into the $X$-RHP data described by
			\begin{gather}
						\frac{\mathscr{H}_{n+2}\big[\phi; \tfrac{1}{z-d},\tfrac{1}{z-c};a\big]}{D_{n+1}[\phi]}
= a - \frac{1}{ (dc)^{n+1}}\nonumber \\
\phantom{\qquad = }{}\times \!\!\begin{cases}
						0, & \text{either $|c|<1$ or $|d|<1$,} \\
						 \tfrac{ X_{11}(c;n+1)X_{21}(d;n+2) - X_{21}(c;n+2)X_{11}(d;n+1) }{d-c}, & \text{$|c|>1$ and $|d|>1$ and $c \neq d$,} \\
						X_{11}(d;n+1)X'_{21}(d;n+2)&\\
 \qquad- X_{21}(d;n+2)X'_{11}(d;n+1), & \text{$|c|>1$ and $|d|>1$ and $c = d$,}
					\end{cases}\label{Hn+2 rationals X-RHP}\\
						\frac{\mathscr{L}_{n+2}\big[\phi; \tfrac{1}{z-d},\tfrac{1}{z-c};a\big]}{D_{n+1}[\phi]} \nonumber\\
\qquad= a -\! \begin{cases}
						0, & \text{either $|c|<1$ or $|d|<1$,} \\
						 \tfrac{ X_{11}\big(\tfrac{1}{c};n+1\big)X_{21}\big(\tfrac{1}{d};n+2\big) - X_{21}\big(\tfrac{1}{c};n+2\big)X_{11}\big(\tfrac{1}{d};n+1\big) }{d-c}, & \text{$|c|>1$ and $|d|>1$ and $c \neq d$}, \\
						\tfrac{1}{d^2} \big( X_{11}\big(\tfrac{1}{d};n+1\big)X'_{21}\big(\tfrac{1}{d};n+2\big) &\\
\qquad- X_{21}\big(\tfrac{1}{d};n+2\big)X'_{11}\big(\tfrac{1}{d};n+1\big) \big), & \text{$|c|>1$ and $|d|>1$ and $c = d$,}\nonumber
					\end{cases}
\\
					\frac{\mathscr{E}_{n+2}\big[\phi; \tfrac{1}{z-d},\tfrac{1}{z-c};a\big]}{D_{n+1}[\phi]}\nonumber\\
 \qquad= a - \frac{1}{c^{n+1}} \begin{cases}
						0, & \text{either $|c|<1$ or $|d|<1$,} \\
						 \tfrac{ X_{11}(c;n+1)X_{21}(d^{-1};n+2) - X_{21}(c;n+2)X_{11}(d^{-1};n+1) }{1-dc}, & \text{$|c|>1$ and $|d|>1$,}
					\end{cases}\nonumber
\\
					 	\frac{\mathscr{G}_{n+2}\big[\phi; \tfrac{1}{z-d},\tfrac{1}{z-c};a\big]}{D_{n+1}[\phi]}\nonumber \\
\qquad= a - \frac{1}{d^{n+1}} \begin{cases}
						0, & \text{either $|c|<1$ or $|d|<1$}, \\
						 \tfrac{ X_{11}(c^{-1};n+1)X_{21}(d;n+2) - X_{21}(c^{-1};n+2)X_{11}(d;n+1) }{dc-1}, & \text{$|c|>1$ and $|d|>1$}.
					\end{cases}\nonumber
			\end{gather}
	Moreover, if $\phi$ is of Szeg\H{o}-type, we have
			\begin{gather}
				\mathscr{H}_{n+1}\left[\phi; \frac{1}{z-d},\frac{1}{z-c};a\right] = G^n[\phi] E[\phi] (a + O(\rho^{-n})),\label{asymp H rationals}\\
				\mathscr{L}_{n+1}\left[\phi; \frac{1}{z-d},\frac{1}{z-c};a\right] = G^n[\phi] E[\phi] (a + O(\rho^{-n})),\nonumber\\
				\mathscr{E}_{n+1}\left[\phi; \frac{1}{z-d},\frac{1}{z-c};a\right]\nonumber\\
\qquad = G^n[\phi] E[\phi] \begin{cases}
					(a + O(\rho^{-n})), & \text{either $|c|<1$ or $|d|<1$}, \\
				 	\big( a + \tfrac{\al(c)}{\al(d^{-1})}\cdot \tfrac{1}{1-cd} + O(\rho^{-n}) \big), & \text{$|c|>1$ and $|d|>1$},
				\end{cases}	\nonumber
\\
				\mathscr{G}_{n+1}\left[\phi; \frac{1}{z-d},\frac{1}{z-c};a\right]\nonumber\\
\qquad = G^n[\phi] E[\phi] \begin{cases}
					(a + O(\rho^{-n})), & \text{either $|c|<1$ or $|d|<1$}, \\
					\big( a + \tfrac{\al(d)}{\al(c^{-1})}\cdot \tfrac{1}{1-cd} + O(\rho^{-n}) \big), & \text{$|c|>1$ and $|d|>1$}.
				\end{cases} 		\nonumber			
			\end{gather}
			Here the number $\rho$ is such that for $\la \in \{c,d\}$: a$)$ $1<\rho < |\la|$, if $|\la|>1$, b$)$ $|\la|<\rho^{-1}<1$, if~${|\la|<1}$, and c$)$ $\phi$ is analytic in the annulus $\big\{z\colon \rho^{-1}<|z|<\rho\big\}$.
		\end{Lemma}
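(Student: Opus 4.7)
The plan is to start from the Riemann--Hilbert characterizations in Corollary \ref{EGL and RHP intro} and evaluate the resulting double contour integrals by residue calculus, which is tractable because the frame symbols are rational. A crucial structural observation is that the polynomial
\[
D(z_1,z_2) := X_{11}(z_2;n+1)X_{21}(z_1;n+2) - X_{21}(z_2;n+2)X_{11}(z_1;n+1)
\]
vanishes on the diagonal $z_1=z_2$, so the apparent singularity of $1/(z_1-z_2)$ on that diagonal is removable and contributes no residue.

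For $\mathscr{H}_{n+2}[\phi;1/(z-d),1/(z-c);a]$ I would first perform the inner integration in $z_2$ using \eqref{H and X-RHP}. With $\eta(z_2)=1/(z_2-c)$ the integrand equals $z_2^{-n-1} D(z_1,z_2)/[(z_2-c)(z_1-z_2)]$ (times $1/(2\pi \ii)$). Since $D(z_1,\cdot)$ is a polynomial of degree $n+1$, this integrand decays like $|z_2|^{-2}$ at infinity, so the unit-circle integral equals the negative sum of residues at poles with $|z_2|>1$. Only $z_2=c$ can qualify (the pole at $z_2=z_1\in\T$ being removable), which yields $0$ when $|c|<1$ and $-c^{-n-1}D(z_1,c)/(z_1-c)$ when $|c|>1$. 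Repeating on the outer $z_1$-integral, and using $D(c,c)=0$ to absorb the apparent pole at $z_1=c$, leaves $z_1=d$ as the only non-removable pole outside $\T$, producing the three cases in \eqref{Hn+2 rationals X-RHP}. When $c=d$ the simple pole at $z_1=d$ becomes a double pole whose residue, simplified via $D(d,d)=0$, collapses to the stated Wronskian-type expression.

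For $\mathscr{L}_{n+2}$, $\mathscr{E}_{n+2}$, and $\mathscr{G}_{n+2}$ the same strategy applies starting from the corresponding representations in Corollary \ref{EGL and RHP intro}. The only new feature is that the reflected frame symbols satisfy $\tilde\psi(z)/z = -1/[d(z-d^{-1})]$ and $\tilde\eta(z)/z = -1/[c(z-c^{-1})]$; this replaces the poles at $c,d$ by poles at $c^{-1},d^{-1}$ and interchanges the roles of the interior and exterior of $\T$ for the reflected variable. Carrying the overall factor $1/(cd)$ through the residue bookkeeping then produces the remaining three exact formulas.

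The large-$n$ asymptotics follow by substituting the Deift--Zhou asymptotics of $X$ from Appendix \ref{Appendices} into these exact formulas. The key estimates are, in $\Omega_\infty$, $X_{11}(z;n+1) = \alpha(z)z^{n+1}(1+O(\rho^{-2n}))$ and $X_{21}(z;n+2)/z^{n+1} = O(\rho^{-n})$ uniformly, and, in $\Omega_0$ by \eqref{X11 Om0}--\eqref{X21 Om0}, $X_{11}(z;n+1) = O(\rho^{-n})$ and $X_{21}(z;n+2) = -\alpha^{-1}(z)(1+O(\rho^{-2n}))$. For $\mathscr{H}$ and $\mathscr{L}$, both terms of the numerator (after normalization by $(cd)^{n+1}$ or by $1$, respectively) are individually $O(\rho^{-n})$ whatever the inside/outside configuration of $c,d$, so only the constant $a$ survives modulo the Szeg\H{o} prefactor $G^n[\phi]E[\phi]$. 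For $\mathscr{E}$ and $\mathscr{G}$ with $|c|,|d|>1$, exactly one of the two products in the numerator tends to a finite nonzero limit, e.g.\ $X_{11}(c;n+1)X_{21}(d^{-1};n+2)/c^{n+1}\to -\alpha(c)/\alpha(d^{-1})$ for the $\mathscr{E}$ case, while its partner is $O(\rho^{-2n})$; combining with Szeg\H{o}'s theorem for $D_{n+1}[\phi]$ and the factor $1/(1-cd)$ then produces the stated nontrivial leading-order correction. The main obstacle I anticipate is not in any single step but in carefully tracking the several inside/outside dichotomies and the correct arguments of $\alpha$ across the four different determinants, so that each final asymptotic formula emerges with the correct sign and Szeg\H{o}-function structure.
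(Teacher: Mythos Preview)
Your approach is correct, but it takes a more circuitous route than the paper's. The paper does not start from Corollary~\ref{EGL and RHP intro} at all. Instead, for $|c|,|d|>1$ it writes out the determinant $\mathscr{H}_{n+2}$ explicitly using the Fourier coefficients $q_{0,j}=-c^{-j-1}$ from \eqref{q0 Fourier Coefficients}, and then \emph{recognizes} the resulting matrix directly as a scalar multiple of $\widehat{K}_n\big(d^{-1},c;\,adc^{n+1}\big)$ from \eqref{ReproducingKer}. One application of Theorem~\ref{Rep Ker Semi Framed} then yields $K_n(d^{-1},c)$, and the Christoffel--Darboux identity~\eqref{316} (or~\eqref{425} when $c=d$) converts this to the $X$-RHP expressions with no integration whatsoever. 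The cases $|c|<1$ or $|d|<1$ are immediate since the last row or column is then identically zero except for the corner.

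What you do instead is pass through the double-integral identity of Corollary~\ref{EGL and RHP intro}---which itself was obtained by integrating the Christoffel--Darboux kernel---and then collapse the integrals back to point evaluations via residues. This works, and your observation that $D(z_1,z_2)$ vanishes on the diagonal is exactly what makes the residue bookkeeping clean, but it is effectively an ``expand then contract'' detour: the residues at $z_2=c$ and $z_1=d$ reproduce precisely the point evaluation $K_n(d^{-1},c)$ that the paper obtains in one line. The payoff of the paper's route is brevity and transparency for this particular class of symbols; the payoff of yours is that it exercises the general machinery of Corollary~\ref{EGL and RHP intro} and would extend more uniformly to frame symbols that are not purely rational. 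For the asymptotic part both arguments coincide: substitute \eqref{X in terms of R exact} into the exact formulas and invoke the strong Szeg\H{o} theorem.
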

		\begin{proof}
			The statements about the cases in which $|c|<1$ or $|d|<1$ are obvious in view of \eqref{q0 Fourier Coefficients}. We only prove the statements involving $\mathscr{H}$, since the proof of the statements about $\mathscr{L}$, $\mathscr{G}$ and $\mathscr{E}$ can be obtained in a similar way. Consider the case $|c|>1$ and $|d|>1$. Notice that
			\begin{gather*}
				\mathscr{H}_{n+2}\left[\phi;\frac{1}{z-d},\frac{1}{z-c};a\right] = \det \begin{bmatrix}
					\phi_0& \phi_{-1} & \cdots & \phi_{-n} & -d^{-1} \\
					\phi_{1}& \phi_0 & \cdots & \phi_{-n+1} & -d^{-2} \\
					\vdots & \vdots & \ddots & \vdots & \vdots \\
					\phi_{n} & \phi_{n-1} & \cdots & \phi_{0} & -d^{-n-1} \\
					-c^{-n-1} & -c^{-n} & \cdots & -c^{-1} & a
				\end{bmatrix}.
			\end{gather*}
			Recalling \eqref{ReproducingKer}, we observe that
			\begin{align*}
				\mathscr{H}_{n+2}\left[\phi;\frac{1}{z-d},\frac{1}{z-c};a\right] & = \frac{1}{d \cdot c^{n+1}} D_{n+1}[\phi] \widehat{K}_{n}\big(d^{-1},c;a d c^{n+1}\big) \\ & = \frac{1}{d \cdot c^{n+1}} D_{n+1}[\phi] \big( adc^{n+1} - K_{n}\big(d^{-1},c\big) \big),	
			\end{align*}
			where in the last step we have used Theorem~\ref{Rep Ker Semi Framed}. Therefore, using \eqref{425} and \eqref{316}, we find
			\begin{gather}
					\mathscr{H}_{n+2}\left[\phi;\frac{1}{z-d},\frac{1}{z-c};a\right]
= aD_{n+1}[\phi] - \frac{D_{n+1}[\phi]}{d \cdot c^{n+1}}\nonumber \\
\phantom{\qquad}{}\times \begin{cases}
						-(n+1) Q_{n+1}(d)\widehat{Q}_{n+1}\big(d^{-1}\big)+ d \big( \widehat{Q}_{n+1}\big(d^{-1}\big) \tfrac{\dd}{\dd z} Q_{n+1}(z) \big|_{z=d} & \\
\qquad{} - Q_{n+1}(d) \tfrac{\dd}{\dd z} \widehat{Q}_{n+1}\big(z^{-1}\big) \big|_{z=d} \big), & c = d, \\
						 \tfrac{d^{-n-1}Q_{n+1}(d)c^{n+1}\widehat{Q}_{n+1}\big(c^{-1}\big) - \widehat{Q}_{n+1}\big(d^{-1}\big) Q_{n+1}(c)}{1-d^{-1}c}, & c \neq d.
					\end{cases}\label{Hn+2 rationals}
\end{gather}
Now, recall from \eqref{Toeplitz-OP-solution} that $Q_{n+1}(z)=\varkappa_{n+1} X_{11}(z;n+1)$ and $\widehat{Q}_{n+1}\big(z^{-1}\big) = - \varkappa^{-1}_{n+1} z^{-n-1} X_{21}(z;\allowbreak n+2)$.
 Plugging these into \eqref{Hn+2 rationals} after straightforward simplifications we get \eqref{Hn+2 rationals X-RHP}. The asymptotics~\eqref{asymp H rationals} now follows from \eqref{Hn+2 rationals X-RHP}, \eqref{X in terms of R exact}, and the strong Szeg\H{o} theorem for pure Toeplitz determinants when $\phi$ is of Szeg\H{o}-type.
		\end{proof}

We have now arrived at the proof of Theorem \ref{thm semi-framed rationals intro}, as for instance, \eqref{asymp H rationals lin comb} follows from \eqref{framed lin comb}, \eqref{framed lin comb1}, and \eqref{asymp H rationals}.
		
\begin{Remark}
In view of Lemma \ref{thm semi-framed rationals}, we observe that the way one positions the Fourier coefficients in the last row and the last column of a semi-framed Toeplitz determinant does in fact affect the leading order term of the large-size asymptotics. This observation motivated us to revisit the bordered Toeplitz determinants with the reverse order of positioning the Fourier coefficients. Indeed, let us denote\footnote{Compare with \eqref{btd0}.}
			\begin{gather*}
				\mathcal{D}^{B}_{n}[\phi; \psi] := \det \begin{bmatrix}
					\phi_0& \phi_1 & \cdots & \phi_{n-2} & \psi_{0} \\
					\phi_{-1}& \phi_0 & \cdots & \phi_{n-3}&\psi_{1} \\
					\vdots & \vdots & \ddots & \vdots & \vdots\\
					\phi_{1-n} & \phi_{2-n} & \cdots & \phi_{-1}& \psi_{n-1}
				\end{bmatrix}, \qquad n \geq 2.
			\end{gather*}
			Then, for $\psi=q_2$ (given by \eqref{q1 q2}) by the same techniques used in \cite{BEGIL}, we obtain\footnote{Notice that $\mathcal{D}^{B}_{n+1}[\phi; 1]/D_n[\phi]=(-1)^{n}D_{n}[z\phi]/D_n[\phi] = \varkappa^{-1}_{n} Q_n(0) = X_{11}(0;n)$, where we have used~\eqref{DIK zphi} and \eqref{Toeplitz-OP-solution}. Also, note that $\mathcal{D}^{B}_{n+1}[\phi; z]/D_n[\phi]$ is the coefficient of $z$ in the polynomial $X_{11}(z;n)$, so $\mathcal{D}^{B}_{n+1}[\phi; z]/D_n[\phi] = X'_{11}(0;n)$.}
			\begin{gather}
				\frac{\mathcal{D}^{B}_{n+1}\left[\phi; \hat{a}_0+\hat{a}_1z+\tfrac{\hat{b}_0}{z}+	\sum_{j=1}^m \tfrac{\hat{b}_j}{z-c_j}\right]}{D_n[\phi]}\nonumber\\
\qquad = \hat{a}_0 X_{11}(0;n) + \hat{a}_0 X'_{11}(0;n) - \sum_{j=1 \atop |c_j|>1}^{m} \hat{b}_j c^{-1}_j X_{11}\big(c^{-1}_j;n\big).\label{DnB phi q2 reversed order X-RHP}
			\end{gather}
			Notice that the right-hand side of \eqref{DnB phi q2 reversed order X-RHP} is exponentially small as $n \to \infty$ in view of \eqref{X in terms of R exact} and~\eqref{R_k's are small}, as opposed to
			\begin{gather*}
				\frac{D^{B}_{n+1}\big[\phi; \hat{a}_0+\hat{a}_1z+\tfrac{\hat{b}_0}{z}+\sum_{j=1}^m \tfrac{\hat{b}_j}{z-c_j}\big]}{D_n[\phi]} = 			
				\hat{a}_0-\hat{a}_1[\log \phi]_{-1} -
				 \sum^{m}_{j=1 \atop |c_j|>1 } \frac{\hat{b}_j}{c_j} \al(c_j) + O({\rm e}^{-cn}),
			\end{gather*}
			for some $c>0$, which we have taken from Theorem \ref{main thm} (recall that $\al(0)=G[\phi]$). 	So we see that different ways of positioning the Fourier coefficients in the last column of a bordered Toeplitz determinant also changes the leading order term of the large-size asymptotics.
		\end{Remark}
		
\subsubsection{Proof of Theorem \ref{thm semi-framed rationals . phi intro}}
		
		Throughout this section, we assume that $\phi$ is a Szeg\H{o}-type symbol, so that we can refer to the Riemann--Hilbert analysis reminded in Appendix~\ref{Appendices}.
		
		\begin{Lemma} Let $\phi$ be a Szeg\H{o}-type symbol, and suppose that $|c|, |d| \neq 1$. It holds that as $n \to \infty$
			\begin{gather}
				\mathscr{H}_{n+1}\left[\phi;\di \frac{\phi}{z-d}, \frac{\phi}{z-c} ;a\right] = G[\phi]^n E[\phi] (a + O(\rho^{-n})),\label{H phi . rational}
\\
				\mathscr{L}_{n+1}\left[\phi;\di \frac{\tilde{\phi}}{z-d}, \frac{\tilde{\phi}}{z-c} ;a\right] = G[\phi]^n E[\phi] (a + O(\rho^{-n})),\label{L phi . rational}
\\
				\mathscr{E}_{n+1}\left[\phi;\di \frac{\tilde{\phi}}{z-d}, \frac{\phi}{z-c} ;a\right] \nonumber\\
\qquad= G[\phi]^n E[\phi] \begin{cases}
					a+ \tfrac{\al(c)}{\al(d^{-1})}\cdot \tfrac{1}{1-cd}+O(\rho^{-n}), & \text{if $|c|<1$ and $|d|<1$}, \\
					a+O(\rho^{-n}), & \text{either $|c|>1$ or $|d|>1$},
				\end{cases}\label{E phi . rational}
			\end{gather}
			and
			\begin{gather}
				\mathscr{G}_{n+1}\left[\phi; \frac{\phi}{z-d}, \frac{\tilde{\phi}}{z-c} ;a\right]\nonumber\\
\qquad = G[\phi]^n E[\phi] \begin{cases}
					a+ \tfrac{\al(d)}{\al(c^{-1})}\cdot \tfrac{1}{1-cd}+O(\rho^{-n}), & \text{if $|c|<1$ and $|d|<1$}, \\
					a+O(\rho^{-n}), & \text{either $|c|>1$ or $|d|>1$}.
				\end{cases}\label{G phi . rational}
			\end{gather}
			Here the number $\rho$ is such that for $\la \in \{c,d\}$: a$)$ $1<\rho < |\la|$, if $|\la|>1$, b$)$ $|\la|<\rho^{-1}<1$, if~${|\la|<1}$, and c$)$ $\phi$ is analytic in the annulus $\big\{z\colon \rho^{-1}<|z|<\rho\big\}$.
		\end{Lemma}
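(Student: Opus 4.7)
By the linearity identities \eqref{framed lin comb}--\eqref{framed lin comb1}, it suffices to establish the four single-pole statements \eqref{H phi . rational}--\eqref{G phi . rational}. Starting from the $X$-RHP representations in Corollary~\ref{EGL and RHP intro}, I substitute the specified frame symbols and use the key observation that, by the jump condition {\rm RH-X2},
\[
\begin{aligned}
z^{-n-1}\phi(z)X_{11}(z;n+1)&=X_{12,+}(z;n+1)-X_{12,-}(z;n+1),\\
z^{-n-1}\phi(z)X_{21}(z;n+2)&=z\bigl[X_{22,+}(z;n+2)-X_{22,-}(z;n+2)\bigr]
\end{aligned}
\]
hold on $\T$. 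Consequently each integrand in Corollary~\ref{EGL and RHP intro} decomposes into a product of $\T$-jumps of $X_{12}(\cdot;n+1)$ and $X_{22}(\cdot;n+2)$, times the meromorphic factors inherited from the rational parts of the frame symbols.

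Each $\T$-jump is then split via Plemelj-Sokhotski into the difference of two contour integrals on the circles $|z|=1^{\pm}$, in each of which the integrand is analytic in the corresponding component of $\C\setminus\T$. Those contours can be freely deformed within the annulus of analyticity of $\phi$, and the deformation picks up residues at the poles of the rational factors: at $z=c,d$ for the $\phi$-tilted variants and at the reciprocals $z=c^{-1},d^{-1}$ for the $\tilde\phi$-tilted variants (since $\psi\mapsto\tilde\psi$ relocates the pole $d$ of $\psi(z)=\tilde\phi(z)/(z-d)$ into a pole at $1/d$ of $\tilde\psi(z)$). The remaining contour pieces on $|z|=\rho^{\pm1}$ contribute only $O(\rho^{-n})$ by the Deift-Zhou outer-parametrix estimates of \eqref{X in terms of R exact}--\eqref{R_k's are small}: $X_{12}$ and $X_{22}$ are exponentially small on their opposite sides of $\T$, and on their natural sides the explicit powers of $z$ appearing in the integrand precisely balance the $z^{-n}$ behavior dictated by the outer parametrix, again leaving an $O(\rho^{-n})$ remainder.

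For $\mathscr{H}$ and $\mathscr{L}$, the antisymmetric structure $A(z_2)B(z_1)-A(z_1)B(z_2)$ of the determinant kernel makes the leading-order residue contributions cancel pairwise, yielding $a+O(\rho^{-n})$ after dividing by $D_{n+1}[\phi]\sim G^{n+1}[\phi]E[\phi]$ via the strong Szeg\H{o} limit theorem. For $\mathscr{E}$ and $\mathscr{G}$, non-vanishing residue contributions survive only when both poles lie inside the unit disk, namely $|c|,|d|<1$; evaluating the outer parametrix at these residue positions produces exactly the factors $\tfrac{\alpha(c)}{\alpha(d^{-1})}\cdot\tfrac{1}{1-cd}$ and $\tfrac{\alpha(d)}{\alpha(c^{-1})}\cdot\tfrac{1}{1-cd}$ claimed in \eqref{E phi . rational}--\eqref{G phi . rational}. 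The main obstacle will be the careful sign and case bookkeeping during the four-fold $(\pm,\pm)$ Plemelj splitting, an exercise analogous in spirit to the one underlying Lemma~\ref{thm semi-framed rationals} for the rational case but made more delicate here because the $\phi$ factor is absorbed into the $X$-RHP jump rather than contributing its own Laurent expansion, which reverses the role of inside versus outside poles relative to Theorem~\ref{thm semi-framed rationals intro}.
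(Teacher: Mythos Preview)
Your route via the jump identities is genuinely different from the paper's. The paper never rewrites $z^{-n-1}\phi X_{j1}$ as a jump of $X_{j2}$; instead it substitutes the leading asymptotics $X_{11}(z;n)\simeq z^n\alpha(z)\phi^{-1}(z)$ and $X_{21}(z;n)\simeq -\alpha^{-1}(z)$ in the lens $\Omega_1$ directly into the double integral of Corollary~\ref{EGL and RHP intro}. One $\phi$-factor from the frame symbol cancels against the $\phi^{-1}$ in the asymptotic of $X_{11}$, the other is absorbed via $\alpha_+=\alpha_-\phi$, the two contours are separated to circles $|z|=\mathfrak{r}^{\pm1}$, and $1/(z_1-z_2)$ is expanded as a geometric series to decouple the variables (see \eqref{3.61 a}--\eqref{3.63 a}). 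Your Plemelj route can also be made to work, but be aware that the identity $z^{-n-1}\phi X_{11}(\cdot;n+1)=X_{12,+}-X_{12,-}$ needs the factor $z^{-n-1}\phi$ already present. Only $\mathscr{H}$ has a $z^{-n}$ weight in both variables; in $\mathscr{E}$, $\mathscr{G}$ only one variable does, and in $\mathscr{L}$ neither. In those cases you must insert compensating powers $z^{n+1}$ or $z^{n+2}$, so the leftover factors are not merely ``meromorphic factors inherited from the rational parts of the frame symbols'' as you write.

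There is also a genuine gap in your reasoning for $\mathscr{H}$ and $\mathscr{L}$. The leading residue contributions do \emph{not} cancel by antisymmetry: the $z_1\leftrightarrow z_2$ symmetry of the determinant kernel is broken by the distinct pole locations $c$ and $d$ in the rational factors. What actually happens is that each residue term is \emph{individually} $O(\rho^{-n})$, because the residues involve either $X_{22}$ evaluated inside $\T$ or $X_{12}$ evaluated outside $\T$, both of which are $O(\rho^{-n})$ by \eqref{X in terms of R exact} and \eqref{R_k's are small}. Conversely, the genuine $O(1)$ contribution in $\mathscr{E}$ arises precisely when the inserted $z_1^{n+2}$ factor meets the pole at $z_1=d^{-1}$ outside $\T$ and exactly cancels the $z^{-n-2}$ decay of $X_{22,-}$ there, producing $\alpha^{-1}(d^{-1})$; paired with the residue $X_{12}(c)\simeq\alpha(c)$ in $z_2$, this gives the claimed $\alpha(c)/\alpha(d^{-1})\cdot 1/(1-cd)$. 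Finally, your sketch does not say how you handle the $1/(z_1-z_2)$ pole when deforming one contour across the other; the paper sidesteps this entirely via the geometric-series expansion after the contours are already separated.
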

		\begin{proof}
			We only prove \eqref{E phi . rational} as \eqref{H phi . rational}, \eqref{L phi . rational}, and \eqref{G phi . rational} can be obtained similarly. 			From \eqref{X in terms of R exact} in $\Om_1$ (see Figure \ref{S_contour}), we have
			\begin{align*}
				X_{11}(z;n) & = z^n \al(z) \phi^{-1}(z) ( 1+ O(\rho^{-n})),\qquad n \to \infty, \\
				X_{21}(z;n) & = - \al^{-1}(z) ( 1+ O(\rho^{-n})),\qquad n \to \infty.
			\end{align*}
			Therefore, recalling \eqref{E and X-RHP}, we have
			\begin{gather*}
					\frac{\mathscr{E}_{n+2}[\phi;\psi, \eta ;a]}{D_{n+1}[\phi]} - a \\
\qquad = - \int_{\T} \int_{\T} \frac{z^{-n}_2\eta(z_2) \Tilde{\psi}(z_1)}{z_1-z_2} \det \begin{bmatrix}
						X_{11}(z_2;n+1) & X_{21}(z_2;n+2) \\
						X_{11}(z_1;n+1) & X_{21}(z_1;n+2)
					\end{bmatrix} \frac{\dd z_2}{2 \pi \ii z_2} \frac{\dd z_1}{2 \pi \ii z_1} \\
\qquad= - \int_{\T} \int_{\T} \frac{z^{-n}_2\eta(z_2) \Tilde{\psi}(z_1)}{z_1-z_2} \bigl( - z^{n+1}_2 \al_+(z_2) \phi^{-1}(z_2) \al^{-1}_+(z_1) \bigr)\frac{\dd z_2}{2 \pi \ii z_2} \frac{\dd z_1}{2 \pi \ii z_1} \\
\phantom{\qquad=}{} - \int_{\T} \int_{\T} \frac{z^{-n}_2\eta(z_2) \Tilde{\psi}(z_1)}{z_1-z_2} \big( z^{n+1}_1 \al_+(z_1) \phi^{-1}(z_1) \al^{-1}_+(z_2) \big)\frac{\dd z_2}{2 \pi \ii z_2} \frac{\dd z_1}{2 \pi \ii z_1} + O(\rho^{-n}).
			\end{gather*}
			In view of \eqref{47}, we can rewrite this as
			\begin{align*}
					\frac{\mathscr{E}_{n+2}[\phi;\psi, \eta ;a]}{D_{n+1}[\phi]} - a ={} &\int_{\T} \int_{\T} \frac{\eta(z_2) \Tilde{\psi}(z_1)}{z_1-z_2} \al_+(z_2) \phi^{-1}(z_2) \phi^{-1}(z_1) \al^{-1}_-(z_1) \frac{\dd z_2}{2 \pi \ii } \frac{\dd z_1}{2 \pi \ii z_1} \\
 & - \int_{\T} \int_{\T} \frac{\eta(z_2) \Tilde{\psi}(z_1)}{z_1-z_2} \left( \frac{z_1}{z_2} \right)^n \al_+(z_1) \phi^{-1}(z_1) \phi^{-1}(z_2) \al^{-1}_-(z_2) \frac{\dd z_2}{2 \pi \ii z_2} \frac{\dd z_1}{2 \pi \ii }\\
 & + O(\rho^{-n}).
			\end{align*}
			For $\mathfrak{r}>1$, define $	\T^{\mathfrak{r}}_{\mp} := \big\{ z\colon |z|= \mathfrak{r}^{\pm1} \big\}$ and $	\D^{\mathfrak{r}}_{\mp} := \big\{ z\colon |z|< \mathfrak{r}^{\pm1} \big\}$. We choose $\mathfrak{r}$ so that $\psi$, $\eta$, and $\phi$ are analytic in $\D^{\mathfrak{r}}_- \setminus \overline{\D^{\mathfrak{r}}_+}$. With this choice of $\mathfrak{r}$ we deform the contours of integration to rewrite the previous equation as
			\begin{gather}
					\frac{\mathscr{E}_{n+2}[\phi;\psi, \eta ;a]}{D_{n+1}[\phi]} - a\nonumber\\
\qquad = \int_{\T^{\mathfrak{r}}_{-}} \int_{\T^{\mathfrak{r}}_{+}} \frac{\eta(z_2) \Tilde{\psi}(z_1)}{z_1-z_2} \al(z_2) \phi^{-1}(z_2) \phi^{-1}(z_1) \al^{-1}(z_1) \frac{\dd z_2}{2 \pi \ii } \frac{\dd z_1}{2 \pi \ii z_1}\nonumber \\
\phantom{\qquad =}{} - \int_{\T^{\mathfrak{r}}_{+}} \int_{\T^{\mathfrak{r}}_{-}} \frac{\eta(z_2) \Tilde{\psi}(z_1)}{z_1-z_2} \left( \frac{z_1}{z_2} \right)^n \al(z_1) \phi^{-1}(z_1) \phi^{-1}(z_2) \al^{-1}(z_2) \frac{\dd z_2}{2 \pi \ii z_2} \frac{\dd z_1}{2 \pi \ii } + O(\rho^{-n}) \nonumber\\
\qquad = \int_{\T^{\mathfrak{r}}_{-}} \int_{\T^{\mathfrak{r}}_{+}} z^{-1}_1\eta(z_2) \Tilde{\psi}(z_1) \al(z_2) \phi^{-1}(z_2) \phi^{-1}(z_1) \al^{-1}(z_1) \left( \sum_{k=0}^{\infty} \frac{z^k_2}{z^k_1} \right) \frac{\dd z_2}{2 \pi \ii } \frac{\dd z_1}{2 \pi \ii z_1} \nonumber\\
\phantom{\qquad =}{} + \int_{\T^{\mathfrak{r}}_{+}} \int_{\T^{\mathfrak{r}}_{-}} z^{-1}_2\eta(z_2) \Tilde{\psi}(z_1) \al(z_1) \phi^{-1}(z_1) \phi^{-1}(z_2) \al^{-1}(z_2) \left( \sum_{k=0}^{\infty} \frac{z^{k+n}_1}{z^{k+n}_2} \right) \frac{\dd z_2}{2 \pi \ii z_2} \frac{\dd z_1}{2 \pi \ii }\nonumber \\
\phantom{\qquad =}{} + O(\rho^{-n}) \nonumber\\
\qquad = \sum_{k=0}^{\infty} \left[ \int_{\T^{\mathfrak{r}}_{+}} \frac{\eta(z)}{\phi(z)} \al(z) z^k \frac{\dd z}{2 \pi \ii} \right] \left[ \int_{\T^{\mathfrak{r}}_{-}} \frac{\Tilde{\psi}(z)}{\phi(z)} \al^{-1}(z) z^{-k-1} \frac{\dd z}{2 \pi \ii z} \right] \nonumber\\
\phantom{\qquad =}{} + \sum_{k=0}^{\infty} \left[ \int_{\T^{\mathfrak{r}}_{+}} \frac{\Tilde{\psi}(z)}{\phi(z)} \al(z) z^{k+n} \frac{\dd z}{2 \pi \ii} \right] \left[ \int_{\T^{\mathfrak{r}}_{-}} \frac{\eta(z)}{\phi(z)} \al^{-1}(z) z^{-n-k-1} \frac{\dd z}{2 \pi \ii z} \right] + O(\rho^{-n}).\label{3.61 a}
			\end{gather}
			
			Since $|c| \neq 1$ and $|d| \neq 1$, we can choose $\varepsilon>0$ small enough so that $c$ and $d$ do not belong to~$\D^{\ep}_- \setminus \overline{\D^{\ep}_+}$. Replacing $\psi$ and $\eta$ respectively by \smash{$\frac{\tilde{\phi}}{z-d}$} and $\frac{\phi}{z-c}$ in the last member of \eqref{3.61 a} gives
			\begin{gather}
					\frac{\mathscr{E}_{n+2}\big[\phi;\di \tfrac{\tilde{\phi}}{z-d}, \tfrac{\phi}{z-c} ;a\big]}{D_{n+1}[\phi]} - a\nonumber\\
\qquad\simeq \sum_{k=0}^{\infty} \left[ \int_{\T^{\mathfrak{r}}_{+}} \frac{\al(z) z^k}{z-c} \frac{\dd z}{2 \pi \ii} \right] \left[ \int_{\T^{\mathfrak{r}}_{-}} \frac{\al^{-1}(z) z^{-k-1}}{z^{-1}-d} \frac{\dd z}{2 \pi \ii z} \right] \nonumber\\
\phantom{\qquad\simeq}{}+ \sum_{k=0}^{\infty} \left[ \int_{\T^{\mathfrak{r}}_{+}} \frac{\al(z) z^{k+n}}{z^{-1}-d} \frac{\dd z}{2 \pi \ii} \right] \left[ \int_{\T^{\mathfrak{r}}_{-}} \frac{\al^{-1}(z) z^{-n-k-1}}{z-c} \frac{\dd z}{2 \pi \ii z} \right] \nonumber\\
\qquad= \sum_{k=0}^{\infty} \left[ \int_{\T^{\mathfrak{r}}_{+}} \frac{\al(z) z^k}{z-c} \frac{\dd z}{2 \pi \ii} \right] \left[ \int_{\T^{\mathfrak{r}}_{+}} \frac{\al^{-1}\big(z^{-1}\big) z^{k}}{z-d} \frac{\dd z}{2 \pi \ii} \right]\nonumber\\
\phantom{\qquad\simeq}{}+ \frac{1}{c\cdot d} \sum_{k=0}^{\infty} \left[
					\int_{\T^{\mathfrak{r}}_{+}} \frac{\al(z) z^{n+k+1}}{z-d^{-1}} \frac{\dd z}{2 \pi \ii} \right] \left[ \int_{\T^{\mathfrak{r}}_{+}} \frac{\al^{-1}\big(z^{-1}\big) z^{n+k+1}}{z-c^{-1}} \frac{\dd z}{2 \pi \ii} \right].\label{3.63 a}
			\end{gather}
			Since $\al$ is analytic in $\C \setminus \T$, we immediately have
			\begin{equation*}
				\int_{\T^{\mathfrak{r}}_{+}} \frac{\al(z) z^k}{z-c} \frac{\dd z}{2 \pi \ii} = \begin{cases}
					\al(c)c^k, & |c|<1, \\
					0, & |c|>1,
				\end{cases}
			\end{equation*} and
			\begin{equation*}
				\int_{\T^{\mathfrak{r}}_{+}} \frac{\al(z) z^{n+k+1}}{z-d^{-1}} \frac{\dd z}{2 \pi \ii} = \begin{cases}
					0, & |d|<1, \\
					\al(d^{-1}) d^{-n-k-1}, & |d|>1.
				\end{cases}
			\end{equation*}
			Recall that \[			\al(z)=\exp \left[ \frac{1}{2 \pi {\rm i} } \int_{\T} \frac{\ln(\phi(\tau))}{\tau-z}{\rm d}\tau \right].\] Since $\al(z) \neq 0$ where it is defined, $\al^{-1}\big(z^{-1}\big)$ is also analytic in $\C \setminus \T$. So
			\begin{equation*}
				\int_{\T^{\mathfrak{r}}_{+}} \frac{\al^{-1}\big(z^{-1}\big) z^{k}}{z-d} \frac{\dd z}{2 \pi \ii} = \begin{cases}
					\al^{-1}\big(d^{-1}\big) d^{k}, & |d|<1, \\
					0, & |d|>1,
				\end{cases}
			\end{equation*}
			and
			\begin{equation*}
				\int_{\T^{\mathfrak{r}}_{+}} \frac{\al^{-1}\big(z^{-1}\big) z^{n+k+1}}{z-c^{-1}} \frac{\dd z}{2 \pi \ii} = \begin{cases}
					0, & |c|<1, \\
					\al^{-1}(c) c^{-n-k-1}, & |c|>1.
				\end{cases}
			\end{equation*}
			Using these in last member of \eqref{3.63 a} and summing up the resulting geometric series, we obtain\footnote{We choose $\rho>1$ so that there are no poles in the annulus with radii $\rho$ and $\rho^{-1}$, so $c^{-n} = O(\rho^{-n})$ when~${|c|>1}$.}
			\begin{align*}
					\frac{\mathscr{E}_{n+2}\big[\phi; \tfrac{\tilde{\phi}}{z-d}, \tfrac{\phi}{z-c} ;a\big]}{D_{n+1}[\phi]} - a = & \begin{cases}
						 \dfrac{\al(c)}{\al\big(d^{-1}\big)}\cdot \dfrac{1}{1-cd} + O(\rho^{-n}), 		 & \text{if $|c|<1$ and $|d|<1$}, \\
						O(\rho^{-n}), & \text{either $|c|>1$ or $|d|>1$}.
					\end{cases}
			\end{align*}
			Changing $n \mapsto n-1$ and recalling the strong Szeg\H{o} theorem, we obtain \eqref{E phi . rational}.		
		\end{proof}

We have now arrived at the proof of Theorem \ref{thm semi-framed rationals . phi intro}, as the asymptotic formulas \eqref{asymp H rationals lin comb1} through~\eqref{asymp G rationals lin comb1} simply follow from the above lemma and equations \eqref{framed lin comb} and \eqref{framed lin comb1}.
		
		\subsection[Beyond the semi-framed case: framed and multi-framed Toeplitz determinants]{Beyond the semi-framed case: \\
framed and multi-framed Toeplitz determinants}\label{sec framed and multi-framed}

		Finally, we turn our attention to the framed and multi-framed Toeplitz determinants, which are determinants of matrices of Toeplitz structure in addition to one or multiple frames around them (recall \eqref{framed intro} and \eqref{MM}). As mentioned before, in this section we do not intend to prove any asymptotic results, instead we would like to highlight the general framework for how one may approach such asymptotic analysis.
		
		For a framed Toeplitz determinant, there are many ways one can place the Fourier coefficients of the symbols along the four borderes. For example, consider an $(n+3) \times (n+3)$ framed Toeplitz determinant with border symbols $\psi$, $\eta$, $\xi$ and $\ga$, respectively for the right, bottom, top and left borders. Then, if we want to use the zeroth up to the $n$-th Fourier coefficients of these symbols along the borders, we have sixteen ways to construct such framed Toeplitz determinants.\footnote{Recall the semi-framed Toeplitz determinants where we had four forms \eqref{half-framed}--\eqref{half-framed 3}.} Nevertheless, for any of these choices, a framed Toeplitz determinant can be related to four semi-framed Toeplitz determinants by the following Dodgson condensation identity:
		\begin{gather}\label{DODGSON framed B}
\underbrace{\mathscr{M}}_{\text{framed}}
			\cdot \underbrace{ \mathscr{M}\left\lbrace \begin{matrix} 0 & n+2 \\ 0 & n+2 \end{matrix} \right\rbrace }_{\text{pure Toeplitz}} = \underbrace{ \mathscr{M}\left\lbrace \begin{matrix} 0 \\ 0 \end{matrix} \right\rbrace }_{\text{semi-framed}} \cdot \underbrace{ \mathscr{M}\left\lbrace \begin{matrix} n+2 \\ n+2 \end{matrix} \right\rbrace}_{\text{semi-framed}} - \underbrace{ \mathscr{M}\left\lbrace \begin{matrix} 0 \\ n+2 \end{matrix} \right\rbrace}_{\text{semi-framed}} \cdot \underbrace{ \mathscr{M}\left\lbrace \begin{matrix} n+2 \\ 0 \end{matrix} \right\rbrace}_{\text{semi-framed}}.
		\end{gather}
		
For example, among the aforementioned sixteen choices, suppose we want to study the asymptotics of
		\begin{gather*}
			\mathscr{M}_{n+3}[\phi; \xi, \psi, \eta, \ga; \boldsymbol{a}_4] := \det \begin{bmatrix}
				a_1 & \xi_{n} & \xi_{n-1} & \cdots & \xi_{0} & a_2 \\
				\ga_{n} &	\phi_0& \phi_{-1} & \cdots & \phi_{-n} & \psi_{0} \\
				\ga_{n-1} &	\phi_{1}& \phi_0 & \cdots & \phi_{-n+1} & \psi_{1} \\
				\vdots &	\vdots & \vdots & \ddots & \vdots & \vdots \\
				\ga_0 &	\phi_{n} & \phi_{n-1} & \cdots & \phi_{0} & \psi_{n} \\
				a_4 &	\eta_{n} & \eta_{n-1} & \cdots & \eta_{0} & a_3
			\end{bmatrix},
		\end{gather*}
		or
		\begin{gather*}
			\mathscr{N}_{n+3}[\phi; \xi, \psi, \eta, \ga; \boldsymbol{a}_4] := \det \begin{bmatrix}
				a_1 & \xi_{0} & \xi_{1} & \cdots & \xi_{n} & a_2 \\
				\ga_{0} &	\phi_0& \phi_{-1} & \cdots & \phi_{-n} & \psi_{n} \\
				\ga_{1} &	\phi_{1}& \phi_0 & \cdots & \phi_{-n+1} & \psi_{n-1} \\
				\vdots &	\vdots & \vdots & \ddots & \vdots & \vdots \\
				\ga_n &	\phi_{n} & \phi_{n-1} & \cdots & \phi_{0} & \psi_{0} \\
				a_4 &	\eta_{n} & \eta_{n-1} & \cdots & \eta_{0} & a_3
			\end{bmatrix},
		\end{gather*}
		where $\boldsymbol{a}_4$ denotes the ordered set $\{a_1, a_2, a_3, a_4\}$, and $a_k$'s are arbitrary complex numbers, ${k=1,\dots,4}$. Employing the Dodgson condensation identity \eqref{DODGSON framed B} for $\mathscr{M}$ and $\mathscr{N}$, we respectively obtain
		\begin{align}
			\mathscr{M}_{n+3}[\phi; \xi, \psi, \eta, \ga; \boldsymbol{a}_4] \cdot D_{n+1}[\phi] ={}& \mathscr{H}_{n+2}[\phi;\psi,\eta;a_3] \cdot \mathscr{E}_{n+2}[\phi;\ga,\xi;a_1]\nonumber \\
 &- \mathscr{E}_{n+2}[\phi;\ga,\eta;a_4] \cdot \mathscr{H}_{n+2}[\phi;\psi,\xi;a_2], 	\label{M in terms of semi-framed stuff}
		\end{align}
		and
		\begin{align*}
			\mathscr{N}_{n+3}[\phi; \xi, \psi, \eta, \ga; \boldsymbol{a}_4] \cdot D_{n+1}[\phi]={}& \mathscr{E}_{n+2}[\phi;\psi,\eta;a_3] \cdot \mathscr{G}_{n+2}[\phi;\ga,\xi;a_1] \\
&- \mathscr{H}_{n+2}[\phi;\ga,\eta;a_4] \cdot \mathscr{L}_{n+2}[\phi;\psi,\xi;a_2].
		\end{align*}
		These identities and their analogues for the other fourteen framed Toeplitz determinants, provide a pathway to the asymptotics at least for the class of border symbols considered in Section~\ref{sec semi-framed}, since we already know how to compute the asymptotics of the semi-framed Toeplitz determinants appearing on the right-hand side (see Section~\ref{sec semi-framed}).
		
		When dealing with a multi-framed Toeplitz determinant, repeated application of appropriate Dodgson condensation identities can simplify the analysis, ultimately reducing it to the asymptotic analysis of semi-framed Toeplitz determinants once again. For example, consider the following $(n+5) \times (n+5)$ two-framed Toeplitz determinant:
		\begin{gather}
			\mathscr{K}_{n+5}[\phi; \boldsymbol{\xi}, \boldsymbol{\psi}, \boldsymbol{\eta}, \boldsymbol{\ga}; \boldsymbol{a}_8]\nonumber\\
\qquad := \de\begin{bmatrix}
				a_5 & \xi_{2,n+2} & \xi_{2,n+1} & \xi_{2,n} & \cdots & \xi_{2,1} & \xi_{2,0} & a_6 \\
				\ga_{2,n+2} &	a_1 & \xi_{1,n} & \xi_{1,n-1} & \cdots & \xi_{1,0} & a_2 & \psi_{2,0} \\
				\ga_{2,n+1} &	\ga_{1,n} &	\phi_0& \phi_{-1} & \cdots & \phi_{-n} & \psi_{1,0} & \psi_{2,1} \\
				\ga_{2,n} &	\ga_{1,n-1} &	\phi_{1}& \phi_0 & \cdots & \phi_{-n+1} & \psi_{1,1} & \psi_{2,2} \\
				\vdots &	\vdots & \vdots & \vdots & \ddots & \vdots & \vdots & \vdots \\
				\ga_{2,1} &	\ga_{1,0} &	\phi_{n} & \phi_{n-1} & \cdots & \phi_{0} & \psi_{1,n} & \psi_{2,n+1} \\
				\ga_{2,0} &	a_4 &	\eta_{1,n} & \eta_{1,n-1} & \cdots & \eta_{1,0} & a_3 & \psi_{2,n+2} \\
				a_8 &	\eta_{2,n+2} &	\eta_{2,n+1} & \eta_{2,n} & \cdots & \eta_{2,1} & \eta_{2,0} & a_7
			\end{bmatrix}.\label{two framed}
		\end{gather}
		To this two-framed Toeplitz determinant we apply
		\begin{equation}\label{DODGSON framed C} \underbrace{\mathscr{K}}_{\text{two-framed}}
			\cdot \underbrace{ \mathscr{K}\left\lbrace \begin{matrix} 0 & n+4 \\ 0 & n+4 \end{matrix} \right\rbrace }_{\text{framed}} = \mathscr{K}\left\lbrace \begin{matrix} 0 \\ 0 \end{matrix} \right\rbrace \cdot \mathscr{K}\left\lbrace \begin{matrix} n+4 \\ n+4 \end{matrix} \right\rbrace - \mathscr{K}\left\lbrace \begin{matrix} 0 \\ n+4 \end{matrix} \right\rbrace \cdot \mathscr{K}\left\lbrace \begin{matrix} n+4 \\ 0 \end{matrix} \right\rbrace,
		\end{equation}
		where each of the determinants on the right-hand side are that of a framed Toeplitz matrix with an extra \textit{semi-frame} around it. Indeed,
		\begin{gather*}
			\mathscr{K}\left\lbrace \begin{matrix} 0 \\ 0 \end{matrix} \right\rbrace = \det \begin{bmatrix}
				a_1 & \xi_{1,n} & \xi_{1,n-1} & \cdots & \xi_{1,0} & a_2 & \psi_{2,0} \\
				\ga_{1,n} &	\phi_0& \phi_{-1} & \cdots & \phi_{-n} & \psi_{1,0} & \psi_{2,1} \\
				\ga_{1,n-1} &	\phi_{1}& \phi_0 & \cdots & \phi_{-n+1} & \psi_{1,1} & \psi_{2,2} \\
				\vdots & \vdots & \vdots & \ddots & \vdots & \vdots & \vdots \\
				\ga_{1,0} &	\phi_{n} & \phi_{n-1} & \cdots & \phi_{0} & \psi_{1,n} & \psi_{2,n+1} \\
				a_4 &	\eta_{1,n} & \eta_{1,n-1} & \cdots & \eta_{1,0} & a_3 & \psi_{2,n+2} \\
				\eta_{2,n+2} &	\eta_{2,n+1} & \eta_{2,n} & \cdots & \eta_{2,1} & \eta_{2,0} & a_7
			\end{bmatrix},
\\
			\mathscr{K}\left\lbrace \begin{matrix} n+4 \\ n+4 \end{matrix} \right\rbrace	= \det	\begin{bmatrix}
				a_5 & \xi_{2,n+2} & \xi_{2,n+1} & \xi_{2,n} & \cdots & \xi_{2,1} & \xi_{2,0} \\
				\ga_{2,n+2} &	a_1 & \xi_{1,n} & \xi_{1,n-1} & \cdots & \xi_{1,0} & a_2 \\
				\ga_{2,n+1} &	\ga_{1,n} &	\phi_0& \phi_{-1} & \cdots & \phi_{-n} & \psi_{1,0} \\
				\ga_{2,n} &	\ga_{1,n-1} &	\phi_{1}& \phi_0 & \cdots & \phi_{-n+1} & \psi_{1,1} \\
				\vdots &	\vdots & \vdots & \vdots & \ddots & \vdots & \vdots \\
				\ga_{2,1} &	\ga_{1,0} &	\phi_{n} & \phi_{n-1} & \cdots & \phi_{0} & \psi_{1,n} \\
				\ga_{2,0} &	a_4 &	\eta_{1,n} & \eta_{1,n-1} & \cdots & \eta_{1,0} & a_3 &
			\end{bmatrix},
\\
			\mathscr{K}\left\lbrace \begin{matrix} 0 \\ n+4 \end{matrix} \right\rbrace	= \det \begin{bmatrix}
				\ga_{2,n+2} &	a_1 & \xi_{1,n} & \xi_{1,n-1} & \cdots & \xi_{1,0} & a_2 \\
				\ga_{2,n+1} &	\ga_{1,n} &	\phi_0& \phi_{-1} & \cdots & \phi_{-n} & \psi_{1,0} \\
				\ga_{2,n} &	\ga_{1,n-1} &	\phi_{1}& \phi_0 & \cdots & \phi_{-n+1} & \psi_{1,1} \\
				\vdots &	\vdots & \vdots & \vdots & \ddots & \vdots & \vdots \\
				\ga_{2,1} &	\ga_{1,0} &	\phi_{n} & \phi_{n-1} & \cdots & \phi_{0} & \psi_{1,n} \\
				\ga_{2,0} &	a_4 &	\eta_{1,n} & \eta_{1,n-1} & \cdots & \eta_{1,0} & a_3 \\
				a_8 &	\eta_{2,n+2} &	\eta_{2,n+1} & \eta_{2,n} & \cdots & \eta_{2,1} & \eta_{2,0}
			\end{bmatrix},
		\end{gather*}
		and
		\begin{equation*}
			\mathscr{K}\left\lbrace \begin{matrix} n+4 \\ 0 \end{matrix} \right\rbrace	= \det \begin{bmatrix}
				\xi_{2,n+2} & \xi_{2,n+1} & \xi_{2,n} & \cdots & \xi_{2,1} & \xi_{2,0} & a_6 \\
				a_1 & \xi_{1,n} & \xi_{1,n-1} & \cdots & \xi_{1,0} & a_2 & \psi_{2,0} \\
				\ga_{1,n} &	\phi_0& \phi_{-1} & \cdots & \phi_{-n} & \psi_{1,0} & \psi_{2,1} \\
				\ga_{1,n-1} &	\phi_{1}& \phi_0 & \cdots & \phi_{-n+1} & \psi_{1,1} & \psi_{2,2} \\
				\vdots & \vdots & \vdots & \ddots & \vdots & \vdots & \vdots \\
				\ga_{1,0} &	\phi_{n} & \phi_{n-1} & \cdots & \phi_{0} & \psi_{1,n} & \psi_{2,n+1} \\
				a_4 &	\eta_{1,n} & \eta_{1,n-1} & \cdots & \eta_{1,0} & a_3 & \psi_{2,n+2} 			\end{bmatrix}.
		\end{equation*}
		Now consider the following auxiliary DCIs:
		\begin{gather}
\mathscr{K}\left\lbrace \begin{matrix} 0 \\ 0 \end{matrix} \right\rbrace
			\cdot \underbrace{ \mathscr{K}\left\lbrace \begin{matrix} 0 & n+3 & n+4 \\ 0 & n+3 & n+4 \end{matrix} \right\rbrace }_{\text{semi-framed}}\nonumber\\
\qquad = \underbrace{\mathscr{K}\left\lbrace \begin{matrix} 0 & n+3 \\ 0 & n+3 \end{matrix} \right\rbrace}_{\text{framed}} \cdot \underbrace{\mathscr{K}\left\lbrace \begin{matrix} 0 & n+4 \\ 0 & n+4 \end{matrix} \right\rbrace}_{\text{framed}} - \underbrace{\mathscr{K}\left\lbrace \begin{matrix} 0 & n+3 \\ 0 & n+4 \end{matrix} \right\rbrace}_{\text{framed}} \cdot \underbrace{\mathscr{K}\left\lbrace \begin{matrix} 0 & n+4 \\ 0 & n+3 \end{matrix} \right\rbrace}_{\text{framed}},\label{DODGSON framed CC}
\\
\mathscr{K}\left\lbrace \begin{matrix} n+4 \\ n+4 \end{matrix} \right\rbrace
			\cdot \underbrace{ \mathscr{K}\left\lbrace \begin{matrix} 0 & 1 & n+4 \\ 0 & 1 & n+4 \end{matrix} \right\rbrace }_{\text{semi-framed}} \nonumber\\
\qquad= \underbrace{\mathscr{K}\left\lbrace \begin{matrix} 0 & n+4 \\ 0 & n+4 \end{matrix} \right\rbrace}_{\text{framed}} \cdot \underbrace{\mathscr{K}\left\lbrace \begin{matrix} 1 & n+4 \\ 1 & n+4 \end{matrix} \right\rbrace}_{\text{framed}} - \underbrace{\mathscr{K}\left\lbrace \begin{matrix} 0 & n+4 \\ 1 & n+4 \end{matrix} \right\rbrace}_{\text{framed}} \cdot \underbrace{\mathscr{K}\left\lbrace \begin{matrix} 1 & n+4 \\ 0 & n+4 \end{matrix} \right\rbrace}_{\text{framed}},\nonumber
\\
 \mathscr{K}\left\lbrace \begin{matrix} 0 \\ n+4 \end{matrix} \right\rbrace
			\cdot \underbrace{ \mathscr{K}\left\lbrace \begin{matrix} 0 & n+3 & n+4 \\ 0 & 1 & n+4 \end{matrix} \right\rbrace }_{\text{semi-framed}} \nonumber\\
\qquad= \underbrace{\mathscr{K}\left\lbrace \begin{matrix} 0 & n+3 \\ 0 & n+4 \end{matrix} \right\rbrace}_{\text{framed}} \cdot \underbrace{\mathscr{K}\left\lbrace \begin{matrix} 0 & n+4 \\ 1 & n+4 \end{matrix} \right\rbrace}_{\text{framed}} - \underbrace{\mathscr{K}\left\lbrace \begin{matrix} 0 & n+3 \\ 1 & n+4 \end{matrix} \right\rbrace}_{\text{framed}} \cdot \underbrace{\mathscr{K}\left\lbrace \begin{matrix} 0 & n+4 \\ 0 & n+4 \end{matrix} \right\rbrace}_{\text{framed}},\nonumber
		\end{gather}
		and
		\begin{gather}
\mathscr{K}\left\lbrace \begin{matrix} n+4 \\ 0 \end{matrix} \right\rbrace
			\cdot \underbrace{ \mathscr{K}\left\lbrace \begin{matrix} 0 & 1 & n+4 \\ 0 & n+3 & n+4 \end{matrix} \right\rbrace }_{\text{semi-framed}} \nonumber\\
\qquad= \underbrace{\mathscr{K}\left\lbrace \begin{matrix} 0 & n+4 \\ 0 & n+3 \end{matrix} \right\rbrace}_{\text{framed}} \cdot \underbrace{\mathscr{K}\left\lbrace \begin{matrix} 1 & n+4 \\ 0 & n+4 \end{matrix} \right\rbrace}_{\text{framed}} - \underbrace{\mathscr{K}\left\lbrace \begin{matrix} 0 & n+4 \\ 0 & n+4 \end{matrix} \right\rbrace}_{\text{framed}} \cdot \underbrace{\mathscr{K}\left\lbrace \begin{matrix} 1 & n+4 \\ 0 & n+3 \end{matrix} \right\rbrace}_{\text{framed}}.\label{DODGSON framed F}
		\end{gather}
		Using these, we can express the objects on the right-hand side of \eqref{DODGSON framed C} in terms of the framed Toeplitz determinant $\mathscr{M}$ introduced in \eqref{MM}. Indeed, from the equations \eqref{DODGSON framed CC} through \eqref{DODGSON framed F} we respectively obtain
		\begin{gather*}
				\mathscr{K}\left\lbrace \begin{matrix} 0 \\ 0 \end{matrix} \right\rbrace = \frac{1}{\mathscr{E}_{n+2}[\phi;\ga_1,\xi_1;a_1]} \left( \mathscr{M}_{n+3}\left[\phi; \xi_1, \frac{\psi_2}{z}, \frac{\eta_2}{z}, \ga_1; \boldsymbol{a}^{(1)}_4\right]\cdot \mathscr{M}_{n+3}\big[\phi; \xi_1, \psi_1, \eta_1, \ga_1; \boldsymbol{a}^{(2)}_4\big] \right. \\
\phantom{\mathscr{K}\left\lbrace \begin{matrix} 0 \\ 0 \end{matrix} \right\rbrace =}{} - \left. \mathscr{M}_{n+3}\left[\phi; \xi_1, \psi_1, \frac{\eta_2}{z}, \ga_1; \boldsymbol{a}^{(3)}_4\right]\cdot \mathscr{M}_{n+3}\left[\phi; \xi_1, \frac{\psi_2}{z}, \eta_1, \ga_1; \boldsymbol{a}^{(4)}_4\right] \right),
\\
				\mathscr{K}\left\lbrace \begin{matrix} n+4 \\ n+4 \end{matrix} \right\rbrace = \frac{1}{\mathscr{H}_{n+2}[\phi;\psi_1,\eta_1;a_3]}\\
\phantom{\mathscr{K}\left\lbrace \begin{matrix} n+4 \\ n+4 \end{matrix} \right\rbrace= }{} \times \left( \mathscr{M}_{n+3}\big[\phi; \xi_1, \psi_1, \eta_1, \ga_1; \boldsymbol{b}^{(1)}_4\big]\cdot \mathscr{M}_{n+3}\left[\phi; \frac{\xi_2}{z}, \psi_1, \eta_1, \frac{\ga_2}{z}; \boldsymbol{b}^{(2)}_4\right] \right. \\
\phantom{\mathscr{K}\left\lbrace \begin{matrix} n+4 \\ n+4 \end{matrix} \right\rbrace =\times}{} - \left. \mathscr{M}_{n+3}\left[\phi; \xi_1, \psi_1, \eta_1, \frac{\ga_2}{z}; \boldsymbol{b}^{(3)}_4\right]\cdot \mathscr{M}_{n+3}\left[\phi; \frac{\xi_2}{z}, \psi_1, \eta_1, \ga_1; \boldsymbol{b}^{(4)}_4\right] \right),
\\
				\mathscr{K}\left\lbrace \begin{matrix} 0 \\ n+4 \end{matrix} \right\rbrace = \frac{(-1)^{n+1}}{\mathscr{H}_{n+2}[\phi;\psi_1,\xi_1;a_2]} \\
\phantom{\mathscr{K}\left\lbrace \begin{matrix} 0 \\ n+4 \end{matrix} \right\rbrace =}{}\times \left( \mathscr{M}_{n+3}\left[\phi; \xi_1, \psi_1, \frac{\eta_2}{z}, \ga_1; \boldsymbol{c}^{(1)}_4\right]\cdot \mathscr{M}_{n+3}\left[\phi; \xi_1, \psi_1, \eta_1, \frac{\ga_2}{z}; \boldsymbol{c}^{(2)}_4\right] \right. \\
 \phantom{\mathscr{K}\left\lbrace \begin{matrix} 0 \\ n+4 \end{matrix} \right\rbrace =\times}{} - \left. \mathscr{M}_{n+3}\left[\phi; \xi_1, \psi_1, \frac{\eta_2}{z},\frac{\ga_2}{z}; \boldsymbol{c}^{(3)}_4\right]\cdot \mathscr{M}_{n+3}\big[\phi; \xi_1, \psi_1, \eta_1, \ga_1; \boldsymbol{c}^{(4)}_4\big] \right),
\\
				\mathscr{K}\left\lbrace \begin{matrix} n+4 \\ 0 \end{matrix} \right\rbrace = \frac{(-1)^{n+1}}{\mathscr{E}_{n+2}[\phi;\ga_1,\eta_1;a_4]} \\
\phantom{\mathscr{K}\left\lbrace \begin{matrix} n+4 \\ 0 \end{matrix} \right\rbrace =}{}\times \left( \mathscr{M}_{n+3}\left[\phi; \xi_1,\frac{\psi_2}{z}, \eta_1, \ga_1; \boldsymbol{d}^{(1)}_4\right]\cdot \mathscr{M}_{n+3}\left[\phi; \frac{\xi_2}{z}, \psi_1, \eta_1, \ga_1; \boldsymbol{d}^{(2)}_4\right] \right. \\
\phantom{\mathscr{K}\left\lbrace \begin{matrix} n+4 \\ 0 \end{matrix} \right\rbrace =\times }{}- \left. \mathscr{M}_{n+3}\big[\phi; \xi_1, \psi_1, \eta_1, \ga_1; \boldsymbol{d}^{(3)}_4\big]\cdot \mathscr{M}_{n+3}\left[\phi; \frac{\xi_2}{z}, \frac{\psi_2}{z}, \eta_1, \ga_1; \boldsymbol{d}^{(4)}_4\right] \right),
		\end{gather*}
		with
		\begin{gather*}
			\boldsymbol{a}^{(1)}_4 = \{a_1, \psi_{2,0}, a_7, \eta_{2,n+2} \},\qquad	 \boldsymbol{b}^{(2)}_4 = \{a_5, \xi_{2,0}, a_3, \ga_{2,0}\}, \qquad		
			\boldsymbol{c}^{(3)}_4 = \{\ga_{2,n+2}, a_2, \eta_{2,0}, a_8\},		\\
	\boldsymbol{d}^{(4)}_4 = \{\xi_{2,n+2},a_6, \psi_{2,n+2}, a_4\},	\qquad
			\boldsymbol{a}^{(3)}_4 = \boldsymbol{c}^{(1)}_4 = \{a_1, a_2, \eta_{2,0}, \eta_{2,n+2}\},\\
 \boldsymbol{a}^{(4)}_4 = \boldsymbol{d}^{(1)}_4 = \{a_1, \psi_{2,0}, \psi_{2,n+2}, a_4\}, \qquad \boldsymbol{b}^{(3)}_4 = \boldsymbol{c}^{(2)}_4 = \{\ga_{2,n+2}, a_2, a_3,\ga_{2,0}\}, \\
 \boldsymbol{b}^{(4)}_4 = 	\boldsymbol{d}^{(2)}_4 = \{\xi_{2,n+2}, \xi_{2,0}, a_3, a_4\}, \qquad
			\boldsymbol{a}^{(2)}_4 = \boldsymbol{b}^{(1)}_4 = \boldsymbol{c}^{(4)}_4 = \boldsymbol{d}^{(3)}_4 = \{a_1, a_2, a_3, a_4\}.
		\end{gather*}
		The above equations together with \eqref{M in terms of semi-framed stuff} and the results of Section \ref{sec semi-framed}, provides the needed pathway to compute the asymptotics of the two-framed Toeplitz determinant \eqref{two framed}.
		
\appendix

		\section[Solution of the Riemann--Hilbert problem for BOPUC\\ with Szeg\H{o}-type symbols]{Solution of the Riemann--Hilbert problem \\
for BOPUC with Szeg\H{o}-type symbols}\label{Appendices}

		The following Riemann--Hilbert problem for BOPUC is due to J.~Baik, P.~Deift and K.~Johansson~\cite{BDJ}. Find $X(z;n)$ satisfying
		\begin{itemize}\itemsep=0pt
			\item {\rm RH-X1:} $X(\cdot;n)\colon\C\setminus \T \to \C^{2\times2}$ is analytic,
			\item {\rm RH-X2:} The limits of $X(\ze;n)$ as $\ze$ tends to $z \in \T $ from the inside and outside of the unit circle exist, and are denoted $X_{\pm}(z;n)$ respectively and are related by
			\[
				X_+(z;n)=X_-(z;n)\begin{bmatrix}
					1 & z^{-n}\phi(z) \\
					0 & 1
				\end{bmatrix}, \qquad z \in \T,
			\]
			
			\item {\rm RH-X3:} $X(z;n)=\big( I + O\big(z^{-1}\big) \big) z^{n \sigma_3}$ as $z \to \infty$,
		\end{itemize}
		(see \cite{D, DIK}). Below we show the standard steepest descent analysis to asymptotically solve this problem for a Szeg\H{o}-type symbol. We first normalize the behavior at $\infty$ by defining \begin{gather*}
			T(z;n) := \begin{cases}
				X(z;n)z^{-n\sigma_3}, & |z|>1, \\
				X(z;n), & |z|<1.
			\end{cases}
		\end{gather*}
		The function $T$ defined above satisfies the following RH problem:
				\begin{itemize}\itemsep=0pt
			\item {\rm RH-T1:} $T(\cdot;n) \colon\C\setminus \T \to \C^{2\times2}$ is analytic,
			\item {\rm RH-T2:} $T_{+}(z;n)=T_{-}(z;n)\begin{bmatrix}
				z^n & \phi(z) \\
				0 & z^{-n}
			\end{bmatrix}$, $z \in \T$,
			\item {\rm RH-T3:} $T(z;n)=I+O(1/z)$, $ z \to \infty$,
		\end{itemize}
		So $T$ has a highly-oscillatory jump matrix as $n \to \infty$. The next transformation yields a Riemann--Hilbert problem, normalized at infinity, having an exponentially decaying jump matrix on the \textit{lenses}. Note that we have the following factorization of the jump matrix of the $T$-RHP:
\begin{align*}
			\begin{bmatrix}
				z^n & \phi(z) \\
				0 & z^{-n}
			\end{bmatrix} &= \begin{bmatrix}
				1 & 0 \\
				z^{-n}\phi(z)^{-1} & 1
			\end{bmatrix}\begin{bmatrix}
				0 & \phi(z) \\
				-\phi(z)^{-1} & 0
			\end{bmatrix}\begin{bmatrix}
				1 & 0 \\
				z^{n}\phi(z)^{-1} & 1
			\end{bmatrix}\\
& \equiv J_{0}(z;n)J^{(\infty)}(z)J_{1}(z;n).
		\end{align*}
		Now, we define the following function: \begin{gather*}
			S(z;n):=\begin{cases}
				T(z;n)J^{-1}_{1}(z;n), & z \in \Om_1, \\
				T(z;n)J_{0}(z;n), & z \in \Om_2, \\
				T(z;n), & z\in \Om_0\cup \Om_{\infty}.
			\end{cases}
		\end{gather*}
		Also introduce the following function on $\Ga_S := \Ga_0 \cup \Ga_1 \cup \T$:
 \begin{gather*}
			J_{ S}(z;n)=\begin{cases}
				J_{1}(z;n), & z \in \Ga_0, \\
				J^{(\infty)}(z), & z \in \T, \\
				J_{0}(z;n), & z \in \Ga_1. \\
			\end{cases}
		\end{gather*} 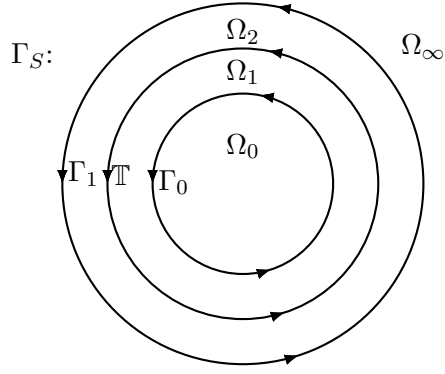
\begin{figure}
			\centering
			
			\begin{tikzpicture}[scale=0.4]

				\draw[ ->-=0.22,->-=0.5,->-=0.8,thick] (-3,0) circle (4.5cm);
				
				\draw[ ->-=0.22,->-=0.5,->-=0.8,thick] (-3,0) circle (3cm);
				
				\draw[ ->-=0.22,->-=0.5,->-=0.8,thick] (-3,0) circle (6cm);
				
				
				
				\node at (-3,4.45) [below] {$\Om_1$};
				
				\node at (-3,5.8) [below] {$\Om_2$};
				
				\node at (-8.3,-0.4) [above] {$\Ga_1$};
				
				
				
				\node at (-6.4,0.3) [left] {$\T$};
				
				\node at (-5.3,0.8) [below] {$\Ga_0$};

				\node at (-3,2) [below] {$\Om_0$};
				
				
				
				\node at (3,5.3) [below] {$\Om_{\infty}$};
				
				\node at (-10,5) [below] {$\Ga_S$:};
			\end{tikzpicture}

			\caption{Opening of lenses: the jump contour for the $S$-RHP.}
			\label{S_contour}
		\end{figure}
		Therefore, we have the following Riemann--Hilbert problem for $S(z;n)$:
		\begin{itemize}\itemsep=0pt
			\item {\rm RH-S1:} $ S(\cdot;n)\colon\C\setminus \Ga_S \to \C^{2\times2}$ is analytic,
			\item {\rm RH-S2:} $S_{+}(z;n)=S_{-}(z;n) J_{S}(z;n)$, $z \in \Ga_S$,
			\item {\rm RH-S3:} $ S(z;n)=I+O(1/z)$, as $z \to \infty$.
		\end{itemize}
		Note that the matrices $J_0(z;n)$ and $J_1(z;n)$ tend to the identity matrix uniformly on their respective contours, exponentially fast as $n \to \infty$.	We are looking for a piecewise analytic function $P^{(\infty)}(z)\colon \C \setminus \T \to \C^{2\times2}$ such that
\begin{itemize}\itemsep=0pt
			\item {\rm RH-Global1:} $P^{(\infty)}$ is holomorphic in $\C \setminus \T$,
			\item {\rm RH-Global2:} for $z\in \T$, we have \begin{equation}\label{44}
				P_+^{(\infty)}(z)=P_-^{(\infty)}(z) \begin{bmatrix}
					0 & \phi(z) \\
					- \phi^{-1}(z) & 0
				\end{bmatrix},
			\end{equation}
			\item {\rm RH-Global3:} $P^{(\infty)}(z)=I+O(1/z)$, as $z \to \infty$.
		\end{itemize}
		We can find a piecewise analytic function $\al$ which solves the following scalar multiplicative Riemann--Hilbert problem \begin{equation}\label{47}
			\al_+(z)=\al_-(z)\phi(z), \qquad z \in \T.
		\end{equation}
		By Plemelj--Sokhotski formula, we have \begin{equation}\label{45}
			\al(z)=\exp \left[ \frac{1}{2 \pi {\rm i} } \int_{\T} \frac{\ln(\phi(\tau))}{\tau-z}{\rm d}\tau \right],
		\end{equation}
		Now, using (\ref{47}), we have the following factorization: \begin{gather*}
			\begin{bmatrix}
				0 & \phi(z) \\
				- \phi^{-1}(z) & 0
			\end{bmatrix}= \begin{bmatrix}
				\al_-^{-1}(z) & 0 \\
				0 & \al_-(z)
			\end{bmatrix} \begin{bmatrix}
				0 & 1 \\
				- 1 & 0
			\end{bmatrix} \begin{bmatrix}
				\al^{-1}_+(z) & 0 \\
				0 & \al_+(z)
			\end{bmatrix}.
		\end{gather*}
		So, the function \begin{equation}\label{48}
			P^{(\infty)}(z) :=\begin{cases}
				\begin{bmatrix}
					0 & \al(z) \\
					-\al^{-1}(z) & 0
				\end{bmatrix}, & |z|<1, \\[14pt]
				\begin{bmatrix}
					\al(z) & 0 \\
					0 & \al^{-1}(z)
				\end{bmatrix}, & |z|>1,
			\end{cases}
		\end{equation}
		satisfies (\ref{44}). Also, by the properties of the Cauchy integral, $P^{(\infty)}(z)$ is holomorphic in $\C \setminus \T$. Moreover, $\al(z)=1+O\big(z^{-1}\big)$, as $z\to \infty$, and hence \begin{gather*}
			P^{(\infty)}(z)=I+O(1/z), \qquad z \to \infty.
		\end{gather*}
		Therefore, $P^{(\infty)}$ given by (\ref{48}) is the unique solution of the global parametrix Riemann--Hilbert problem. Let us now consider the ratio
		\begin{equation*}
			R(z;n):= S(z;n) \big[ P^{(\infty)}(z) \big]^{-1}.
		\end{equation*}
		We have the following Riemann--Hilbert problem for $R(z;n)$:
		\begin{itemize}\itemsep=0pt
			\item {\rm RH-R1:} $ R$ is holomorphic in $\C\setminus (\Ga_0 \cup \Ga_1)$,
			\item {\rm RH-R2:} $R_{+}(z;n)=R_{-}(z;n) J_{R}(z;n)$, $ z \in \Ga_0 \cup \Ga_1 =: \Sigma_R$,
			\item {\rm RH-R3:} $ R(z;n)=I+O(1/z) $ as $z \to \infty$.
		\end{itemize}
		This Riemann--Hilbert problem is solvable for large $n$ (see \cite{Deiftetal2, Deiftetal}), and $R(z;n)$ can be written~as
		\begin{gather*}
			R(z;n) = I + R_1(z;n) + R_2(z;n) + R_3(z;n) + \cdots, \qquad n \geq n_0,
		\end{gather*}
		where $R_k$ can be found recursively. Indeed,
		\begin{gather*}
			R_k(z;n) = \frac{1}{2\pi \ii}\int_{\Sigma_R} \frac{\left[ R_{k-1}(\mu;n)\right]_- \left( J_R(\mu;n)-I\right) }{\mu-z}\dd \mu, \qquad z \in \C \setminus \Sigma_R, \qquad k\geq1.
		\end{gather*}	
		It is easy to check that $R_{2\ell}(z;n)$ is diagonal and $R_{2\ell+1}(z;n)$ is off-diagonal; $\ell \in \N \cup \{0\}$, and that
		\begin{gather}\label{R_k's are small}
			R_{k,ij}(z;n) = O \left( \frac{\rho^{-kn}}{1+|z|} \right), \qquad n \to \infty, \qquad k \geq 1,
		\end{gather}
		uniformly in $z\in \C \setminus \Sigma_R$, where $\rho$ \big(resp.\ $\rho^{-1}$\big) is the radius of $\Ga_1$ (resp.\ $\Ga_0$). Let us compute~${R_1(z;n)}$; we have
		\begin{gather*}\label{JR-I}
			J_R(z)-I = \begin{cases}
	P^{(\infty)}(z) \begin{bmatrix}
					0 & 0 \\
					z^n \phi^{-1}(z) & 0 \end{bmatrix} \left[ P^{(\infty)}(z) \right]^{-1}, & z \in \Ga_0, \\[14pt]
			 	P^{(\infty)}(z) \begin{bmatrix}
					0 & 0 \\
					z^{-n} \phi^{-1}(z) & 0 \end{bmatrix} \left[ P^{(\infty)}(z) \right]^{-1}, & z \in \Ga_1,
			\end{cases} \\
\phantom{J_R(z)-I }{} = \begin{cases}
				\begin{bmatrix}
					0 & -z^{n} \phi^{-1}(z)\al^{2}(z) \\
					0 & 0 \end{bmatrix}, & z \in \Ga_0, \\[14pt]
 				\begin{bmatrix}
					0 & 0 \\
					z^{-n} \phi^{-1}(z)\al^{-2}(z) & 0 \end{bmatrix}, & z \in \Ga_1.
			\end{cases}
		\end{gather*}
		Therefore,
		\begin{gather}\label{R1}
			R_1(z;n)= \begin{bmatrix}
				0 & -\tfrac{1}{2\pi {\rm i}}\int_{\Ga_0} \tfrac{\tau^{n} \phi^{-1}(\tau)\al^{2}(\tau)}{\tau-z}{\rm d}\tau \\
 \frac{1}{2\pi {\rm i}}\int_{\Ga_1} \tfrac{\tau^{-n} \phi^{-1}(\tau)\al^{-2}(\tau)}{\tau-z}{\rm d}\tau
				& 0 \end{bmatrix}.
		\end{gather}
		If we trace back the Riemann--Hilbert problems $R \mapsto S \mapsto T \mapsto X $, we will obtain
		\begin{gather}\label{X in terms of R exact}
			X(z;n) = R(z;n)\begin{cases} \begin{bmatrix} \al(z) & 0 \\ 0 & \al^{-1}(z) \end{bmatrix}z^{n\sigma_3}, & z \in \Om_{\infty}, \\[14pt]
				\begin{bmatrix} \al(z) & 0 \\ -z^{-n}\al^{-1}(z) \phi^{-1}(z) & \al^{-1}(z) \end{bmatrix}z^{n\sigma_3}, & z \in \Om_{2}, \\[14pt]
				\begin{bmatrix} z^{n}\al(z) \phi^{-1}(z) & \al(z) \\ -\al^{-1}(z) & 0 \end{bmatrix}, & z \in \Om_{1}, \\[14pt] \begin{bmatrix} 0 & \al(z) \\ -\al^{-1}(z) & 0 \end{bmatrix}, & z \in \Om_{0},
			\end{cases}
		\end{gather}
		where for $z \in \C \setminus \Sigma_{R}$, as $n \to\infty$, we have
		\begin{gather}\label{R asymp}
			R(z;n)=\di \begin{bmatrix}
				1 + O \left(\frac{ \rho^{-2n}}{ 1+|z| }\right) & R_{1,12}(z;n)+ O \left(\frac{ \rho^{-3n}}{ 1+|z| }\right) \\
				R_{1,21}(z;n)+ O \left(\frac{ \rho^{-3n}}{ 1+|z| }\right) & 1 +O \left(\frac{ \rho^{-2n}}{ 1+|z| }\right)
			\end{bmatrix}.
		\end{gather}

		\subsection*{Acknowledgements}
		The author would like to thank Harini Desiraju, Alexander Its, Karl Liechty, and Nicholas Witte for their interest in this project and for helpful conversations. The author would like to also thank the anonymous referees for their helpful remarks and suggestions. This material is based upon work supported by the National Science Foundation under Grant No. DMS-1928930. The author gratefully acknowledges the Mathematical Sciences Research Institute, Berkeley California and the organizers of the semester-long program \textit{Universality and Integrability in Random Matrix Theory and Interacting Particle Systems} for their support in the Fall of 2021, during which part of this project
		was completed.


\addcontentsline{toc}{section}{References}
\LastPageEnding

\end{document}